\theoremstyle{theorem}
\newtheorem{Def}{Definition}[section]
\newtheorem{Prop}[Def]{Proposition}
\newtheorem{Lem}[Def]{Lemma}
\newtheorem{Thm}[Def]{Theorem}
\theoremstyle{definition}
\newtheorem{Rem}[Def]{Remark}
\newcommand{\bR}{\mathbb{R}}
\newcommand{\mf}{\mathcal{F}}
\newcommand{\pr}{\mathbb{P}}
\newcommand{\bE}{\mathbb{E}}
\newcommand{\br}{\mathbb{R}} 
\newcommand{\sm}{\sigma}
\newcommand{\Om}{\Omega}
\newcommand{\ellf}{\textbf{l}}
\begin{document}

\title{On the infinite time horizon approximation for L\'evy-driven McKean-Vlasov SDEs with non-globally Lipschitz continuous and super-linearly growth drift and diffusion coefficients}
\author{
Ngoc Khue Tran\footnote{Faculty of Mathematics and Informatics, Hanoi University of Science and Technology, 1 Dai Co Viet, Hai Ba Trung, Hanoi, Vietnam. Email: khue.tranngoc@hust.edu.vn} \quad 
Trung-Thuy Kieu\footnote{Hanoi National University of Education. Email: thuykt@hnue.edu.vn} \quad  Duc-Trong Luong\footnote{Hanoi National University of Education. Email: trongld@hnue.edu.vn} \quad  Hoang-Long Ngo\footnote{Corresponding author. Hanoi National University of Education. Email: ngolong@hnue.edu.vn}}  
\maketitle

\begin{abstract} 
	This paper studies the numerical approximation for McKean-Vlasov stochastic differential equations driven by L\'evy processes. We propose a tamed-adaptive Euler-Maruyama scheme and consider its strong convergence in both finite and infinite time horizons when applying for some classes of L\'evy-driven McKean-Vlasov stochastic differential equations with non-globally Lipschitz continuous and super-linearly growth drift and diffusion coefficients.
\end{abstract} 

\textbf{Keywords} L\'evy process $\cdot$ McKean-Vlasov  $\cdot$ Stochastic differential equation $\cdot$ Super-linearly growth coefficient $\cdot$ Tamed-adaptive Euler Maruyama 

\textbf{Mathematics Subject Classification:} 60H35,  60H10

\section{Introduction}
On a complete probability space $(\Om, \mf, \pr)$, we consider the $d$-dimensional process $X=(X_t)_{t \geq 0}$ solution to the following McKean-Vlasov stochastic differential equation (SDE) with jumps
\begin{equation} \label{eqn1}
dX_t=b(X_t,\mathcal{L}_{X_t})dt+\sigma(X_t,\mathcal{L}_{X_t})dW_t +  c\left(X_{t-},\mathcal{L}_{X_{t-}}\right)dZ_t, 
\end{equation}
for $t \geq 0$, where $X_0=x_0\in \mathbb{R}^d$ is a fixed initial value, $\mathcal{L}_{X_t}$ denotes the marginal law of the process $X$ at time $t$, $W=(W_t)_{t\geq 0}$ is a $d$-dimensional standard Brownian motion and $Z=(Z_t)_{t\geq 0}$ is a $d$-dimensional centered pure jump L\'evy process whose L\'evy measure $\nu$ satisfies $\int_{\mathbb{R}^d}(1\wedge \vert z\vert^2)\nu(dz)<+\infty$. Two processes $W$ and $Z$ are supposed to be independent.  The natural filtration $(\mf_t)_{t\geq 0}$ is generated by $W$ and $Z$. The L\'evy-It\^o decomposition of $Z$ is given by
$$
Z_t=\int_{0}^{t}\int_{\mathbb{R}_0^d}z(N(ds,dz)-\nu(dz)ds),
$$
for any $t\geq 0$, where $\mathbb{R}_0^d:=\mathbb{R}^d\setminus\{0\}$, and $N(dt,dz)$ is a Poisson random measure on the measurable space $(\mathbb{R}_{+}\times \mathbb{R}_0^d,\mathcal{B}(\mathbb{R}_{+}\times \mathbb{R}_0^d))$ that is associated with the jumps of the L\'evy process $Z$ with intensity measure $\nu(dz)dt$. That is, 
$$
N(dt,dz):=\sum_{0\leq s\leq t}{\bf 1}_{\{\Delta Z_s\neq 0\}}\delta_{(s,\Delta Z_s)}(ds,dz).
$$
Here, the jump size of $Z$ at instant $s$ is defined as $\Delta Z_s:=Z_s-Z_{s-}:=Z_s-\lim_{u \uparrow s}Z_{u}$ for any $s>0$, $\Delta Z_0:=0$, $\delta_{(s,z)}$ denotes the Dirac measure at the point $(s,z)\in\mathbb{R}_{+}\times \mathbb{R}_0^d$, and $\mathcal{B}(\mathbb{R}_{+}\times \mathbb{R}_0^d)$ denotes the Borel $\sigma$-algebra on $\mathbb{R}_{+}\times \mathbb{R}_0^d$. The compensated Poisson random measure associated with $N(dt,dz)$ is denoted by $\widetilde{N}(dt,dz):=N(dt,dz)-\nu(dz)dt$.

We denote by $\mathcal{P}(\mathbb{R}^d)$ the space of all probability measures defined on a measurable space $(\mathbb{R}^d,\mathcal{B}(\mathbb{R}^d))$, where $\mathcal{B}(\mathbb{R}^d)$ denotes the Borel $\sigma$-field over $\mathbb{R}^d$, and by
$$
\mathcal{P}_2(\mathbb{R}^d):=\left\{\mu\in\mathcal{P}(\mathbb{R}^d): \int_{\mathbb{R}^d}\vert x\vert^2\mu(dx)<\infty \right\}
$$
the subset of probability measures with the finite second moment. As metric on the space $\mathcal{P}_2(\mathbb{R}^d)$, we use the $\mathcal{L}_2$-Wasserstein distance. That is, for $\mu,\nu\in \mathcal{P}_2(\mathbb{R}^d)$, the $\mathcal{L}_2$-Wasserstein distance between $\mu$ and $\nu$ is defined as
$$
\mathcal{W}_2(\mu,\nu):=\inf_{\pi\in\mathcal{C}(\mu,\nu)}\left(\int_{\mathbb{R}^d\times \mathbb{R}^d}\left\vert x-y\right\vert^2\pi(dx,dy)\right)^{1/2},
$$
where $\mathcal{C}(\mu,\nu)$ denotes all the couplings of $\mu$ and $\nu$. That is, $\pi\in\mathcal{C}(\mu,\nu)$ if and only if $\pi(\cdot,\mathbb{R}^d)=\mu(\cdot)$ and $\pi(\mathbb{R}^d,\cdot)=\nu(\cdot)$.

The coefficients $b=(b_i)_{1\leq i\leq d}: \mathbb{R}^d \times \mathcal{P}_2(\mathbb{R}^d)\to \mathbb{R}^d$, $\sigma=(\sigma_{ij})_{1\leq i,j\leq d}: \mathbb{R}^d \times \mathcal{P}_2(\mathbb{R}^d)\to \mathbb{R}^d\otimes \mathbb{R}^d$ and $c=(c_{ij})_{1\leq i,j\leq d}: \mathbb{R}^d \times \mathcal{P}_2(\mathbb{R}^d) \to \mathbb{R}^d\otimes \mathbb{R}^d$ are measurable functions. The integral equation \eqref{eqn1} now writes as 
\begin{equation*} 
X_t=x_0+\int_0^t b(X_s,\mathcal{L}_{X_s})ds+\int_0^t \sigma(X_s,\mathcal{L}_{X_s})dW_s + \int_0^t\int_{\mathbb{R}_0^d} c\left(X_{s-},\mathcal{L}_{X_{s-}}\right)z\widetilde{N}(d s, d z),
\end{equation*}	
for any $t\geq 0$.

The McKean-Vlasov process was first studied by McKean in \cite{MK66} as a model for the Vlasov equation describing the time evolution of the distribution function of a plasma consisting of charged particles with long-range interaction. The process can be obtained as a limit of a mean-field system of interacting particles as the number of particles tends to infinity. The very first studies on the numerical approximation for McKean-Vlasov SDEs are the works of Ogawa \cite{O95}, Kohatsu-Higa and Ogawa \cite{KO97} and Bossy and Talay \cite{BT97}, where the authors considered the weak approximation of McKean-Vlasov SDEs with regular coefficients. However, the numerical approximation for McKean-Vlasov SDEs has only become active in the last decade.

Let $X^{(n)}_T $ be an approximation of $X_T$ which depends on the values of $W$ and $Z$ at fixed equidistance times $t_k = \frac{kT}{n}, k=0,1,\ldots, n$. Then under some regularity conditions on the coefficients $b, \sigma,$ and $c$, one may prove that  
\begin{equation} \label{def.rate}
\|X^{(n)}_T - X_T\|_{L^p} :=  \Big(\bE[| X^{(n)}_T - X_T|^p]\Big)^{1/p} \leq \frac{C(T)}{n^{\zeta_0}},
\end{equation}
for some positive constants $p$ and ${\zeta_0}$. In case the estimate \eqref{def.rate} holds, we say that the $X^{(n)}_T $ converges at the rate of order ${\zeta_0}$ in $L^p$-norm.

It is now well-known that for SDEs with super-linear growth coefficients, the Euler-Maruyama scheme may not converge in the $L^p$-norm  (see \cite{HJK11}). Therefore, the numerical approximation for SDEs with super-linear growth coefficients has attracted lots of attention recently. New approximation schemes have been introduced to solve the problem, such as the tamed Euler-Maruyama scheme (\cite{HJK12, S1, S2, KS2}), the truncated Euler-Maruyama scheme (\cite{Mao2}), the implicit Euler-Maruyama scheme (\cite{MS}), the adaptive Euler-Maruyama scheme (\cite{FG}). In \cite{dRES22, KN21, KNRS22,  LWW} the tamed Euler-Maruyama scheme has been developed to approximate McKean-Vlasov SDEs with super-linear growth coefficients.  In \cite{RW22}, the authors introduced several adaptive Euler-Maruyama and Milstein schemes and studied their strong rate of convergence for McKean-Vlasov SDEs with super-linear drift. 
In \cite{HKN22}, the authors introduced a multilevel Picard approximation, which has a low computational cost,  for McKean-Vlasov SDEs with additive noise. In \cite{CG22, dRES22}, the authors introduced the implicit Euler-Maruyama scheme and studied its convergence in $L^p$-norm for McKean-Vlasov SDEs with drifts of super-linear growth. 

The McKean-Vlasov SDEs with jumps were studied by many authors with applications in many domains (see  \cite{G92, E22, ELL22, AP21,  FP22} and the reference therein). In \cite{NBKGR20}, the authors considered McKean-Vlasov SDEs driven by infinite activity L\'evy processes with super-linear growth coefficients. They proved the existence and uniqueness of the solution and proposed a tamed Euler-Maruyama approximation for the associated interacting particle system and proved that the rate of its convergence in $L^p$-norm is arbitrarily close to 0.5. 

In some applications, it is necessary to approximate $X_T$ for $T$ large, e.g., to approximate the invariant distribution of process $X$ (see Section 3 in \cite{FG}). However, since the proof of convergence of many schemes often involves Gronwall's inequality, the quantity $C(T)$ may grow exponentially fast to $T$.  Recently, there have been some attempts to introduce numerical schemes where $C(T)$ does not depend on $T$.  The paper \cite{FG} proposed an adaptive Euler-Maruyama scheme for SDEs where each stepsize is adjusted to the size of the current values of $X$, and showed its strong convergence in the interval $[0,\infty)$ when applying for SDEs whose coefficients $b$ and $\sigma$ satisfy the contractive Lipschitz condition (Assumption 9 in \cite{FG}), $b$ is polynomial growth Lipschitz continuous, and $\sigma$ is bounded and globally Lipschitz continuous. The paper \cite{KLN22} 
introduced a tamed-adaptive Euler-Maruyama scheme and considered its rate of convergence in $L^1$-norm when applying for one-dimensional SDEs whose diffusion coefficient $\sigma$ is of super-linearly growth. In \cite{KLNT22}, the result of \cite{KLN22} was generalized for one-dimensional SDEs with jumps. 

This paper aims to generalize the result of the papers \cite{FG, KLN22, KLNT22} for multi-dimensional McKean-Vlasov SDEs with jumps.  In particular, we propose a  tamed-adaptive Euler-Maruyama approximation scheme for the   L\'evy-driven SDEs \eqref{eqn1} where \textcolor{black}{$\sigma$ and $b$ are non-globally Lipschitz continuous and of super-linearly growth}, and  $c$ is Lipschitz continuous.  We will study the strong convergence of the scheme in both finite and infinite time intervals. Note that in \cite{RW22} the authors only considered the strong convergence of their adaptive scheme in a fixed time interval. In \cite{NBKGR20}, the tamed Euler-Maruyama scheme is also proven to converge in a fixed time interval when applying for McKean-Vlasov SDEs with jumps. To the best of our knowledge, our tamed-adaptive Euler-Maruyama scheme is the first approximation method for L\'evy driven McKean-Vlasov SDEs that could be shown to converge in an infinite time horizon.

	We denote the vector Euclidean norm of $v$ by $\vert v\vert:=(\vert v_1\vert^2+\vert v_2\vert^2+\ldots+\vert v_d\vert^2)^{\frac{1}{2}}$, the inner product of vectors $v$ and $w$ by $\langle v,w\rangle:=v_1w_1+v_2w_2+\ldots+v_dw_d$ for any $v, w\in \mathbb{R}^d$, the Frobenius matrix norm by $\vert A\vert:=\sqrt{\sum_{i,j=1}^{d}A^2_{ij}}$ for all $A\in \mathbb{R}^{d \times d}$, the integer part of $a$ by $[a]$, and  the transpose of a vector or matrix $A$ by $A^\mathsf{T}$. Moreover, the binomial coefficients are denoted by ${n \choose k}:=\frac{n!}{k!(n-k)!}$. In all that follows, we denote by $C$ positive constants whose value may change from one line to the next.

The rest of this paper is structured as follows. In Section \ref{Sec:2}, we introduce assumptions on the coefficients of equation \eqref{eqn1} and show some moment estimates under these assumptions. In Section \ref{Sec:3}, we prove the propagation of chaos. In Section \ref{sec:main}, we introduce the tamed-adaptive Euler-Maruyama scheme and prove that it is well-defined and converges in $L^2$-norm.  \textcolor{black}{Section \ref{sec:nume}  presents a numerical study for the tamed-adaptive Euler-Maruyama scheme.}

\section{Model assumptions and moment estimates} \label{Sec:2}

	Assume that the drift, diffusion and jump coefficients $b, \sigma, c$ and the L\'evy measure $\nu$ of equation \eqref{eqn1} satisfy the following conditions:
	
\begin{enumerate}
		\item[\bf A1.]  There exists a positive constant $L$ such that
		\begin{align*}
		2\left\langle x,b(x,\mu)\right\rangle + \left\vert\sigma(x,\mu)\right\vert^2 +|c(x,\mu)|^2 \int_{\bR_{0}^d}|z|^{2} \nu(d z) \leq L \left(1+|x|^2+\mathcal{W}_2^2(\mu,\delta_0)\right),
		\end{align*}
		for any $x\in \br^d$ and $\mu\in \mathcal{P}_2(\mathbb{R}^d)$.		
		\item[\bf A2.]
		There exist constants $\kappa_1>0, \kappa_2>0$, $L_1\in\mathbb{R}$ and $L_2\geq 0$ such that
		\begin{align*}
		&2\left\langle x-\overline{x},b(x,\mu)-b(\overline{x},\overline{\mu})\right\rangle +\kappa_1 \left\vert\sigma(x,\mu)-\sigma(\overline{x},\overline{\mu})\right\vert^2 +\kappa_2|c(x,\mu)-c(\overline{x},\overline{\mu})|^2 \int_{\bR_{0}^d}|z|^{2} \nu(d z) \\
		&\leq L_1 |x-\overline{x}|^2+L_2\mathcal{W}_2^2(\mu,\overline{\mu}),
		\end{align*}
		for any $x,\overline{x}\in \br^d$ and $\mu, \overline{\mu}\in \mathcal{P}_2(\mathbb{R}^d)$.
		\item[\bf A3.]  $b(x,\mu)$ is a continuous function of $x\in \br^d$ and $\mu\in \mathcal{P}_2(\mathbb{R}^d)$.
		\item[\bf A4.]  There exist constants $L>0$ and $\ell \geq 0 $ such that
		\begin{align*}
		\left\vert b(x,\mu)-b(\overline{x},\overline{\mu})\right\vert\leq L\left(1+\vert x\vert^{\ell}+\vert \overline{x}\vert^{\ell}\right) \left(\vert x-\overline{x}\vert+\mathcal{W}_2(\mu,\overline{\mu})\right),
		\end{align*}
		for any $x,\overline{x}\in \br^d$ and $\mu, \overline{\mu}\in \mathcal{P}_2(\mathbb{R}^d)$.
		\item[\bf A5.] There exists an even integer $p_0 \in [2, +\infty)$ such that  $\int_{\vert z\vert >1} \vert z\vert^{p_0}\nu(dz)<\infty$  and $\int_{0< \vert z\vert \le 1} \vert z\vert \nu(dz)<\infty$. 
		\item[\bf A6.] There exists a positive constant $L_3$ such that
		\begin{equation*}
		|c(x,\mu)| \leq L_3 \left(1+|x|+\mathcal{W}_2(\mu,\delta_0)\right), 
		\end{equation*}
		for any $\;x \in \br^d$ and $\mu\in \mathcal{P}_2(\mathbb{R}^d)$.	
		\item[\bf A7.]  For the even integer $p_0 \in [2, +\infty)$ given in \textbf{A5} and the positive constant $L_3$ given in \textbf{A6}, there exist constants  $\gamma_1 \in \mathbb R$, $\gamma_2\geq 0$ and $\eta\geq 0$ such that 		
		\begin{align*}
		\langle x,b(x,\mu) \rangle  + \dfrac{p_0-1}{2}  |\sm(x,\mu)|^2 +\dfrac{1}{2L_3}|c(x,\mu)|^2 \int_{\bR_{0}^d}|z|\left(\left(1+L_{3}|z|\right)^{p_0-1}-1\right) \nu(d z) \leq \gamma_1  |x|^2+\gamma_2\mathcal{W}_2^2(\mu,\delta_0)+\eta,
		\end{align*} 
		for any $x \in \br^d$ and $\mu\in \mathcal{P}_2(\mathbb{R}^d)$.
\end{enumerate}
\begin{Rem} \label{sigmac} (i) It follows from Condition \textbf{A4} that
	\begin{align*}
	\left\vert b(x,\mu)\right\vert\leq \left\vert b(0,\delta_0)\right\vert+L\left(1+\vert x\vert^{\ell}\right) \left(\vert x\vert+\mathcal{W}_2(\mu,\delta_0)\right),
	\end{align*}
	for any $x \in \bR^d$ and $\mu\in \mathcal{P}_2(\mathbb{R}^d)$.	
	
	(ii) Assume that Condition \textbf{A2} holds for $\kappa_1=\kappa_2=1+\varepsilon$, $L_1\in\mathbb{R}$, $L_2\geq 0$ with a constant $\varepsilon>0$ . This, combined with Condition \textbf{A4} and Cauchy's inequality, implies that 
	\begin{align*}
	&\left(1+\varepsilon\right) \left\vert\sigma(x,\mu)-\sigma(\overline{x},\overline{\mu})\right\vert^2 +\left(1+\varepsilon\right)|c(x,\mu)-c(\overline{x},\overline{\mu})|^2 \int_{\bR_{0}^d}|z|^{2} \nu(d z) \\
	&\leq 2\vert x-\overline{x}\vert \vert b(x,\mu)-b(\overline{x},\overline{\mu})\vert+ L_1 |x-\overline{x}|^2+L_2\mathcal{W}_2^2(\mu,\overline{\mu})\\
	&\leq L\left(1+\vert x\vert^{\ell}+\vert \overline{x}\vert^{\ell}\right) \left(2\vert x-\overline{x}\vert^2+2\vert x-\overline{x}\vert\mathcal{W}_2(\mu,\overline{\mu})\right)+ L_1 |x-\overline{x}|^2+L_2\mathcal{W}_2^2(\mu,\overline{\mu})\\
	&\leq L\left(1+\vert x\vert^{\ell}+\vert \overline{x}\vert^{\ell}\right) \left(2\vert x-\overline{x}\vert^2+\vert x-\overline{x}\vert^2+\mathcal{W}_2^2(\mu,\overline{\mu})\right)+ L_1 |x-\overline{x}|^2+L_2\mathcal{W}_2^2(\mu,\overline{\mu})\\
	&\leq L\widetilde{L}\left(1+\vert x\vert^{\ell}+\vert \overline{x}\vert^{\ell}\right)\left(\vert x-\overline{x}\vert^2+\mathcal{W}_2^2(\mu,\overline{\mu})\right),
	\end{align*}
	with $\widetilde{L}=\max\{3,L_1,L_2\}$ for any $x,\overline{x}\in \br^d$ and $\mu, \overline{\mu}\in \mathcal{P}_2(\mathbb{R}^d)$. This yields to
	\begin{align*}
	\left\vert\sigma(x,\mu)-\sigma(\overline{x},\overline{\mu})\right\vert^2+|c(x,\mu)-c(\overline{x},\overline{\mu})|^2 \int_{\bR_{0}^d}|z|^{2} \nu(d z)\leq \dfrac{L\widetilde{L}}{1+\varepsilon}\left(1+\vert x\vert^{\ell}+\vert \overline{x}\vert^{\ell}\right)\left(\vert x-\overline{x}\vert^2+\mathcal{W}_2^2(\mu,\overline{\mu})\right),
	\end{align*}
	for any $x,\overline{x}\in \br^d$ and $\mu, \overline{\mu}\in \mathcal{P}_2(\mathbb{R}^d)$.
	
	(iii) From (ii), we have that for any $x\in \br^d$ and $\mu\in \mathcal{P}_2(\mathbb{R}^d)$,
	\begin{align*}
	\left\vert\sigma(x,\mu)\right\vert^2&\leq 2\left\vert\sigma(x,\mu)-\sigma(0,\delta_0)\right\vert^2+2\left\vert\sigma(0,\delta_0)\right\vert^2\\
	&\leq 2\dfrac{L\widetilde{L}}{1+\varepsilon}\left(1+\vert x\vert^{\ell}\right)\left(\vert x\vert^2+\mathcal{W}_2^2(\mu,\delta_0)\right)+2\left\vert\sigma(0,\delta_0)\right\vert^2,
	\end{align*}
	and similarly,
	\begin{align*}
	|c(x,\mu)|^2 \int_{\bR_{0}^d}|z|^{2} \nu(d z)\leq 2\dfrac{L\widetilde{L}}{1+\varepsilon}\left(1+\vert x\vert^{\ell}\right)\left(\vert x\vert^2+\mathcal{W}_2^2(\mu,\delta_0)\right)+2\left\vert c(0,\delta_0)\right\vert^2\int_{\bR_{0}^d}|z|^{2} \nu(d z).
	\end{align*}
\end{Rem}
\begin{Rem} \label{c5}
	Observe that Condition \textbf{A7} yields to 
	\begin{align*}
	\langle x,b(x,\mu) \rangle  + \dfrac{p-1}{2}  |\sm(x,\mu)|^2 +\dfrac{1}{2L_3}|c(x,\mu)|^2 \int_{\bR_{0}^d}|z|\left(\left(1+L_{3}|z|\right)^{p-1}-1\right) \nu(d z) \leq \gamma_1  |x|^2+\gamma_2\mathcal{W}_2^2(\mu,\delta_0)+\eta,
	\end{align*} 
	for any $p\in[2,p_0]$, $x \in \br^d$ and $\mu\in \mathcal{P}_2(\mathbb{R}^d)$.
\end{Rem}

We first recall a result on the existence and uniqueness of the strong solution of McKean-Vlasov SDE with jumps \eqref{eqn1} which is taken from \cite{NBKGR20}.
\begin{Prop}\label{existenceunique} (\cite[Theorem 2.1]{NBKGR20})
Assume Conditions \textbf{A1}, \textbf{A3} and that Condition \textbf{A2} holds for $\kappa_1=\kappa_2=1, L_1= L_2>0$. Then, there exists a unique c\`adl\`ag process $X=(X_t)_{t \ge 0}$ taking values in $\mathbb{R}^d$ satisfying the McKean-Vlasov SDE with jumps \eqref{eqn1} such that
$$
\sup_{t\in[0,T]}\bE \left[\vert X_t\vert^2\right]\leq K,
$$
where $T>0$ is a fixed constant and $K:=K(\vert x_0\vert^2,d,L,L_1,T)$ is a positive constant.
\end{Prop}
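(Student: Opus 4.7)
The plan is to construct the solution by a Picard-type iteration on the space of measure flows, and to obtain uniqueness via a monotonicity argument. Fix $T>0$ and let $\Lambda_T$ denote the space of continuous maps $\mu:[0,T]\to \mathcal{P}_2(\bR^d)$ with $\sup_{t\le T}\int |x|^2 \mu_t(dx)$ uniformly bounded, equipped with the $\mathcal{W}_2$-metric on paths. For each input flow $\mu\in\Lambda_T$, I first solve the standard (non-McKean-Vlasov) L\'evy-driven SDE
\begin{equation*}
d\hat X^\mu_t = b(\hat X^\mu_t,\mu_t)\,dt + \sigma(\hat X^\mu_t,\mu_t)\,dW_t + c(\hat X^\mu_{t-},\mu_{t-})\,dZ_t,\quad \hat X^\mu_0 = x_0.
\end{equation*}
Under \textbf{A1} (one-sided coercivity), \textbf{A2} with $\kappa_1=\kappa_2=1$ (monotonicity in the state variable with $\mu$ frozen), and \textbf{A3} (continuity of $b$), the classical theory of jump SDEs with monotone coefficients yields a unique c\`adl\`ag strong solution.

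The a priori moment bound is obtained by applying It\^o's formula to $|\hat X^\mu_t|^2$: after compensating the jump integral, the drift, diffusion and jump contributions combine into precisely the left-hand side of \textbf{A1}. Taking expectation and using $\mathcal{W}_2^2(\mathcal{L}_{\hat X^\mu_t},\delta_0) = \bE[|\hat X^\mu_t|^2]$ at the fixed point, together with Gronwall's inequality, produces $\sup_{t\in[0,T]} \bE[|\hat X^\mu_t|^2] \le K$ with $K$ as claimed. I then define $\Phi:\mu\mapsto(\mathcal{L}_{\hat X^\mu_t})_{t\in[0,T]}$ and, applying It\^o to $|\hat X^\mu_t-\hat X^{\bar\mu}_t|^2$ and invoking \textbf{A2} with $\kappa_1=\kappa_2=1$, $L_1=L_2$, obtain
\begin{equation*}
\bE[|\hat X^\mu_t-\hat X^{\bar\mu}_t|^2] \le L_1\int_0^t \bigl(\bE[|\hat X^\mu_s-\hat X^{\bar\mu}_s|^2] + \mathcal{W}_2^2(\mu_s,\bar\mu_s)\bigr)\,ds.
\end{equation*}
Iterating produces factorially decaying bounds of order $(L_1 t)^n/n!$, hence a unique fixed point of $\Phi$ in $\Lambda_T$, which is the desired strong solution of \eqref{eqn1}. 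Uniqueness at the level of the McKean-Vlasov SDE follows by applying the same estimate to two solutions $X,Y$ using the trivial coupling $\pi_s = \mathcal{L}_{(X_s,Y_s)}$ to bound $\mathcal{W}_2^2(\mathcal{L}_{X_s},\mathcal{L}_{Y_s}) \le \bE[|X_s-Y_s|^2]$, then Gronwall.

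The main obstacle is Step 1: under only monotone (not globally Lipschitz) conditions on the coefficients, the frozen jump SDE is not directly amenable to a Banach fixed-point argument in a sup-norm setting. The standard remedy is to truncate $b,\sigma,c$ to Lipschitz approximations, invoke classical strong existence for the truncated equation, and pass to the limit using the a priori $L^2$-bound from the It\^o/Gronwall step together with the continuity assumption \textbf{A3}. The compensated Poisson martingale term requires careful Burkholder-Davis-Gundy control when localizing via stopping times $\tau_n = \inf\{t: |\hat X^\mu_t|>n\}$, and the moment condition $\int_{\bR_0^d}|z|^2\nu(dz)<\infty$ (implicit in the finiteness of the left-hand side of \textbf{A1}) is what makes these estimates feasible.
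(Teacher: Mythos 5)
The paper does not prove this proposition itself; it is imported verbatim from \cite{NBKGR20}, as the text just before it says. Your sketch correctly reconstructs the standard fixed-point-on-measure-flows argument that the cited reference uses: solve the frozen SDE under Krylov-type monotone conditions, obtain the a priori $L^2$-bound via It\^o and Gronwall using \textbf{A1}, run a Picard contraction on $\Lambda_T$ driven by \textbf{A2} with $\kappa_1=\kappa_2=1$, and deduce uniqueness by the same monotonicity estimate applied to two solutions; this outline is sound and consistent with the cited proof (modulo the inessential slip that the iterated bound carries an extra $e^{L_1 T}$ factor from the first Gronwall pass, which does not affect the factorial decay).
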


We now recall the Burkholder-Davis-Gundy inequality for the compensated Poisson stochastic integral which will be useful in the sequel.
\begin{Lem} \label{BDGjump}(\cite[Theorem 4.4.23]{A09} and \cite[Proposition 2.2]{Zhu}) Let  $\mathcal{B}(\mathbb{R}_0^d)$ be the Borel $\sigma$-algebra of \ $\mathbb{R}_0^d$ and $\mathcal{P}$ be the progressive $\sigma$-algebra on $\mathbb{R}_+\times\Omega$. Let $g$ be a $\mathcal{P}\otimes \mathcal{B}(\mathbb{R}_0^d)$-measurable function satisfying that $\int_0^T\int_{\mathbb{R}_0^d}\vert g(s,z) \vert^2\nu(dz)ds<\infty$ $\pr$-a.s. for all $T\geq 0$. Then for any $p\geq 2$, there exists a positive constant $C_p$ such that
	\begin{align*}
		&\mathbb{E}\left[\sup_{t\in[0,T]}\left\vert \int_0^t\int_{\mathbb{R}_0^d} g(s,z) \widetilde{N}(d s, d z)\right\vert^p\right]\\
		&\leq C_p\left(\mathbb{E}\left[\left(\int_0^T\int_{\mathbb{R}_0^d} \vert g(s,z) \vert^2 \nu(dz)ds\right)^{\frac{p}{2}}\right]
		+\mathbb{E}\left[\int_0^T\int_{\mathbb{R}_0^d} \vert g(s,z) \vert^p \nu(dz)ds\right]\right).
	\end{align*}
	Furthermore, for any $1\leq p<2$, there exists a positive constant $C_p$ such that
	\begin{align*}
		\mathbb{E}\left[\sup_{t\in[0,T]}\left\vert \int_0^t\int_{\mathbb{R}_0^d} g(s,z) \widetilde{N}(d s, d z)\right\vert^p\right]\leq C_p\mathbb{E}\left[\left(\int_0^T\int_{\mathbb{R}_0^d} \vert g(s,z) \vert^2 \nu(dz)ds\right)^{\frac{p}{2}}\right].
	\end{align*}	
\end{Lem}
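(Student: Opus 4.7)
The plan is to separate the two regimes $p\ge 2$ and $1\le p<2$, and in each case exploit the fact that the c\`adl\`ag process $M_t:=\int_0^t\int_{\bR_0^d}g(s,z)\widetilde{N}(ds,dz)$ is a (local) martingale with predictable quadratic variation $\langle M\rangle_t=\int_0^t\int_{\bR_0^d}|g(s,z)|^2\nu(dz)\,ds$. A standard localization by the stopping times $\tau_n:=\inf\{t\ge 0:\int_0^t\int_{\bR_0^d}|g(s,z)|^2\nu(dz)\,ds\ge n\}\wedge T$ reduces the argument to a genuine $L^p$-martingale, after which Fatou's lemma allows passage to the limit.

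For $p\ge 2$, I would apply It\^o's formula to the $C^2$ function $x\mapsto|x|^p$. The compensator of the jump part of $|M|^p$ reads
\[
\int_0^t\int_{\bR_0^d}\Bigl(|M_{s-}+g(s,z)|^p-|M_{s-}|^p-p|M_{s-}|^{p-2}\langle M_{s-},g(s,z)\rangle\Bigr)\nu(dz)\,ds,
\]
and a second-order Taylor expansion produces the elementary bound
\[
\bigl||a+b|^p-|a|^p-p|a|^{p-2}\langle a,b\rangle\bigr|\le C_p\bigl(|a|^{p-2}|b|^2+|b|^p\bigr).
\]
Taking expectations, splitting the mixed term $|M_{s-}|^{p-2}|g|^2$ by Young's inequality, applying Doob's maximal inequality to control the martingale part by its bracket, and finally closing the loop by Gronwall on $t\mapsto\bE[\sup_{s\le t}|M_s|^p]$ yields both target terms on the right, namely $\bE\bigl[(\int_0^T\int_{\bR_0^d}|g|^2\nu\,ds)^{p/2}\bigr]$ and $\bE\bigl[\int_0^T\int_{\bR_0^d}|g|^p\nu\,ds\bigr]$.

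For $1\le p<2$, the slickest route is Lenglart's domination inequality. Since $M^2-\langle M\rangle$ is a local martingale, the nonnegative submartingale $M_t^2$ is Lenglart-dominated by the predictable increasing process $\langle M\rangle_t$, which for $q=p/2\in[1/2,1)$ yields $\bE\bigl[(\sup_{t\le T}|M_t|^2)^q\bigr]\le C_q\,\bE\bigl[\langle M\rangle_T^q\bigr]$. Identifying $\langle M\rangle_T=\int_0^T\int_{\bR_0^d}|g(s,z)|^2\nu(dz)\,ds$ closes the bound. The main obstacle is the $p\ge 2$ analysis when $p$ is non-integer, since $|x|^{p-2}$ is singular at the origin; the cleanest workaround is to regularize by $\phi_\varepsilon(x)=(|x|^2+\varepsilon)^{p/2}$, execute the Taylor and Gronwall steps with constants uniform in $\varepsilon$, and let $\varepsilon\downarrow 0$ by dominated convergence.
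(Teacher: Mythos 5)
The paper does not prove this lemma; it is stated with citations to \cite{A09} and \cite{Zhu}, so there is no in-paper argument against which to compare your sketch. Evaluating it on its own: the $1\le p<2$ case via Lenglart's domination inequality is correct and is indeed the standard route (after localization $M^2-\langle M\rangle$ is a martingale, so $M^2$ is Lenglart-dominated by $\langle M\rangle_t=\int_0^t\int_{\bR_0^d}|g|^2\nu(dz)ds$, and the domination inequality with exponent $q=p/2\in[\tfrac12,1)$ gives the claim). For $p\ge 2$ the ingredients (It\^o's formula, second-order Taylor estimate, localization, Fatou, Doob) are the right ones, but the closing step as described does not produce the stated bound: pointwise Young applied to $|M_{s-}|^{p-2}\int_{\bR_0^d}|g(s,\cdot)|^2\nu$ followed by Gronwall on $t\mapsto\bE[\sup_{u\le t}|M_u|^p]$ yields the term $\bE\bigl[\int_0^T(\int_{\bR_0^d}|g|^2\nu)^{p/2}ds\bigr]$, which is not comparable in general to the stated $\bE\bigl[(\int_0^T\int_{\bR_0^d}|g|^2\nu\,ds)^{p/2}\bigr]$, and the Gronwall factor makes $C_p$ depend on $T$.

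The standard fix is to first bound $\int_0^T|M_{s-}|^{p-2}d\langle M\rangle_s\le(\sup_{s\le T}|M_s|)^{p-2}\langle M\rangle_T$, then apply H\"older (exponents $p/(p-2)$ and $p/2$) and Young to the expectation of this product, use Doob's $L^p$ inequality to pass from $\bE[|M_T|^p]$ to $\bE[\sup_{t\le T}|M_t|^p]$, and finally absorb the resulting $\epsilon\,\bE[\sup_{s\le T}|M_s|^p]$ into the left-hand side — an absorption step, not Gronwall. This produces the stated inequality with a $T$-independent constant (the case $p=2$ being It\^o isometry plus Doob directly). Also, the $\varepsilon$-regularization of $|x|^p$ is superfluous: for $p\ge 2$ the map $x\mapsto|x|^p$ is already $C^2$ on $\bR^d$, since its Hessian $p|x|^{p-2}I+p(p-2)|x|^{p-4}xx^{\mathsf{T}}$ extends continuously to the origin; the singularity of $|x|^{p-2}$ at $0$ only arises for $p<2$, a regime you handle via Lenglart and not via It\^o's formula at all.
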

We next show the following moment estimates of the exact solution $X=(X_t)_{t \geq 0}$.   
\begin{Prop}\label{nghiem dung 1}
	Let $X=(X_t)_{t \geq 0}$ be a solution to equation \eqref{eqn1}. Assume Conditions \textbf{A6, A7}, and that $\sigma$ is bounded on $C\times\mathcal{P}_2(\mathbb{R}^d)$ for every compact subset $C$ of \; $\mathbb{R}^d$, and \textbf{A5} holds for $q=2p_0$.
	Then for any $p \in [2,p_0]$, there exists a positive constant $C_p$ such that for any $t \ge 0$
	\begin{equation} \label{proofmomentp}  \begin{split} 
		\bE \left[\vert X_t\vert^p\right] \le \begin{cases}  C_p (1+e^{\gamma pt}) & \text{ if \;\;} \gamma \not = 0,\\
			C_p (1+t)^{p/2} & \text{ if \;\;} \gamma  = 0, p=2 \text{\; or \;} \gamma  = 0, \gamma_2>0, p\in (2,p_0],\\ 
			C_p (1+t)^{p} & \text{ if \;\;} \gamma  = 0, \gamma_2=0, p\in (2,p_0],
			\end{cases} 
			\end{split} 
	\end{equation}
	where $\gamma=\gamma_1+\gamma_2$. 
\end{Prop}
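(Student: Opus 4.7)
The plan is to apply It\^o's formula to $|X_t|^p$ for $p\in[2,p_0]$, employ Assumption \textbf{A7} (extended to any $p\in[2,p_0]$ via Remark \ref{c5}) to derive a closed differential inequality for $f(t):=\bE[|X_t|^p]$, and then solve it case by case. The peculiar form of the jump term in \textbf{A7}, namely $\tfrac{1}{2L_3}|c|^2\int|z|[(1+L_3|z|)^{p-1}-1]\nu(dz)$, is designed precisely to absorb the Poisson compensator produced by It\^o's formula on $|x|^p$ when combined with the linear growth of $c$ from \textbf{A6}.

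\textbf{Pointwise bound from It\^o.} After localising by $\tau_n=\inf\{t\ge 0:|X_t|\ge n\}$, It\^o's formula decomposes $d|X_t|^p$ into a Brownian and a jump martingale plus a drift-plus-diffusion contribution
\begin{align*}
p|X_t|^{p-2}\langle X_t,b\rangle+\tfrac{p}{2}|X_t|^{p-2}|\sigma|^2+\tfrac{p(p-2)}{2}|X_t|^{p-4}|\sigma^\mathsf{T} X_t|^2\le p|X_t|^{p-2}\bigl[\langle X_t,b\rangle+\tfrac{p-1}{2}|\sigma|^2\bigr],
\end{align*}
together with the Poisson compensator
\begin{align*}
\int_{\bR_0^d}\!\bigl[|X_{t-}+cz|^p-|X_{t-}|^p-p|X_{t-}|^{p-2}\langle X_{t-},cz\rangle\bigr]\nu(dz),
\end{align*}
where $c=c(X_{t-},\mathcal L_{X_{t-}})$. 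A binomial expansion of $|x+cz|^p$ combined with $|c|\le L_3(1+|x|+\mathcal W_2(\mu,\delta_0))$ from \textbf{A6} controls this compensator by a quantity matching the jump term of \textbf{A7}; the moment condition $q=2p_0$ in \textbf{A5} guarantees that all $\int|z|^k\nu(dz)$ arising are finite. Invoking \textbf{A7}, the finite-variation part of $d|X_t|^p$ is at most
\begin{align*}
p|X_t|^{p-2}\bigl[\gamma_1|X_t|^2+\gamma_2\mathcal W_2^2(\mathcal L_{X_t},\delta_0)+\eta\bigr]\,dt.
\end{align*}

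\textbf{Closing the ODE.} Taking expectations on the stopped process (Lemma \ref{BDGjump} and Proposition \ref{existenceunique} justify the martingale property up to $t\wedge\tau_n$, then Fatou passes $n\to\infty$), using $\mathcal W_2^2(\mathcal L_{X_t},\delta_0)=\bE[|X_t|^2]$ together with the H\"older bounds $\bE[|X_t|^{p-2}]\le f(t)^{(p-2)/p}$ and $\bE[|X_t|^2]\bE[|X_t|^{p-2}]\le f(t)$, we arrive at
\begin{align*}
f'(t)\le p\gamma f(t)+p\eta\,f(t)^{(p-2)/p},\qquad\gamma=\gamma_1+\gamma_2.
\end{align*}
For $p>2$ the substitution $g=f^{2/p}$ linearises this to $g'\le 2\gamma g+2\eta$, and Gronwall yields $g(t)\le C(1+e^{2\gamma t})$ when $\gamma\ne 0$ and $g(t)\le C(1+t)$ when $\gamma=0$, giving $f(t)\le C_p(1+e^{p\gamma t})$ or $C_p(1+t)^{p/2}$ after inverting the substitution; the $p=2$ case is handled directly. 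The weaker bound $C_p(1+t)^p$ in the degenerate subcase $\gamma_1=\gamma_2=0$ is obtained by substituting the lower-order moment bound $\bE[|X_t|^{p-2}]\le C(1+t)^{p-2}$ (available inductively from the preceding step) into $f'(t)\le p\eta\,\bE[|X_t|^{p-2}]$, in place of the H\"older route.

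\textbf{Main obstacle.} The most delicate ingredient is the compensator estimate: matching the binomial sum from $|x+cz|^p-|x|^p-p|x|^{p-2}\langle x,cz\rangle$ to the specific $(1+L_3|z|)^{p-1}-1$ factor of \textbf{A7} requires careful algebra that leans heavily on the linear growth \textbf{A6} of $c$. A secondary subtlety is the rigorous passage $n\to\infty$ in the localisation for general $p\in[2,p_0]$: one proceeds by induction on even integers $p=2,4,\ldots,p_0$, bootstrapping the moment bound from order $p-2$ to order $p$ using the finiteness granted by \textbf{A5} at $q=2p_0$.
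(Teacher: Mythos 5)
There is a genuine gap: your claim that the Poisson compensator is controlled ``by a quantity matching the jump term of \textbf{A7}'' is only true for its leading term. Writing $|x+cz|^p = (|x|^2 + 2\langle x,cz\rangle + |cz|^2)^{p/2}$ and expanding, the compensator $|x+cz|^p - |x|^p - p|x|^{p-2}\langle x,cz\rangle$ contains, beyond the term $\tfrac p2 |x|^{p-2}|cz|^2$, a sum over $i\ge 2$ of $\binom{p/2}{i}|x|^{p-2i}\bigl(2\langle x,cz\rangle + |cz|^2\bigr)^i$. Applying \textbf{A6} to peel off $|c|^2$ recovers the factor $\tfrac{p}{2L_3}|x|^{p-2}|c|^2|z|\bigl((1+L_3|z|)^{p-1}-1\bigr)$ from \textbf{A7}, but it also leaves a residual polynomial (the paper's $Q_p$) of total degree $p$ in $\bigl(|x|,\ \mathcal{W}_2(\mu,\delta_0)\bigr)$ with $|x|$-degree at most $p-2$ and $\mathcal{W}_2$-degree at least $2$. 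This residual survives and must be estimated.

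Your proposed scheme---bound every lower moment by a power of $f(t)=\bE[|X_t|^p]$ via H\"older and close a Bernoulli ODE $f'\le p\gamma f + p\eta f^{(p-2)/p}$---does not correctly absorb that residual. A typical residual term in expectation is $\bE[|X_s|^{\ell_1}]\bigl(1+\sqrt{\bE[|X_s|^2]}\bigr)^{\ell_2}$ with $\ell_1\le p-2$, $\ell_1+\ell_2=p$, $\ell_2\ge 2$; H\"older gives $\le C\bigl(f(s)^{\ell_1/p}+f(s)\bigr)$, i.e.\ an extra term proportional to $f(s)$ with a constant coefficient $C>0$ that is not related to $\gamma$. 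That changes the coefficient of $f$ in your ODE from $p\gamma$ to $p\gamma + C$ and destroys the sharp rate $e^{p\gamma t}$ (and, when $\gamma<0$, can even fail to yield boundedness). The paper avoids this by inducting on even $p$: the residual is evaluated at moments of order $\le p-2$, which the inductive hypothesis already bounds by \emph{explicit} functions of $s$ (e.g.\ $C_q(1+e^{\gamma q s})$), so the residual becomes a known forcing term rather than a function of $f(s)$, and plain Gronwall then gives the stated rate. Your final paragraph does mention induction, but attributes it to the localisation $n\to\infty$ rather than to the residual; the latter is where it is essential. A clear symptom of the gap: in the case $\gamma=0,\ \gamma_2=0$ your clean Bernoulli route would give $f(t)\le C(1+t)^{p/2}$, strictly stronger than the paper's $C(1+t)^{p}$---that weaker bound is exactly the price of the residual, since with the inductive bound the residual behaves like $(1+s)^{p-1}$ and integrates to $(1+t)^p$.
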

Note that if $\gamma<0$, we have that $\sup_{t \geq 0} \bE \left[\vert X_t\vert^p\right] \le 2C_p$.
\begin{proof}	
\underline{\it Step 1:} We first show that for any even natural number $p\in [2,p_0]$ and $T>0$,
\begin{align}\label{momentp}
\sup_{t\in[0,T]}\bE \left[\left\vert X_t\right\vert^p\right] \leq C(T,p).
\end{align}
Note that \eqref{momentp} holds for $p=2$ due to Proposition \ref{existenceunique}. Next, we assume that
\eqref{momentp} is valid for any even natural number $q\in [2,p-2]$. That is,
\begin{align}\label{inductionq}
\sup_{t\in[0,T]}\bE \left[\left\vert X_t\right\vert^q\right] \leq C(T,q).
\end{align}
Now, for $\lambda \in \br$ and even natural number $p\in [2,p_0]$, applying It\^o's formula to $e^{-\lambda t } |X_t|^p$, we obtain that for any $t\geq 0$,
\begin{align}
&e^{ - \lambda t} |X_t|^p
=|x_0|^p+\int_0^t e^{-\lambda s}\bigg[-\lambda   \left|X_{s}\right|^{p}+p \left|X_{s}\right|^{p-2}  \left\langle X_{s}, b\left(X_{s},\mathcal{L}_{X_s}\right)\right\rangle+ \dfrac{p}{2}  \left|X_{s}\right|^{p-2}  \left|\sigma\left(X_{s},\mathcal{L}_{X_s}\right)\right|^{2}\notag\\
&\quad + \dfrac{p(p-2)}{2}  \left|X_{s}\right|^{p-4}  \left|X_{s}^\mathsf{T}  \sigma\left(X_{s},\mathcal{L}_{X_s}\right)\right|^{2} \bigg]ds +p\int_0^te^{-\lambda s} \left|X_{s}\right|^{p-2}\left\langle X_{s}, \sigma\left(X_{s},\mathcal{L}_{X_s}\right) d W_{s}\right\rangle\notag \\
&\quad  +\int_0^t\int_{\bR_{0}^{d}} e^{-\lambda s}  \left( \left|X_{s-}+c\left(X_{s-},\mathcal{L}_{X_{s-}}\right) z\right|^{p}-\left|X_{s-}\right|^{p}\right) \widetilde{N}(d s, d z) \notag \\
&\quad  + \int_0^t\int_{\bR_{0}^{d}} e^{- \lambda s}\left( \left|X_{s}+c\left(X_{s},\mathcal{L}_{X_s}\right) z \right|^{p}-\left|X_{s}\right|^{p}-p\left|X_{s}\right|^{p-2} \left\langle X_{s}, c\left(X_{s},\mathcal{L}_{X_s}\right) z\right\rangle \right) \nu(d z)ds. \label{ulxt}
\end{align}
In order to treat the last integral in \eqref{ulxt}, it suffices to use the binomial theorem to get that, for any $t\geq 0$,
\begin{align}
&\left|X_s+c\left(X_s,\mathcal{L}_{X_s}\right) z\right|^{p} 
= \left(\left|X_s+c\left(X_s,\mathcal{L}_{X_s}\right) z\right|^{2}\right)^{p/2} \notag\\
&=\left( |X_s|^2+|c\left(X_s,\mathcal{L}_{X_s}\right) z|^2 +2\left\langle X_s, c\left(X_s,\mathcal{L}_{X_s}\right) z \right\rangle \right)^{p/2} \notag\\
&=|X_s|^p + \dfrac{p}{2}|X_s|^{p-2} \left(|c\left(X_s,\mathcal{L}_{X_s}\right) z|^2+2\langle X_s, c\left(X_s,\mathcal{L}_{X_s}\right) z \rangle \right) \notag\\ 
&\quad+\sum_{i=2}^{p/2} {p/2 \choose i}|X_s|^{p-2i} \left(|c\left(X_s,\mathcal{L}_{X_s}\right) z|^2+2\left\langle X_s, c\left(X_s,\mathcal{L}_{X_s}\right) z\right\rangle \right)^i \notag\\
&= |X_s|^p +p |X_s|^{p-2} \left\langle X_s, c\left(X_s,\mathcal{L}_{X_s}\right) z \right\rangle + \dfrac{p}{2}|X_s|^{p-2} \left|c\left(X_s,\mathcal{L}_{X_s}\right) z\right|^2 \notag\\ &\quad+\sum_{i=2}^{p/2} {p/2 \choose i}|X_s|^{p-2i} \left(|c\left(X_s,\mathcal{L}_{X_s}\right) z|^2+2\left\langle X_s, c\left(X_s,\mathcal{L}_{X_s}\right) z\right\rangle \right)^i. \label{for1}
\end{align}
Next, by using the binomial theorem repeatedly, Condition \textbf{A6}, the equality $\mathcal{W}_2^2\left(\mathcal{L}_{X_s},\delta_0\right) = \bE[\vert X_s\vert^2]$, and the inequality $y\vert x\vert^{p-3}\leq \frac{1}{2}(\vert x\vert^{p-2}+y^2\vert x\vert^{p-4})$ valid for any $x\in \br^d, y>0$, $\sum_{j=0}^{i}{i \choose j} a^j=(1+a)^i$ and $\sum_{j=0}^{i}{i \choose j} j a^{j}=ia(1+a)^{i-1}$ valid for any $a\in \br$, we obtain that
\begin{align*}
& |X_s|^{p-2i} \left(|c\left(X_s,\mathcal{L}_{X_s}\right) z|^2+2\left\langle X_s, c\left(X_s,\mathcal{L}_{X_s}\right) z\right\rangle \right)^i =|X_s|^{p-2i} \sum_{j=0}^{i} {i \choose j} |c\left(X_s,\mathcal{L}_{X_s}\right) z|^{2i-2j}2^j \left( \langle X_s, c\left(X_s,\mathcal{L}_{X_s}\right) z\rangle\right)^j \\
&\le \sum_{j=0}^{i} {i \choose j} 2^j |X_s|^{p-2i+j}   |c\left(X_s,\mathcal{L}_{X_s}\right)|^{2i-j} |z|^{2i-j} \\
&= |c\left(X_s,\mathcal{L}_{X_s}\right)|^{2}\sum_{j=0}^{i} {i \choose j} 2^j |X_s|^{p-2i+j}   |c\left(X_s,\mathcal{L}_{X_s}\right)|^{2i-j-2} |z|^{2i-j}\\
&\le  |c\left(X_s,\mathcal{L}_{X_s}\right)|^{2}\sum_{j=0}^{i}  {i \choose j} 2^j |X_s|^{p-2i+j}    |z|^{2i-j} L_3^{2i-j-2} \left(1+|X_s|+\mathcal{W}_2(\mathcal{L}_{X_s},\delta_0)\right)^{2i-j-2}  \\
&=  |c\left(X_s,\mathcal{L}_{X_s}\right)|^{2}\sum_{j=0}^{i}  {i \choose j} 2^j |X_s|^{p-2i+j}    |z|^{2i-j} L_3^{2i-j-2} \left(1+|X_s|+\sqrt{\bE[\vert X_s\vert^2]}\right)^{2i-j-2}  \\
&= |c\left(X_s,\mathcal{L}_{X_s}\right)|^{2} \sum_{j=0}^{i}  {i \choose j} 2^j |X_s|^{p-2i+j}    |z|^{2i-j} L_3^{2i-j-2} \Bigg(|X_s|^{2i-j-2}+(2i-j-2)\left(1+\sqrt{\bE[\vert X_s\vert^2]}\right)|X_s|^{2i-j-3}\\
&\quad+\sum_{k=2}^{2i-j-2} {2i-j-2 \choose k}\left(1+\sqrt{\bE[\vert X_s\vert^2]}\right)^k|X_s|^{2i-j-2-k}\Bigg)   \\
&=  |c\left(X_s,\mathcal{L}_{X_s}\right)|^{2}\sum_{j=0}^{i}  {i \choose j} 2^j  |z|^{2i-j} L_3^{2i-j-2} \Bigg(|X_s|^{p-2}+(2i-j-2)\left(1+\sqrt{\bE[\vert X_s\vert^2]}\right)|X_s|^{p-3}\\
&\quad+\sum_{k=2}^{2i-j-2} {2i-j-2 \choose k}\left(1+\sqrt{\bE[\vert X_s\vert^2]}\right)^k|X_s|^{p-2-k}\Bigg)   \\
&\le |c\left(X_s,\mathcal{L}_{X_s}\right)|^{2}\sum_{j=0}^{i} {i \choose j} 2^j  |z|^{2i-j} L_3^{2i-j-2} \Bigg(|X_s|^{p-2}+\dfrac{(2i-j-2)}{2}\left(|X_s|^{p-2}+\left(1+\sqrt{\bE[\vert X_s\vert^2]}\right)^2|X_s|^{p-4}\right)\\
&\quad+\sum_{k=2}^{2i-j-2} {2i-j-2 \choose k}\left(1+\sqrt{\bE[\vert X_s\vert^2]}\right)^k|X_s|^{p-2-k}\Bigg)   \\
&= |c\left(X_s,\mathcal{L}_{X_s}\right)|^{2}|X_s|^{p-2}\sum_{j=0}^{i}  {i \choose j} 2^j  |z|^{2i-j} L_3^{2i-j-2} \left(i-\dfrac{j}{2}\right)  + |c\left(X_s,\mathcal{L}_{X_s}\right)|^{2}\sum_{j=0}^{i}  {i \choose j} 2^j  |z|^{2i-j} L_3^{2i-j-2} \\
&\quad \times \left( \left(i-\dfrac{j}{2}-1\right) \left(1+\sqrt{\bE[\vert X_s\vert^2]}\right)^2|X_s|^{p-4}+\sum_{k=2}^{2i-j-2} {2i-j-2 \choose k}\left(1+\sqrt{\bE[\vert X_s\vert^2]}\right)^k|X_s|^{p-2-k}\right)   \\
&= |c\left(X_s,\mathcal{L}_{X_s}\right)|^{2} |X_s|^{p-2} \left(\dfrac{i}{L_{3}^{2}}\left(L_{3}^{2}|z|^{2}+2 L_{3}|z|\right)^{i}-\dfrac{|z|}{L_{3}} i  \left(L_{3}^{2}|z|^{2}+2 L_{3}|z|\right)^{i-1}\right) + |c\left(X_s,\mathcal{L}_{X_s}\right)|^{2}\sum_{j=0}^{i}  {i \choose j} 2^j  |z|^{2i-j} \\
&\quad \times L_3^{2i-j-2}\left( \left(i-\dfrac{j}{2}-1\right) \left(1+\sqrt{\bE[\vert X_s\vert^2]}\right)^2|X_s|^{p-4}+\sum_{k=2}^{2i-j-2} {2i-j-2 \choose k}\left(1+\sqrt{\bE[\vert X_s\vert^2]}\right)^k|X_s|^{p-2-k}\right)   \\
&\leq |c\left(X_s,\mathcal{L}_{X_s}\right)|^{2} |X_s|^{p-2} \left(\dfrac{i}{L_{3}^{2}}\left(L_{3}^{2}|z|^{2}+2 L_{3}|z|\right)^{i}-\dfrac{|z|}{L_{3}} i  \left(L_{3}^{2}|z|^{2}+2 L_{3}|z|\right)^{i-1}\right)  \\
&\quad +  L_3^{2} \left(1+|X_s|+\sqrt{\bE[\vert X_s\vert^2]}\right)^{2}\sum_{j=0}^{i}  {i \choose j} 2^j  |z|^{2i-j} L_3^{2i-j-2} \Bigg( \big(i-\dfrac{j}{2}-1\big) \big(1+\sqrt{\bE[\vert X_s\vert^2]}\big)^2|X_s|^{p-4}\\
&\quad+\sum_{k=2}^{2i-j-2} {2i-j-2 \choose k}\big(1+\sqrt{\bE[\vert X_s\vert^2]}\big)^k|X_s|^{p-2-k}\Bigg)  \\
&=  |c\left(X_s,\mathcal{L}_{X_s}\right)|^{2} |X_s|^{p-2}  \dfrac{i|z|}{L_{3}}\left(1+L_{3}|z|\right)\left(L_{3}^{2}|z|^{2}+2 L_{3}|z|\right)^{i-1}   +\sum_{j=0}^{i} |z|^{2i-j} Q_{p}\left(p-2,2i-j,|X_s|, 1+\sqrt{\bE[\vert X_s\vert^2]}\right),
\end{align*}
where $2i-j\geq 2$ and
\begin{align*}
Q_{q}(m,n,x,y)= \sum_{\ell_1\leq m,\ell_2\leq n, \ell_1+\ell_2=q}c_{\ell_1\ell_2}x^{\ell_1}y^{\ell_2}.
\end{align*}
This, together with the fact that $\sum_{i=2}^{p/2}{p/2 \choose i}i a^{i-1}=\frac{p}{2}((1+a)^{p/2-1}-1)$ valid for any $a\in \br$, yields to
\begin{align}
&\quad \sum_{i=2}^{p/2} {p/2 \choose i}|X_s|^{p-2i} \left(|c\left(X_s,\mathcal{L}_{X_s}\right) z|^2+2\left\langle X_s, c\left(X_s,\mathcal{L}_{X_s}\right) z\right\rangle \right)^i \notag\\	
&\le  \dfrac{|z|}{L_3}\left(1+L_{3}|z|\right)|c\left(X_s,\mathcal{L}_{X_s}\right)|^{2} |X_s|^{p-2} \sum_{i=2}^{p/2}  {p/2 \choose i}   i\left(L_{3}^{2}|z|^{2}+2 L_{3}|z|\right)^{i-1} \notag \\
&\quad  +\sum_{i=2}^{p/2}  \sum_{j=0}^{i} {p/2 \choose i}|z|^{2i-j} Q_{p}\left(p-2,2i-j,|X_s|, 1+\sqrt{\bE[\vert X_s\vert^2]}\right)  \notag\\
&=  \dfrac{p}{2}|X_s|^{p-2} |c\left(X_s,\mathcal{L}_{X_s}\right)|^2  \dfrac{|z|}{L_3} \left(\left(1+L_{3}|z|\right)^{p-1}-L_3 |z|-1\right) \notag\\
&\quad   +\sum_{i=2}^{p/2}  \sum_{j=0}^{i} {p/2 \choose i}|z|^{2i-j} Q_{p}\left(p-2,2i-j,|X_s|, 1+\sqrt{\bE[\vert X_s\vert^2]}\right). \label{for2}
\end{align}
As a consequence of \eqref{for1} and \eqref{for2}, we have shown that for any $s\geq 0$,
\begin{align}
&\left|X_s+c\left(X_s,\mathcal{L}_{X_s}\right) z\right|^{p}  -|X_s|^p-p |X_s|^{p-2} \left\langle X_s, c\left(X_s,\mathcal{L}_{X_s}\right) z \right\rangle \notag \\
&\le  \dfrac{p}{2}|X_s|^{p-2} |c\left(X_s,\mathcal{L}_{X_s}\right)|^2 \left(|z|^2 + \dfrac{|z|}{L_3} \left(\left(1+L_{3}|z|\right)^{p-1}-L_3 |z|-1\right) \right) \notag\\
&\quad +\sum_{i=2}^{p/2}  \sum_{j=0}^{i} {p/2 \choose i}|z|^{2i-j} Q_{p}\left(p-2,2i-j,|X_s|, 1+\sqrt{\bE[\vert X_s\vert^2]}\right)\notag\\
&=  \dfrac{p}{2L_3}|X_s|^{p-2} |c\left(X_s,\mathcal{L}_{X_s}\right)|^2 |z| \left(\left(1+L_{3}|z|\right)^{p-1}-1\right)  \notag\\
&\quad +\sum_{i=2}^{p/2}  \sum_{j=0}^{i} {p/2 \choose i}|z|^{2i-j} Q_{p}\left(p-2,2i-j,|X_s|, 1+\sqrt{\bE[\vert X_s\vert^2]}\right). \label{ulxct1}
\end{align}	
Therefore, substituting \eqref{ulxct1} into \eqref{ulxt}, we get that for any $t\geq 0$,
\begin{align*}
&e^{ -\lambda t} |X_{t}|^{p} 
\leq |x_0|^p+p\int_{0}^{t}  e^{ -\lambda s} |X_{s}|^{p-2}\bigg[-\dfrac{\lambda}{p} |X_{s}|^{2}+ \langle X_{s},   b\left(X_{s},\mathcal{L}_{X_s}\right) \rangle +\dfrac{p-1}{2}|\sigma\left(X_{s},\mathcal{L}_{X_s}\right)|^{2} \\
&\quad+\dfrac{1}{2L_3} |c\left(X_{s},\mathcal{L}_{X_s}\right)|^2 \int_{\bR_{0}^{d}}  |z| \left(\left(1+L_{3}|z|\right)^{p-1}-1\right)  \nu(dz) \bigg]ds  \\
&\quad+ \int_0^t\int_{\bR_{0}^{d}} e^{- \lambda s}\sum_{i=2}^{p/2}  \sum_{j=0}^{i} {p/2 \choose i}|z|^{2i-j} Q_{p}\left(p-2,2i-j,|X_s|, 1+\sqrt{\bE[\vert X_s\vert^2]}\right)\nu(d z)ds  \\
&\quad +p\int_0^te^{-\lambda s} \left|X_{s}\right|^{p-2}\left\langle X_{s}, \sigma\left(X_{s},\mathcal{L}_{X_s}\right) d W_{s}\right\rangle +\int_0^t\int_{\bR_{0}^{d}} e^{-\lambda s}  \left( \left|X_{s-}+c\left(X_{s-},\mathcal{L}_{X_{s-}}\right) z\right|^{p}-\left|X_{s-}\right|^{p}\right) \widetilde{N}(d s, d z).
\end{align*}
	Now,  we define $\tau_N:=\inf\{t\geq0: |X_t|\geq N \}$ for each $N>0$. Choosing $\lambda= \gamma_1 p$ and using  Condition  \textbf{A7}, Remark \ref{c5} together with $\mathcal{W}_2^2\left(\mathcal{L}_{X_s},\delta_0\right) = \bE[\vert X_s\vert^2]$, we obtain that
	\begin{align}
		&\bE \left[e^{ -\gamma_1 p(t\wedge \tau_N)} |X_{t\wedge \tau_N}|^{p}\right]
		\leq |x_0|^p+{p}\int_{0}^{t}  e^{ -\gamma_1 ps} \bE \left[|X_{s}|^{p-2}\right]\left(\gamma_2\mathcal{W}_2^2\left(\mathcal{L}_{X_s},\delta_0\right)+\eta\right) ds  \notag  \\
		&\quad + \int_0^t\int_{\bR_{0}^{d}} e^{- \gamma_1 p s}\sum_{i=2}^{p/2}  \sum_{j=0}^{i} {p/2 \choose i}|z|^{2i-j} \bE \left[Q_{p}\left(p-2,2i-j,|X_s|, 1+\sqrt{\bE[\vert X_s\vert^2]}\right)\right]\nu(d z)ds  \notag \\		
		&\leq |x_0|^p+{p}\int_{0}^{t}  e^{ -\gamma_1 ps} \bE \left[|X_{s}|^{p-2}\right]\left(\gamma_2\bE \left[|X_{s}|^{2}\right]+\eta\right) ds  \notag  \\
		&\quad + \int_0^t\int_{\bR_{0}^{d}} e^{- \gamma_1 p s}\sum_{i=2}^{p/2}  \sum_{j=0}^{i} {p/2 \choose i}|z|^{2i-j} \bE \left[Q_{p}\left(p-2,2i-j,|X_s|, 1+\sqrt{\bE[\vert X_s\vert^2]}\right)\right]\nu(d z)ds . \label{ulxct2}
	\end{align}	
	Next, using $e^{ -\gamma_1 p(t\wedge \tau_N)}\geq e^{ -\gamma_1 pt}$ and the induction assumption \eqref{inductionq}, there exists a positive constant $C(T,p)$ which does not depend on $N$ such that
	\begin{align}\label{p}
	\sup_{t\in[0,T]}\bE \left[\left\vert X_{t\wedge \tau_N}\right\vert^p\right] \leq C(T,p).
	\end{align}
	This yields to
	$$
	\sup_{t\in[0,T]}\mathbb{P}(\tau_N<t)\leq \dfrac{C(T,p)}{N^{p}},
	$$
	which implies that $\tau_N$ tends to infinity a.s. as $N$ tends to infinity. Now, it suffices to let $N\uparrow \infty$ and use Fatou's lemma for the left-hand side of \eqref{p} to get that $\sup_{t\in[0,T]}\bE [\vert X_{t}\vert^p] \leq C(T,p)$. Thus, by the induction principle, we have shown \eqref{momentp}.
	
	\underline{\it Step 2:} We next wish to show \eqref{proofmomentp}  for any even natural number $p\in [2,p_0]$.
	
	First, applying \eqref{ulxt} to $\lambda=2\gamma$ and $p=2$, and using \textbf{A7}, we get
	\begin{align}
	&e^{-2\gamma t}  \left|X_{t}\right|^{2} =|x_0|^2 +\int_{0}^{t} e^{-2\gamma s}\left[-2\gamma   \left|X_{s}\right|^2 + 2\left\langle X_{s}, b\left(X_{s},\mathcal{L}_{X_s}\right)\right\rangle +  \left|\sigma\left(X_{s},\mathcal{L}_{X_s}\right)\right|^{2} \right] ds \notag \\
	&\quad +2\int_{0}^{t} e^{-2\gamma s} \left\langle X_{s}, \sigma\left(X_{s},\mathcal{L}_{X_s}\right) d W_{s}\right\rangle +\int_{0}^t \int_{\bR_{0}^ d} e^{-2\gamma s}  \left(\left|X_{s-}+c\left(X_{s-},\mathcal{L}_{X_{s-}}\right) z\right|^{2}-\left|X_{s-}\right|^{2}\right) \widetilde{N}(d s, d z) \notag \\
	&\quad +\int_{0}^t \int_{\bR_{0}^{d}} e^{-2\gamma  s} |c\left(X_{s},\mathcal{L}_{X_s}\right)z|^2 \nu(dz)ds \notag \\
	&\le |x_0|^2 +2\int_{0}^{t} e^{-2\gamma s}\left[-\gamma   \left|X_{s}\right|^2 + \left\langle X_{s}, b\left(X_{s},\mathcal{L}_{X_s}\right)\right\rangle +  \dfrac{1}{2}\left|\sigma\left(X_{s},\mathcal{L}_{X_s}\right)\right|^{2}+ \dfrac{1}{2}|c\left(X_{s},\mathcal{L}_{X_s}\right)|^2 \int_{\bR_{0}^{d}} |z|^2 \nu(dz) \right] ds \notag \\
	&\quad +2\int_{0}^{t} e^{-2\gamma s} \left\langle X_{s}, \sigma\left(X_{s},\mathcal{L}_{X_s}\right) d W_{s}\right\rangle +\int_{0}^t \int_{\bR_{0}^ d} e^{-2\gamma s}  \left(\left|X_{s-}+c\left(X_{s-},\mathcal{L}_{X_{s-}}\right) z\right|^{2}-\left|X_{s-}\right|^{2}\right) \widetilde{N}(d s, d z) \notag \\
	&\leq |x_{0}|^{2}+ 2 \int_{0}^{t} e^{ - 2\gamma s }\left(-\gamma_2\left|X_{s}\right|^2 +\gamma_2\mathcal{W}_2^2\left(\mathcal{L}_{X_s},\delta_0\right)+\eta\right) d s + 2\int_{0}^{t}  e^{ -2\gamma s} \langle X_{s},\sigma\left(X_{s},\mathcal{L}_{X_s}\right) d W_{s}\rangle  \notag \\
	&\quad +\int_{0}^{t} \int_{\mathbb{R}_0^d} e^{ - 2\gamma s } \left(\left|X_{s-}+c\left(X_{s-},\mathcal{L}_{X_{s-}}\right) z\right|^{2}-\left|X_{s-}\right|^{2}\right) \widetilde{N}(d s, d z).\label{ulxt2}
	\end{align}
Thanks to  the fact that $\mathcal{W}_2^2\left(\mathcal{L}_{X_s},\delta_0\right) = \bE[\vert X_s\vert^2]$, and the estimate  \eqref{momentp}, we get 
	$$
	\bE \left[e^{ - 2\gamma t}|X_{t}|^{2}\right] \le |x_{0}|^{2}+ 2\eta \int_{0}^{t} e^{ -2\gamma s} d s,
	$$	
	which yields to
	\begin{align*} 
	\bE \left[|X_{t}|^{2}\right] \le \begin{cases}  
	\left(|x_{0}|^{2}+\dfrac{\eta}{\gamma}\right) e^{2\gamma t}-\dfrac{\eta}{\gamma} & \text{ if \;\;} \gamma \not =0,\\
	|x_{0}|^{2}+2\eta t & \text{ if \;\;} \gamma=0.
	\end{cases} 
	\end{align*}	
	Thus, \eqref{proofmomentp}  holds for $p=2$. \\
	\indent Now, we suppose that \eqref{proofmomentp} is valid for all even integer $q \in [2, p-2]$. That is,
	\begin{equation}\label{induction} \begin{split}  
		\bE \left[\vert X_t\vert^q\right] \le \begin{cases}  C_q (1+e^{ \gamma q t}) & \text{ if \;\;} \gamma \not = 0,\\
			C_q (1+t)^{q/2} & \text{ if \;\;} \gamma  = 0, q=2 \text{\; or \;} \gamma  = 0, \gamma_2>0, q\in (2,p-2],\\
			C_q (1+t)^{q} & \text{ if \;\;} \gamma  = 0, \gamma_2=0, q\in (2,p-2].
			 \end{cases} 
		\end{split} 
	\end{equation}
	We are going to show that  \eqref{proofmomentp} holds for  even integer $p$. For this, it suffices to use  \eqref{ulxct2}, the inductive assumption \eqref{induction} and Condition \textbf{A5}.\\	
	\underline{Case $\gamma \ne 0$:} We have
	\begin{align*}
	\bE \left[e^{ -\gamma_1 p(t\wedge \tau_N)} |X_{t\wedge \tau_N}|^{p}\right]
	\leq |x_{0}|^{p}+  C \int_{0}^{t} e^{ -\gamma_1 ps}\left(1+e^{ \gamma p t}\right)ds=|x_{0}|^{p}+  C \int_{0}^{t} \left(e^{ -\gamma_1 ps}+e^{ \gamma_2 ps}\right)ds.
	\end{align*}
	Thanks to fact that $\tau_N\uparrow \infty$ a.s. as $N\uparrow \infty$,  applying Fatou's lemma, we get
	\begin{align*}
	\mathbb{E}\left[e^{ -\gamma_1 p t} |X_{t}|^{p} \right]
	\leq |x_{0}|^{p}+  C \int_{0}^{t} \left(e^{ -\gamma_1 ps}+e^{ \gamma_2 ps}\right)ds.
	\end{align*}
	When $\gamma_1=0$, we have
	\begin{align*}
	\bE \left[\vert X_{t}\vert^{p}\right]&\leq |x_{0}|^{p}+C\left(1+t+e^{ \gamma_2 pt}\right)\\
	&\leq C\left(1+e^{ \gamma pt}\right).
	\end{align*}
	When $\gamma_1\neq 0$, we have
	\begin{align*}
	\bE \left[\vert X_{t}\vert^{p}\right]&\leq |x_{0}|^{p}e^{\gamma_1 p t}+\dfrac{C}{p\gamma_1}\left(e^{\gamma_1 p t}-1\right)+\dfrac{C}{p\gamma_2}e^{\gamma p t}\\
	&\leq C\left(1+e^{ \gamma pt}\right).
	\end{align*}	
	\underline{Case $\gamma=0$:}	When $\gamma_2>0$, we have
	\begin{align*}
	\bE \left[e^{ -\gamma_1 p(t\wedge \tau_N)} |X_{t\wedge \tau_N}|^{p}\right]
	&\leq |x_{0}|^{p}+  C \int_{0}^{t} e^{ -\gamma_1 ps}\left(1+s\right)^{p/2}ds\\
	&\leq |x_{0}|^{p}+  C \left(1+t\right)^{p/2}\int_{0}^{t} e^{ -\gamma_1 ps}ds.
	\end{align*}
	Then, letting $N\uparrow \infty$ and using $-\gamma_1=\gamma_2$, we obtain
	\begin{align*}
	\mathbb{E}\left[e^{ \gamma_2 p t} |X_{t}|^{p} \right]
	\leq |x_{0}|^{p}+C\left(1+t\right)^{p/2}\int_{0}^{t} e^{ \gamma_2 ps}ds\leq |x_{0}|^{p}+\dfrac{C}{\gamma_2 p}\left(1+t\right)^{p/2}e^{ \gamma_2 pt}.
	\end{align*}
	Hence,
	\begin{align*}
	\mathbb{E}\left[|X_{t}|^{p} \right]\leq C\left(1+t\right)^{p/2}.
	\end{align*}
	When $\gamma_2=\gamma_1=0$, we have
	\begin{align*}
	\bE \left[|X_{t\wedge \tau_N}|^{p}\right]
	\leq |x_{0}|^{p}+  C \int_{0}^{t} \left(1+s\right)^{p-1}ds\leq C\left(1+t\right)^{p}.
	\end{align*}
	Then, letting $N\uparrow \infty$, we obtain
	\begin{align*}
	\bE \left[|X_{t}|^{p}\right]\leq C\left(1+t\right)^{p}.
	\end{align*}
	Consequently, \eqref{proofmomentp} holds for even integer $p$. Due to the induction principle, \eqref{proofmomentp} is valid for any even natural number $p\in [2,p_0]$. Finally,  \eqref{proofmomentp} is also valid for any $p \in [2,p_0]$ thanks to H\"older's inequality. This finishes the proof.
\end{proof}

\section{Propagation of chaos} \label{Sec:3}
For $N\in\mathbb{N}$, suppose that $(W^i,Z^i)$ are  independent copies of the couple $(W,Z)$ for $i\in \{1,\ldots,N\}$. Let $N^i(dt,dz)$ be the Poisson random measure associated with the jumps of the L\'evy process $Z^i$ with intensity measure $\nu(dz)dt$, and $\widetilde{N}^i(dt,dz):=N^i(dt,dz)-\nu(dz)dt$ be the compensated Poisson random measure associated with $N^i(dt,dz)$. Thus, the L\'evy-It\^o decomposition of $Z^i$ is given by $Z_t^i=\int_{0}^{t}\int_{\mathbb{R}_0^d}z\widetilde{N}^i(ds,dz)$ for $t\geq 0$. We now consider the system of non-interacting particles which is associated with the L\'evy-driven McKean-Vlasov SDE \eqref{eqn1}, where  the state $X^i=(X_t^i)_{t \geq 0}$ of the particle $i$ is defined by
\begin{equation} \begin{split}
X_t^i&=x_0+\int_0^t b(X_s^i,\mathcal{L}_{X_s^i})ds+\int_0^t \sigma(X_s^i,\mathcal{L}_{X_s^i})dW_s^i + \int_0^t c\left(X_{s-}^i,\mathcal{L}_{X_{s-}^i}\right)dZ_s^i\\
&=x_0+\int_0^t b(X_s^i,\mathcal{L}_{X_s^i})ds+\int_0^t \sigma(X_s^i,\mathcal{L}_{X_s^i})dW_s^i + \int_0^t\int_{\mathbb{R}_0^d} c\left(X_{s-}^i,\mathcal{L}_{X_{s-}^i}\right)z\widetilde{N}^i(d s, d z),
\end{split}
\end{equation}
for any $t\geq 0$ and $i\in \{1,\ldots,N\}$.

For $\boldsymbol{x}^N:=(x_1,x_2,\ldots,x_N), \boldsymbol{y}^N:=(y_1,y_2,\ldots,y_N)\in \mathbb{R}^{dN}$,  we have
\begin{align*}
\mathcal{W}_2^2(\mu^{\boldsymbol{x}^N},\delta_0)= \dfrac{1}{N}\sum_{i=1}^{N}\left\vert x_i \right\vert^2.
\end{align*}
Here, the empirical measure is defined by $\mu^{\boldsymbol{x}^N}(dx):=\frac{1}{N}\sum_{i=1}^{N}\delta_{x_i}(dx)$, where $\delta_x$ denotes the Dirac measure at $x$. Moreover,  a standard bound for the Wasserstein distance between two empirical measures $\mu^{\boldsymbol{x}^N},\mu^{\boldsymbol{y}^N}$ is given by
\begin{align*}
\mathcal{W}_2^2(\mu^{\boldsymbol{x}^N},\mu^{\boldsymbol{y}^N})\leq \dfrac{1}{N}\sum_{i=1}^{N}\left\vert x_i-y_i \right\vert^2=\dfrac{1}{N} \left\vert \boldsymbol{x}^N-\boldsymbol{y}^N \right\vert^2,
\end{align*}
(see (1.24) of  \cite{C16}).
	
Now, the true measure $\mathcal{L}_{X_t}$ at each time $t$ is approximated by the empirical measure
\begin{align}
\mu_{t}^{\boldsymbol{X}^N}(dx):=\dfrac{1}{N}\sum_{i=1}^{N}\delta_{X_{t}^{i,N}}(dx), 
\end{align}	
where $\boldsymbol{X}^N=(\boldsymbol{X}_t^N)_{t\geq 0}=(X_t^{1,N},\ldots,X_t^{N,N})_{t\geq 0}^\mathsf{T}$, which is called the system of interacting particles, is the solution to the $\mathbb{R}^{dN}$-dimensional L\'evy-driven SDE with components $X^{i,N}=(X^{i,N}_t)_{t\geq 0}$
\begin{equation} \label{equa2}\begin{split}
X_t^{i,N}&=x_0+\int_0^tb(X_s^{i,N},\mu_{s}^{\boldsymbol{X}^N})ds+\int_0^t\sigma(X_s^{i,N},\mu_{s}^{\boldsymbol{X}^N})dW_s^i + \int_0^t c\left(X_{s-}^{i,N},\mu_{s-}^{\boldsymbol{X}^N}\right)dZ_s^i\\
&=x_0+\int_0^tb(X_s^{i,N},\mu_{s}^{\boldsymbol{X}^N})ds+\int_0^t\sigma(X_s^{i,N},\mu_{s}^{\boldsymbol{X}^N})dW_s^i + \int_0^t\int_{\mathbb{R}_0^d} c\left(X_{s-}^{i,N},\mu_{s-}^{\boldsymbol{X}^N}\right)z\widetilde{N}^i(d s, d z),
\end{split} 
\end{equation}
for any $t \geq 0$ and $i\in \{1,\ldots,N\}$.

	Observe that the interacting particle system $\boldsymbol{X}^N=(X^{i,N})^\mathsf{T}_{i\in\{1,\ldots,N\}}$ can be viewed as an ordinary L\'evy-driven SDE with random coefficients taking values in $\mathbb{R}^{d  N}$. Therefore, under Conditions \textbf{A1}, \textbf{A3} and \textbf{A2} valid for $\kappa_1=\kappa_2=1, L_1= L_2>0$, there exists a unique c\`adl\`ag solution such that
	$$
	\max_{i\in\{1,\ldots,N\}}\sup_{t\in[0,T]}\bE \left[\vert X_t^{i,N}\vert^2\right]\leq K,
	$$
	for any $N\in\mathbb{N}$, where $K>0$ does not depend on $N$.

\begin{Prop}\label{moment XiN}
	Let $X^{i,N}=(X_t^{i,N})_{t \geq 0}$ be a solution to equation \eqref{equa2}. Assume Conditions \textbf{A6, A7} and that $\sigma$ is bounded on $C\times\mathcal{P}_2(\mathbb{R}^d)$ for every compact subset $C$ of \; $\mathbb{R}^d$,  and \textbf{A5} holds for $q=2p_0$.
	Then for any $p \in [2,p_0]$, there exists a positive constant $C_p$ such that for any $t \ge 0$,
	\begin{equation*} \begin{split} 
	\max_{i\in\{1,\ldots,N\}}\bE \left[\vert X_t^{i,N}\vert^p\right] \le \begin{cases}  C_p (1+e^{\gamma pt}) & \text{ if \;\;} \gamma \not = 0,\\
	C_p (1+t)^{p/2} & \text{ if \;\;} \gamma  = 0, p=2 \text{\; or \;} \gamma  = 0, \gamma_2>0, p\in (2,p_0],\\ 
	C_p (1+t)^{p} & \text{ if \;\;} \gamma  = 0, \gamma_2=0, p\in (2,p_0],
	\end{cases} 
	\end{split} 
	\end{equation*}
	where $\gamma=\gamma_1+\gamma_2$. 
\end{Prop}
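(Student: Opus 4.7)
The plan is to mirror the argument of Proposition \ref{nghiem dung 1}, replacing the deterministic identity $\mathcal{W}_2^2(\mathcal{L}_{X_s},\delta_0)=\bE[|X_s|^2]$ by a stochastic counterpart afforded by the exchangeability of the particle system. Since all particles share the initial value $x_0$ and are driven by i.i.d.\ copies of $(W,Z)$, the random vector $\boldsymbol{X}_t^N=(X_t^{1,N},\ldots,X_t^{N,N})$ has a permutation-invariant law, so $\bE[|X_t^{i,N}|^p]$ does not depend on $i$. Combining the identity $\mathcal{W}_2^2(\mu_s^{\boldsymbol{X}^N},\delta_0)=\frac{1}{N}\sum_{j=1}^N|X_s^{j,N}|^2$ with Jensen's inequality applied to the convex map $t\mapsto t^{q/2}$ for $q\geq 2$ yields
\begin{equation*}
\bE\!\left[\mathcal{W}_2^q(\mu_s^{\boldsymbol{X}^N},\delta_0)\right]\leq\bE\!\left[|X_s^{1,N}|^q\right],
\end{equation*}
which is both $N$-independent and plays exactly the structural role of the deterministic identity used in Proposition \ref{nghiem dung 1}.

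Concretely, fix an even integer $p\in[2,p_0]$ and apply It\^o's formula to $e^{-\gamma_1 p t}|X_t^{1,N}|^p$; the binomial expansion of the jump integrand proceeds verbatim as in \eqref{ulxt}--\eqref{ulxct1} with $(X_s,\mathcal{L}_{X_s},W,\widetilde{N})$ replaced by $(X_s^{1,N},\mu_s^{\boldsymbol{X}^N},W^1,\widetilde{N}^1)$. Introducing the stopping times $\tau_M^1:=\inf\{t\geq 0:|X_t^{1,N}|\geq M\}$, invoking Condition \textbf{A7} (via Remark \ref{c5}) pointwise in $\omega$, and taking expectations to kill the $dW^1$ and $d\widetilde{N}^1$ integrals produces the particle analogue of \eqref{ulxct2}. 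An induction on even $p$ running in parallel to Steps 1--2 of Proposition \ref{nghiem dung 1} then controls every low-order $Q_p$-monomial $|X_s^{1,N}|^{\ell_1}\mathcal{W}_2^{\ell_2}(\mu_s^{\boldsymbol{X}^N},\delta_0)$ with $\ell_1+\ell_2<p$ via the inductive bound on $\bE[|X_s^{1,N}|^q]$ for $q\leq p-2$ combined with the Jensen estimate above, while the extremal monomials ($\ell_1+\ell_2=p$) and the drift term $|X_s^{1,N}|^{p-2}\mathcal{W}_2^2(\mu_s^{\boldsymbol{X}^N},\delta_0)$ are bounded in expectation by $\bE[|X_s^{1,N}|^p]$ through Young's inequality $a^{p-2}b^2\leq\frac{p-2}{p}a^p+\frac{2}{p}b^p$. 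Setting $h_M(t):=e^{-\gamma_1 p t}\bE[|X_{t\wedge\tau_M^1}^{1,N}|^p]$, the resulting inequality has the form $h_M(t)\leq C+p\gamma_2\int_0^t h_M(s)\,ds+r_p(t)$, with $r_p(t)$ of the regime-dependent growth predicted by the induction; Gronwall's lemma gives $h_M(t)\leq (C+r_p(t))e^{p\gamma_2 t}$. Undoing the exponential weight and sending $M\to\infty$ via Fatou (justified by $\tau_M^1\to\infty$ a.s., which follows from the $q=p-2$ moment bound) produces the claimed estimate on $\bE[|X_t^{1,N}|^p]$, which by exchangeability equals $\max_i\bE[|X_t^{i,N}|^p]$; H\"older's inequality finally extends the bound from even integers $p$ to all $p\in[2,p_0]$.

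The main obstacle, compared with Proposition \ref{nghiem dung 1}, is the emergence of a self-referential term $\bE[|X_s^{1,N}|^p]$ on the right-hand side, which does not arise in the McKean-Vlasov proof because the corresponding factor $\sqrt{\bE[|X_s|^2]}$ is deterministic and controlled by the base case of the induction alone. The randomness of $\mathcal{W}_2(\mu_s^{\boldsymbol{X}^N},\delta_0)$ prevents the analogous factorization and forces the Gronwall closure just described. The fact that the final exponent still comes out to the sharp value $\gamma p=(\gamma_1+\gamma_2)p$ (rather than some strictly larger constant) is a consequence of Young's inequality producing exactly the coefficient $p\gamma_2$ in front of the self-coupled integral, which when combined with the $e^{-\gamma_1 pt}$ weight reproduces $e^{\gamma pt}$; the $N$-independence of the constants is immediate since the Jensen bound for the empirical Wasserstein moment, and hence every step of the argument, is free of any quantitative dependence on $N$.
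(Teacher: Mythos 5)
Your diagnosis of the obstacle is correct and important: the proof of Proposition \ref{nghiem dung 1} repeatedly uses the \emph{deterministic} identity $\mathcal{W}_2^2(\mathcal{L}_{X_s},\delta_0)=\bE[|X_s|^2]$ to pull the Wasserstein factors out of expectations, whereas $\mathcal{W}_2^2(\mu_s^{\boldsymbol{X}^N},\delta_0)=\frac{1}{N}\sum_j|X_s^{j,N}|^2$ is random, so ``the same lines'' do not carry over verbatim. However, the Young/Gronwall closure you propose does not reproduce the stated rate. The degree-$p$ monomials $|X_s^{1,N}|^{\ell_1}\mathcal{W}_2^{\ell_2}$ with $\ell_1+\ell_2=p$, $\ell_1\le p-2$, $\ell_2\ge 2$ inside the polynomial $Q_p$ (arising from the binomial expansion of the jump integrand, cf.\ the chain of inequalities after \eqref{for1}) are bounded after Young and Jensen by $\bE[|X_s^{1,N}|^p]$ exactly like the drift term, and they enter with \emph{fixed positive} coefficients of the form $\binom{p/2}{i}\int|z|^{2i-j}\nu(dz)\,c_{\ell_1\ell_2}$. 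So the Gronwall coefficient in front of $\int_0^t h_M(s)\,ds$ is not $p\gamma_2$ but $p\gamma_2+C_0$ for some $C_0>0$ depending on $p$, $L_3$ and the L\'evy measure. Undoing the weight $e^{\gamma_1 p t}$ therefore yields a bound of order $e^{(\gamma p + C_0)t}$ rather than $e^{\gamma p t}$; in particular when $\gamma<0$ one may have $\gamma p+C_0>0$, which would contradict the $T$-uniform boundedness that Proposition \ref{PoC} relies on. In the McKean--Vlasov proof these same monomials reduce (after taking expectations) to products of moments of order $\le p-2$ with deterministic powers of $\sqrt{\bE[|X_s|^2]}$, hence stay inductive and never feed a Gronwall term; the Young step destroys exactly this structure, and there is no way to recover the sharp coefficient without an additional idea (the paper's parallel argument for the discrete scheme, Proposition \ref{dinh ly 2}, shifts $\lambda$ to $\gamma_1 p+\gamma_2 p/N$ to cancel the diagonal and then invokes the factorisation \eqref{indepen} for the off-diagonal cross-moments --- a genuinely different device from Young/Gronwall).

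A secondary issue: the stopping time $\tau_M^1:=\inf\{t\ge 0:|X_t^{1,N}|\ge M\}$ localises only particle $1$ and does \emph{not} bound $\mathcal{W}_2(\mu_s^{\boldsymbol{X}^N},\delta_0)$, which depends on all $N$ particles. Since the inductive hypothesis controls moments of order at most $p-2$ while $Q_p$ contains powers of $\mathcal{W}_2$ up to degree $p$, the integrability needed to pass to expectations is not a priori available on $\{s\le\tau_M^1\}$. One needs, as in Step 1 of Proposition \ref{nghiem dung 1}, a preliminary finite-moment argument run with the system-wide stopping time $\tau_M=\inf\{t: \max_{i}|X_t^{i,N}|\ge M\}$ before the regime-dependent sharp estimates can be derived.
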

Note that when $\gamma<0$, we have that $\max_{i\in\{1,\ldots,N\}}\sup_{t \geq 0} \bE \left[\vert X_t^{i,N}\vert^p\right] \le 2C_p$.
\begin{proof} The proof follows the same lines as the one of Proposition \ref{nghiem dung 1}, thus we omit it.
\end{proof}
Next, we provide a result on the propagation of chaos which is the key to the convergence as $N\uparrow \infty$.  To simplify the exposition, we define
\begin{align*}
	\varphi(N)=\begin{cases}
		N^{-1/2} \quad&\textnormal{if\;\;} d<4,\\
		N^{-1/2}\ln N\quad&\textnormal{if\;\;} d=4,\\
		N^{-2/d}\quad&\textnormal{if\;\;} d>4.
	\end{cases}
\end{align*}
\begin{Prop}\label{PoC} Assume that all conditions in Proposition \ref{moment XiN} hold and that Condition \textbf{A2} holds for $\kappa_1=\kappa_2=1$, $L_1\in\mathbb{R}$, $L_2\geq 0$. Then, we have
\begin{equation}\label{esti1} \begin{split}
\max_{i\in\{1,\ldots,N\}}\sup_{t\in[0,T]}\mathbb{E}\left[\left\vert X_t^i-X_t^{i,N}\right\vert^2\right]	\leq C_T \varphi(N),
\end{split} 
\end{equation}
for any $N\in\mathbb{N}$, where the positive constant $C_T$ does not depend on $N$.

Assume further that $L_1+L_2<0$ and $\gamma<0$. Then, we have
\begin{equation}\label{esti2} \begin{split}
\max_{i\in\{1,\ldots,N\}}\sup_{t\geq 0}\mathbb{E}\left[\left\vert X_t^i-X_t^{i,N}\right\vert^2\right]	\leq C \varphi(N),
\end{split} 
\end{equation}
for any $N\in\mathbb{N}$, where the positive constant $C$ does not depend on $N$ and $T$.
\end{Prop}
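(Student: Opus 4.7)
The plan is to use a synchronous coupling: $X^i$ and $X^{i,N}$ are driven by the same pair $(W^i,\widetilde N^i)$ and differ only in the measure argument ($\mathcal{L}_{X_s^i}$ versus $\mu_s^{\boldsymbol{X}^N}$). I apply It\^o's formula to $|X_t^i - X_t^{i,N}|^2$. After a localization argument (justified by the moment bounds in Propositions \ref{nghiem dung 1} and \ref{moment XiN} together with Lemma \ref{BDGjump}), the stochastic integrals against $dW^i$ and $\widetilde N^i(ds,dz)$ vanish in expectation, while the drift, the diffusion's quadratic variation, and the compensator of the jump square-bracket assemble exactly into the left-hand side of Condition \textbf{A2} with $\kappa_1=\kappa_2=1$ (using that $\int_{\bR_0^d}|(c(X_s^i,\mathcal{L}_{X_s^i})-c(X_s^{i,N},\mu_s^{\boldsymbol{X}^N}))z|^2\nu(dz)\le |c(X_s^i,\mathcal{L}_{X_s^i})-c(X_s^{i,N},\mu_s^{\boldsymbol{X}^N})|^2\int_{\bR_0^d}|z|^2\nu(dz)$). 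Setting $u(s):=\mathbb{E}[|X_s^i - X_s^{i,N}|^2]$, which is independent of $i$ by exchangeability of the coupled system, \textbf{A2} yields
\[
u(t) \le L_1\int_0^t u(s)\,ds \;+\; L_2\int_0^t \mathbb{E}\bigl[\mathcal{W}_2^2(\mathcal{L}_{X_s},\mu_s^{\boldsymbol{X}^N})\bigr]\,ds.
\]

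To control the Wasserstein term, I introduce the empirical measure $\mu_s^{\boldsymbol{X}}:=\frac{1}{N}\sum_{j=1}^N\delta_{X_s^j}$ of the non-interacting particles, and combine the triangle inequality in $\mathcal{W}_2$ with Young's inequality $(a+b)^2\le(1+\varepsilon)a^2+(1+1/\varepsilon)b^2$ and the diagonal coupling $\frac{1}{N}\sum_j\delta_{(X_s^j,X_s^{j,N})}$ to get
\[
\mathcal{W}_2^2(\mathcal{L}_{X_s},\mu_s^{\boldsymbol{X}^N}) \le (1+\varepsilon)\,\mathcal{W}_2^2(\mathcal{L}_{X_s},\mu_s^{\boldsymbol{X}}) + \frac{1+1/\varepsilon}{N}\sum_{j=1}^N |X_s^j-X_s^{j,N}|^2.
\]
Taking expectations, exchangeability collapses the second term to $(1+1/\varepsilon)u(s)$, and for the first term I invoke the Fournier--Guillin quantitative estimate (Theorem~1 of Fournier and Guillin, PTRF 2015) to obtain $\mathbb{E}[\mathcal{W}_2^2(\mathcal{L}_{X_s},\mu_s^{\boldsymbol{X}})] \le C\bigl(\mathbb{E}|X_s|^q\bigr)\,\varphi(N)$ for some suitable $q\in(4\vee d,p_0]$; this $q$ is admissible because hypothesis \textbf{A5} is assumed at order $2p_0$, and Proposition \ref{nghiem dung 1} furnishes the required control of $\mathbb{E}|X_s|^q$ uniformly on $[0,T]$ in the finite horizon case and uniformly in $s\ge 0$ under $\gamma<0$.

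For \eqref{esti1}, fix $\varepsilon=1$ and Gronwall's lemma applied to the resulting inequality
\[
u(t) \le (L_1+2L_2)\int_0^t u(s)\,ds + 2L_2\,C\,T\,\varphi(N)
\]
gives $u(t)\le C_T\varphi(N)$ on $[0,T]$. For \eqref{esti2}, the hypothesis $L_1+L_2<0$ is the key: since $L_2(1+1/\varepsilon)\downarrow L_2$ as $\varepsilon\uparrow\infty$, I can choose $\varepsilon$ large enough (any $\varepsilon>L_2/|L_1+L_2|$ when $L_2>0$, trivial otherwise) so that $\beta:=L_1+L_2(1+1/\varepsilon)<0$. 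Combined with $u(0)=0$ and the uniform-in-$s$ bound on the Fournier--Guillin prefactor under $\gamma<0$, the standard comparison lemma for the inequality $u'(t)\le\beta u(t)+K\varphi(N)$ with $\beta<0$ yields $\sup_{t\ge 0}u(t)\le(K/|\beta|)\varphi(N)$, which is the advertised uniform-in-time bound.

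The main obstacle I foresee is bookkeeping in the jump part of the It\^o expansion: one must verify rigorously that the double-sum of binomial terms arising in the compensator integral collapses to the clean $|c-c|^2\int|z|^2\nu(dz)$ contribution matching the left-hand side of \textbf{A2}, and that the moment-based localization discarding the compensated Poisson martingale in expectation is uniform enough for the infinite-horizon regime. Both points are routine thanks to the bounds already established in Propositions~\ref{nghiem dung 1}, \ref{moment XiN} and Lemma~\ref{BDGjump}, but they require some care in the polynomial-growth jump setting.
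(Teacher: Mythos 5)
Your proposal is correct and follows the same skeleton as the paper's proof (It\^o's formula, Condition \textbf{A2} with $\kappa_1=\kappa_2=1$, a triangle inequality for $\mathcal{W}_2$, a quantitative empirical-measure estimate, then Gronwall), but the ``middle measure'' you insert in the triangle inequality differs from the paper's. The paper splits
\[
\mathcal{W}_2\bigl(\mathcal{L}_{X_s^i},\mu_s^{\boldsymbol{X}^N}\bigr)
\le \mathcal{W}_2\bigl(\mathcal{L}_{X_s^i},\mathcal{L}_{X_s^{i,N}}\bigr) + \mathcal{W}_2\bigl(\mathcal{L}_{X_s^{i,N}},\mu_s^{\boldsymbol{X}^N}\bigr),
\]
bounds the first term by $\bigl(\mathbb{E}[|X_s^i-X_s^{i,N}|^2]\bigr)^{1/2}$, and invokes a quantitative empirical-law estimate (Theorem 5.8 in \cite{CD18}) for the second, which involves the empirical measure of the \emph{interacting} particles against the marginal law $\mathcal{L}_{X_s^{i,N}}$. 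You instead insert the empirical measure $\mu_s^{\boldsymbol{X}}$ of the \emph{non-interacting} particles and use the diagonal coupling to control $\mathcal{W}_2(\mu_s^{\boldsymbol{X}},\mu_s^{\boldsymbol{X}^N})$ by the discrepancy $\frac{1}{N}\sum_j|X_s^j-X_s^{j,N}|^2$. This buys you a slight technical simplification: the non-interacting particles $(X_s^j)_{j}$ are genuinely i.i.d.\ with common law $\mathcal{L}_{X_s}$, so the Fournier--Guillin estimate applies verbatim, whereas the paper's application of the empirical-measure bound to the non-i.i.d.\ interacting system implicitly leans on exchangeability. In exchange, your version scatters the error over all $N$ coordinates, which you then collapse by exchangeability of the coupled pair system; the paper's version keeps the error localized to coordinate $i$. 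The remaining bookkeeping — distributing the Young's-inequality weight across the two Wasserstein terms, choosing $\varepsilon=1$ for the finite-horizon Gronwall, and pushing $\varepsilon\uparrow\infty$ to make the linear coefficient negative when $L_1+L_2<0$ — is in essence what the paper does by carrying an exponential weight $e^{-\lambda t}$ through It\^o's formula and then setting $\lambda=L_1+L_2+\varepsilon$ with $\varepsilon$ small. Both routes lead to the same constants up to relabeling, and your observation that the jump compensator reduces to $|c(X^i,\cdot)-c(X^{i,N},\cdot)|^2\int|z|^2\nu(dz)$ (so that the It\^o drift, diffusion QV and jump compensator fit the left-hand side of \textbf{A2} with $\kappa_1=\kappa_2=1$) is exactly the calculation the paper carries out.
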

\begin{proof} Observe that  for any $t\geq 0$,
\begin{align*}
X_t^i-X_t^{i,N}&=\int_0^t \left(b(X_s^i,\mathcal{L}_{X_s^i})-b(X_s^{i,N},\mu_{s}^{\boldsymbol{X}^N})\right)ds+\int_0^t \left(\sigma(X_s^i,\mathcal{L}_{X_s^i})-\sigma(X_s^{i,N},\mu_{s}^{\boldsymbol{X}^N})\right)dW_s^i \\
&\quad+ \int_0^t\int_{\mathbb{R}_0^d} \left(c(X_{s-}^i,\mathcal{L}_{X_{s-}^i})-c(X_{s-}^{i,N},\mu_{s-}^{\boldsymbol{X}^N})\right)z\widetilde{N}^i(d s, d z).
\end{align*}
Then, for $\lambda \in \br$, applying It\^o's formula and Condition \textbf{A2} valid for $\kappa_1=\kappa_2=1$, $L_1\in\mathbb{R}$, $L_2\geq 0$, we obtain that for any $t\geq 0$,	
\begin{align*}
&e^{-\lambda t}\vert X_t^i-X_t^{i,N}\vert^2=\int_0^te^{-\lambda s}\Big(-\lambda\vert X_s^i-X_s^{i,N}\vert^2+2\langle X_s^i-X_s^{i,N},b(X_s^i,\mathcal{L}_{X_s^i})-b(X_s^{i,N},\mu_{s}^{\boldsymbol{X}^N})\rangle\\
&\quad+\vert \sigma(X_s^i,\mathcal{L}_{X_s^i})-\sigma(X_s^{i,N},\mu_{s}^{\boldsymbol{X}^N})\vert^2\Big)ds+2\int_0^te^{-\lambda s}\langle X_s^i-X_s^{i,N},(\sigma(X_s^i,\mathcal{L}_{X_s^i})-\sigma(X_s^{i,N},\mu_{s}^{\boldsymbol{X}^N}))dW_s^i\rangle\\
&\quad+\int_0^t\int_{\mathbb{R}_0^d}e^{-\lambda s}\left(\vert X_{s-}^i-X_{s-}^{i,N}+\big(c(X_{s-}^i,\mathcal{L}_{X_{s-}^i})-c(X_{s-}^{i,N},\mu_{s-}^{\boldsymbol{X}^N})\big)z\vert^2-\vert X_{s-}^i-X_{s-}^{i,N}\vert^2\right)\widetilde{N}^i(d s, d z)\\
&\quad+\int_0^t\int_{\mathbb{R}_0^d}e^{-\lambda s}\big\vert \big(c(X_{s-}^i,\mathcal{L}_{X_{s-}^i})-c(X_{s-}^{i,N},\mu_{s-}^{\boldsymbol{X}^N})\big)z\big\vert^2\nu(d z) ds\\
&\leq \int_0^te^{-\lambda s}\Big(-\lambda\vert X_s^i-X_s^{i,N}\vert^2+2\langle X_s^i-X_s^{i,N},b(X_s^i,\mathcal{L}_{X_s^i})-b(X_s^{i,N},\mu_{s}^{\boldsymbol{X}^N})\rangle\\
&\quad+\vert \sigma(X_s^i,\mathcal{L}_{X_s^i})-\sigma(X_s^{i,N},\mu_{s}^{\boldsymbol{X}^N})\vert^2+\vert c(X_{s-}^i,\mathcal{L}_{X_{s-}^i})-c(X_{s-}^{i,N},\mu_{s-}^{\boldsymbol{X}^N}\vert^2\int_{\mathbb{R}_0^d}\vert z\vert^2\nu(d z)\Big)ds\\
&\quad+2\int_0^te^{-\lambda s}\langle X_s^i-X_s^{i,N},(\sigma(X_s^i,\mathcal{L}_{X_s^i})-\sigma(X_s^{i,N},\mu_{s}^{\boldsymbol{X}^N}))dW_s^i\rangle\\
&\quad+\int_0^t\int_{\mathbb{R}_0^d}e^{-\lambda s}\left(\big\vert X_{s-}^i-X_{s-}^{i,N}+\big(c(X_{s-}^i,\mathcal{L}_{X_{s-}^i})-c(X_{s-}^{i,N},\mu_{s-}^{\boldsymbol{X}^N})\big)z\big\vert^2-\vert X_{s-}^i-X_{s-}^{i,N}\vert^2\right)\widetilde{N}^i(d s, d z)\\
&\leq \int_0^te^{-\lambda s}\Big(-\lambda\vert X_s^i-X_s^{i,N}\vert^2+L_1\vert X_s^i-X_s^{i,N}\vert^2+L_2\mathcal{W}_2^2(\mathcal{L}_{X_{s}^i},\mu_{s}^{\boldsymbol{X}^N})\Big)ds\\
&\quad+2\int_0^te^{-\lambda s}\langle X_s^i-X_s^{i,N},(\sigma(X_s^i,\mathcal{L}_{X_s^i})-\sigma(X_s^{i,N},\mu_{s}^{\boldsymbol{X}^N}))dW_s^i\rangle\\
&\quad+\int_0^t\int_{\mathbb{R}_0^d}e^{-\lambda s}\left(\big\vert X_{s-}^i-X_{s-}^{i,N}+\big(c(X_{s-}^i,\mathcal{L}_{X_{s-}^i})-c(X_{s-}^{i,N},\mu_{s-}^{\boldsymbol{X}^N})\big)z\vert^2-\vert X_{s-}^i-X_{s-}^{i,N}\big\vert^2\right)\widetilde{N}^i(d s, d z).
\end{align*}
Therefore, taking the expectation and using the estimate $\mathcal{W}_2^2(\mathcal{L}_{X_{s}^i},\mathcal{L}_{X_{s}^{i,N}})\leq \mathbb{E}[\vert X_{s}^i-X_{s}^{i,N}\vert^2]$, we obtain that for any $\varepsilon>0$,
\begin{align}
&e^{-\lambda t}\mathbb{E}\left[\left\vert X_t^i-X_t^{i,N}\right\vert^2\right]\leq \int_0^te^{-\lambda s}\left((-\lambda+L_1)\mathbb{E}\left[\vert X_s^i-X_s^{i,N}\vert^2\right] +L_2\mathbb{E}\left[\mathcal{W}_2^2(\mathcal{L}_{X_{s}^i},\mu_{s}^{\boldsymbol{X}^N})\right]\right)ds\notag\\
&\leq\int_0^te^{-\lambda s}\Bigg((-\lambda+L_1)\mathbb{E}\left[\vert X_s^i-X_s^{i,N}\vert^2\right] \notag\\
&\quad+L_2\bigg(\big(1+\frac{\varepsilon}{L_2}\big)\mathbb{E}\left[\mathcal{W}_2^2(\mathcal{L}_{X_{s}^i},\mathcal{L}_{X_{s}^{i,N}})\right]+\big(1+\frac{L_2}{\varepsilon}\big)\mathbb{E}\left[\mathcal{W}_2^2(\mathcal{L}_{X_{s}^{i,N}},\mu_{s}^{\boldsymbol{X}^N})\right]\bigg)\Bigg)ds \notag\\
&\leq \int_0^te^{-\lambda s}\left(\left(-\lambda+L_1+L_2+\varepsilon\right)\mathbb{E}\left[\vert X_s^i-X_s^{i,N}\vert^2\right]+L_2\big(1+\frac{L_2}{\varepsilon}\big)\mathbb{E}\left[\mathcal{W}_2^2(\mathcal{L}_{X_{s}^{i,N}},\mu_{s}^{\boldsymbol{X}^N})\right]\right)ds. \label{estimate1}
\end{align}
Moreover, from Proposition \ref{moment XiN}, we have that for any $p\in (4,p_0]$,
\begin{align*}
\max_{i\in\{1,\ldots,N\}}\sup_{t\in[0,T]}\mathbb{E}\left[\left\vert X_t^{i,N}\right\vert^p\right]	\leq C_T,
\end{align*}
for some constant  $C_T>0$. This, together with \cite[Theorem 5.8]{CD18}, deduces that
\begin{align}\label{estimate2}
\max_{i\in\{1,\ldots,N\}}\mathbb{E}\left[\mathcal{W}_2^2(\mathcal{L}_{X_{s}^{i,N}},\mu_{s}^{\boldsymbol{X}^N})\right] &\leq C \begin{cases}
N^{-1/2} \quad&\textnormal{if\;\;} d<4,\\
N^{-1/2}\ln N\quad&\textnormal{if\;\;} d=4,\\
N^{-2/d}\quad&\textnormal{if\;\;} d>4
\end{cases} \notag\\
&=C \varphi(N),
\end{align}
for any $s\in[0,T]$, where the positive constant $C$ does not depend on the time.

Consequently, it suffices to choose $\lambda=L_1+L_2+\varepsilon$ in \eqref{estimate1} and use the estimate \eqref{estimate2} to conclude \eqref{esti1}.

Finally, when $\gamma<0$, it follows from Proposition \ref{moment XiN} that for any $p\in (4,p_0]$,
\begin{align*}
\max_{i\in\{1,\ldots,N\}}\sup_{t \geq 0}\mathbb{E}\left[\left\vert X_t^{i,N}\right\vert^q\right]	\leq C,
\end{align*}
where the positive constant $C$ does not depend on $T$. Furthermore, when $L_1+L_2<0$, one can always choose $\varepsilon$ sufficiently small such that $\lambda<0$. This allows to conclude \eqref{esti2}. The desired proof follows.
\end{proof}

\section{Tamed-adaptive Euler-Maruyama scheme} \label{sec:main} 

\subsection{Definition}
Let $\sigma_\Delta=(\sigma_{\Delta,ij})_{1\leq i,j\leq d}: \mathbb{R}^d \times \mathcal{P}_2(\mathbb{R}^d)\to \mathbb{R}^d\otimes \mathbb{R}^d$ and $c_\Delta=(c_{\Delta,ij})_{1\leq i,j\leq d}: \mathbb{R}^d \times \mathcal{P}_2(\mathbb{R}^d)\to \mathbb{R}^d\otimes \mathbb{R}^d$ be approximations of  the coefficients $\sigma$ and $c$,  respectively, which will be specified later. For all $i\in\{1,\ldots,N\}$, $\Delta \in (0,1)$ and $k\in\mathbb{N}$, we define the tamed-adaptive Euler-Maruyama discretization of equation \eqref{equa2} by
\begin{equation}\label{EM1}
\begin{cases} 
&t_0=0, \quad \widehat{X}_0^{i,N}=x_0, \quad t_{k+1}=t_k+\textbf{h}(\widehat{\boldsymbol{X}}_{t_k}^N,\mu_{t_k}^{\widehat{\boldsymbol{X}}^N})\Delta,\\ 
&\widehat{X}_{t_{k+1}}^{i,N}= \widehat{X}_{t_k}^{i,N}+b(\widehat{X}_{t_k}^{i,N},\mu_{t_k}^{\widehat{\boldsymbol{X}}^N})(t_{k+1}-t_k)+\sm_{\Delta}(\widehat{X}_{t_k}^{i,N},\mu_{t_k}^{\widehat{\boldsymbol{X}}^N})(W_{t_{k+1}}^i-W_{t_{k}}^i) + c_{\Delta}(\widehat{X}_{t_k}^{i,N},\mu_{t_k}^{\widehat{\boldsymbol{X}}^N})(Z_{t_{k+1}}^i-Z_{t_k}^i),
\end{cases} 
\end{equation}
where
\begin{align*}
&\widehat{\boldsymbol{X}}_{t_k}^N=\left(\widehat{X}^{1,N}_{t_k},\ldots,\widehat{X}^{N,N}_{t_k}\right),\\
&\mu_{t_k}^{\widehat{\boldsymbol{X}}^N}(dx):=\dfrac{1}{N}\sum_{i=1}^{N}\delta_{\widehat{X}_{t_k}^{i,N}}(dx),\\
&\textbf{h}(\widehat{\boldsymbol{X}}^N_{t_k},\mu_{t_k}^{\widehat{\boldsymbol{X}}^N})=\min\left\{h(\widehat{X}^{1,N}_{t_k},\mu_{t_k}^{\widehat{\boldsymbol{X}}^N}),\ldots,h(\widehat{X}^{N,N}_{t_k},\mu_{t_k}^{\widehat{\boldsymbol{X}}^N})\right\},
\end{align*}
and 
\begin{equation} \label{chooseh} 
h(x,\mu)=\dfrac{h_0}{(1+|b(x,\mu)|+|\sm(x,\mu)|+|x|^l)^2+|c(x,\mu)|^{p_0}},
\end{equation}
for $x \in \br^d$, $\mu\in \mathcal{P}_2(\mathbb{R}^d)$ and some positive constant $h_0$. Here, the constants $\ell$ and $p_0$ are respectively defined in Conditions  \textbf{A4} and  \textbf{A7}.  

In all what follows, to simplify the exposition, we take $h_0=1$ in the proofs.

Now, we provide sufficient conditions to ensure $t_k \uparrow \infty$ as $k \uparrow \infty$, which shows that the tamed-adaptive Euler-Maruyama approximation scheme \eqref{EM1} is well-defined. 

\begin{Prop}\label{dinh ly 4}
	Assume that Condition \textbf{A5} holds for $p=2$ and there exist positive constants $L, \beta_1$ and $\beta_2$ such that the functions $h, b,  \sm_{\Delta}$ and $c_{\Delta}$ satisfy 
	\begin{enumerate}[\indent \bf T1.]
		\item  $\dfrac{1}{h(x,\mu)}\leq L\left(1+\vert x\vert^{\beta_1}+\mathcal{W}_2^{\beta_2}(\mu,\delta_0)\right)$ \; and \; $\vert b(x,\mu)\vert \left(1+\vert b(x,\mu)\vert\right) h(x,\mu) \leq L$;
		\item  $\left\langle x, b(x,\mu)-b(0,\delta_0) \right\rangle \le L \left(|x|^2+ \mathcal{W}_2^2(\mu,\delta_0)\right)$;
		\item  $|\sm_{\Delta}(x,\mu)|\left(1+\vert x\vert\right)\le \dfrac{L}{\sqrt{\Delta}}$;  $|c_{\Delta}(x,\mu)|\left(1+\vert x\vert\right)\le \dfrac{L}{\sqrt{\Delta}}$; 		$\vert b(x,\mu)\vert \vert c_{\Delta}(x,\mu)\vert \le \dfrac{L}{\sqrt{\Delta}}$;		
	\end{enumerate}
	for any $x \in \mathbb{R}^d$ and $\mu\in \mathcal{P}_2(\mathbb{R}^d)$. Then, we have
	\begin{align*}
	\lim\limits_{k \to +\infty} t_k =+\infty \quad \text{a.s.} 
	\end{align*}
\end{Prop}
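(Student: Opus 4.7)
The plan is to prove $\p(\lim_{k\to\infty} t_k < \infty) = 0$, which by countable additivity over $T \in \n^*$ reduces to showing $\p(t_k < T \text{ for all } k) = 0$ for each fixed $T > 0$. I would combine a truncation stopping-time with a discrete Gronwall estimate on the total second moment of the particle system.

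First I introduce $\xi_k := \max_{1 \leq i \leq N}|\widehat X_{t_k}^{i,N}|$ and the indices $\kappa_M := \inf\{k \geq 0 : \xi_k \geq M\}$, $n_T := \inf\{k \geq 0 : t_k \geq T\}$, and $K := \kappa_M \wedge n_T$. Since $\mathcal W_2(\mu_{t_k}^{\widehat{\boldsymbol{X}}^N}, \delta_0) \leq \xi_k$, Condition \textbf{T1} yields $\mathbf{h}(\widehat{\boldsymbol{X}}_{t_k}^N, \mu_{t_k}^{\widehat{\boldsymbol{X}}^N}) \geq [L(1+2\xi_k^\beta)]^{-1}$ with $\beta := \max(\beta_1,\beta_2)$. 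This has two consequences: on the event $\{n_T = \infty\} = \{t_k < T\ \forall k\}$, the step sizes $\Delta t_k := t_{k+1} - t_k = \mathbf{h}\cdot\Delta$ must tend to $0$ (as $(t_k)$ is bounded increasing), forcing $\xi_k \to \infty$ and hence $\kappa_M < n_T$ for every $M > 0$; and for $k < \kappa_M$, $\Delta t_k \geq \Delta/[L(1+2M^\beta)]$, so $K$ is deterministically bounded by $K_M := \lceil TL(1+2M^\beta)/\Delta\rceil + 1$. Thus it suffices to show $\p(\kappa_M < n_T) \to 0$ as $M \to \infty$.

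For the core moment estimate, let $S_k := \sum_{i=1}^N |\widehat X_{t_k}^{i,N}|^2$. Because $t_{k+1}$ is $\mathcal F_{t_k}$-measurable, the increments $W_{t_{k+1}}^i - W_{t_k}^i$ and $Z_{t_{k+1}}^i - Z_{t_k}^i$ are, conditionally on $\mathcal F_{t_k}$, centered with second moments $\Delta t_k$ and $\Delta t_k\int_{\bR_0^d}|z|^2\nu(dz)$ (finite by \textbf{A5}) respectively, and mutually independent across $i$. Expanding $|\widehat X_{t_{k+1}}^{i,N}|^2$ gives
\begin{align*}
\bE\bigl[|\widehat X_{t_{k+1}}^{i,N}|^2 - |\widehat X_{t_k}^{i,N}|^2 \bigm| \mathcal F_{t_k}\bigr] = 2\Delta t_k\langle \widehat X_{t_k}^{i,N}, b\rangle + |b|^2(\Delta t_k)^2 + |\sigma_\Delta|^2\Delta t_k + |c_\Delta|^2\Delta t_k \int_{\bR_0^d}|z|^2\nu(dz).
\end{align*}
Applying \textbf{T2} to the first term, $|b|^2 h \leq L$ from \textbf{T1} to get $|b|^2(\Delta t_k)^2 \leq L\Delta \cdot \Delta t_k$, and \textbf{T3} together with $\Delta t_k = h\Delta$ to get $|\sigma_\Delta|^2\Delta t_k \leq L^2 \Delta t_k/\Delta$ (and similarly for $c_\Delta$), then summing over $i$ with $\sum_i \mathcal W_2^2(\mu_{t_k}^{\widehat{\boldsymbol{X}}^N},\delta_0) = S_k$, produces
\begin{align*}
\bE[S_{k+1} - S_k \mid \mathcal F_{t_k}] \leq (4L+1)\Delta t_k\, S_k + N C_\Delta\, \Delta t_k,
\end{align*}
where $C_\Delta$ depends only on $L, \Delta, |b(0,\delta_0)|$ and $\int_{\bR_0^d}|z|^2\nu(dz)$. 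A standard exponential-weight Gronwall argument applied to $Z_k := S_{k\wedge K} e^{-(4L+1)t_{k\wedge K}}$, combined with the deterministic bound $t_K \leq T + h_0\Delta$, then yields $\bE[S_{k\wedge K}] \leq e^{(4L+1)(T+h_0\Delta)}\bigl[N|x_0|^2 + NC_\Delta(T + h_0\Delta)\bigr] =: C(T,N,\Delta)$, uniformly in $k$ and $M$. Taking $k = K_M$, Markov's inequality gives $\p(\kappa_M < n_T) \leq \p(\xi_K^2 \geq M^2) \leq \bE[S_K]/M^2 \leq C(T,N,\Delta)/M^2 \to 0$, closing the argument.

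The main technical obstacle is securing the $\Delta t_k$-proportional bound in the moment estimate. With \textbf{T3} alone one only has $|\sigma_\Delta|^2 \leq L^2/\Delta$ uniformly, so the per-step ``It\^o correction'' $|\sigma_\Delta|^2\Delta t_k$ is at worst $O(1)$, and naively summing over the $\sim M^\beta$ steps forced by the adaptive scheme would blow up with $M$. The crucial trick is to exploit the $(1+|x|)^{-2}$ decay in \textbf{T3} together with the identity $h = \Delta t_k/\Delta$ to rewrite this correction as $L^2\Delta t_k/[\Delta(1+|x|)^2] \leq L^2\Delta t_k/\Delta$, genuinely proportional to the time increment and summing to $O(T/\Delta)$ over $[0,T]$ independently of $M$. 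The analogous idea handles $|c_\Delta|^2\Delta t_k$ (using \textbf{A5} with $p=2$ for the jump variance), while $|b|^2(\Delta t_k)^2$ is absorbed via $|b|^2 h \leq L$ from \textbf{T1}.
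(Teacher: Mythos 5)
Your proof is correct and takes a genuinely different route from the paper's. The paper constructs an auxiliary scheme $\widehat{X}^{i,N,H}$ in which both the step-size function $h$ and the drift $b$ are replaced by bounded, $H$-truncated surrogates $h^H$ and $b_H$; it then applies It\^o's formula to the \emph{continuous interpolant} of that auxiliary scheme, derives a moment bound uniform in $H$, and concludes via a union bound over $\{t_k\leq T\}\subset\{t_k^H\leq T\}\cup\{\text{truncated process exceeds }H/2\}$ together with Markov's inequality. You instead work entirely at the level of the discrete grid: the stopping index $K=\kappa_M\wedge n_T$ plays the role of the $H$-truncation, and a direct one-step conditional moment identity plus a discrete Gronwall argument replace It\^o's formula. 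The two mechanisms are the same in spirit --- both hinge on the structural facts $|b|^2h\leq L$ (from \textbf{T1}) and $|\sm_\Delta|^2,|c_\Delta|^2\leq L^2/\Delta$ (from \textbf{T3}), which make every ``It\^o-correction'' term proportional to $\Delta t_k$ so that summing over $[0,T]$ costs only $O(T/\Delta)$ rather than $O(\text{number of steps})$ --- but your formulation is more elementary and self-contained for this particular proposition. What the paper's heavier machinery buys is reuse: the $H$-truncated interpolant and the estimate \eqref{C0} produced inside the proof of Proposition~\ref{dinh ly 4} are exactly what is needed for Lemma~\ref{menh de 3}, which establishes finite $L^p$-moments of the interpolated scheme; your discrete argument would not yield that by itself.

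Two small remarks. The conditional one-step computation should end with an inequality rather than an equality, since in general $\bE\big[|c_\Delta\Delta Z|^2\mid\mathcal F_{t_k}\big]=\Delta t_k\int_{\bR_0^d}|c_\Delta z|^2\nu(dz)\leq|c_\Delta|^2\Delta t_k\int_{\bR_0^d}|z|^2\nu(dz)$; the last step uses the Frobenius-norm bound $|c_\Delta z|\leq|c_\Delta||z|$ and equality would require a symmetry of $\nu$ that is not assumed. Also, your concluding paragraph slightly overstates the role of the $(1+|x|)^{-1}$ factor in \textbf{T3}: in the discrete argument you actually only use the coarse bound $|\sm_\Delta|\leq L/\sqrt{\Delta}$, because the sum $\sum_{k<K}\Delta t_k\leq T+h_0\Delta$ is controlled by the terminal time, not by the number of steps. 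The decay in $|x|$ is needed by the paper (see the estimates in \eqref{delta}) to verify that the stochastic integrals against the continuous interpolant are genuine martingales, a step that your stopping at $K\leq K_M$ with $\xi_k<M$ for $k<K$ renders unnecessary.
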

\begin{proof}
For all $i\in\{1,\ldots,N\}$ and $H>0$, we define the tamed-adaptive Euler-Maruyama discretization of equation \eqref{equa2} as follows
\begin{equation}\label{EM00}
\begin{cases} 
&t_0^H=0, \quad \widehat{X}_0^{i,N,H}=x_0, \quad t_{k+1}^H=t_k^H+\textbf{h}^H(\widehat{\boldsymbol{X}}^{N,H}_{t_k^H},\mu_{t_k^H}^{\widehat{\boldsymbol{X}}^{N,H}})\Delta,\\ 
&  \widehat{X}_{t_{k+1}^H}^{i,N,H}= \widehat{X}_{t_k^H}^{i,N,H}+b_H(\widehat{X}_{t_k^H}^{i,N,H},\mu_{t_k^H}^{\widehat{\boldsymbol{X}}^{N,H}})(t_{k+1}^H-t_k^H)+\sm_{\Delta}(\widehat{X}_{t_k^H}^{i,N,H},\mu_{t_k^H}^{\widehat{\boldsymbol{X}}^{N,H}})(W_{t_{k+1}^H}^i-W_{t_{k}^H}^i) \\
&\qquad \quad\quad \quad+ c_{\Delta}(\widehat{X}_{t_k^H}^{i,N,H},\mu_{t_k^H}^{\widehat{\boldsymbol{X}}^{N,H}})(Z_{t_{k+1}^H}^i-Z_{t_k^H}^i),
\end{cases} 
\end{equation}
where
\begin{align*}
&\widehat{\boldsymbol{X}}_{t_k}^{N,H}=\left(\widehat{X}^{1,N,H}_{t_k},\ldots,\widehat{X}^{N,N,H}_{t_k}\right),\\
&\mu_{t_k}^{\widehat{\boldsymbol{X}}^{N,H}}(dx):=\dfrac{1}{N}\sum_{i=1}^{N}\delta_{\widehat{X}_{t_k}^{i,N,H}}(dx),\\
&\textbf{h}^H(\widehat{\boldsymbol{X}}^N_{t_k^H},\mu_{t_k^H}^{\widehat{\boldsymbol{X}}^{N,H}})=\min\left\{h^H(\widehat{X}^{1,N,H}_{t_k^H},\mu_{t_k^H}^{\widehat{\boldsymbol{X}}^{N,H}}),\ldots,h^H(\widehat{X}^{N,N,H}_{t_k^H},\mu_{t_k^H}^{\widehat{\boldsymbol{X}}^{N,H}})\right\},\\
&h^H(x,\mu)=\begin{cases}
h(x,\mu)  & \text{ if \;\;} \vert x\vert^{\beta_1}+\mathcal{W}_2^{\beta_2}(\mu,\delta_0) \leq H,\\
\dfrac{1}{1+H} & \text{ if \;\;} \vert x\vert^{\beta_1}+\mathcal{W}_2^{\beta_2}(\mu,\delta_0) >H,
 \end{cases} \\ 
 &b_H(x,\mu)=\begin{cases}
 b(x,\mu)  & \text{ if \;\;} \vert x\vert^{\beta_1}+\mathcal{W}_2^{\beta_2}(\mu,\delta_0) \leq H,\\
 \dfrac{x}{1+\vert x\vert^2} + b(0,\delta_0)& \text{ if \;\;} \vert x\vert^{\beta_1}+\mathcal{W}_2^{\beta_2}(\mu,\delta_0) >H,
 \end{cases} 
\end{align*}
for $x \in \mathbb{R}^d$ and $\mu\in \mathcal{P}_2(\mathbb{R}^d)$. Then, it can be checked that for all $x \in \mathbb{R}^d$ and $\mu\in \mathcal{P}_2(\mathbb{R}^d)$,
\begin{enumerate}
\item[(1)] $\vert b_H(x,\mu)\vert h^H (x,\mu) \leq C_0$ and  $\vert b_H(x,\mu)\vert^2 h^H (x,\mu) \leq C_0$,
\item[(2)] $\left\langle x, b_H(x,\mu)-b(0,\delta_0) \right\rangle \le C_0 \left(|x|^2+ \mathcal{W}_2^2(\mu,\delta_0)\right)$ due to Condition \textbf{T2},
\end{enumerate}
for some other positive constant $C_0$. Moreover, from  Condition \textbf{T1}, we have
\begin{color}{black}
$$
h^H(x,\mu)\Delta \geq \dfrac{\min\{1,L^{-1}\}\Delta}{1+H},
$$
which implies that
$t_{k+1}^H-t_k^H\geq \frac{\min\{1,L^{-1}\}\Delta}{1+H}.$
\end{color} Therefore, 
\begin{align*}
\lim\limits_{k \to +\infty} t_k^H =+\infty \quad \text{a.s.} 
\end{align*}
Now,  we define by $\underline{t}^H := \max \left\{t_{k}^H: t_{k}^H \leq t\right\}$ the nearest time point before $t$. The continuous interpolant process is defined by
\begin{equation} \label{xinh}\begin{split} 
\widehat{X}_{t}^{i,N,H}:&=\widehat{X}_{\underline{t}^H}^{i,N,H}+b_H\left(\widehat{X}_{\underline{t}^H}^{i,N,H},\mu_{\underline{t}^H}^{\widehat{\boldsymbol{X}}^{N,H}}\right)(t-\underline{t}^H)+\sigma_{\Delta}\left(\widehat{X}_{\underline{t}^H}^{i,N,H},\mu_{\underline{t}^H}^{\widehat{\boldsymbol{X}}^{N,H}}\right)\left(W_{t}^i-W_{\underline{t}^H}^i\right) \\
&\quad+c_{\Delta}\left(\widehat{X}_{\underline{t}^H}^{i,N,H},\mu_{\underline{t}^H}^{\widehat{\boldsymbol{X}}^{N,H}}\right)\left(Z_{t}^i-Z_{\underline{t}^H}^i\right) \\
&=x_0+\int_0^tb_H\left(\widehat{X}_{\underline{s}^H}^{i,N,H},\mu_{\underline{s}^H}^{\widehat{\boldsymbol{X}}^{N,H}}\right)ds+\int_0^t\sigma_{\Delta}\left(\widehat{X}_{\underline{s}^H}^{i,N,H},\mu_{\underline{s}^H}^{\widehat{\boldsymbol{X}}^{N,H}}\right)dW_{s}^i \\
&\quad+\int_{0}^t \int_{\bR_{0}^ d}c_{\Delta}\left(\widehat{X}_{\underline{s}^H}^{i,N,H},\mu_{\underline{s}^H}^{\widehat{\boldsymbol{X}}^{N,H}}\right)z\widetilde{N}^i(d s, d z). 
\end{split}
\end{equation}
Using It\^o's formula, we get
\begin{align}
&\vert \widehat{X}_{t}^{i,N,H}\vert^2=\vert x_0\vert^2 +\int_{0}^{t} \left(2\left\langle \widehat{X}_{s}^{i,N,H}, b_H\left(\widehat{X}_{\underline{s}^H}^{i,N,H},\mu_{\underline{s}^H}^{\widehat{\boldsymbol{X}}^{N,H}}\right)\right\rangle +  \left|\sigma_{\Delta}\left(\widehat{X}_{\underline{s}^H}^{i,N,H},\mu_{\underline{s}^H}^{\widehat{\boldsymbol{X}}^{N,H}}\right)\right|^{2} \right) ds  \notag\\
&\quad +2\int_{0}^{t}\left\langle \widehat{X}_{s}^{i,N,H}, \sigma_{\Delta}\left(\widehat{X}_{\underline{s}^H}^{i,N,H},\mu_{\underline{s}^H}^{\widehat{\boldsymbol{X}}^{N,H}}\right) d W_{s}^i\right\rangle +\int_{0}^t \int_{\bR_{0}^{d}}  \left|c_{\Delta}\left(\widehat{X}_{\underline{s}^H}^{i,N,H},\mu_{\underline{s}^H}^{\widehat{\boldsymbol{X}}^{N,H}}\right)z\right|^2 \nu(dz)ds  \notag\\
&\quad  +\int_{0}^t \int_{\bR_{0}^ d}  \left(\left|c_{\Delta}\left(\widehat{X}_{\underline{s}^H}^{i,N,H},\mu_{\underline{s}^H}^{\widehat{\boldsymbol{X}}^{N,H}}\right) z\right|^{2}+2\left\langle \widehat{X}_{s-}^{i,N,H},c_{\Delta}\left(\widehat{X}_{\underline{s}^H}^{i,N,H},\mu_{\underline{s}^H}^{\widehat{\boldsymbol{X}}^{N,H}}\right) z\right\rangle\right) \widetilde{N}^i(d s, d z) \notag\\
&\leq \vert x_0\vert^2 +\int_{0}^{t} \Bigg(2\left\langle \widehat{X}_{s}^{i,N,H}-\widehat{X}_{\underline{s}^H}^{i,N,H}, b_H\left(\widehat{X}_{\underline{s}^H}^{i,N,H},\mu_{\underline{s}^H}^{\widehat{\boldsymbol{X}}^{N,H}}\right)\right\rangle+2\left\langle \widehat{X}_{\underline{s}^H}^{i,N,H}, b_H\left(\widehat{X}_{\underline{s}^H}^{i,N,H},\mu_{\underline{s}^H}^{\widehat{\boldsymbol{X}}^{N,H}}\right)-b(0,\delta_0)\right\rangle  \notag\\
&\quad+2\left\langle \widehat{X}_{\underline{s}^H}^{i,N,H}, b(0,\delta_0)\right\rangle+  \left|\sigma_{\Delta}\left(\widehat{X}_{\underline{s}^H}^{i,N,H},\mu_{\underline{s}^H}^{\widehat{\boldsymbol{X}}^{N,H}}\right)\right|^{2} +  \left|c_{\Delta}\left(\widehat{X}_{\underline{s}^H}^{i,N,H},\mu_{\underline{s}^H}^{\widehat{\boldsymbol{X}}^{N,H}}\right)\right|^2 \int_{\bR_{0}^{d}} \vert z \vert^2\nu(dz)\Bigg) ds  \notag\\
&\quad+2\int_{0}^{t}\left\langle \widehat{X}_{s}^{i,N,H}-\widehat{X}_{\underline{s}^H}^{i,N,H}, \sigma_{\Delta}\left(\widehat{X}_{\underline{s}^H}^{i,N,H},\mu_{\underline{s}^H}^{\widehat{\boldsymbol{X}}^{N,H}}\right) d W_{s}^i\right\rangle +2\int_{0}^{t}\left\langle \widehat{X}_{\underline{s}^H}^{i,N,H}, \sigma_{\Delta}\left(\widehat{X}_{\underline{s}^H}^{i,N,H},\mu_{\underline{s}^H}^{\widehat{\boldsymbol{X}}^{N,H}}\right) d W_{s}^i\right\rangle  \notag\\
&\quad  +\int_{0}^t \int_{\bR_{0}^ d}  \left(\left|c_{\Delta}\left(\widehat{X}_{\underline{s}^H}^{i,N,H},\mu_{\underline{s}^H}^{\widehat{\boldsymbol{X}}^{N,H}}\right) z\right|^{2}+2\left\langle \widehat{X}_{s-}^{i,N,H}-\widehat{X}_{\underline{s}^H}^{i,N,H},c_{\Delta}\left(\widehat{X}_{\underline{s}^H}^{i,N,H},\mu_{\underline{s}^H}^{\widehat{\boldsymbol{X}}^{N,H}}\right) z\right\rangle\right) \widetilde{N}^i(d s, d z)  \notag\\
&\quad+2\int_{0}^t \int_{\bR_{0}^ d}  \left\langle \widehat{X}_{\underline{s}^H}^{i,N,H},c_{\Delta}\left(\widehat{X}_{\underline{s}^H}^{i,N,H},\mu_{\underline{s}^H}^{\widehat{\boldsymbol{X}}^{N,H}}\right) z\right\rangle \widetilde{N}^i(d s, d z)  \notag\\
&\leq \vert x_0\vert^2 +\int_{0}^{t} \Bigg(2\left\vert \widehat{X}_{s}^{i,N,H}-\widehat{X}_{\underline{s}^H}^{i,N,H}\right\vert \left\vert b_H\left(\widehat{X}_{\underline{s}^H}^{i,N,H},\mu_{\underline{s}^H}^{\widehat{\boldsymbol{X}}^{N,H}}\right)\right\vert+ 2C_0\left(\vert \widehat{X}_{\underline{s}^H}^{i,N,H}\vert^2+ \mathcal{W}_2^2(\mu_{\underline{s}^H}^{\widehat{\boldsymbol{X}}^{N,H}},\delta_0)\right)+\vert \widehat{X}_{\underline{s}^H}^{i,N,H}\vert^2  \notag\\
&\quad +\vert b(0,\delta_0)\vert^2+\dfrac{L^2}{\Delta} +\dfrac{L^2}{\Delta} \int_{\bR_{0}^{d}} \vert z \vert^2\nu(dz)\Bigg) ds +2\int_{0}^{t}\left\langle \widehat{X}_{s}^{i,N,H}-\widehat{X}_{\underline{s}^H}^{i,N,H}, \sigma_{\Delta}\left(\widehat{X}_{\underline{s}^H}^{i,N,H},\mu_{\underline{s}^H}^{\widehat{\boldsymbol{X}}^{N,H}}\right) d W_{s}^i\right\rangle \notag\\
&\quad+2\int_{0}^{t}\left\langle \widehat{X}_{\underline{s}^H}^{i,N,H}, \sigma_{\Delta}\left(\widehat{X}_{\underline{s}^H}^{i,N,H},\mu_{\underline{s}^H}^{\widehat{\boldsymbol{X}}^{N,H}}\right) d W_{s}^i\right\rangle  \notag\\
&\quad  +\int_{0}^t \int_{\bR_{0}^ d}  \left(\left|c_{\Delta}\left(\widehat{X}_{\underline{s}^H}^{i,N,H},\mu_{\underline{s}^H}^{\widehat{\boldsymbol{X}}^{N,H}}\right) z\right|^{2}+2\left\langle \widehat{X}_{s-}^{i,N,H}-\widehat{X}_{\underline{s}^H}^{i,N,H},c_{\Delta}\left(\widehat{X}_{\underline{s}^H}^{i,N,H},\mu_{\underline{s}^H}^{\widehat{\boldsymbol{X}}^{N,H}}\right) z\right\rangle\right) \widetilde{N}^i(d s, d z)  \notag\\
&\quad+2\int_{0}^t \int_{\bR_{0}^ d}  \left\langle \widehat{X}_{\underline{s}^H}^{i,N,H},c_{\Delta}\left(\widehat{X}_{\underline{s}^H}^{i,N,H},\mu_{\underline{s}^H}^{\widehat{\boldsymbol{X}}^{N,H}}\right) z\right\rangle \widetilde{N}^i(d s, d z). \label{Xinh2}
\end{align}
Now, we define $\tau_R:=\inf\{t>0: \max_{i\in\{1,\ldots,N\}}\vert \widehat{X}_{t}^{i,N,H}\vert>R\}$ for each $R>0$ and $\tau:=s \wedge \tau_R$. On the one hand, using equation \eqref{xinh}, Condition \textbf{T3}, the isometry property of stochastic integrals and the fact that $\vert b_H(x,\mu)\vert h^H(x,\mu) \leq C_0$, we get
\begin{align}
&\mathbb{E}\left[\left\vert \widehat{X}_{\tau}^{i,N,H}-\widehat{X}_{\underline{\tau}^H}^{i,N,H}\right\vert^2 \right] \leq 3\bigg(\mathbb{E}\bigg[\left\vert b_H\left(\widehat{X}_{\underline{\tau}^H}^{i,N,H},\mu_{\underline{\tau}^H}^{\widehat{\boldsymbol{X}}^{N,H}}\right)\right\vert^2(\tau-\underline{\tau}^H)^2\bigg] \notag\\
&\quad+\left\vert\sigma_{\Delta}\left(\widehat{X}_{\underline{\tau}^H}^{i,N,H},\mu_{\underline{\tau}^H}^{\widehat{\boldsymbol{X}}^{N,H}}\right)\right\vert^2\left\vert W_{\tau}^i-W_{\underline{\tau}^H}^i\right\vert^2+\left\vert c_{\Delta}\left(\widehat{X}_{\underline{\tau}^H}^{i,N,H},\mu_{\underline{\tau}^H}^{\widehat{\boldsymbol{X}}^{N,H}}\right)\right\vert^2\left\vert Z_{\tau}^i-Z_{\underline{\tau}^H}^i\right\vert^2\bigg) \notag\\
&\leq 3\left(C_0^2\Delta^2+\dfrac{L^2}{\Delta}\mathbb{E}\left[\tau-\underline{\tau}^H\right]+\dfrac{L^2}{\Delta}\int_{\bR_{0}^{d}} \vert z \vert^2\nu(dz)\mathbb{E}\left[\tau-\underline{\tau}^H\right]\right) \notag\\
&\leq 3\left(C_0^2\Delta^2+L^2+L^2\int_{\bR_{0}^{d}} \vert z \vert^2\nu(dz)\right). \label{squaremoment}
\end{align}
On the other hand, Condition \textbf{T3} yields to 
\begin{equation} \label{delta}\begin{split}
&\left\vert \sigma_{\Delta}\left(\widehat{X}_{\underline{s}^H}^{i,N,H},\mu_{\underline{s}^H}^{\widehat{\boldsymbol{X}}^{N,H}}\right)\right\vert\leq \dfrac{L}{\sqrt{\Delta}},\; \; \left\vert c_{\Delta}\left(\widehat{X}_{\underline{s}^H}^{i,N,H},\mu_{\underline{s}^H}^{\widehat{\boldsymbol{X}}^{N,H}}\right)\right\vert\leq \dfrac{L}{\sqrt{\Delta}},\\
&\left\vert \widehat{X}_{\underline{s}^H}^{i,N,H}\right\vert\left\vert \sigma_{\Delta}\left(\widehat{X}_{\underline{s}^H}^{i,N,H},\mu_{\underline{s}^H}^{\widehat{\boldsymbol{X}}^{N,H}}\right)\right\vert\leq \dfrac{L}{\sqrt{\Delta}},\; \; \left\vert \widehat{X}_{\underline{s}^H}^{i,N,H}\right\vert\left\vert c_{\Delta}\left(\widehat{X}_{\underline{s}^H}^{i,N,H},\mu_{\underline{s}^H}^{\widehat{\boldsymbol{X}}^{N,H}}\right)\right\vert\leq \dfrac{L}{\sqrt{\Delta}}.
\end{split}
\end{equation}
Therefore, all the stochastic integrals to the Brownian motion and the compensated Poisson random measure above are square martingales. Thus, their moments are equal to zero.

Moreover, using \textbf{T3}, equation \eqref{xinh}, moment properties of the Brownian motion, the isometry property of stochastic integrals, and the fact that $\vert b_H(x,\mu)\vert^2 h^H(x,\mu) \leq C_0$, we get
\begin{align}
&\mathbb{E}\left[\left\vert \widehat{X}_{s}^{i,N,H}-\widehat{X}_{\underline{s}^H}^{i,N,H}\right\vert \left\vert b_H\left(\widehat{X}_{\underline{s}^H}^{i,N,H},\mu_{\underline{s}^H}^{\widehat{\boldsymbol{X}}^{N,H}}\right)\right\vert \big|\mathcal{F}_{\underline{s}^H}\right] \notag\\
&\leq \mathbb{E}\bigg[\left\vert b_H\left(\widehat{X}_{\underline{s}^H}^{i,N,H},\mu_{\underline{s}^H}^{\widehat{\boldsymbol{X}}^{N,H}}\right)\right\vert^2(s-\underline{s}^H) +\left\vert b_H\left(\widehat{X}_{\underline{s}^H}^{i,N,H},\mu_{\underline{s}^H}^{\widehat{\boldsymbol{X}}^{N,H}}\right)\right\vert \left\vert\sigma_{\Delta}\left(\widehat{X}_{\underline{s}^H}^{i,N,H},\mu_{\underline{s}^H}^{\widehat{\boldsymbol{X}}^{N,H}}\right)\right\vert\left\vert W_{s}^i-W_{\underline{s}^H}^i\right\vert \notag\\
&\quad +\left\vert b_H\left(\widehat{X}_{\underline{s}^H}^{i,N,H},\mu_{\underline{s}^H}^{\widehat{\boldsymbol{X}}^{N,H}}\right)\right\vert \left\vert c_{\Delta}\left(\widehat{X}_{\underline{s}^H}^{i,N,H},\mu_{\underline{s}^H}^{\widehat{\boldsymbol{X}}^{N,H}}\right)\right\vert\left\vert Z_{s}^i-Z_{\underline{s}^H}^i\right\vert \big|\mathcal{F}_{\underline{s}^H}\bigg]  \notag\\
&\leq \left\vert b_H\left(\widehat{X}_{\underline{s}^H}^{i,N,H},\mu_{\underline{s}^H}^{\widehat{\boldsymbol{X}}^{N,H}}\right)\right\vert^2(s-\underline{s}^H)+\dfrac{L}{\sqrt{\Delta}}\left\vert b_H\left(\widehat{X}_{\underline{s}^H}^{i,N,H},\mu_{\underline{s}^H}^{\widehat{\boldsymbol{X}}^{N,H}}\right)\right\vert\sqrt{s-\underline{s}^H}  \notag\\
&\quad+\dfrac{L}{\sqrt{\Delta}}\left(\int_{\bR_{0}^{d}} \vert z \vert^2\nu(dz)\right)^{1/2}\left\vert b_H\left(\widehat{X}_{\underline{s}^H}^{i,N,H},\mu_{\underline{s}^H}^{\widehat{\boldsymbol{X}}^{N,H}}\right)\right\vert\sqrt{s-\underline{s}^H} \notag\\
&\leq C_0\Delta+L\sqrt{C_0}+L\sqrt{C_0}\left(\int_{\bR_{0}^{d}} \vert z \vert^2\nu(dz)\right)^{1/2}. \label{Xb}
\end{align}
This, combined with $\mathbb{E}\left[\mathcal{W}_2^2(\mu_{\underline{s}^H}^{\widehat{\boldsymbol{X}}^{N,H}},\delta_0) {\bf 1}_{s\leq \tau_R}\right] = \mathbb{E}\left[\vert \widehat{X}_{\underline{s}^H}^{i,N,H}\vert^2 {\bf 1}_{s\leq \tau_R}\right]$ for $i\in\{1,\ldots,N\}$, yields that for any $t\in (0,T]$, 
 \begin{align*}
 \mathbb{E}\left[\left\vert \widehat{X}_{t\wedge \tau_R}^{i,N,H}\right\vert^2\right]\leq \vert x_0\vert^2 + \int_{0}^t C\left(L,\Delta,b(0,\delta_0),\mu_2\right) \left(1+\mathbb{E}\left[\left\vert \widehat{X}_{\underline{s}^H}^{i,N,H}\right\vert^2 {\bf 1}_{s\leq \tau_R}\right] \right)ds,
\end{align*}
where $\mu_2:=\int_{\bR_{0}^{d}} \vert z \vert^2\nu(dz)$.

Next, using equation \eqref{xinh} and \eqref{squaremoment}, we get
\begin{align*}
\mathbb{E}\left[\left\vert \widehat{X}_{\underline{s}^H}^{i,N,H}\right\vert^2 {\bf 1}_{s\leq \tau_R}\right] &\leq 2\mathbb{E}\left[\left\vert \widehat{X}_{s}^{i,N,H}\right\vert^2 {\bf 1}_{s\leq \tau_R}\right]+2\mathbb{E}\left[\left\vert \widehat{X}_{s}^{i,N,H}- \widehat{X}_{\underline{s}^H}^{i,N,H}\right\vert^2\right]\\
&\leq  2\mathbb{E}\left[\left\vert \widehat{X}_{s\wedge \tau_R}^{i,N,H}\right\vert^2\right]+ 6\left(C_0^2\Delta^2+L^2+L^2\mu_2\right).
\end{align*}
This implies that for any $t\in (0,T]$, 
\begin{align*}
\mathbb{E}\left[\left\vert \widehat{X}_{t\wedge \tau_R}^{i,N,H}\right\vert^2\right]&\leq  C\left(x_0,L,\Delta,b(0,\delta_0),\mu_2,T\right)\left(1+\int_0^t\mathbb{E}\left[\left\vert \widehat{X}_{s\wedge \tau_R}^{i,N,H}\right\vert^2\right] ds\right),
\end{align*}
which, together with Gronwall's inequality, yields that for any $t\in (0,T]$, 
\begin{align*}
\max_{i\in\{1,\ldots,N\}}\sup_{t\in[0,T]}\mathbb{E}\left[\left\vert \widehat{X}_{t\wedge \tau_R}^{i,N,H}\right\vert^2\right] \leq C\left(x_0,L,\Delta,b(0,\delta_0),\mu_2,T\right).
\end{align*}
Then, using Markov's inequality, we obtain that
\begin{align*}
\pr(\tau_R<T) &\leq \sum_{i=1}^{N}\pr(\vert \widehat{X}_{T\wedge \tau_R}^{i,N,H}\vert>R) =N\pr(\vert \widehat{X}_{T\wedge \tau_R}^{1,N,H}\vert>R)\\
&\leq N\dfrac{\mathbb{E}[\vert \widehat{X}_{T\wedge \tau_R}^{1,N,H}\vert^2]}{R^2}\\
&\leq  N\dfrac{C\left(x_0,L,\Delta,b(0,\delta_0),\mu_2,T\right)}{R^2},
\end{align*}
which tends to zero as $R \uparrow \infty$. Therefore, $\tau_R \uparrow \infty$
as $R \uparrow \infty$. Then due to Fatou's lemma, we get
\begin{align}\label{supE}
\max_{i\in\{1,\ldots,N\}}\sup_{t\in[0,T]}\mathbb{E}\left[\left\vert \widehat{X}_{t}^{i,N,H}\right\vert^2\right] \leq C\left(x_0,L,\Delta,b(0,\delta_0),\mu_2,T\right).
\end{align}
Now, from \eqref{Xinh2}, we get that for any $t\in (0,T]$,
\begin{align*}
\vert \widehat{X}_{t}^{i,N,H}\vert^2&\leq \vert x_0\vert^2 +\int_{0}^{T} \Bigg(2\left\vert \widehat{X}_{s}^{i,N,H}-\widehat{X}_{\underline{s}^H}^{i,N,H}\right\vert \left\vert b_H\left(\widehat{X}_{\underline{s}^H}^{i,N,H},\mu_{\underline{s}^H}^{\widehat{\boldsymbol{X}}^{N,H}}\right)\right\vert+ 2C_0\left(\vert \widehat{X}_{\underline{s}^H}^{i,N,H}\vert^2+ \mathcal{W}_2^2(\mu_{\underline{s}^H}^{\widehat{\boldsymbol{X}}^{N,H}},\delta_0)\right)\\
&\quad +\vert \widehat{X}_{\underline{s}^H}^{i,N,H}\vert^2+\vert b(0,\delta_0)\vert^2+\dfrac{L^2}{\Delta} +\dfrac{L^2}{\Delta} \mu_2\Bigg) ds +2\int_{0}^{t}\left\langle \widehat{X}_{s}^{i,N,H}-\widehat{X}_{\underline{s}^H}^{i,N,H}, \sigma_{\Delta}\left(\widehat{X}_{\underline{s}^H}^{i,N,H},\mu_{\underline{s}^H}^{\widehat{\boldsymbol{X}}^{N,H}}\right) d W_{s}^i\right\rangle \\
&\quad+2\int_{0}^{t}\left\langle \widehat{X}_{\underline{s}^H}^{i,N,H}, \sigma_{\Delta}\left(\widehat{X}_{\underline{s}^H}^{i,N,H},\mu_{\underline{s}^H}^{\widehat{\boldsymbol{X}}^{N,H}}\right) d W_{s}^i\right\rangle\\
&\quad  +\int_{0}^t \int_{\bR_{0}^ d}  \left(\left|c_{\Delta}\left(\widehat{X}_{\underline{s}^H}^{i,N,H},\mu_{\underline{s}^H}^{\widehat{\boldsymbol{X}}^{N,H}}\right) z\right|^{2}+2\left\langle \widehat{X}_{s-}^{i,N,H}-\widehat{X}_{\underline{s}^H}^{i,N,H},c_{\Delta}\left(\widehat{X}_{\underline{s}^H}^{i,N,H},\mu_{\underline{s}^H}^{\widehat{\boldsymbol{X}}^{N,H}}\right) z\right\rangle\right) \widetilde{N}^i(d s, d z)\\
&\quad+2\int_{0}^t \int_{\bR_{0}^ d}  \left\langle \widehat{X}_{\underline{s}^H}^{i,N,H},c_{\Delta}\left(\widehat{X}_{\underline{s}^H}^{i,N,H},\mu_{\underline{s}^H}^{\widehat{\boldsymbol{X}}^{N,H}}\right) z\right\rangle \widetilde{N}^i(d s, d z).
\end{align*}
This, combined with \eqref{Xb}, the fact that  $\mathbb{E}\left[\mathcal{W}_2^2(\mu_{\underline{s}^H}^{\widehat{\boldsymbol{X}}^{N,H}},\delta_0)\right] = \mathbb{E}\left[|\widehat{X}_{\underline{s}^H}^{i,N,H}|^2\right]$ and \eqref{supE}, deduces that
\begin{align}\label{C0}
\max_{i\in\{1,\ldots,N\}}\mathbb{E}\left[\sup_{t\in[0,T]}\left\vert \widehat{X}_{t}^{i,N,H}\right\vert^2\right] \leq C\left(x_0,L,\Delta,b(0,\delta_0),\mu_2,T\right)=:\overline{C_0}.
\end{align}
Observe that
\begin{align*}
\left\{t_k\leq T\right\}&=\left\{t_k\leq T, \;\; \max_{i\in\{1,\ldots,N\}}\sup_{t\in[0,T]}\left(\left\vert \widehat{X}_{t}^{i,N,H} \right\vert^{\beta_1}+\mathcal{W}_2^{\beta_2}\left(\mu_{t}^{\widehat{\boldsymbol{X}}^{N,H}},\delta_0\right)\right) \leq \dfrac{H}{2}\right\}\\
&\qquad\cup \left\{\max_{i\in\{1,\ldots,N\}}\sup_{t\in[0,T]}\left(\left\vert \widehat{X}_{t}^{i,N,H} \right\vert^{\beta_1}+\mathcal{W}_2^{\beta_2}\left(\mu_{t}^{\widehat{\boldsymbol{X}}^{N,H}},\delta_0\right)\right) > \dfrac{H}{2}\right\}\\
&\subset \left\{t_k^H\leq T\right\} \cup\left\{\max_{i\in\{1,\ldots,N\}}\sup_{t\in[0,T]}\left(\left\vert \widehat{X}_{t}^{i,N,H} \right\vert^{\beta_1}+\mathcal{W}_2^{\beta_2}\left(\mu_{t}^{\widehat{\boldsymbol{X}}^{N,H}},\delta_0\right)\right) > \dfrac{H}{2}\right\}.
\end{align*}
Then, using $\mathbb{E}\left[\sup_{t\in[0,T]}\mathcal{W}_2^2(\mu_{t}^{\widehat{\boldsymbol{X}}^{N,H}},\delta_0)\right] \leq \mathbb{E}\left[\sup_{t\in[0,T]}|\widehat{X}_{t}^{i,N,H}|^2\right]$ for  $i\in\{1,\ldots,N\}$ and Markov's inequality, we get that for any $H>0$,
\begin{align*}
\pr(t_k\leq T)&\leq \pr\left(t_k^H\leq T\right)+\pr\left(\max_{i\in\{1,\ldots,N\}}\sup_{t\in[0,T]}\left(\left\vert \widehat{X}_{t}^{i,N,H} \right\vert^{\beta_1}+\mathcal{W}_2^{\beta_2}\left(\mu_{t}^{\widehat{\boldsymbol{X}}^{N,H}},\delta_0\right)\right) > \dfrac{H}{2}\right)\\
&\leq \pr\left(t_k^H\leq T\right)+\pr\left(\max_{i\in\{1,\ldots,N\}}\sup_{t\in[0,T]}\left\vert \widehat{X}_{t}^{i,N,H} \right\vert^{\beta_1}> \dfrac{H}{4}\right)+\pr\left(\max_{i\in\{1,\ldots,N\}}\sup_{t\in[0,T]}\mathcal{W}_2^{\beta_2}\left(\mu_{t}^{\widehat{\boldsymbol{X}}^{N,H}},\delta_0\right)> \dfrac{H}{4}\right)\\
&= \pr\left(t_k^H\leq T\right)+\pr\left(\max_{i\in\{1,\ldots,N\}}\sup_{t\in[0,T]}\left\vert \widehat{X}_{t}^{i,N,H} \right\vert^{2}> \Big(\dfrac{H}{4}\Big)^{2/\beta_1}\right)\\
&\quad+\pr\left(\max_{i\in\{1,\ldots,N\}}\sup_{t\in[0,T]}\mathcal{W}_2^{2}\left(\mu_{t}^{\widehat{\boldsymbol{X}}^{N,H}},\delta_0\right)> \Big(\dfrac{H}{4}\Big)^{2/\beta_2}\right)\\
&\leq \pr\left(t_k^H\leq T\right)+\left(\dfrac{4}{H}\right)^{2/\beta_1}\sum_{i=1}^{N}\mathbb{E}\left[\sup_{t\in[0,T]}\left\vert \widehat{X}_{t}^{i,N,H}\right\vert^2\right]+\left(\dfrac{4}{H}\right)^{2/\beta_2}\sum_{i=1}^{N}\mathbb{E}\left[\sup_{t\in[0,T]}\mathcal{W}_2^2(\mu_{t}^{\widehat{\boldsymbol{X}}^{N,H}},\delta_0)\right]\\
&\leq \pr\left(t_k^H\leq T\right)+\left(\dfrac{4}{H}\right)^{2/\beta_1}\sum_{i=1}^{N}\mathbb{E}\left[\sup_{t\in[0,T]}\left\vert \widehat{X}_{t}^{i,N,H}\right\vert^2\right]+\left(\dfrac{4}{H}\right)^{2/\beta_2}\sum_{i=1}^{N}\mathbb{E}\left[\sup_{t\in[0,T]}\left\vert \widehat{X}_{t}^{i,N,H}\right\vert^2\right]\\
&\leq \pr\left(t_k^H\leq T\right)+\left(\left(\dfrac{4}{H}\right)^{2/\beta_1}+\left(\dfrac{4}{H}\right)^{2/\beta_2}\right)N\overline{C_0}.
\end{align*}
Then, let $k\uparrow\infty$ and recall that $\lim\limits_{k \to +\infty} t_k^H =+\infty \quad \text{a.s.}$, we have that for any $H>0$,
\begin{align*}
\limsup_{k\to\infty}\pr(t_k\leq T) \leq \left(\left(\dfrac{4}{H}\right)^{2/\beta_1}+\left(\dfrac{4}{H}\right)^{2/\beta_2}\right)N\overline{C_0}.
\end{align*}
Then, letting $H\uparrow\infty$, we get
\begin{align*}
\lim_{k\to\infty}\pr(t_k\leq T) =0.
\end{align*}
Therefore, $t_k \to \infty$ in probability as $k\uparrow\infty$. Since $(t_k)_{k\geq 0}$ is an increasing sequence, we have 
\begin{align*}
\lim\limits_{k \to +\infty} t_k =+\infty \quad \text{a.s.} 
\end{align*}
Thus, the result follows.
\end{proof} 

\begin{Rem}
Our approach to showing Proposition \ref{dinh ly 4} is quite different from that of \cite{FG}. Indeed, in \cite{FG}, the auxiliary process $\widehat{X}$ is constructed by the projection method, which makes its analysis very hard in the case of McKean-Vlasov SDEs. In our proof, $\widehat{X}$ is constructed as an It\^o process, which allows us to apply It\^o's formula to $\vert\widehat{X}\vert^2$. This fact helps to greatly simplify our proof.
\end{Rem}

Let all assumptions of Proposition \ref{dinh ly 4} be satisfied, we define by $\underline{t} := \max \left\{t_{n}: t_{n} \leq t\right\}$ the nearest time point before $t$, and by $N_{t} := \max \left\{n: t_{n} \leq t\right\}$ the number of timesteps approximation up to time $t$. Observe that $\underline{t}$ is a stopping time. Thus, we define the standard continuous interpolation as
\begin{equation}\label{EM3}
	\widehat{X}_{t}^{i,N}=\widehat{X}_{\underline{t}}^{i,N}+b\left(\widehat{X}_{\underline{t}}^{i,N},\mu_{\underline{t}}^{\widehat{\boldsymbol{X}}^{N}}\right)(t-\underline{t})+\sigma_{\Delta}\left(\widehat{X}_{\underline{t}}^{i,N},\mu_{\underline{t}}^{\widehat{\boldsymbol{X}}^{N}}\right)\left(W_{t}^i-W_{\underline{t}}^i\right)+c_{\Delta}\left(\widehat{X}_{\underline{t}}^{i,N},\mu_{\underline{t}}^{\widehat{\boldsymbol{X}}^{N}}\right)\left(Z_{t}^i-Z_{\underline{t}}^i\right).
\end{equation}
Hence, $\widehat{X}^{i,N}=(\widehat{X}_t^{i,N})_{t \geq 0}$ is the solution to the following SDE with jumps
\begin{equation}\label{EM2}
d \widehat{X}_{t}^{i,N}= b \left(\widehat{X}_{\underline{t}}^{i,N},\mu_{\underline{t}}^{\widehat{\boldsymbol{X}}^{N}}\right) dt+\sm_{\Delta} \left(\widehat{X}_{\underline{t}}^{i,N},\mu_{\underline{t}}^{\widehat{\boldsymbol{X}}^{N}}\right)  dW_{t}^i +c_{\Delta} \left( \widehat{X}_{\underline{t}}^{i,N},\mu_{\underline{t}}^{\widehat{\boldsymbol{X}}^{N}} \right)  dZ_{t}^i, \qquad \widehat{X}_{0}^{i,N} = x_0,
\end{equation}
whose integral equation has the following form 
\begin{equation*} 
\widehat{X}_{t}^{i,N}=x_0+\int_0^t b\left(\widehat{X}_{\underline{s}}^{i,N},\mu_{\underline{s}}^{\widehat{\boldsymbol{X}}^{N}}\right)ds+\int_0^t \sigma_{\Delta}\left(\widehat{X}_{\underline{s}}^{i,N},\mu_{\underline{s}}^{\widehat{\boldsymbol{X}}^{N}}\right)dW_s^i + \int_0^t\int_{\mathbb{R}_0^d} c_{\Delta}\left(\widehat{X}_{\underline{s}}^{i,N},\mu_{\underline{s}}^{\widehat{\boldsymbol{X}}^{N}}\right)z\widetilde{N}^i(d s, d z).
\end{equation*}

In all what follows, the following classical moment estimate for the Brownian motion $W$ will be useful
\begin{equation} \label{markov} 
\mathbb{E}\left[\left|W_t-W_{\underline{t}^H}\right|^r\big| \mathcal{F}_{\underline{t}^H}\right] \le   C_r(t-\underline{t}^H)^{r/2},
\end{equation} 
for any $t>0$, $r>0$ and some positive constant $C_r$. 

\subsection{Moments of the tamed-adaptive Euler-Maruyama scheme}


	We now state the first estimate on the moments of $\widehat{X}^{i,N}=(\widehat{X}_t^{i,N})_{t \geq 0}$. 
	\begin{Lem}\label{menh de 3} 
		Assume Conditions \textbf{T1}-\textbf{T3} and that Condition \textbf{A5} holds for $q=2p_0$. Then for any $p\in [1,2p_0]$ and  $T>0$, there exists a positive constant 
		$C\left(p,x_0,L,\Delta,b(0,\delta_0),\mu_2,\mu_{p/2},T\right)$ with $\mu_{p/2}:=\int_{\bR_{0}^{d}} \vert z \vert^{p/2}\nu(dz)$ such that  
		\begin{align*}
		\max_{i\in\{1,\ldots,N\}}\bE\left[\sup_{t\in[0,T]}|\widehat{X}_t^{i,N}|^p \right] \leq C\left(p,x_0,L,\Delta,b(0,\delta_0),\mu_2,\mu_{p/2},T\right).
		\end{align*}
		\end{Lem}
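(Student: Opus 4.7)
The plan is to combine the It\^o formula approach used in the proof of Proposition \ref{dinh ly 4} with an induction on the moment exponent and a localization argument, and then upgrade the pointwise-in-$t$ moment bound to the bound inside the supremum by the Burkholder-Davis-Gundy inequality. Since Proposition \ref{dinh ly 4} guarantees that $t_k \uparrow \infty$ a.s., the process $\widehat{X}^{i,N}$ satisfying \eqref{EM2} is defined on all of $[0,\infty)$. For fixed $R>0$ define the stopping time $\tau_R:=\inf\{t\ge 0:\max_{i}|\widehat X_t^{i,N}|>R\}$, so that all stochastic integrals stopped at $\tau_R$ are genuine square-integrable martingales (using Condition \textbf{T3} which gives $|\sm_\Delta|, |c_\Delta| \le L/\sqrt{\Delta}$). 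By H\"older's inequality it suffices to prove the bound for even integers $p\in[2,2p_0]$, and we proceed by induction on $p$, the case $p=2$ being exactly the content of estimate \eqref{C0} carried out for the untruncated scheme.

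The main step is applying It\^o's formula to $|\widehat X_{t\wedge\tau_R}^{i,N}|^p$, exactly as in \eqref{ulxt}, but now with the frozen coefficients $b(\widehat X_{\underline{s}}^{i,N},\mu_{\underline{s}}^{\widehat{\boldsymbol{X}}^{N}})$, $\sigma_\Delta(\widehat X_{\underline{s}}^{i,N},\mu_{\underline{s}}^{\widehat{\boldsymbol{X}}^{N}})$, $c_\Delta(\widehat X_{\underline{s}}^{i,N},\mu_{\underline{s}}^{\widehat{\boldsymbol{X}}^{N}})$. I would then decompose
\[
\langle \widehat X_s^{i,N}, b(\widehat X_{\underline{s}}^{i,N},\mu_{\underline{s}}^{\widehat{\boldsymbol{X}}^{N}})\rangle = \langle \widehat X_{\underline{s}}^{i,N}, b-b(0,\delta_0)\rangle + \langle \widehat X_{\underline{s}}^{i,N}, b(0,\delta_0)\rangle + \langle \widehat X_s^{i,N}-\widehat X_{\underline{s}}^{i,N}, b\rangle,
\]
bound the first summand by \textbf{T2}, and control the last one using Cauchy-Schwarz together with $|b|^2 h(\cdot,\cdot)\le L$ from \textbf{T1} (which makes $|b|(t-\underline t)\le L\Delta/|b|$ or $|b|(t-\underline t)^{1/2}$ bounded after combining with the isometries for $W$ and $\widetilde N$, as already exploited in \eqref{Xb}). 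The diffusion terms $|\widehat X_s^{i,N}|^{p-2}|\sm_\Delta|^2$ are dominated using $|\sigma_\Delta|^2\le L^2/\Delta$ and Young's inequality to put powers on $|\widehat X_s^{i,N}|$. For the jump compensator I would use the standard estimate
\[
\bigl||a+b|^p-|a|^p - p|a|^{p-2}\langle a,b\rangle\bigr| \le C_p\bigl(|a|^{p-2}|b|^2 + |b|^p\bigr),
\]
with $a=\widehat X_s^{i,N}$ and $b=c_\Delta z$, so after integration against $\nu(dz)$ and using \textbf{A5} (with $q=2p_0$) together with $|c_\Delta|\le L/\sqrt{\Delta}$, the compensator is dominated by $C(|\widehat X_s^{i,N}|^{p-2}+1)+ C/\Delta^{p/2}$ times moments of $\nu$.

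Taking expectations kills the Brownian and compensated Poisson martingales (they are true martingales up to $\tau_R$), and using $\mathcal{W}_2^2(\mu_{\underline{s}}^{\widehat{\boldsymbol{X}}^{N}},\delta_0)=\tfrac{1}{N}\sum_{j}|\widehat X_{\underline{s}}^{j,N}|^2$ together with the exchangeability of the $\widehat X^{j,N}$'s, I can close the estimate in terms of $\max_i\bE[|\widehat X_{\underline{s}\wedge\tau_R}^{i,N}|^p]$ plus lower-order quantities controlled by the inductive hypothesis. A Gronwall argument then yields $\sup_{t\in[0,T]}\max_i\bE[|\widehat X_{t\wedge\tau_R}^{i,N}|^p]\le C(T)$ with $C(T)$ independent of $R$. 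Letting $R\uparrow\infty$ and applying Fatou's lemma gives the same bound without the stopping. Finally, to move the supremum inside the expectation, I would return to the It\^o expansion and apply Lemma \ref{BDGjump} together with Doob's maximal inequality to the Brownian and jump martingales: each term is controlled by the bound $\sup_{t\le T}\max_i\bE[|\widehat X_t^{i,N}|^p]$ already obtained, multiplied by integrable $\nu$-moments and powers of $L/\sqrt{\Delta}$.

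The main obstacle I expect is the bookkeeping for the jump compensator at general even $p$: the binomial expansion used in \eqref{for1}--\eqref{ulxct1} is delicate and must be combined with the boundedness $|c_\Delta|\le L/\sqrt{\Delta}$ rather than the polynomial growth used for the exact solution. One must check that each term arising after expansion produces either $|\widehat X_s^{i,N}|^p$ (absorbable by Gronwall), $\mathcal{W}_2^p$ (handled by exchangeability), or a deterministic constant depending only on $L$, $\Delta$, $x_0$, $b(0,\delta_0)$, $T$, and the required $\nu$-moments. The dependence of the final constant on $\Delta$ is allowed here since the statement makes no claim of uniformity in $\Delta$; this is crucial because the boundedness $|\sigma_\Delta|,|c_\Delta|\le L/\sqrt{\Delta}$ degenerates as $\Delta\downarrow 0$.
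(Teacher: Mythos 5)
Your proposal is feasible but takes a genuinely different route from the paper, and the final step as written has a gap that would need to be filled more carefully.

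The paper does \emph{not} apply It\^o to $|\widehat X^{i,N,H}_t|^p$. Instead it reuses the $L^2$ inequality \eqref{Xinh2}/\eqref{l1} for the $H$-truncated scheme already derived in the proof of Proposition~\ref{dinh ly 4}, raises that scalar inequality to the power $p/2$, and only then applies BDG and Gronwall. The crucial payoff is that the martingale integrands in the $L^2$ expansion are $\langle\widehat X_{\underline s},\sigma_\Delta(\widehat X_{\underline s},\cdot)\,dW\rangle$ and the analogous jump term, and Condition~\textbf{T3} gives the \emph{product} bounds $|\widehat X_{\underline s}|\,|\sigma_\Delta(\widehat X_{\underline s},\cdot)|\le L/\sqrt\Delta$ and $|\widehat X_{\underline s}|\,|c_\Delta(\widehat X_{\underline s},\cdot)|\le L/\sqrt\Delta$ (see \eqref{delta}). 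Hence the quadratic variations are bounded by deterministic constants, and BDG for the $p/2$-th power produces pure constants with no self-reference to $\bE[\sup|\widehat X|^p]$. The paper then passes $H\uparrow\infty$ using the convergence in probability $\sup_{t\le T}|\widehat X^{i,N,H}_t|\to\sup_{t\le T}|\widehat X^{i,N}_t|$ extracted from Markov's inequality and \eqref{C0}, together with Fatou along a subsequence $H_n$; this last device replaces the stopping-time localization you propose.

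Your proposal instead applies It\^o directly to $|\widehat X^{i,N}_{t\wedge\tau_R}|^p$. That is a valid starting point, and the drift/diffusion bookkeeping, the generic jump-compensator estimate, and the use of exchangeability for $\mathcal W_2^2$ are all fine. The gap is in the concluding sentence, where you assert that the sup-inside-expectation step ``is controlled by the bound $\sup_{t\le T}\max_i\bE[|\widehat X_t^{i,N}|^p]$ already obtained.'' In the $|\cdot|^p$ expansion, the Brownian martingale integrand is $|\widehat X_s|^{p-2}\langle\widehat X_s,\sigma_\Delta(\widehat X_{\underline s},\cdot)\,dW_s\rangle$ — note the mismatch between $s$ in the prefactor and $\underline s$ in the coefficient — so \textbf{T3} no longer cancels the state variable, and BDG produces $\bE\bigl[(\int_0^T|\widehat X_s|^{2(p-1)}|\sigma_\Delta|^2\,ds)^{1/2}\bigr]$. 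For $p$ near $2p_0$ this cannot be bounded by $\sup_t\bE[|\widehat X_t|^p]$: either you need moments of order up to $2(p-1)>2p_0$ (not available), or you must keep the supremum on one factor and absorb, via Young's inequality, a small multiple of $\bE[\sup_{s\le T}|\widehat X_{s\wedge\tau_R}|^p]$ into the left-hand side. The latter is standard and works with your stopping time, but it is not what your sketch says, and it must be done before letting $R\uparrow\infty$. The analogous care is needed for the compensated-Poisson martingale. This is precisely the circularity that the paper's $L^2$-then-raise-to-power-$p/2$ trick is designed to avoid.

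A minor further point: you claim that the $p=2$ case ``is exactly the content of estimate \eqref{C0} carried out for the untruncated scheme,'' but \eqref{C0} is for the $H$-truncated process $\widehat X^{i,N,H}$. To use it for $\widehat X^{i,N}$ you still need the limit argument $H\uparrow\infty$ (which the paper does) or a fresh $L^2$ derivation under your stopping time, which you assume but do not carry out. This is fixable, but is not ``exactly'' \eqref{C0}.
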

	
	\begin{proof}  
	 Recall that the process $\widehat{X}^{i,N,H}=(\widehat{X}_{t}^{i,N,H})_{t\geq 0}$ is defined in \eqref{EM00} and \eqref{xinh}. Using Markov's inequality, the estimate $\mathbb{E}\left[\sup_{t\in[0,T]}\mathcal{W}_2^2(\mu_{t}^{\widehat{\boldsymbol{X}}^{N,H}},\delta_0)\right] \leq \mathbb{E}\left[\sup_{t\in[0,T]}|\widehat{X}_{t}^{i,N,H}|^2\right]$ and \eqref{C0}, we obtain that for any $T>0$, $i\in\{1,\ldots,N\}$ and $H>0$,
		\begin{align*}
		&\pr\left(\sup_{t\in[0,T]}|\widehat{X}_t^{i,N}|\neq \sup_{t\in[0,T]}|\widehat{X}_t^{i,N,H}|\right) \\
		&\leq \pr\left(\sup_{t\in[0,T]}\left(\left\vert \widehat{X}_{t}^{i,N,H} \right\vert^{\beta_1}+\mathcal{W}_2^{\beta_2}\left(\mu_{t}^{\widehat{\boldsymbol{X}}^{N,H}},\delta_0\right)\right)>H\right)\\
		&\leq \pr\left(\sup_{t\in[0,T]}\left\vert \widehat{X}_{t}^{i,N,H} \right\vert^{\beta_1}> \dfrac{H}{2}\right)+\pr\left(\sup_{t\in[0,T]}\mathcal{W}_2^{\beta_2}\left(\mu_{t}^{\widehat{\boldsymbol{X}}^{N,H}},\delta_0\right)> \dfrac{H}{2}\right)\\
		&=\pr\left(\sup_{t\in[0,T]}\left\vert \widehat{X}_{t}^{i,N,H} \right\vert^{2}> \Big(\dfrac{H}{2}\Big)^{2/\beta_1}\right)+\pr\left(\sup_{t\in[0,T]}\mathcal{W}_2^{2}\left(\mu_{t}^{\widehat{\boldsymbol{X}}^{N,H}},\delta_0\right)> \Big(\dfrac{H}{2}\Big)^{2/\beta_2}\right)\\
		&\leq\left(\dfrac{2}{H}\right)^{2/\beta_1}\mathbb{E}\left[\sup_{t\in[0,T]}\left\vert \widehat{X}_{t}^{i,N,H}\right\vert^2\right]+\left(\dfrac{2}{H}\right)^{2/\beta_2}\mathbb{E}\left[\sup_{t\in[0,T]}\mathcal{W}_2^2(\mu_{t}^{\widehat{\boldsymbol{X}}^{N,H}},\delta_0)\right]\\
		&\leq \left(\dfrac{2}{H}\right)^{2/\beta_1}\mathbb{E}\left[\sup_{t\in[0,T]}\left\vert \widehat{X}_{t}^{i,N,H}\right\vert^2\right]+\left(\dfrac{2}{H}\right)^{2/\beta_2}\mathbb{E}\left[\sup_{t\in[0,T]}\left\vert \widehat{X}_{t}^{i,N,H}\right\vert^2\right]\\
		&\leq \left(\left(\dfrac{2}{H}\right)^{2/\beta_1}+\left(\dfrac{2}{H}\right)^{2/\beta_2}\right)\overline{C_0},
		\end{align*}
		which tends to zero as $H \uparrow \infty$. This implies that $\sup_{t\in[0,T]}|\widehat{X}_t^{i,N,H}|\to\sup_{t\in[0,T]}|\widehat{X}_t^{i,N}|$ in probability as $H \uparrow \infty$. Thus, for any $T>0$ and $i\in\{1,\ldots,N\}$, there exists a sequence $\{H_n\}_{n\geq 1}$ that tends to infinity such that $\sup_{t\in[0,T]}|\widehat{X}_t^{i,N,H_n}|\to \sup_{t\in[0,T]}|\widehat{X}_t^{i,N}|$ a.s. as $n \uparrow \infty$.
		
		Now, from \eqref{Xinh2}, we have that for any $t>0$, $i\in\{1,\ldots,N\}$ and $H>0$,
		\begin{align}
		&\vert \widehat{X}_{t}^{i,N,H}\vert^2\leq \vert x_0\vert^2 +\int_{0}^{T} \Bigg(2\left\vert \widehat{X}_{s}^{i,N,H}-\widehat{X}_{\underline{s}^H}^{i,N,H}\right\vert \left\vert b_H\left(\widehat{X}_{\underline{s}^H}^{i,N,H},\mu_{\underline{s}^H}^{\widehat{\boldsymbol{X}}^{N,H}}\right)\right\vert+ 2C_0\mathcal{W}_2^2(\mu_{\underline{s}^H}^{\widehat{\boldsymbol{X}}^{N,H}},\delta_0)\notag\\
		&\quad +\vert b(0,\delta_0)\vert^2+\dfrac{L^2}{\Delta} +\dfrac{L^2}{\Delta} \int_{\bR_{0}^{d}} \vert z \vert^2\nu(dz)\Bigg) ds +2\int_{0}^{t}\left\langle \widehat{X}_{s}^{i,N,H}-\widehat{X}_{\underline{s}^H}^{i,N,H}, \sigma_{\Delta}\left(\widehat{X}_{\underline{s}^H}^{i,N,H},\mu_{\underline{s}^H}^{\widehat{\boldsymbol{X}}^{N,H}}\right) d W_{s}^i\right\rangle \notag\\
		&\quad+2\int_{0}^{t}\left\langle \widehat{X}_{\underline{s}^H}^{i,N,H}, \sigma_{\Delta}\left(\widehat{X}_{\underline{s}^H}^{i,N,H},\mu_{\underline{s}^H}^{\widehat{\boldsymbol{X}}^{N,H}}\right) d W_{s}^i\right\rangle  \notag\\
		&\quad  +\int_{0}^t \int_{\bR_{0}^ d}  \left(\left|c_{\Delta}\left(\widehat{X}_{\underline{s}^H}^{i,N,H},\mu_{\underline{s}^H}^{\widehat{\boldsymbol{X}}^{N,H}}\right) z\right|^{2}+2\left\langle \widehat{X}_{s-}^{i,N,H}-\widehat{X}_{\underline{s}^H}^{i,N,H},c_{\Delta}\left(\widehat{X}_{\underline{s}^H}^{i,N,H},\mu_{\underline{s}^H}^{\widehat{\boldsymbol{X}}^{N,H}}\right) z\right\rangle\right) \widetilde{N}^i(d s, d z)  \notag\\
		&\quad+2\int_{0}^t \int_{\bR_{0}^ d}  \left\langle \widehat{X}_{\underline{s}^H}^{i,N,H},c_{\Delta}\left(\widehat{X}_{\underline{s}^H}^{i,N,H},\mu_{\underline{s}^H}^{\widehat{\boldsymbol{X}}^{N,H}}\right) z\right\rangle \widetilde{N}^i(d s, d z) +(2C_0+1) \int_{0}^{t}\vert \widehat{X}_{\underline{s}^H}^{i,N,H}\vert^2  ds.  \label{l1}
		\end{align}
		First, we have
		\begin{align}
	&\mathbb{E}\left[\left\vert \widehat{X}_{s}^{i,N,H}-\widehat{X}_{\underline{s}^H}^{i,N,H}\right\vert^{p/2} \left\vert b_H\left(\widehat{X}_{\underline{s}^H}^{i,N,H},\mu_{\underline{s}^H}^{\widehat{\boldsymbol{X}}^{N,H}}\right)\right\vert^{p/2} \big|\mathcal{F}_{\underline{s}^H}\right] \leq C(p)\Bigg(\mathbb{E}\bigg[\left\vert b_H\left(\widehat{X}_{\underline{s}^H}^{i,N,H},\mu_{\underline{s}^H}^{\widehat{\boldsymbol{X}}^{N,H}}\right)\right\vert^p(s-\underline{s}^H)^{p/2} \notag\\
		&\quad  +\left\vert b_H\left(\widehat{X}_{\underline{s}^H}^{i,N,H},\mu_{\underline{s}^H}^{\widehat{\boldsymbol{X}}^{N,H}}\right)\right\vert^{p/2} \left\vert\sigma_{\Delta}\left(\widehat{X}_{\underline{s}^H}^{i,N,H},\mu_{\underline{s}^H}^{\widehat{\boldsymbol{X}}^{N,H}}\right)\right\vert^{p/2}\left\vert W_{s}^i-W_{\underline{s}^H}^i\right\vert^{p/2} \notag\\
		&\quad +\left\vert b_H\left(\widehat{X}_{\underline{s}^H}^{i,N,H},\mu_{\underline{s}^H}^{\widehat{\boldsymbol{X}}^{N,H}}\right)\right\vert^{p/2} \left\vert c_{\Delta}\left(\widehat{X}_{\underline{s}^H}^{i,N,H},\mu_{\underline{s}^H}^{\widehat{\boldsymbol{X}}^{N,H}}\right)\right\vert^{p/2}\left\vert Z_{s}^i-Z_{\underline{s}^H}^i\right\vert^{p/2} \big|\mathcal{F}_{\underline{s}^H}\bigg]  \Bigg) \notag\\
		&\leq C(p)\Bigg(\left\vert b_H\left(\widehat{X}_{\underline{s}^H}^{i,N,H},\mu_{\underline{s}^H}^{\widehat{\boldsymbol{X}}^{N,H}}\right)\right\vert^p(s-\underline{s}^H)^{p/2}+\Big(\dfrac{L}{\sqrt{\Delta}}\Big)^{p/2}\left(\left\vert b_H\left(\widehat{X}_{\underline{s}^H}^{i,N,H},\mu_{\underline{s}^H}^{\widehat{\boldsymbol{X}}^{N,H}}\right)\right\vert^2 (s-\underline{s}^H) \right)^{p/4} \notag\\
		&\quad+\Big(\dfrac{L}{\sqrt{\Delta}}\Big)^{p/2}\int_{\bR_{0}^{d}} \vert z \vert^{p/2}\nu(dz) (s-\underline{s}^H) \Bigg) \notag\\
		&\leq C(p)\left( C_0^{p/2}\Delta^{p/2}+L^{p/2}C_0^{p/4}+\Big(\dfrac{L}{\sqrt{\Delta}}\Big)^{p/2}\int_{\bR_{0}^{d}} \vert z \vert^{p/2}\nu(dz) \right). \label{l2}
		\end{align}
		Second, we have
		\begin{align}
		&\mathbb{E}\left[\left\vert \widehat{X}_{\tau}^{i,N,H}-\widehat{X}_{\underline{\tau}^H}^{i,N,H}\right\vert^{p/2} \right] \leq 3^{p/2-1}\Bigg(\mathbb{E}\bigg[\left\vert b_H\left(\widehat{X}_{\underline{\tau}^H}^{i,N,H},\mu_{\underline{\tau}^H}^{\widehat{\boldsymbol{X}}^{N,H}}\right)\right\vert^{p/2}(\tau-\underline{\tau}^H)^{p/2}\bigg] \notag \\
		&\quad+\left\vert\sigma_{\Delta}\left(\widehat{X}_{\underline{\tau}^H}^{i,N,H},\mu_{\underline{\tau}^H}^{\widehat{\boldsymbol{X}}^{N,H}}\right)\right\vert^{p/2}\left\vert W_{\tau}^i-W_{\underline{\tau}^H}^i\right\vert^{p/2}+\left\vert c_{\Delta}\left(\widehat{X}_{\underline{\tau}^H}^{i,N,H},\mu_{\underline{\tau}^H}^{\widehat{\boldsymbol{X}}^{N,H}}\right)\right\vert^{p/2}\left\vert Z_{\tau}^i-Z_{\underline{\tau}^H}^i\right\vert^{p/2}\Bigg) \notag\\
		&\leq C_p\left(C_0^{p/2}\Delta^{p/2}+\Big(\dfrac{L}{\sqrt{\Delta}}\Big)^{p/2}\mathbb{E}\left[(\tau-\underline{\tau}^H)^{p/4}\right]+\Big(\dfrac{L}{\sqrt{\Delta}}\Big)^{p/2}\int_{\bR_{0}^{d}} \vert z \vert^{p/2}\nu(dz) \mathbb{E}\left[\tau-\underline{\tau}^H\right]\right) \notag\\
		&\leq C_p\left(C_0^{p/2}\Delta^{p/2}+L^{p/2}+\Big(\dfrac{L}{\sqrt{\Delta}}\Big)^{p/2}\int_{\bR_{0}^{d}} \vert z \vert^{p/2}\nu(dz)\right).  \label{l3}
		\end{align}
		Therefore, from \eqref{l1},  \eqref{l2}, \eqref{l3}, \eqref{delta}, the estimate $\mathbb{E}\left[\mathcal{W}_2^2(\mu_{\underline{s}^H}^{\widehat{\boldsymbol{X}}^{N,H}},\delta_0)\right] = \mathbb{E}\left[|\widehat{X}_{\underline{s}^H}^{i,N,H}|^2\right] \leq \overline{C_0}$ and the Burkholder-Davis-Gundy inequality, we get that for any $t\in (0,T]$, 
		\begin{align*}
		\mathbb{E}\left[\sup_{u\in[0,t]}|\widehat{X}_u^{i,N,H}|^p\right] &\leq  C\left(p,x_0,L,\Delta,b(0,\delta_0),\mu_2,\mu_{p/2},T\right)+ (2C_0+1)^{p/2} \mathbb{E}\left[\left(\int_{0}^{t}\vert \widehat{X}_{\underline{s}^H}^{i,N,H}\vert^2  ds\right)^{p/2}\right]\\
		&\leq  \widetilde{C}_0+ (2C_0+1)^{p/2} t^{p/2-1}\int_{0}^{t}\mathbb{E}\left[\vert \widehat{X}_{\underline{s}^H}^{i,N,H}\vert^p\right]  ds\\
		&\leq  \widetilde{C}_0+ (2C_0+1)^{p/2} t^{p/2-1}\int_{0}^{t}\mathbb{E}\left[\sup_{u\in[0,s]}|\widehat{X}_u^{i,N,H}|^p\right] ds,
		\end{align*}
		where $\widetilde{C}_0:=C\left(p,x_0,L,\Delta,b(0,\delta_0),\mu_2,\mu_{p/2},T\right)$ with $\mu_{p/2}:=\int_{\bR_{0}^{d}} \vert z \vert^{p/2}\nu(dz)$.
		
		 This, combined with Gronwall's inequality, deduces that
		 \begin{align}\label{hn}
		 \max_{i\in\{1,\ldots,N\}}\mathbb{E}\left[\sup_{t\in[0,T]}|\widehat{X}_t^{i,N,H}|^p\right]  \leq \widetilde{C}_1,
		 \end{align}
	 where  the constant $\widetilde{C}_1$ does not depend on $H$.  
	 
	Therefore, choosing $H=H_n$ in \eqref{hn} and letting $n\uparrow\infty$, combined with Fatou's lemma and the fact that $\sup_{t\in[0,T]}|\widehat{X}_t^{i,N,H_n}|\to \sup_{t\in[0,T]}|\widehat{X}_t^{i,N}|$ a.s. as $n \uparrow \infty$, we obtain that
		\begin{align*}
		\max_{i\in\{1,\ldots,N\}}\bE\left[\sup_{t\in[0,T]}|\widehat{X}_t^{i,N}|^p \right] \leq \widetilde{C}_1,
		\end{align*}
		which finishes the desired proof.
	\end{proof}

	Next, we are going to show that the moments of $\widehat{X}_t^{i,N}$ depend on $t$. For this, we need to introduce the following conditions.
	\begin{itemize}
		\item[\textbf{T4.}] There exists a positive constant $L$ such that $|\sm_{\Delta}(x,\mu)| \le  |\sm(x,\mu)|$ \; and \;  $|c_\Delta(x,\mu)| \leq |c(x,\mu)|$ for all $x \in \mathbb{R}^d$ and $\mu\in \mathcal{P}_2(\mathbb{R}^d)$;
		\item[\textbf{T5.}] For some integer $p_0 \in [2, +\infty)$, there exist constants $\widetilde{L_3}>0$, $\widetilde{\gamma}_1 \in \mathbb{R}$, $\widetilde{\gamma}_2> 0$, $\widetilde{\eta}\geq 0, \upsilon > 0$ such that for all $x \in \mathbb{R}^d$ and $\mu\in \mathcal{P}_2(\mathbb{R}^d)$,
		\begin{equation}\label{cdelta}
			|c_\Delta(x,\mu)|\leq \widetilde{L_3}\left(1+|x|+ \mathcal{W}_2(\mu,\delta_0)\right),
		\end{equation}
		and
		\begin{equation} \label{sigmacdelta}\begin{split}
			&\langle x,b(x,\mu) \rangle  + \dfrac{p_0-1}{2}  |\sm_{\Delta}(x,\mu)|^2 \\
			&+|c_{\Delta}(x,\mu)|^2 \int_{\bR_{0}^d}\left(\dfrac{\vert z\vert^2}{2}+\dfrac{1}{\widetilde{L_3}^2}\left(\left(1+\vert z \vert (\widetilde{L_3}+\upsilon)\right)^{p_0-1}-1-\vert z\vert(\widetilde{L_3}+ \upsilon)\right)\bigg(\vert z\vert\Big(\dfrac{\widetilde{L_3}}{2}+\upsilon\Big)+\upsilon\bigg)\right)\nu(d z) \\
			&\leq \widetilde{\gamma}_1  |x|^2+\widetilde{\gamma}_2\mathcal{W}_2^2(\mu,\delta_0)+\widetilde{\eta}.
		\end{split}
		\end{equation}
	\end{itemize}

\begin{Rem} \label{RMbsmdelta}
 Observe that \eqref{sigmacdelta} of Condition \textbf{T5} yields to 
	\begin{equation*} \begin{split}
	&\langle x,b(x,\mu) \rangle  + \dfrac{p-1}{2}  |\sm_{\Delta}(x,\mu)|^2 \\
	&+|c_{\Delta}(x,\mu)|^2 \int_{\bR_{0}^d}\left(\dfrac{\vert z\vert^2}{2}+\dfrac{1}{\widetilde{L_3}^2}\left(\left(1+\vert z \vert (\widetilde{L_3}+\upsilon)\right)^{p-1}-1-\vert z\vert(\widetilde{L_3}+\upsilon)\right)\bigg(\vert z\vert\Big(\dfrac{\widetilde{L_3}}{2}+\upsilon\Big)+\upsilon\bigg)\right)\nu(d z) \\
	&\leq \widetilde{\gamma}_1  |x|^2+\widetilde{\gamma}_2\mathcal{W}_2^2(\mu,\delta_0)+\widetilde{\eta}.
	\end{split}
	\end{equation*}
	for any $p\in[2,p_0]$, $x \in \br^d$ and $\mu\in \mathcal{P}_2(\mathbb{R}^d)$.
\end{Rem}

In the following, we state an estimate for $L^2$-norm of the approximate solution. 
\begin{Lem}\label{indu2} Assume  \textbf{T1}--\textbf{T5} and \textbf{A5} with $p_0=2$. 
	Then, there exists a positive constant $C=C(x_0,\widetilde{\gamma}_1,\widetilde{\gamma}_2,\widetilde{\eta},L,\widetilde{L_3})$ which depends neither on $\Delta$  nor on $t$ such that for any $t\geq 0$, 
	\begin{align*}
	\max_{i\in\{1,\ldots,N\}}\left(\bE \left[|\widehat{X}_t^{i,N}|^{2} \right] \vee \bE \left[|\widehat{X}_{\underline{t}}^{i,N}|^{2}\right]\right) \le \left\{ \begin{array}{l l}
	Ce^{{2\widetilde{\gamma} t}}\quad &\text{ if\;\; } \widetilde{\gamma} > 0,\\
	C(1+t)\quad &\text{ if \;\;} \widetilde{\gamma}  = 0, \\
	C \quad &\text{ if \;\;} \widetilde{\gamma}  < 0,
	\end{array} \right. 
	\end{align*}
	where $\widetilde{\gamma} =\widetilde{\gamma}_1 +\widetilde{\gamma}_2$.
\end{Lem}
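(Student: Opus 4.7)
The plan is to adapt the strategy of Proposition \ref{nghiem dung 1} to the tamed-adaptive scheme \eqref{EM2}. A crucial preliminary observation is that, when $p_0=2$, Condition \textbf{T5} drastically simplifies because $(1+|z|(\widetilde{L_3}+\upsilon))^{p_0-1}-1-|z|(\widetilde{L_3}+\upsilon)=0$, so the complicated integrand collapses to $|z|^2/2$ and \textbf{T5} reduces to the classical coercivity
$$2\langle x,b(x,\mu)\rangle+|\sigma_\Delta(x,\mu)|^2+|c_\Delta(x,\mu)|^2\int_{\br_0^d}|z|^2\nu(dz)\leq 2\widetilde{\gamma}_1|x|^2+2\widetilde{\gamma}_2\mathcal{W}_2^2(\mu,\delta_0)+2\widetilde{\eta}.$$
Applying It\^o's formula to $|\widehat{X}_t^{i,N}|^2$ via \eqref{EM2} (with martingale terms integrable thanks to Lemma \ref{menh de 3}) gives $\mathbb{E}|\widehat{X}_t^{i,N}|^2=|x_0|^2+\mathbb{E}\int_0^t[2\langle\widehat{X}_s^{i,N},b\rangle+|\sigma_\Delta|^2+|c_\Delta|^2\int|z|^2\nu(dz)]\,ds$, with coefficients evaluated at $(\widehat{X}_{\underline{s}}^{i,N},\mu_{\underline{s}}^{\widehat{\boldsymbol{X}}^N})$.

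The key algebraic step is the splitting $\langle\widehat{X}_s^{i,N},b\rangle=\langle\widehat{X}_{\underline{s}}^{i,N},b\rangle+\langle\widehat{X}_s^{i,N}-\widehat{X}_{\underline{s}}^{i,N},b\rangle$. The first piece, together with $|\sigma_\Delta|^2$ and $|c_\Delta|^2\int|z|^2\nu(dz)$, is bounded via the simplified \textbf{T5} above. For the cross term, the one-step expansion $\widehat{X}_s^{i,N}-\widehat{X}_{\underline{s}}^{i,N}=b(s-\underline{s})+\sigma_\Delta(W_s^i-W_{\underline{s}}^i)+c_\Delta(Z_s^i-Z_{\underline{s}}^i)$ combined with the fact that the $W^i$ and $\widetilde{N}^i$ increments have zero conditional mean given $\mathcal{F}_{\underline{s}}$ yields $\mathbb{E}[\langle\widehat{X}_s^{i,N}-\widehat{X}_{\underline{s}}^{i,N},b\rangle]=\mathbb{E}[|b|^2(s-\underline{s})]\leq L\Delta$, the last inequality following from \textbf{T1} ($|b|^2 h\leq L$ and $s-\underline{s}\leq \mathbf{h}\Delta\leq h\Delta$). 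Exchangeability of the particles then gives $\mathbb{E}|\widehat{X}_{\underline{s}}^{i,N}|^2=\mathbb{E}\mathcal{W}_2^2(\mu_{\underline{s}}^{\widehat{\boldsymbol{X}}^N},\delta_0)=:g(s)$ for every $i$; setting $F(t):=\mathbb{E}|\widehat{X}_t^{1,N}|^2$ one obtains
$$F(t)\leq |x_0|^2+2\widetilde{\gamma}\int_0^t g(s)\,ds+(2\widetilde{\eta}+2L\Delta)\,t.$$
To close this in terms of $F$ alone, I would repeat the argument on the short random interval $[\underline{s},s]$ to estimate $|F(s)-g(s)|$. Using $s-\underline{s}\leq\Delta$, Cauchy--Schwarz, and the uniform-in-$s$ bound $\mathbb{E}|\widehat{X}_s^{i,N}-\widehat{X}_{\underline{s}}^{i,N}|^2\leq C$ (obtained exactly as in \eqref{squaremoment} from \textbf{T1} and \textbf{T3}), one obtains $|F(s)-g(s)|\leq C_0\Delta(1+\sqrt{F(s)+g(s)})$, which can be absorbed by Young's inequality to give $g(s)$ comparable to $F(s)$ up to an additive constant.

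Substituting this comparison into the integral inequality and applying a Gronwall-type argument in each regime produces the three stated bounds: for $\widetilde{\gamma}>0$ an exponential bound of order $e^{2\widetilde{\gamma}t}$; for $\widetilde{\gamma}=0$ a linear-in-$t$ bound; and for $\widetilde{\gamma}<0$ a uniform constant bound since the contribution $2\widetilde{\gamma}\int_0^t g\,ds$ becomes a genuine damping. The companion bound on $\mathbb{E}|\widehat{X}_{\underline{t}}^{i,N}|^2$ follows immediately from the same discrepancy estimate $|F(t)-g(t)|\leq C_0\Delta(1+\sqrt{F+g})$. The main obstacle is the contractive regime $\widetilde{\gamma}<0$: the discrepancy $|F(s)-g(s)|$ must be controlled so that its integral against the exponential weight $e^{-2\widetilde{\gamma}s}$ (which grows) remains bounded uniformly in $t$. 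This is exactly why we need the one-step moment bound to be truly uniform in $s$ (depending only on $\Delta$, $L$, $\mu_2$), and not to invoke the $T$-dependent moment bound of Lemma \ref{menh de 3}; this is where the tamed structure \textbf{T3} and the adaptive step size \textbf{T1} jointly do the work of preserving contractivity at the discrete level.
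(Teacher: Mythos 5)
The overall skeleton of your argument (Itô's formula, splitting the drift inner product into a piece at $\underline{s}$ plus an error controlled by \textbf{T1} and \textbf{T3}, using \textbf{T5} with $p_0=2$ together with the exchangeability identity $\bE[|\widehat{X}_{\underline s}^{i,N}|^2]=\bE[\mathcal{W}_2^2(\mu_{\underline s}^{\widehat{\boldsymbol{X}}^N},\delta_0)]$) is the same as the paper's, and your observation that the \textbf{T5} integrand degenerates at $p_0=2$ is correct and is exactly what the paper implicitly exploits. But you miss the one trick that makes the paper's argument close cleanly: applying It\^o's formula to $e^{-2\widetilde\gamma t}|\widehat{X}_t^{i,N}|^2$ rather than to $|\widehat{X}_t^{i,N}|^2$. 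The exponential weight introduces a $-\widetilde\gamma|\widehat{X}_s^{i,N}|^2$ term inside the integral; after transferring it to level $\underline s$ (the paper's estimate \eqref{-2gmX}) and invoking \textbf{T5} and the exchangeability identity, $-\widetilde\gamma + \widetilde\gamma_1 + \widetilde\gamma_2 = 0$ makes the $\bE[|\widehat{X}_{\underline s}^{i,N}|^2]$ contributions \emph{cancel exactly}, leaving only $\bE[e^{-2\widetilde\gamma t}|\widehat{X}_t^{i,N}|^2]\le|x_0|^2 + 2(\widetilde\eta+C)\int_0^t e^{-2\widetilde\gamma s}\,ds$, which integrates explicitly in all three regimes with no Gronwall step and no comparison between $F(s):=\bE|\widehat{X}_s^{i,N}|^2$ and $g(s):=\bE|\widehat{X}_{\underline s}^{i,N}|^2$.

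Because you skip the weight, you are left with the integral inequality $F(t)\le|x_0|^2 + 2\widetilde\gamma\int_0^t g(s)\,ds + C't$, and you must reconnect $g$ to $F$ and run a Gronwall argument. Two problems: (i) the discrepancy estimate should read $|F(s)-g(s)|\le C\sqrt{\Delta}\sqrt{F(s)+g(s)}$ via Cauchy--Schwarz, not $C\Delta(1+\sqrt{F+g})$; (ii) more seriously, in the regime $\widetilde\gamma<0$ the naive step of discarding the nonnegative term $2\widetilde\gamma\int_0^t g\,ds$ gives only $F(t)\le C+C't$, and the integral-form Gronwall inequality with negative coefficient does \emph{not} convert $F(t)\le A + Ct + B\int_0^t F\,ds$ ($B<0$) into a uniform bound on $F$. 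To make your route work you would have to pass to the differential form $F'(t)\le 2\widetilde\gamma g(t)+C$, establish a lower bound $g(s)\ge\alpha F(s)-\beta\Delta$ with $\alpha\in(0,1)$ via Young's inequality, and then integrate $F'\le 2\widetilde\gamma\alpha F + C'$. This is substantially more work, degrades the exponent from $2\widetilde\gamma$ to $2\widetilde\gamma\alpha$ in the $\widetilde\gamma>0$ case, and is precisely the machinery the exponential-weight It\^o formula renders unnecessary. You also misdiagnose the obstacle: the difficulty in the contractive regime is structural (you need the damping term, but integral Gronwall with negative rate cannot be closed), not that the one-step moment bound secretly depends on $T$ — it does not, as you correctly observe.
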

\begin{proof}
	Proceeding as in \eqref{ulxt2} by applying It\^o's formula to $e^{-2\widetilde{\gamma} t}\vert\widehat{X}_t^{i,N}\vert^2$, we get that for any $i\in\{1,\ldots,N\}$,
	\begin{align}
	&e^{-2\widetilde{\gamma} t}|\widehat{X}_t^{i,N}|^2 
	\leq |x_0|^2+ 2\int_{0}^{t} e^{-2\widetilde{\gamma} s}\Bigg(- {\widetilde{\gamma}} |\widehat{X}_s^{i,N}|^2 + \langle \widehat{X}_s^{i,N}, b(\widehat{X}_{\underline{s}}^{i,N},\mu_{\underline{s}}^{\widehat{\boldsymbol{X}}^N}) \rangle +\dfrac{1}{2} \left|\sm_{\Delta}(\widehat{X}_{\underline{s}}^{i,N},\mu_{\underline{s}}^{\widehat{\boldsymbol{X}}^N})\right|^2 \notag\\
	&\qquad+\dfrac{1}{2}\left|c_{\Delta} (\widehat{X}_{\underline{s}}^{i,N},\mu_{\underline{s}}^{\widehat{\boldsymbol{X}}^N})\right|^2  \int_{\mathbb{R}_0^d} |z|^{2} \nu\left(dz\right) \Bigg)ds + 2\int_{0}^{t} e^{-2\widetilde{\gamma} s }\left\langle \widehat{X}_s^{i,N},  \sm_{\Delta}(\widehat{X}_{\underline{s}}^{i,N},\mu_{\underline{s}}^{\widehat{\boldsymbol{X}}^N}) dW_s^i \right\rangle \notag \\
	&\qquad  + \int_{0}^{t} \int_{\mathbb{R}_0^d} e^{-2 \widetilde{\gamma} s}\left(2\left\langle \widehat{X}_{\underline{s}-}^{i,N}, c_{\Delta}\left(\widehat{X}_{\underline{s}-}^{i,N},\mu_{\underline{s}-}^{\widehat{\boldsymbol{X}}^N}\right) z \right\rangle +  \left|c_{\Delta}\left(\widehat{X}_{\underline{s}-}^{i,N},\mu_{\underline{s}-}^{\widehat{\boldsymbol{X}}^N}\right) z\right|^2\right) \widetilde{N}^i(d s, d z).
	\label{e^-2gmX}
	\end{align}
	First, using \eqref{EM3}, we have 
	\begin{align*}
	-\widetilde{\gamma}|\widehat{X}_{s}^{i,N}|^{2}&=-\widetilde{\gamma}|\widehat{X}_{\underline{s}}^{i,N}|^{2}\\
	&\quad-2\widetilde{\gamma}\left\langle \widehat{X}_{\underline{s}}^{i,N}, b\left(\widehat{X}_{\underline{s}}^{i,N},\mu_{\underline{s}}^{\widehat{\boldsymbol{X}}^N}\right)(s-\underline{s})+\sigma_{\Delta}\left(\widehat{X}_{\underline{s}}^{i,N},\mu_{\underline{s}}^{\widehat{\boldsymbol{X}}^N}\right)\left(W_{s}^i-W_{\underline{s}}^i\right)+c_{\Delta}\left(\widehat{X}_{\underline{s}}^{i,N},\mu_{\underline{s}}^{\widehat{\boldsymbol{X}}^N}\right)\left(Z_{s}^i-Z_{\underline{s}}^i\right)\right\rangle  \\
	&\quad -\widetilde{\gamma}\left|b \left(\widehat{X}_{\underline{s}}^{i,N},\mu_{\underline{s}}^{\widehat{\boldsymbol{X}}^N}\right)(s-\underline{s})+\sigma_{\Delta}\left(\widehat{X}_{\underline{s}}^{i,N},\mu_{\underline{s}}^{\widehat{\boldsymbol{X}}^N}\right)\left(W_{s}^i-W_{\underline{s}}^i\right)+c_{\Delta}\left(\widehat{X}_{\underline{s}}^{i,N},\mu_{\underline{s}}^{\widehat{\boldsymbol{X}}^N}\right)\left(Z_{s}^i-Z_{\underline{s}}^i\right)\right|^{2}\\
	&\leq -\widetilde{\gamma}\left|\widehat{X}_{\underline{s}}^{i,N}\right|^{2}+2\vert \widetilde{\gamma}\vert \left\vert\widehat{X}_{\underline{s}}^{i,N}\right\vert \left\vert b\left(\widehat{X}_{\underline{s}}^{i,N},\mu_{\underline{s}}^{\widehat{\boldsymbol{X}}^N}\right)\right\vert(s-\underline{s})\\
	&\quad-2\widetilde{\gamma}\left\langle \widehat{X}_{\underline{s}}^{i,N}, \sigma_{\Delta}\left(\widehat{X}_{\underline{s}}^{i,N},\mu_{\underline{s}}^{\widehat{\boldsymbol{X}}^N}\right)\left(W_{s}^i-W_{\underline{s}}^i\right)+c_{\Delta}\left(\widehat{X}_{\underline{s}}^{i,N},\mu_{\underline{s}}^{\widehat{\boldsymbol{X}}^N}\right)\left(Z_{s}^i-Z_{\underline{s}}^i\right)\right\rangle  \\
	&\quad +3\vert \widetilde{\gamma} \vert\left(\left|b(\widehat{X}_{\underline{s}}^{i,N},\mu_{\underline{s}}^{\widehat{\boldsymbol{X}}^N})\right|^2 \left(s-\underline{s}\right)^2 + \left|\sigma_{\Delta}(\widehat{X}_{\underline{s}}^{i,N},\mu_{\underline{s}}^{\widehat{\boldsymbol{X}}^N})\right|^2  \left| W_{s}^i-W_{\underline{s}}^i\right|^2 + \left|c_{\Delta}(\widehat{X}_{\underline{s}}^{i,N},\mu_{\underline{s}}^{\widehat{\boldsymbol{X}}^N}) \right|^2 |Z_{s}^i-Z_{\underline{s}}^i|^2\right).
	\end{align*}
	Then, using  \textbf{T4}, \textbf{A5}, \eqref{markov}, and \eqref{chooseh}, we get
	\begin{align*} 
	&\max \Bigg\{ \left\vert\widehat{X}_{\underline{s}}^{i,N}\right\vert \left\vert b(\widehat{X}_{\underline{s}}^{i,N},\mu_{\underline{s}}^{\widehat{\boldsymbol{X}}^N})\right\vert(s-\underline{s}) ; \ \left|b(\widehat{X}_{\underline{s}}^{i,N},\mu_{\underline{s}}^{\widehat{\boldsymbol{X}}^N})\right|^2 (s-\underline{s})^2; \  \mathbb{E}\left[ \left|\sigma_\Delta\left(\widehat{X}_{\underline{s}}^{i,N},\mu_{\underline{s}}^{\widehat{\boldsymbol{X}}^N}\right)\right|^2 \left|W_s^i-W_{\underline{s}}^i\right|^2\big|\mathcal{F}_{\underline{s}}\right]; \\
	&\qquad \qquad \mathbb{E}\left[ \left|c_{\Delta}\left(\widehat{X}_{\underline{s}}^{i,N},\mu_{\underline{s}}^{\widehat{\boldsymbol{X}}^N}\right)\right|^2 \left|Z_{s}^i-Z_{\underline{s}}^i\right|^2 \big|\mathcal{F}_{\underline{s}}\right] \Bigg\}  \leq C\Delta,
	\end{align*} 
	which yields that
	\begin{equation}
	\mathbb{E}\left[-\widetilde{\gamma} |\widehat{X}_s^{i,N}|^2\right]=\mathbb{E}\left[\mathbb{E}\left[-\widetilde{\gamma} |\widehat{X}_s^{i,N}|^2\big\vert\mathcal{F}_{\underline{s}}\right] \right]\leq \mathbb{E}\left[-\widetilde{\gamma} |\widehat{X}_{\underline{s}}^{i,N}|^2\right]+C|\widetilde{\gamma}|\Delta. 
	\label{-2gmX}
	\end{equation}	
	Moreover, using again \eqref{EM3}, we have
	\begin{align*}
	\left\langle \widehat{X}_s^{i,N}, b(\widehat{X}_{\underline{s}}^{i,N},\mu_{\underline{s}}^{\widehat{\boldsymbol{X}}^N}) \right\rangle&=\left\langle \widehat{X}_{\underline{s}}^{i,N}, b(\widehat{X}_{\underline{s}}^{i,N},\mu_{\underline{s}}^{\widehat{\boldsymbol{X}}^N}) \right\rangle+\left\langle \widehat{X}_s^{i,N}-\widehat{X}_{\underline{s}}^{i,N}, b(\widehat{X}_{\underline{s}}^{i,N},\mu_{\underline{s}}^{\widehat{\boldsymbol{X}}^N}) \right\rangle\\
	&\leq \left\langle \widehat{X}_{\underline{s}}^{i,N}, b(\widehat{X}_{\underline{s}}^{i,N},\mu_{\underline{s}}^{\widehat{\boldsymbol{X}}^N}) \right\rangle+ \left|b(\widehat{X}_{\underline{s}}^{i,N},\mu_{\underline{s}}^{\widehat{\boldsymbol{X}}^N})\right|^2 (s-\underline{s})\\
	&\quad	
	+\left\langle \sigma_{\Delta}\left(\widehat{X}_{\underline{s}}^{i,N},\mu_{\underline{s}}^{\widehat{\boldsymbol{X}}^N}\right)\left(W_{s}^i-W_{\underline{s}}^i\right)+c_{\Delta}\left(\widehat{X}_{\underline{s}}^{i,N},\mu_{\underline{s}}^{\widehat{\boldsymbol{X}}^N}\right)\left(Z_{s}^i-Z_{\underline{s}}^i\right), b(\widehat{X}_{\underline{s}}^{i,N},\mu_{\underline{s}}^{\widehat{\boldsymbol{X}}^N}) \right\rangle,
	\end{align*}
	which, combined with the fact that $|b(\widehat{X}_{\underline{s}}^{i,N},\mu_{\underline{s}}^{\widehat{\boldsymbol{X}}^N})|^2 (s-\underline{s})\leq \Delta$ due to \eqref{chooseh}, deduces that 
	\begin{equation}
	\mathbb{E}\left[\left\langle \widehat{X}_s^{i,N}, b(\widehat{X}_{\underline{s}}^{i,N},\mu_{\underline{s}}^{\widehat{\boldsymbol{X}}^N}) \right\rangle \right]\leq \mathbb{E}\left[\left\langle \widehat{X}_{\underline{s}}^{i,N}, b(\widehat{X}_{\underline{s}}^{i,N},\mu_{\underline{s}}^{\widehat{\boldsymbol{X}}^N}) \right\rangle \right]+\Delta. \label{2Xb}
	\end{equation}
	Thanks to Lemma \ref{menh de 3}, the stochastic integrals in \eqref{e^-2gmX}  have zero expectation. Thus, using  
	\eqref{e^-2gmX}, \eqref{-2gmX}, \eqref{2Xb} and \eqref{sigmacdelta} of \textbf{T5} with $p_0=2$ and recall that $\widetilde{\gamma} =\widetilde{\gamma}_1 +\widetilde{\gamma}_2$, we obtain that 
	\begin{align*}
	\mathbb{E}\left[e^{-2\widetilde{\gamma} t}|\widehat{X}_t^{i,N}|^2\right]&\leq |x_0|^2+2\int_0^t e^{-2\widetilde{\gamma} s}\bigg(\mathbb{E}\bigg[-\widetilde{\gamma} |\widehat{X}_{\underline{s}}^{i,N}|^2+\left\langle \widehat{X}_{\underline{s}}^{i,N}, b(\widehat{X}_{\underline{s}}^{i,N},\mu_{\underline{s}}^{\widehat{\boldsymbol{X}}^N}) \right\rangle+\dfrac{1}{2} \left|\sm_{\Delta}(\widehat{X}_{\underline{s}}^{i,N},\mu_{\underline{s}}^{\widehat{\boldsymbol{X}}^N})\right|^2 \\
	&\qquad+\dfrac{1}{2}\left|c_{\Delta} (\widehat{X}_{\underline{s}}^{i,N},\mu_{\underline{s}}^{\widehat{\boldsymbol{X}}^N})\right|^2  \int_{\mathbb{R}_0^d} |z|^{2} \nu\left(dz\right)\bigg]+C\bigg) ds\\
	&\leq |x_0|^2+2\int_0^t e^{-2\widetilde{\gamma} s}\bigg(-\widetilde{\gamma}_2\mathbb{E}\left[ |\widehat{X}_{\underline{s}}^{i,N}|^2\right]+\widetilde{\gamma}_2\mathbb{E}\left[\mathcal{W}_2^2(\mu_{\underline{s}}^{\widehat{\boldsymbol{X}}^N},\delta_0)\right]+\widetilde{\eta}+C\bigg) ds\\
	&= |x_0|^2+2\int_0^t e^{-2\widetilde{\gamma} s}\left(-\widetilde{\gamma}_2\mathbb{E}\left[ |\widehat{X}_{\underline{s}}^{i,N}|^2\right]+\widetilde{\gamma}_2\mathbb{E}\left[ |\widehat{X}_{\underline{s}}^{i,N}|^2\right]+\widetilde{\eta}+C\right) ds\notag\\
	&= |x_0|^2+2\left(\widetilde{\eta}+C\right)\int_0^t e^{-2\widetilde{\gamma} s} ds,
	\end{align*} 
	for some positive constant $C$, where we have used the equality $\bE \left[\mathcal{W}_2^2(\mu_{\underline{s}}^{\widehat{\boldsymbol{X}}^N},\delta_0)\right]= \mathbb{E}\left[|\widehat{X}_{\underline{s}}^{i,N}|^2\right]$. This yields to
	\begin{align} 
	\bE \left[|\widehat{X}_t^{i,N}|^2\right] \le \begin{cases}  \left(|x_{0}|^{2}+\dfrac{\widetilde{\eta}+C}{\widetilde{\gamma}}\right) e^{2\widetilde{\gamma} t}-\dfrac{\widetilde{\eta}+C}{\widetilde{\gamma}} & \text{ if \;\;} \widetilde{\gamma} \not =0,\\ 
	|x_{0}|^{2}+2\left(\widetilde{\eta}+C\right) t & \text{ if \;\;} \widetilde{\gamma}=0.\label{EXtmu2}
	\end{cases} 
	\end{align}		
	Next, from \eqref{EM3}, we have
	$$
	\widehat{X}_{\underline{t}}^{i,N}=\widehat{X}_{t}^{i,N}-b\left(\widehat{X}_{\underline{t}}^{i,N},\mu_{\underline{t}}^{\widehat{\boldsymbol{X}}^N}\right)(t-\underline{t})-\sigma_{\Delta}\left(\widehat{X}_{\underline{t}}^{i,N},\mu_{\underline{t}}^{\widehat{\boldsymbol{X}}^N}\right)\left(W_{t}^i-W_{\underline{t}}^i\right)-c_{\Delta}\left(\widehat{X}_{\underline{t}}^{i,N},\mu_{\underline{t}}^{\widehat{\boldsymbol{X}}^N}\right)\left(Z_{t}^i-Z_{\underline{t}}^i\right).
	$$
	This, combined with \textbf{T4}, \textbf{A5}, \eqref{markov} and \eqref{chooseh}, we get that for any $p>1$,
	\begin{align}
	\bE \left[\left|\widehat{X}_{\underline{t}}^{i,N}\right|^p\right] 
	&\le 4^{p-1} \bigg(\bE\left[\left|\widehat{X}_t^{i,N}\right|^p\right] +\bE\left[\left|b(\widehat{X}_{\underline{t}}^{i,N},\mu_{\underline{t}}^{\widehat{\boldsymbol{X}}^N})(t-\underline{t})\right|^p\right] +\bE \left[\left|\sm_{\Delta}(\widehat{X}_{\underline{t}}^{i,N},\mu_{\underline{t}}^{\widehat{\boldsymbol{X}}^N})(W_t^i-W_{\underline{t}}^i)\right|^p\right] \notag\\
	& \qquad+\bE \left[\left|c_{\Delta}(\widehat{X}_{\underline{t}}^{i,N},\mu_{\underline{t}}^{\widehat{\boldsymbol{X}}^N})(Z_t^i-Z_{\underline{t}}^i)\right|^p\right] \bigg) \notag \\
	&\le 4^{p-1} \left(\bE\left[\left|\widehat{X}_t^{i,N}\right|^p\right] +C\Delta^p +C\Delta^{p/2} +C\Delta^{1 \wedge p/2} \right). \label{qh}
	\end{align}
	Consequently, from \eqref{EXtmu2} and \eqref{qh} with $p=2$, the result follows. 
\end{proof}

To estimate $L^p$-norm of the approximate solution for $p>2$, we need  a series of preliminary lemmas.

\begin{Lem} \label{abcLemma}
Let $p$ be a positive even integer.  For any $a, b, c \in \mathbb{R}^d$, it holds that 
\begin{align} 
S= &|a+c|^p - |b+c|^p - |a|^p + |b|^p \notag \\
\leq &\sum_{j=1}^{p/2} \sum_{k=0}^j {p/2 \choose j} {j \choose k} 2^{j-k}|c|^{j+k} \Big[ 2^{p-2j-1} (|a-b|^{p-2j}+|b|^{p-2j})\sum_{\ell=1}^{j-k}  {j-k \choose \ell} |b|^{j-k-\ell} |a-b|^{\ell}  \notag \\
& \qquad \qquad \qquad \qquad + \sum_{\ell=1}^{p-2j} {p-2j \choose \ell} |b|^{p-j-k-\ell} |a-b|^{\ell} \Big].
\label{abc_expan}
\end{align}
\end{Lem}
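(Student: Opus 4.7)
The plan is to exploit the fact that $p$ is an even integer so that $|a+c|^p$ can be treated as a polynomial via $|a+c|^p = (|a|^2 + 2\langle a,c\rangle + |c|^2)^{p/2}$. Applying the binomial theorem twice yields
\begin{align*}
|a+c|^p - |a|^p = \sum_{j=1}^{p/2}\sum_{k=0}^{j} \binom{p/2}{j}\binom{j}{k} 2^{j-k} |c|^{2k}\,|a|^{p-2j}\,\langle a,c\rangle^{j-k},
\end{align*}
and analogously for $b$. Subtracting gives
\begin{align*}
S = \sum_{j=1}^{p/2}\sum_{k=0}^{j}\binom{p/2}{j}\binom{j}{k}2^{j-k}|c|^{2k}\Bigl(|a|^{p-2j}\langle a,c\rangle^{j-k}-|b|^{p-2j}\langle b,c\rangle^{j-k}\Bigr).
\end{align*}
The remaining task is therefore to bound the bracketed difference term-by-term in terms of $|b|$, $|a-b|$, and $|c|$, and to match the combinatorial structure on the right-hand side.

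Next I would introduce the splitting $\langle a,c\rangle = \langle b,c\rangle + \langle a-b,c\rangle$ and expand its $(j-k)$-th power binomially, which gives
\begin{align*}
|a|^{p-2j}\langle a,c\rangle^{j-k}-|b|^{p-2j}\langle b,c\rangle^{j-k} &= \bigl(|a|^{p-2j}-|b|^{p-2j}\bigr)\langle b,c\rangle^{j-k} \\
&\quad + |a|^{p-2j}\sum_{\ell=1}^{j-k}\binom{j-k}{\ell}\langle b,c\rangle^{j-k-\ell}\langle a-b,c\rangle^{\ell}.
\end{align*}
For the first summand I would use the Cauchy--Schwarz bound $|\langle b,c\rangle|^{j-k}\le |b|^{j-k}|c|^{j-k}$ together with the scalar estimate $\bigl||a|^{p-2j}-|b|^{p-2j}\bigr| \le (|b|+|a-b|)^{p-2j}-|b|^{p-2j} = \sum_{\ell=1}^{p-2j}\binom{p-2j}{\ell}|b|^{p-2j-\ell}|a-b|^\ell$; this is exactly what is needed to reproduce the second interior sum in the target inequality, with $|c|^{2k}\cdot|c|^{j-k}=|c|^{j+k}$. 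For the second summand, Cauchy--Schwarz applied twice gives $\langle b,c\rangle^{j-k-\ell}\langle a-b,c\rangle^{\ell}\le |b|^{j-k-\ell}|a-b|^{\ell}|c|^{j-k}$, and the prefactor $|a|^{p-2j}$ is controlled by $(|b|+|a-b|)^{p-2j}\le 2^{p-2j-1}(|b|^{p-2j}+|a-b|^{p-2j})$, which produces the first interior sum of the stated bound.

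The main obstacle is arithmetic rather than conceptual: one must keep careful track of signs (the difference $S$ is signed, so only an upper bound is required, which is why the Cauchy--Schwarz step is legitimate even when $\langle b,c\rangle^{j-k}$ or $(|a|^{p-2j}-|b|^{p-2j})$ changes sign) and verify the combinatorial bookkeeping so that $|c|^{2k}\cdot|c|^{j-k}$ reassembles to $|c|^{j+k}$ and the powers of $|b|$ and $|a-b|$ add up to $p-j-k$ in the second bracket. The boundary case $j=k$ needs a quick sanity check: the inner sum $\sum_{\ell=1}^{j-k}$ vanishes as an empty sum, and the remaining contribution reduces to the $\bigl(|a|^{p-2j}-|b|^{p-2j}\bigr)$ estimate above, which matches the second bracket with $p-j-k = p-2j$. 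Collecting everything yields \eqref{abc_expan}.
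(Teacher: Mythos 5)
Your proposal is correct and follows essentially the same route as the paper: expand $|a+c|^p=(|a|^2+2\langle a,c\rangle+|c|^2)^{p/2}$ via the binomial theorem, subtract, split the bracketed difference into $(|a|^{p-2j}-|b|^{p-2j})\langle b,c\rangle^{j-k}$ plus $|a|^{p-2j}$ times a telescoped binomial sum, and finish with Cauchy--Schwarz, the bound $|a|^{p-2j}\le 2^{p-2j-1}(|a-b|^{p-2j}+|b|^{p-2j})$, and the scalar estimate $\bigl||a|^{p-2j}-|b|^{p-2j}\bigr|\le(|b|+|a-b|)^{p-2j}-|b|^{p-2j}$. Your writeup is actually slightly cleaner than the paper's on the sign issue, since you explicitly pass to absolute values before invoking Cauchy--Schwarz, whereas the paper leaves that step implicit.
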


\begin{proof}
We first note that 
$S  = \Big(|a|^2 + 2 \langle a, c \rangle + |c|^2\Big)^{p/2} - |a|^p - \Big(|b|^2 + 2 \langle b, c \rangle + |c|^2 \Big)^{p/2} + |b|^p$.   
Using the binomial theorem, we have 
\begin{align*}
S  &= \sum_{j=1}^{p/2} \sum_{k=0}^j {p/2 \choose j} {j \choose k} 2^{j-k}|c|^{2k} \Big( |a|^{p-2j}  \langle a, c \rangle^{j-k} - |b|^{p-2j}  \langle b, c \rangle^{j-k} \Big).
\end{align*}
Next, we write 
\begin{align*}
 |a|^{p-2j}  \langle a, c \rangle^{j-k} - |b|^{p-2j}  \langle b, c \rangle^{j-k} &=  |a|^{p-2j} \Big( (\langle b, c \rangle + \langle a-b,c\rangle) ^{j-k}  - \langle b, c \rangle^{j-k} \Big) \\
 &\quad + \Big( (|b| + (|a|-|b|))^{p-2j} - |b|^{p-2j}\Big)   \langle b, c \rangle^{j-k}.
\end{align*} 
Using the binomial theorem, the estimates $|a|^{p-2j} \leq 2^{p-2j-1}(|a-b|^{p-2j} + |b|^{p-2j})$,  $|\langle a-b, c\rangle| \leq |a-b||c|$, and $|\langle b, c\rangle | \leq |b||c|$, we obtain the desired result. 
\end{proof} 

\begin{Lem} \label{Lem:condmom}
Assume  Conditions \textbf{T1}--\textbf{T5} and \textbf{A5}. 
Then, for any even integer   $p \in (0, p_0]$,  there exists a positive constant $C_p$ such that for any $s>0$, $i\in\{1,\ldots,N\}$ and $\lambda\in\mathbb{R}$, 
\begin{enumerate} 
\item[\textnormal{a)}] 
$\mathbb{E}\left[-\lambda |\widehat{X}_s^{i,N}|^p \big| \mathcal{F}_{\underline{s}}\right] \leq   -\lambda |\widehat{X}_{\underline{s}}^{i,N}|^p +C_p|\lambda|\sum\limits_{j=0}^{p-2}  |\widehat{X}_{\underline{s}}^{i,N}|^j$. 
\item[\textnormal{b)}] 
$ \mathbb{E}\left[|\widehat{X}_s^{i,N}|^{p-2} \left\langle \widehat{X}_s^{i,N}, b(\widehat{X}_{\underline{s}}^{i,N},\mu_{\underline{s}}^{\widehat{\boldsymbol{X}}^N}) \right\rangle \big| \mathcal{F}_{\underline{s}} \right]\leq  |\widehat{X}_{\underline{s}}^{i,N}|^{p-2} \left\langle \widehat{X}_{\underline{s}}^{i,N}, b(\widehat{X}_{\underline{s}}^{i,N},\mu_{\underline{s}}^{\widehat{\boldsymbol{X}}^N}) \right\rangle  +C_p\sum\limits_{j=0}^{p-2}  |\widehat{X}_{\underline{s}}^{i,N}|^j. $
\item[\textnormal{c)}] 
	$\mathbb{E}\left[|\widehat{X}_s^{i,N}|^{p-4}|(\widehat{X}_s^{i,N})^\mathsf{T} \sigma_\Delta(\widehat{X}_{\underline{s}}^{i,N},\mu_{\underline{s}}^{\widehat{\boldsymbol{X}}^N})|^2 \big| \mathcal{F}_{\underline{s}} \right] \leq   |\widehat{X}_{\underline{s}}^{i,N}|^{p-2}|\sigma_\Delta(\widehat{X}_{\underline{s}}^{i,N},\mu_{\underline{s}}^{\widehat{\boldsymbol{X}}^N})|^2 +C_p\sum\limits_{j=0}^{p-3} |\widehat{X}_{\underline{s}}^{i,N}|^j.$
\end{enumerate} 
\end{Lem}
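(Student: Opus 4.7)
All three parts follow from expanding the continuous interpolant~\eqref{EM3} as $\widehat{X}_s^{i,N} = Y + R$, where, writing $Y := \widehat{X}_{\underline{s}}^{i,N}$, $\mu := \mu_{\underline{s}}^{\widehat{\boldsymbol{X}}^N}$, $b := b(Y, \mu)$, $\sigma := \sigma_\Delta(Y,\mu)$, $c := c_\Delta(Y, \mu)$, $\delta := s - \underline{s}$, $\Delta W := W_s^i - W_{\underline{s}}^i$, $\Delta Z := Z_s^i - Z_{\underline{s}}^i$, we have $R = b\delta + \sigma\Delta W + c\Delta Z$, with $Y, b, \sigma, c, \delta$ all $\mathcal{F}_{\underline{s}}$-measurable and $\Delta W, \Delta Z$ conditionally mean-zero. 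The recurring toolkit is the combination of the taming bounds from \textbf{T1}--\textbf{T3},
\[ |b|\delta \leq L\Delta,\qquad |\sigma|\sqrt{\delta} \leq \tfrac{L}{1+|Y|},\qquad |c|\sqrt{\delta} \leq \tfrac{L}{1+|Y|},\qquad |b||c|\sqrt{\delta}\leq L, \]
together with $\mathbb{E}[|\Delta W|^m\mid \mathcal{F}_{\underline{s}}] \leq C_m\,\delta^{m/2}$ and, via Lemma~\ref{BDGjump} and \textbf{A5}, $\mathbb{E}[|\Delta Z|^m\mid \mathcal{F}_{\underline{s}}] \leq C_m(\delta + \delta^{m/2})$ for $m \in [2, p_0]$. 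Together these imply that $\mathbb{E}[|R|^m\mid\mathcal{F}_{\underline{s}}]$ is bounded uniformly in $Y$ for every $m \in [2, p_0]$.

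For~(a), since $p$ is even I would start from the binomial identity
\[ |\widehat{X}_s^{i,N}|^p - |Y|^p = \sum_{j=1}^{p/2}\binom{p/2}{j}|Y|^{p-2j}\bigl(2\langle Y, R\rangle + |R|^2\bigr)^j, \]
expand each inner power again by the binomial theorem, and take conditional expectation. After applying Cauchy--Schwarz to $\langle Y, R\rangle^{j-k}$, the generic remainder is of the form $|Y|^{p-j-k}\mathbb{E}[|R|^{j+k}\mid\mathcal{F}_{\underline{s}}]$, which by the uniform moment estimate above is $\leq C_p|Y|^{p-j-k}$ whenever $j+k\geq 2$. The only term surviving to first order in $R$ is the $j=1,\,k=0$ mean-correction $2|Y|^{p-2}\langle Y, b\delta\rangle$ produced by $\mathbb{E}[R\mid\mathcal{F}_{\underline{s}}]=b\delta$; here I would invoke \textbf{T2} to bound $\langle Y, b\rangle \leq L(|Y|^2+\mathcal{W}_2^2(\mu,\delta_0)) + |Y|\,|b(0,\delta_0)|$ and combine $\delta\leq\Delta$ with Lemma~\ref{indu2} to control the Wasserstein factor. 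Multiplying the resulting identity by $-\lambda$ and taking absolute values yields (a).

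Parts~(b) and~(c) follow the same scheme. For~(b), splitting $\langle X,b\rangle = \langle Y,b\rangle + \langle R,b\rangle$ and expanding $|X|^{p-2}$ as above, the zero-order expansion of $|X|^{p-2}$ multiplied by $\langle Y,b\rangle$ yields the main term $|Y|^{p-2}\langle Y,b\rangle$; the cross contributions from $\langle R,b\rangle$ carry an extra factor of $|b|$ that is absorbed through $|b||c|\sqrt{\delta}\leq L$ (for the $\langle c\Delta Z, b\rangle$ piece) and $|b|\cdot|\sigma|\sqrt{\delta}\leq L|b|/(1+|Y|)$ combined with $\mathbb{E}[|\Delta W|^{2k}\mid\mathcal{F}_{\underline{s}}]\leq C\delta^k$ (for the $\langle\sigma\Delta W, b\rangle$ piece). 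For~(c), I would use $|X^\mathsf{T}\sigma|^2\leq|X|^2|\sigma|^2$ to reduce to $|\sigma|^2\mathbb{E}[|X|^{p-2}\mid\mathcal{F}_{\underline{s}}]$, expand $|X|^{p-2}$ by the same binomial scheme, and then multiply by $|\sigma|^2 \leq L^2/[\Delta(1+|Y|)^2]$: this factor removes two powers of $|Y|$ from every remainder, reducing the worst surviving degree from $p-3$ to $p-5$ and hence matching the claimed $\sum_{j=0}^{p-3}|Y|^j$. The principal technical obstacle throughout is the systematic degree-counting: each factor of $\sigma$ or $c$ in a remainder comes with a taming factor $(1+|Y|)^{-1}$, and one must verify that these absorb exactly enough of the polynomial growth coming from the $|Y|^{p-2j}$ prefactors to keep the final degree at or below the advertised threshold in each of (a), (b), and (c).
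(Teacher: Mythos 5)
Your binomial-plus-taming strategy and the conditional-moment toolkit you list match the paper's; the paper's decomposition differs only in that it expands $|\widehat{X}_s^{i,N}|^p$ in powers of $|\widehat{X}_s^{i,N}|-|\widehat{X}_{\underline{s}}^{i,N}|$ and immediately applies $\bigl||\widehat{X}_s^{i,N}|-|\widehat{X}_{\underline{s}}^{i,N}|\bigr|\le|\widehat{X}_s^{i,N}-\widehat{X}_{\underline{s}}^{i,N}|$, so that inner products $\langle Y,R\rangle$ never appear at all. Where your proof of part (a) has a genuine gap is precisely in the first-order mean-correction term that your expansion does produce. You propose to bound $\langle Y,b(Y,\mu)\rangle$ via \textbf{T2}, which introduces $\mathcal{W}_2^2(\mu_{\underline{s}}^{\widehat{\boldsymbol{X}}^N},\delta_0)$, and then to control this Wasserstein factor by Lemma~\ref{indu2}. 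But Lemma~\ref{indu2} bounds $\mathbb{E}[\mathcal{W}_2^2(\mu_{\underline{s}}^{\widehat{\boldsymbol{X}}^N},\delta_0)]$ only, an unconditional expectation, while part (a) is an $\mathcal{F}_{\underline{s}}$-pointwise inequality whose right-hand side is a deterministic polynomial in $|\widehat{X}_{\underline{s}}^{i,N}|$ alone; the random variable $\mathcal{W}_2^2(\mu_{\underline{s}}^{\widehat{\boldsymbol{X}}^N},\delta_0)$ cannot be replaced by a constant almost surely, and a moment bound in expectation cannot close a conditional-expectation estimate. The paper never invokes \textbf{T2} here: it applies Cauchy--Schwarz, $|\langle Y,b\,\delta\rangle|\le|Y|\,|b|\,\delta$, and then the taming estimate $|Y|\,|b|\,(s-\underline{s})\le C\sqrt{\Delta}$ (which follows from \textbf{T1} and the definition of $h$ in \eqref{chooseh}), giving a pointwise bound with no Wasserstein term surviving.

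A secondary problem appears in your sketch of (c): factoring $|\sigma_\Delta|^2$ out of the conditional expectation, bounding $\mathbb{E}[|\widehat{X}_s^{i,N}|^{p-2}\mid\mathcal{F}_{\underline{s}}]$ by part (a), and only afterwards multiplying by $|\sigma_\Delta|^2\le L^2/[\Delta(1+|Y|)^2]$ leaves a dangling $\Delta^{-1}$. The first-order correction in part (a) is only of size $O(\sqrt{\Delta}\,|Y|^{p-4})$, not $O(\Delta)$, so after multiplication by $|\sigma_\Delta|^2$ you are left with a term of order $\Delta^{-1/2}$, which is not uniformly bounded in $\Delta$. The paper instead keeps $|\sigma_\Delta|^2$ inside the binomial expansion of $|\widehat{X}_s^{i,N}|^{p-2}|\sigma_\Delta|^2$, so that each correction factor $|b|^j\delta^j+|\sigma_\Delta|^j|\Delta W|^j+|c_\Delta|^j|\Delta Z|^j$ is multiplied by $|\sigma_\Delta|^2$ before taking the conditional expectation, and the $\Delta$-smallness of the increment can cancel the $\Delta^{-1}$ from the taming bound for $|\sigma_\Delta|^2$ term by term.
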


\begin{proof}
		First, by using \textbf{T4}, \eqref{chooseh}, Burkholder-Davis-Gundy's  inequality, \eqref{markov} and \textbf{A5}, we get that for all $2\leq j\leq p$,	
		\begin{align}
			&\max \bigg \{ |\widehat{X}_{\underline{s}}^{i,N}|\left|b\left(\widehat{X}_{\underline{s}}^{i,N},\mu_{\underline{s}}^{\widehat{\boldsymbol{X}}^N}\right)\right| \left|s-\underline{s}\right|; \  \mathbb{E}\left[ |\widehat{X}_{\underline{s}}^{i,N}|\left|\sigma_{\Delta}\left(\widehat{X}_{\underline{s}}^{i,N},\mu_{\underline{s}}^{\widehat{\boldsymbol{X}}^N}\right)\right|  \left| W_{s}^i-W_{\underline{s}}^i\right|\big|\mathcal{F}_{\underline{s}}\right]; \notag\\
			&\quad  \mathbb{E}\left[ |\widehat{X}_{\underline{s}}^{i,N}|\left|c_{\Delta}\left(\widehat{X}_{\underline{s}}^{i,N},\mu_{\underline{s}}^{\widehat{\boldsymbol{X}}^N}\right) \right| \left|Z_{s}^i-Z_{\underline{s}}^i\right| \big|\mathcal{F}_{\underline{s}}\right] \bigg\}  \leq C\sqrt{\Delta}, \notag \\
			&\max \bigg \{ \left|b\left(\widehat{X}_{\underline{s}}^{i,N},\mu_{\underline{s}}^{\widehat{\boldsymbol{X}}^N}\right)\right|^j \left|s-\underline{s}\right|^j; \  \mathbb{E}\left[ \left|\sigma_{\Delta}\left(\widehat{X}_{\underline{s}}^{i,N},\mu_{\underline{s}}^{\widehat{\boldsymbol{X}}^N}\right)\right|^j  \left| W_{s}^i-W_{\underline{s}}^i\right|^j\big|\mathcal{F}_{\underline{s}}\right]; \notag\\
			&\quad  \mathbb{E}\left[ \left|c_{\Delta}\left(\widehat{X}_{\underline{s}}^{i,N},\mu_{\underline{s}}^{\widehat{\boldsymbol{X}}^N}\right) \right|^j \left|Z_{s}^i-Z_{\underline{s}}^i\right|^j  \big|\mathcal{F}_{\underline{s}}\right] \bigg\}  \leq C\Delta.  \label{con-moment}
		\end{align} 	

\noindent 
a) Using the binomial theorem and \eqref{EM3}, we get that   
		\begin{align}
			-\lambda |\widehat{X}_s^{i,N}|^p
			&=  -\lambda |\widehat{X}_{\underline{s}}^{i,N}|^p -\lambda\sum_{j=1}^p {p \choose j} |\widehat{X}_{\underline{s}}^{i,N}|^{p-j} \left(|\widehat{X}_s^{i,N}|-|\widehat{X}_{\underline{s}}^{i,N}|\right)^j \notag \\
			&\le -\lambda |\widehat{X}_{\underline{s}}^{i,N}|^p + |\lambda| p |\widehat{X}_{\underline{s}}^{i,N}|^{p-1} \left|\widehat{X}_s^{i,N}-\widehat{X}_{\underline{s}}^{i,N} \right| \quad+|\lambda|\sum_{j=2}^p  {p \choose j} |\widehat{X}_{\underline{s}}^{i,N}|^{p-j} \left|\widehat{X}_s^{i,N}-\widehat{X}_{\underline{s}}^{i,N} \right|^j  \notag \\
			&\le -\lambda |\widehat{X}_{\underline{s}}^{i,N}|^p + |\lambda|p |\widehat{X}_{\underline{s}}^{i,N}|^{p-2} \bigg(|\widehat{X}_{\underline{s}}^{i,N}|\left|b\left(\widehat{X}_{\underline{s}}^{i,N},\mu_{\underline{s}}^{\widehat{\boldsymbol{X}}^N}\right)\right| \left|s-\underline{s}\right| + |\widehat{X}_{\underline{s}}^{i,N}|\left|\sigma_{\Delta}\left(\widehat{X}_{\underline{s}}^{i,N},\mu_{\underline{s}}^{\widehat{\boldsymbol{X}}^N}\right)\right|  \left| W_{s}^i-W_{\underline{s}}^i\right| \notag\\
			&\quad+ |\widehat{X}_{\underline{s}}^{i,N}|\left|c_{\Delta}\left(\widehat{X}_{\underline{s}}^{i,N},\mu_{\underline{s}}^{\widehat{\boldsymbol{X}}^N}\right) \right| \left|Z_{s}^i-Z_{\underline{s}}^i\right|  \bigg) + |\lambda|\sum_{j=2}^p  3^{j-1} {p \choose j} |\widehat{X}_{\underline{s}}^{i,N}|^{p-j} \bigg(\left|b\left(\widehat{X}_{\underline{s}}^{i,N},\mu_{\underline{s}}^{\widehat{\boldsymbol{X}}^N}\right)\right|^j \left|s-\underline{s}\right|^j\notag \\
			&\quad  + \left|\sigma_{\Delta}\left(\widehat{X}_{\underline{s}}^{i,N},\mu_{\underline{s}}^{\widehat{\boldsymbol{X}}^N}\right)\right|^j  \left| W_{s}^i-W_{\underline{s}}^i\right|^j + \left|c_{\Delta}\left(\widehat{X}_{\underline{s}}^{i,N},\mu_{\underline{s}}^{\widehat{\boldsymbol{X}}^N}\right) \right|^j \left|Z_{s}^i-Z_{\underline{s}}^i\right|^j  \bigg). \notag
		\end{align}
	Using \eqref{con-moment}, we conclude part a) of Lemma \ref{Lem:condmom}.

b) Using a similar computation as in part a), we obtain 
		\begin{align*}
			&|\widehat{X}_s^{i,N}|^{p-2} \left\langle \widehat{X}_s^{i,N}, b(\widehat{X}_{\underline{s}}^{i,N},\mu_{\underline{s}}^{\widehat{\boldsymbol{X}}^N}) \right\rangle\\ 
			&\leq |\widehat{X}_{\underline{s}}^{i,N}|^{p-2}\left\langle \widehat{X}_{\underline{s}}^{i,N}, b(\widehat{X}_{\underline{s}}^{i,N},\mu_{\underline{s}}^{\widehat{\boldsymbol{X}}^N}) \right\rangle+|\widehat{X}_{\underline{s}}^{i,N}|^{p-2} |b(\widehat{X}_{\underline{s}}^{i,N},\mu_{\underline{s}}^{\widehat{\boldsymbol{X}}^N})|^2(s-\underline{s}) +|\widehat{X}_{\underline{s}}^{i,N}|^{p-2}\bigg\langle  \sigma_{\Delta}\left(\widehat{X}_{\underline{s}}^{i,N},\mu_{\underline{s}}^{\widehat{\boldsymbol{X}}^N}\right)\left(W_{s}^i-W_{\underline{s}}^i\right) \\
			&\quad+ c_{\Delta}\left(\widehat{X}_{\underline{s}}^{i,N},\mu_{\underline{s}}^{\widehat{\boldsymbol{X}}^N}\right)\left(Z_{s}^i-Z_{\underline{s}}^i\right), b(\widehat{X}_{\underline{s}}^{i,N},\mu_{\underline{s}}^{\widehat{\boldsymbol{X}}^N}) \bigg\rangle +\sum_{j=1}^{p-2}{p-2 \choose j}|\widehat{X}_{\underline{s}}^{i,N}|^{p-1-j}3^{j-1}|b(\widehat{X}_{\underline{s}}^{i,N},\mu_{\underline{s}}^{\widehat{\boldsymbol{X}}^N})|\\
			&\quad \times\left(\left|b\left(\widehat{X}_{\underline{s}}^{i,N},\mu_{\underline{s}}^{\widehat{\boldsymbol{X}}^N}\right)\right|^j \left|s-\underline{s}\right|^j + \left|\sigma_{\Delta}\left(\widehat{X}_{\underline{s}}^{i,N},\mu_{\underline{s}}^{\widehat{\boldsymbol{X}}^N}\right)\right|^j  \left| W_{s}^i-W_{\underline{s}}^i\right|^j + \left|c_{\Delta}\left(\widehat{X}_{\underline{s}}^{i,N},\mu_{\underline{s}}^{\widehat{\boldsymbol{X}}^N}\right) \right|^j \left|Z_{s}^i-Z_{\underline{s}}^i\right|^j\right) \\
			&\quad + \sum_{j=1}^{p-2}{p-2 \choose j}|\widehat{X}_{\underline{s}}^{i,N}|^{p-2-j}3^{j}|b(\widehat{X}_{\underline{s}}^{i,N},\mu_{\underline{s}}^{\widehat{\boldsymbol{X}}^N})|\bigg(\left|b\left(\widehat{X}_{\underline{s}}^{i,N},\mu_{\underline{s}}^{\widehat{\boldsymbol{X}}^N}\right)\right|^{j+1} \left|s-\underline{s}\right|^{j+1} \\
			&\quad +\left|\sigma_{\Delta}\left(\widehat{X}_{\underline{s}}^{i,N},\mu_{\underline{s}}^{\widehat{\boldsymbol{X}}^N}\right)\right|^{j+1}  \left| W_{s}^i-W_{\underline{s}}^i\right|^{j+1}+ \left|c_{\Delta}\left(\widehat{X}_{\underline{s}}^{i,N},\mu_{\underline{s}}^{\widehat{\boldsymbol{X}}^N}\right) \right|^{j+1} \left|Z_{s}^i-Z_{\underline{s}}^i\right|^{j+1}\bigg).
		\end{align*}
			Using again \eqref{con-moment}, we conclude part b) of Lemma \ref{Lem:condmom}.

c) Using the binomial theorem and \eqref{EM3}, we have
		\begin{align*}
			&|\widehat{X}_s^{i,N}|^{p-2} |\sm_{\Delta} (\widehat{X}_{\underline{s}}^{i,N},\mu_{\underline{s}}^{\widehat{\boldsymbol{X}}^N})|^2 \\ 
			&\leq |\widehat{X}_{\underline{s}}^{i,N}|^{p-2}|\sm_{\Delta} (\widehat{X}_{\underline{s}}^{i,N},\mu_{\underline{s}}^{\widehat{\boldsymbol{X}}^N})|^2+\sum_{j=1}^{p-2}{p-2 \choose j}|\widehat{X}_{\underline{s}}^{i,N}|^{p-2-j}|\sm_{\Delta} (\widehat{X}_{\underline{s}}^{i,N},\mu_{\underline{s}}^{\widehat{\boldsymbol{X}}^N})|^23^{j-1}\\
			&\quad\times\left(\left|b\left(\widehat{X}_{\underline{s}}^{i,N},\mu_{\underline{s}}^{\widehat{\boldsymbol{X}}^N}\right)\right|^j \left|s-\underline{s}\right|^j  + \left|\sigma_{\Delta}\left(\widehat{X}_{\underline{s}}^{i,N},\mu_{\underline{s}}^{\widehat{\boldsymbol{X}}^N}\right)\right|^j  \left| W_{s}^i-W_{\underline{s}}^i\right|^j+ \left|c_{\Delta}\left(\widehat{X}_{\underline{s}}^{i,N},\mu_{\underline{s}}^{\widehat{\boldsymbol{X}}^N}\right) \right|^j \left|Z_{s}^i-Z_{\underline{s}}^i\right|^j\right).
		\end{align*}
		Then, using the estimates \eqref{con-moment}, we get that
		\begin{align}
			\mathbb{E}\left[|\widehat{X}_s^{i,N}|^{p-4}|(\widehat{X}_s^{i,N})^\mathsf{T} \sigma_\Delta(\widehat{X}_{\underline{s}}^{i,N},\mu_{\underline{s}}^{\widehat{\boldsymbol{X}}^N})|^2 \big| \mathcal{F}_{\underline{s}} \right]&\leq\mathbb{E}\left[|\widehat{X}_s^{i,N}|^{p-2}|\sigma_\Delta(\widehat{X}_{\underline{s}}^{i,N},\mu_{\underline{s}}^{\widehat{\boldsymbol{X}}^N})|^2 \big| \mathcal{F}_{\underline{s}} \right] \notag\\
			&\leq   |\widehat{X}_{\underline{s}}^{i,N}|^{p-2}|\sigma_\Delta(\widehat{X}_{\underline{s}}^{i,N},\mu_{\underline{s}}^{\widehat{\boldsymbol{X}}^N})|^2 +C_p\sum\limits_{j=0}^{p-3} |\widehat{X}_{\underline{s}}^{i,N}|^j. \label{X^p-2sm}
		\end{align}	
		We conclude part c) of Lemma \ref{Lem:condmom}.
\end{proof}

\begin{Lem} Assume  Conditions \textbf{T1}--\textbf{T5} and \textbf{A5}. 
Then, for any even integer   $p \in (0, p_0]$,   $s>0$, and $z \in \mathbb{R}^d$, it holds that 
\begin{align}
			&\mathbb{E}\left[\left(\left|\widehat{X}_{s}^{i,N}+c_{\Delta}(\widehat{X}_{\underline{s}}^{i,N},\mu_{\underline{s}}^{\widehat{\boldsymbol{X}}^N})  z\right|^{p}-\left|\widehat{X}_{\underline{s}}^{i,N}+c_{\Delta}(\widehat{X}_{\underline{s}}^{i,N},\mu_{\underline{s}}^{\widehat{\boldsymbol{X}}^N})  z\right|^{p}\right)-\left(|\widehat{X}_{s}^{i,N}|^{p}-|\widehat{X}_{\underline{s}}^{i,N}|^{p}\right)\big| \mathcal{F}_{\underline{s}} \right] \notag\\
			&\leq \widehat{Q}_{p-2}\left(\left|\widehat{X}_{\underline{s}}^{i,N}\right|,|z|\right)+  \sum_{j=1}^{p/2}\sum_{k=0}^{j}C_{j,k} \vert z\vert^{j+k}\mathcal{W}_2^{j+k}(\mu_{\underline{s}}^{\widehat{\boldsymbol{X}}^N},\delta_0) \notag\\
			&\quad\times\bigg(\left\vert \widehat{X}_{\underline{s}}^{i,N}\right\vert^{j-k-2}+\left|\widehat{X}_{\underline{s}}^{i,N}\right\vert^{p-j-k-2}+\sum_{\ell=2}^{j-k}\left(\left\vert \widehat{X}_{\underline{s}}^{i,N}\right\vert^{j-k-\ell}+\left|\widehat{X}_{\underline{s}}^{i,N}\right\vert^{p-j-k-\ell}\right)+\sum_{\ell=2}^{p-2j}\left|\widehat{X}_{\underline{s}}^{i,N}\right\vert^{p-j-k-\ell}\bigg)			
			, \label{C52}
			\end{align}
			where $(C_{j,k})$ are some positive constants , $\widehat{Q}_{p-2}(|\widehat{X}_{\underline{s}}^{i,N}|,|z|)$ is a polynomial in $|\widehat{X}_{\underline{s}}^{i,N}|$ of degree $p-2$.
\end{Lem}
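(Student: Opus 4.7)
The plan is to apply Lemma~\ref{abcLemma} with the substitutions $a=\widehat{X}_s^{i,N}$, $b=\widehat{X}_{\underline{s}}^{i,N}$, and $c=c_{\Delta}(\widehat{X}_{\underline{s}}^{i,N},\mu_{\underline{s}}^{\widehat{\boldsymbol{X}}^N})\,z$, since then $|a+c|^p-|b+c|^p-|a|^p+|b|^p$ is exactly the quantity whose conditional expectation we want to bound. Since $b$ and $c$ are $\mathcal{F}_{\underline{s}}$-measurable, only the factors $|a-b|^{\ell}=|\widehat{X}_s^{i,N}-\widehat{X}_{\underline{s}}^{i,N}|^{\ell}$ carry randomness under the conditional expectation.

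Next, I would control $\mathbb{E}[|\widehat{X}_s^{i,N}-\widehat{X}_{\underline{s}}^{i,N}|^{\ell}\mid\mathcal{F}_{\underline{s}}]$ for $\ell=1,\ldots,p$. Writing the increment in the form \eqref{EM3} and invoking Conditions \textbf{T3}, \textbf{T4}, the moment bound \eqref{markov} for Brownian increments, Burkholder-Davis-Gundy (Lemma~\ref{BDGjump}) for the compensated Poisson integral, Condition \textbf{A5} and the taming property \eqref{chooseh} (which gives $|b|\sqrt{s-\underline{s}}\le \sqrt{\Delta}$ and $|\sigma_\Delta|\sqrt{s-\underline{s}}\le L$, $|c_\Delta|\sqrt{s-\underline{s}}\le L$), I obtain bounds of the form $\mathbb{E}[|\widehat{X}_s^{i,N}-\widehat{X}_{\underline{s}}^{i,N}|^{\ell}\mid\mathcal{F}_{\underline{s}}]\le C_\ell$, exactly in the spirit of \eqref{con-moment}. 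In particular, these bounds are deterministic and independent of $\widehat{X}_{\underline{s}}^{i,N}$, so after conditional expectation the generic summand of the abc-expansion becomes a constant multiple of $|c_{\Delta}(\widehat{X}_{\underline{s}}^{i,N},\mu_{\underline{s}}^{\widehat{\boldsymbol{X}}^N})|^{j+k}|z|^{j+k}\,|\widehat{X}_{\underline{s}}^{i,N}|^{m}$ for various $m\in\{j-k-2,\ldots,p-j-k-\ell,\ldots\}$ appearing on the right-hand side of \eqref{abc_expan}.

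The last step is to control $|c_{\Delta}(\widehat{X}_{\underline{s}}^{i,N},\mu_{\underline{s}}^{\widehat{\boldsymbol{X}}^N})|^{j+k}$. By \eqref{cdelta} of \textbf{T5},
\[
|c_{\Delta}(x,\mu)|^{j+k}\le \widetilde{L_3}^{\,j+k}\bigl(1+|x|+\mathcal{W}_2(\mu,\delta_0)\bigr)^{j+k},
\]
and expanding by the trinomial formula splits this into one term of the form $\widetilde{L_3}^{\,j+k}\mathcal{W}_2^{j+k}(\mu,\delta_0)$ (which, multiplied by the surviving polynomial in $|\widehat{X}_{\underline{s}}^{i,N}|$ from Lemma~\ref{abcLemma}, produces precisely the double sum in the statement) plus mixed terms in which the power of $\mathcal{W}_2$ is strictly smaller than $j+k$ and therefore carries at least one extra factor of $(1+|\widehat{X}_{\underline{s}}^{i,N}|)$; bounding each such $\mathcal{W}_2$-factor by the trivial estimate $\mathcal{W}_2(\mu_{\underline{s}}^{\widehat{\boldsymbol{X}}^N},\delta_0)\le(\tfrac{1}{N}\sum_i|\widehat{X}_{\underline{s}}^{i,N}|^2)^{1/2}$ is not what we want here; instead I would simply absorb those remaining mixed terms into a polynomial $\widehat{Q}_{p-2}(|\widehat{X}_{\underline{s}}^{i,N}|,|z|)$, whose degree in $|\widehat{X}_{\underline{s}}^{i,N}|$ is at most $(p-2j-\ell)+(j+k)\le p-2$ for the relevant range of indices because the excess power of $|x|$ coming from the $|c_\Delta|^{j+k}$ expansion is compensated by the factor $|\widehat{X}_{\underline{s}}^{i,N}|^{p-j-k-\ell}$ with $\ell\ge 1$.

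The bookkeeping of indices to verify that every mixed term really fits under a degree-$(p-2)$ polynomial is the main obstacle, and the cleanest way is to isolate the extreme $k=j$, $\ell=1$ contributions (which are exactly the "leading" $\mathcal{W}_2^{j+k}$ terms) and note that all other contributions lose at least one degree in $|\widehat{X}_{\underline{s}}^{i,N}|$ because of the factor $|a-b|^{\ell}$ with $\ell\ge 2$ or lose at least one power of $\mathcal{W}_2$ in the binomial expansion of $|c_\Delta|^{j+k}$; combining the two cases via the obvious inequality $xy\le \tfrac12(x^2+y^2)$ keeps the total degree in $|\widehat{X}_{\underline{s}}^{i,N}|$ at most $p-2$, and the resulting bound is of the form claimed in \eqref{C52}.
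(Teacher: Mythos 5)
Your plan correctly identifies the two structural ingredients: applying Lemma~\ref{abcLemma} with $a=\widehat{X}_s^{i,N}$, $b=\widehat{X}_{\underline{s}}^{i,N}$, $c=c_{\Delta}(\widehat{X}_{\underline{s}}^{i,N},\mu_{\underline{s}}^{\widehat{\boldsymbol{X}}^N})z$, and then using the conditional moment bounds in the spirit of \eqref{con-moment} (including the versions that absorb one factor of $|\widehat{X}_{\underline{s}}^{i,N}|$ for $\ell=1$, which is what produces the $j-k-2$ and $p-j-k-2$ exponents). Up to that point you are on the same route as the paper.

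The gap is in your treatment of $|c_{\Delta}|^{j+k}$. You propose to expand $\bigl(1+|\widehat{X}_{\underline{s}}^{i,N}|+\mathcal{W}_2(\mu_{\underline{s}}^{\widehat{\boldsymbol{X}}^N},\delta_0)\bigr)^{j+k}$ by the trinomial theorem and then absorb the cross terms $|\widehat{X}_{\underline{s}}^{i,N}|^{a}\,\mathcal{W}_2^{b}$ with $1\le b<j+k$ into $\widehat{Q}_{p-2}\bigl(|\widehat{X}_{\underline{s}}^{i,N}|,|z|\bigr)$. That cannot work: $\widehat{Q}_{p-2}$ is a polynomial in $|\widehat{X}_{\underline{s}}^{i,N}|$ and $|z|$ alone, with no $\mathcal{W}_2$-dependence, so a term carrying $\mathcal{W}_2^{b}$ with $b\ge 1$ is simply not absorbable there; nor does it match the explicit double sum, whose $\mathcal{W}_2$-power is exactly $j+k$. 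Your final suggestion of isolating the $k=j$, $\ell=1$ contributions and applying $xy\le\tfrac12(x^2+y^2)$ does not repair this: Cauchy's inequality doubles exponents, which can push the $|\widehat{X}_{\underline{s}}^{i,N}|$-degree above $p-2$ and still leaves residual $\mathcal{W}_2$-factors. The paper avoids all cross terms by replacing the trinomial expansion with the elementary power-mean bound
\begin{align*}
\bigl(1+|x|+\mathcal{W}_2\bigr)^{j+k}\le 3^{j+k-1}\bigl(1+|x|^{j+k}+\mathcal{W}_2^{j+k}\bigr),
\end{align*}
which produces exactly three clean pieces: the $1$ and $|x|^{j+k}$ pieces combine with the $|\widehat{X}_{\underline{s}}^{i,N}|$-powers from the abc-expansion and \eqref{con-moment} to give a polynomial of degree at most $p-2$ (collected into $\widehat{Q}_{p-2}$), while the $\mathcal{W}_2^{j+k}$ piece yields precisely the claimed double sum. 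Substituting this step for your trinomial expansion closes the gap; the rest of your outline is sound.
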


\begin{proof}
		By using Lemma \ref{abcLemma}, we have 
			\begin{align*}
			&\left(\left|\widehat{X}_{s}^{i,N}+c_{\Delta}(\widehat{X}_{\underline{s}}^{i,N},\mu_{\underline{s}}^{\widehat{\boldsymbol{X}}^N})  z\right|^{p}-\left|\widehat{X}_{\underline{s}}^{i,N}+c_{\Delta}(\widehat{X}_{\underline{s}}^{i,N},\mu_{\underline{s}}^{\widehat{\boldsymbol{X}}^N})  z\right|^{p}\right)-\left(|\widehat{X}_{s}^{i,N}|^{p}-|\widehat{X}_{\underline{s}}^{i,N}|^{p}\right)\\
			& \leq \sum_{j=1}^{p/2}\sum_{k=0}^{j}{p/2 \choose j} {j \choose k} 2^{j-k} \left\vert c_{\Delta}(\widehat{X}_{\underline{s}}^{i,N},\mu_{\underline{s}}^{\widehat{\boldsymbol{X}}^N})z\right\vert^{j+k} \Bigg(2^{p-2j-1} \sum_{\ell=1}^{j-k}{j-k \choose \ell} \left\vert \widehat{X}_{\underline{s}}^{i,N}\right\vert^{j-k-\ell} \left\vert \widehat{X}_{s}^{i,N}-\widehat{X}_{\underline{s}}^{i,N}\right\vert^{p-2j+\ell}\\
			&\quad+2^{p-2j-1}\sum_{\ell=1}^{j-k}{j-k \choose \ell} \left\vert \widehat{X}_{\underline{s}}^{i,N}\right\vert^{p-j-k-\ell}  \left\vert \widehat{X}_{s}^{i,N}-\widehat{X}_{\underline{s}}^{i,N}\right\vert^{\ell}+ \sum_{\ell=1}^{p-2j}{p-2j \choose \ell}\left|\widehat{X}_{\underline{s}}^{i,N}\right\vert^{p-j-k-\ell}\left|\widehat{X}_{s}^{i,N}-\widehat{X}_{\underline{s}}^{i,N}\right\vert^{\ell}\Bigg).
			\end{align*}	
			Then, using \eqref{cdelta} of \textbf{T5} and \eqref{EM3}, we get
			\begin{align*}
			&\left(\left|\widehat{X}_{s}^{i,N}+c_{\Delta}(\widehat{X}_{\underline{s}}^{i,N},\mu_{\underline{s}}^{\widehat{\boldsymbol{X}}^N})  z\right|^{p}-\left|\widehat{X}_{\underline{s}}^{i,N}+c_{\Delta}(\widehat{X}_{\underline{s}}^{i,N},\mu_{\underline{s}}^{\widehat{\boldsymbol{X}}^N})  z\right|^{p}\right)-\left(|\widehat{X}_{s}^{i,N}|^{p}-|\widehat{X}_{\underline{s}}^{i,N}|^{p}\right)\\
			&\leq \sum_{j=1}^{p/2}\sum_{k=0}^{j}{p/2 \choose j} {j \choose k} 2^{j-k} \left\vert z\right\vert^{j+k} (\widetilde{L_3})^{j+k}3^{j+k-1}\left(1+\left\vert \widehat{X}_{\underline{s}}^{i,N}\right\vert^{j+k}+\mathcal{W}_2^{j+k}(\mu_{\underline{s}}^{\widehat{\boldsymbol{X}}^N},\delta_0)\right)\Bigg(2^{p-2j-1} (j-k) \left\vert \widehat{X}_{\underline{s}}^{i,N}\right\vert^{j-k-2} \\
			&\times 3^{p-2j}\bigg(\left\vert \widehat{X}_{\underline{s}}^{i,N}\right\vert\left|b\left(\widehat{X}_{\underline{s}}^{i,N},\mu_{\underline{s}}^{\widehat{\boldsymbol{X}}^N}\right)\right|^{p-2j+1} \left|s-\underline{s}\right|^{p-2j+1}+ \left\vert \widehat{X}_{\underline{s}}^{i,N}\right\vert\left|\sigma_{\Delta}\left(\widehat{X}_{\underline{s}}^{i,N},\mu_{\underline{s}}^{\widehat{\boldsymbol{X}}^N}\right)\right|^{p-2j+1} \left| W_{s}^i-W_{\underline{s}}^i\right|^{p-2j+1} \\
			&   +\left\vert \widehat{X}_{\underline{s}}^{i,N}\right\vert \left|c_{\Delta}\left(\widehat{X}_{\underline{s}}^{i,N},\mu_{\underline{s}}^{\widehat{\boldsymbol{X}}^N}\right) \right|^{p-2j+1} \left|Z_{s}^i-Z_{\underline{s}}^i\right|^{p-2j+1}\bigg)+\left(2^{p-2j-1}{j-k\choose 1} + {p-2j\choose 1}\right)\left|\widehat{X}_{\underline{s}}^{i,N}\right\vert^{p-j-k-2}\\
			&\times\bigg(|\widehat{X}_{\underline{s}}^{i,N}||b(\widehat{X}_{\underline{s}}^{i,N},\mu_{\underline{s}}^{\widehat{\boldsymbol{X}}^N})|(s-\underline{s})+ |\widehat{X}_{\underline{s}}^{i,N}|\left|\sigma_{\Delta}\left(\widehat{X}_{\underline{s}}^{i,N},\mu_{\underline{s}}^{\widehat{\boldsymbol{X}}^N}\right)\right|\left|W_{s}^i-W_{\underline{s}}^i\right|+ |\widehat{X}_{\underline{s}}^{i,N}|\left|c_{\Delta}\left(\widehat{X}_{\underline{s}}^{i,N},\mu_{\underline{s}}^{\widehat{\boldsymbol{X}}^N}\right)\right|\left|Z_{s}^i-Z_{\underline{s}}^i\right| \bigg) \Bigg)\\
			& +\sum_{j=1}^{p/2}\sum_{k=0}^{j}{p/2 \choose j} {j \choose k} 2^{j-k} \left\vert z\right\vert^{j+k} (\widetilde{L_3})^{j+k}3^{j+k-1}\left(1+\vert \widehat{X}_{\underline{s}}^{i,N}\vert^{j+k}+\mathcal{W}_2^{j+k}(\mu_{\underline{s}}^{\widehat{\boldsymbol{X}}^N},\delta_0)\right)\\
			&\times \Bigg(2^{p-2j-1} \sum_{\ell=2}^{j-k}{j-k \choose \ell} \left\vert \widehat{X}_{\underline{s}}^{i,N}\right\vert^{j-k-\ell} 3^{p-2j+\ell-1} \bigg(\left|b\left(\widehat{X}_{\underline{s}}^{i,N},\mu_{\underline{s}}^{\widehat{\boldsymbol{X}}^N}\right)\right|^{p-2j+\ell} \left|s-\underline{s}\right|^{p-2j+\ell} + \left|\sigma_{\Delta}\left(\widehat{X}_{\underline{s}}^{i,N},\mu_{\underline{s}}^{\widehat{\boldsymbol{X}}^N}\right)\right|^{p-2j+\ell} \\
			&\times \left| W_{s}^i-W_{\underline{s}}^i\right|^{p-2j+\ell}   + \left|c_{\Delta}\left(\widehat{X}_{\underline{s}}^{i,N},\mu_{\underline{s}}^{\widehat{\boldsymbol{X}}^N}\right) \right|^{p-2j+\ell} \left|Z_{s}^i-Z_{\underline{s}}^i\right|^{p-2j+\ell}\bigg) +2^{p-2j-1}\sum_{\ell=2}^{j-k}{j-k \choose \ell} \left\vert \widehat{X}_{\underline{s}}^{i,N}\right\vert^{p-j-k-\ell}   3^{\ell-1}\\
			&\times \bigg(\left|b\left(\widehat{X}_{\underline{s}}^{i,N},\mu_{\underline{s}}^{\widehat{\boldsymbol{X}}^N}\right)\right|^{\ell} \left|s-\underline{s}\right|^{\ell} + \left|\sigma_{\Delta}\left(\widehat{X}_{\underline{s}}^{i,N},\mu_{\underline{s}}^{\widehat{\boldsymbol{X}}^N}\right)\right|^{\ell}  \left| W_{s}^i-W_{\underline{s}}^i\right|^{\ell}  + \left|c_{\Delta}\left(\widehat{X}_{\underline{s}}^{i,N},\mu_{\underline{s}}^{\widehat{\boldsymbol{X}}^N}\right) \right|^{\ell} \left|Z_{s}^i-Z_{\underline{s}}^i\right|^{\ell}\bigg) \\
			&+ \sum_{\ell=2}^{p-2j}{p-2j \choose \ell}3^{\ell-1} \left|\widehat{X}_{\underline{s}}^{i,N}\right\vert^{p-j-k-\ell}\bigg(\left|b\left(\widehat{X}_{\underline{s}}^{i,N},\mu_{\underline{s}}^{\widehat{\boldsymbol{X}}^N}\right)\right|^{\ell} \left|s-\underline{s}\right|^{\ell} + \left|\sigma_{\Delta}\left(\widehat{X}_{\underline{s}}^{i,N},\mu_{\underline{s}}^{\widehat{\boldsymbol{X}}^N}\right)\right|^{\ell}  \left| W_{s}^i-W_{\underline{s}}^i\right|^{\ell}  \\
			&+ \left|c_{\Delta}\left(\widehat{X}_{\underline{s}}^{i,N},\mu_{\underline{s}}^{\widehat{\boldsymbol{X}}^N}\right) \right|^{\ell} \left|Z_{s}^i-Z_{\underline{s}}^i\right|^{\ell}\bigg)\Bigg).
			\end{align*}
		By applying the estimates \eqref{con-moment} and condition \textbf{T4}, we obtain the desired result. 	
\end{proof}

\begin{Lem} \label{LemulXs_}
Assume that  Conditions \textbf{T1}--\textbf{T5} and \textbf{A5} hold, and  $N \geq \left( \frac{\max\{3\widetilde{L_3},1\}}{2\upsilon}\right)^2$. 
Then, for any even integer   $p \in (0, p_0]$,   $s>0$, and $z \in \mathbb{R}^d$, it holds
\begin{align}
&\left|\widehat{X}_{\underline{s}}^{i,N}+c_{\Delta}(\widehat{X}_{\underline{s}}^{i,N},\mu_{\underline{s}}^{\widehat{\boldsymbol{X}}^N})  z\right|^{p} - |\widehat{X}_{\underline{s}}^{i,N}|^{p} -p |\widehat{X}_{\underline{s}}^{i,N}|^{p-2}\left\langle \widehat{X}_{\underline{s}}^{i,N}, c_{\Delta} (\widehat{X}_{\underline{s}}^{i,N},\mu_{\underline{s}}^{\widehat{\boldsymbol{X}}^N} )z\right\rangle  \notag\\
&\leq  \left|c_{\Delta} (\widehat{X}_{\underline{s}}^{i,N},\mu_{\underline{s}}^{\widehat{\boldsymbol{X}}^N} )\right|^{2}|\widehat{X}_{\underline{s}}^{i,N}|^{p-2}p\Bigg(\dfrac{\vert z\vert^2}{2}+\widetilde{L_3}^{-2}  \left(\left(1+\vert z\vert (\widetilde{L_3}+\upsilon)\right)^{p-1}-\vert z\vert (\widetilde{L_3}+\upsilon)-1\right) \bigg(\vert z\vert \Big(\frac{\widetilde{L_3}}{2}+\upsilon\Big)+\upsilon\bigg)\Bigg)   \notag\\
&\quad+ \sum_{k=2}^{p/2} \sum_{\ell=0}^{k}{p/2 \choose k}|z|^{2k-\ell}Q_{p}\left(p-2,2k-\ell,|\widehat{X}_{\underline{s}}^{i,N}|, 1+\widehat{U}_{\underline{s}}^{i,N}\right), \label{Cond1}
\end{align} 
 where  
   $\widehat{U}_{\underline{s}}^{i,N}:=\dfrac{1}{\sqrt{N}}\sum_{j=1; j\neq i}^{N}\left\vert \widehat{X}_{\underline{s}}^{j,N}\right\vert$, and 
\begin{align*}
&Q_{p}\left(p-2,2k-\ell,|\widehat{X}_{\underline{s}}^{i,N}|, 1+\widehat{U}_{\underline{s}}^{i,N}\right):=2\widetilde{L_3}^{2} \left(\Big(1+\dfrac{1}{\sqrt{N}}\Big)^2|\widehat{X}_{\underline{s}}^{i,N}|^2+\left(1+\widehat{U}_{\underline{s}}^{i,N}\right)^2\right) {k \choose \ell} 2^{\ell}   \widetilde{L_3}^{2k-\ell-2}  \\
&\qquad\times\Bigg( \Big(k-\dfrac{\ell}{2}-1\Big) \big(1+\dfrac{1}{\sqrt{N}}\big)^{2k-\ell-3}\left(1+\widehat{U}_{\underline{s}}^{i,N}\right)^2|\widehat{X}_{\underline{s}}^{i,N}|^{p-4}\\
&\qquad+\sum_{m=2}^{2k-\ell-2} {2k-\ell-2 \choose m}\big(1+\dfrac{1}{\sqrt{N}}\big)^{2k-\ell-2-m}\left(1+\widehat{U}_{\underline{s}}^{i,N}\right)^m|\widehat{X}_{\underline{s}}^{i,N}|^{p-2-m}\Bigg).
\end{align*}
\end{Lem}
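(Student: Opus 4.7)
The strategy follows the binomial-expansion approach already carried out in the proof of Proposition~\ref{nghiem dung 1}, notably the derivations of \eqref{for1}, \eqref{for2} and \eqref{ulxct1}, adapted to the tamed coefficient $c_\Delta$ and the empirical measure $\mu_{\underline{s}}^{\widehat{\boldsymbol{X}}^N}$. Writing $c_\Delta := c_\Delta(\widehat{X}_{\underline{s}}^{i,N},\mu_{\underline{s}}^{\widehat{\boldsymbol{X}}^N})$ for brevity, I would first apply the binomial theorem exactly as in \eqref{for1} to obtain
\begin{align*}
&\left|\widehat{X}_{\underline{s}}^{i,N}+c_\Delta z\right|^p - |\widehat{X}_{\underline{s}}^{i,N}|^p - p|\widehat{X}_{\underline{s}}^{i,N}|^{p-2}\langle \widehat{X}_{\underline{s}}^{i,N}, c_\Delta z\rangle \\
&= \frac{p}{2}|\widehat{X}_{\underline{s}}^{i,N}|^{p-2}|c_\Delta|^2|z|^2 + \sum_{k=2}^{p/2}{p/2 \choose k}|\widehat{X}_{\underline{s}}^{i,N}|^{p-2k}\left(|c_\Delta z|^2+2\langle \widehat{X}_{\underline{s}}^{i,N}, c_\Delta z\rangle\right)^k,
\end{align*}
then expand each inner $k$-th power by the binomial theorem and apply Cauchy--Schwarz to bound $\langle \widehat{X}_{\underline{s}}^{i,N}, c_\Delta z\rangle^{\ell}$ by $|\widehat{X}_{\underline{s}}^{i,N}|^{\ell}|c_\Delta|^{\ell}|z|^{\ell}$.

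The key new ingredient compared to Proposition~\ref{nghiem dung 1} is the estimate of $\mathcal{W}_2(\mu_{\underline{s}}^{\widehat{\boldsymbol{X}}^N},\delta_0)$ in the tamed diffusion. By Cauchy--Schwarz,
\begin{equation*}
\mathcal{W}_2(\mu_{\underline{s}}^{\widehat{\boldsymbol{X}}^N},\delta_0) = \sqrt{\frac{1}{N}\sum_{j=1}^{N}|\widehat{X}_{\underline{s}}^{j,N}|^2} \le \frac{1}{\sqrt{N}}\sum_{j=1}^{N}|\widehat{X}_{\underline{s}}^{j,N}| = \frac{1}{\sqrt{N}}|\widehat{X}_{\underline{s}}^{i,N}| + \widehat{U}_{\underline{s}}^{i,N},
\end{equation*}
which together with \eqref{cdelta} of \textbf{T5} yields $|c_\Delta|\le \widetilde{L_3}\bigl(1+(1+1/\sqrt{N})|\widehat{X}_{\underline{s}}^{i,N}|+\widehat{U}_{\underline{s}}^{i,N}\bigr)$. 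The hypothesis $N\ge(\max\{3\widetilde{L_3},1\}/(2\upsilon))^2$ forces $\widetilde{L_3}/\sqrt{N}\le 2\upsilon/3<\upsilon$, so that the effective multiplier of $|\widehat{X}_{\underline{s}}^{i,N}|$ in any expansion of a positive power of $|c_\Delta|$ can be absorbed into $\widetilde{L_3}+\upsilon$. This is precisely what shifts $\widetilde{L_3}$ to $\widetilde{L_3}+\upsilon$ in the target bound.

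To isolate the coefficient of $|\widehat{X}_{\underline{s}}^{i,N}|^{p-2}$ I would mirror the computation leading to \eqref{for2}. Applying the generating-function identity $\sum_{k=2}^{p/2}{p/2\choose k}k a^{k-1} = \frac{p}{2}((1+a)^{p/2-1}-1)$ with $a=(\widetilde{L_3}+\upsilon)^2|z|^2+2(\widetilde{L_3}+\upsilon)|z|$, for which $1+a=(1+(\widetilde{L_3}+\upsilon)|z|)^2$, produces the factor $(1+(\widetilde{L_3}+\upsilon)|z|)^{p-1}-1$. Splitting $|c_\Delta|^{2k-\ell}=|c_\Delta|^2\cdot|c_\Delta|^{2k-\ell-2}$, using the growth bound above on $|c_\Delta|^{2k-\ell-2}$ after binomial expansion to isolate the leading $|\widehat{X}_{\underline{s}}^{i,N}|^{2k-\ell-2}$ monomial, and then combining with the $\frac{p}{2}|\widehat{X}_{\underline{s}}^{i,N}|^{p-2}|c_\Delta|^2|z|^2$ term while subtracting the redundant linear-in-$|z|$ piece, produces the exact factor $(|z|(\widetilde{L_3}/2+\upsilon)+\upsilon)((1+|z|(\widetilde{L_3}+\upsilon))^{p-1}-1-|z|(\widetilde{L_3}+\upsilon))$ in the statement. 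All remaining cross-terms, which carry powers $|\widehat{X}_{\underline{s}}^{i,N}|^{p-2-m}$ with $m\ge 2$ and matching powers of $\widehat{U}_{\underline{s}}^{i,N}$ coming from $(1+(1+1/\sqrt{N})|\widehat{X}_{\underline{s}}^{i,N}|+\widehat{U}_{\underline{s}}^{i,N})^{2k-\ell-2}$, are collected into the polynomials $Q_p$.

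The main obstacle is the combinatorial bookkeeping in the last step: one must decide precisely which cross-terms contribute to the leading $|\widehat{X}_{\underline{s}}^{i,N}|^{p-2}$ coefficient --- and hence must be organized into the compact closed form via the generating-function identity, with the $1/\sqrt{N}$ corrections absorbed into $\upsilon$ --- and which are genuinely lower order and feed into $Q_p$. Ensuring that the tight expression involving $\widetilde{L_3}+\upsilon$ emerges rather than a coarser bound requires handling simultaneously the $k=1$ contribution $\frac{p}{2}|c_\Delta|^2|z|^2$ and the generating-function sum from $k\ge 2$, consistently across all binomial indices.
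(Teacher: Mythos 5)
Your proposal follows the same approach as the paper: binomial expansion as in \eqref{for1}, the empirical-measure bound $\mathcal{W}_2(\mu_{\underline{s}}^{\widehat{\boldsymbol{X}}^N},\delta_0)\le N^{-1/2}\sum_j|\widehat{X}_{\underline{s}}^{j,N}|$ to express $|c_\Delta|^{2k-\ell-2}$ in powers of $|\widehat{X}_{\underline{s}}^{i,N}|$ and $\widehat{U}_{\underline{s}}^{i,N}$, isolation of the leading $|\widehat{X}_{\underline{s}}^{i,N}|^{p-2}$ monomial, resummation over $k$ via the generating-function identities, and absorption of the $N^{-1/2}$ corrections into $\upsilon$ using the hypothesis on $N$. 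The one point to tighten is that the paper carries the exact quantity $a=|z|\widetilde{L_3}(1+N^{-1/2})$ through the resummation (using both $\sum_{k\geq 2}{p/2\choose k}k z^{k-1}=\tfrac{p}{2}((1+z)^{p/2-1}-1)$ and $\sum_{k\geq 2}{p/2\choose k} z^{k-1}=z^{-1}((1+z)^{p/2}-1-\tfrac{p}{2}z)$, plus the algebraic inequality $(1+a)^p-1-\tfrac{p}{2}(a^2+2a)\le(a+1)^2((1+a)^{p-2}-1)$), and only at the very end applies $a\le|z|(\widetilde{L_3}+\upsilon)$ and $\tfrac{a+1}{p\sqrt{N}}+\tfrac{a}{2}\le|z|(\tfrac{\widetilde{L_3}}{2}+\upsilon)+\upsilon$, whereas you propose to substitute $a=(\widetilde{L_3}+\upsilon)^2|z|^2+2(\widetilde{L_3}+\upsilon)|z|$ directly into the generating-function equality — this is an upper bound for the actual argument of the resummed series, not its value, so the substitution must be deferred until after the exact resummation.
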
 

\begin{proof}
Proceeding  in the same way as in \eqref{for1}, we get 
		\begin{align*}
		&\left|\widehat{X}_{\underline{s}}^{i,N}+c_{\Delta}(\widehat{X}_{\underline{s}}^{i,N},\mu_{\underline{s}}^{\widehat{\boldsymbol{X}}^N})  z\right|^{p} - |\widehat{X}_{\underline{s}}^{i,N}|^{p} -p |\widehat{X}_{\underline{s}}^{i,N}|^{p-2}\left\langle \widehat{X}_{\underline{s}}^{i,N}, c_{\Delta} (\widehat{X}_{\underline{s}}^{i,N},\mu_{\underline{s}}^{\widehat{\boldsymbol{X}}^N} )z\right\rangle\\
		&=  \dfrac{p}{2} \left|\widehat{X}_{\underline{s}}^{i,N}\right|^{p-2} \left|c_{\Delta} (\widehat{X}_{\underline{s}}^{i,N},\mu_{\underline{s}}^{\widehat{\boldsymbol{X}}^N} )z\right|^2 \\
		&\quad +\sum_{k=2}^{p/2} {p/2 \choose k}|\widehat{X}_{\underline{s}}^{i,N}|^{p-2k}\sum_{\ell=0}^{k} {k \choose \ell} \left|c_{\Delta} (\widehat{X}_{\underline{s}}^{i,N},\mu_{\underline{s}}^{\widehat{\boldsymbol{X}}^N} )z\right|^{2k-2\ell}2^{\ell} \left( \left\langle \widehat{X}_{\underline{s}}^{i,N}, c_{\Delta} (\widehat{X}_{\underline{s}}^{i,N},\mu_{\underline{s}}^{\widehat{\boldsymbol{X}}^N} )z\right\rangle\right)^{\ell}\\
		& \leq \dfrac{p}{2} \left|\widehat{X}_{\underline{s}}^{i,N}\right|^{p-2} \left|c_{\Delta} (\widehat{X}_{\underline{s}}^{i,N},\mu_{\underline{s}}^{\widehat{\boldsymbol{X}}^N} )z\right|^2 + \sum_{k=2}^{p/2} \sum_{\ell=0}^{k} {p/2 \choose k}   {k \choose \ell} 2^{\ell} |\widehat{X}_{\underline{s}}^{i,N}|^{p-2k+\ell}   \left|c_{\Delta} (\widehat{X}_{\underline{s}}^{i,N},\mu_{\underline{s}}^{\widehat{\boldsymbol{X}}^N} )\right|^{2k-\ell} |z|^{2k-\ell}. 
\end{align*} 
It follows from estimate \eqref{cdelta} of \textbf{T5} and the estimate $\mathcal{W}_2(\mu_{\underline{s}}^{\widehat{\boldsymbol{X}}^N},\delta_0) \leq \frac{1}{\sqrt{N}}\sum_{j=1}^{N}\vert \widehat{X}_{\underline{s}}^{j,N}\vert$ that 
\begin{align}
& \left|c_{\Delta} (\widehat{X}_{\underline{s}}^{i,N},\mu_{\underline{s}}^{\widehat{\boldsymbol{X}}^N} )\right|^{2k-\ell-2}  \leq \widetilde{L_3}^{2k-\ell-2} \left(1+|\widehat{X}_{\underline{s}}^{i,N}|+\mathcal{W}_2(\mu_{\underline{s}}^{\widehat{\boldsymbol{X}}^N},\delta_0)\right)^{2k-\ell-2} \notag \\
 & \leq  \widetilde{L_3}^{2k-\ell-2} \left(1+|\widehat{X}_{\underline{s}}^{i,N}|+\dfrac{1}{\sqrt{N}}\sum_{j=1}^{N}\left\vert \widehat{X}_{\underline{s}}^{j,N}\right\vert\right)^{2k-\ell-2} \notag\\
 &= \widetilde{L_3}^{2k-\ell-2} \left((1+\dfrac{1}{\sqrt{N}})|\widehat{X}_{\underline{s}}^{i,N}|+1+\dfrac{1}{\sqrt{N}}\sum_{j=1; j\neq i}^{N}\left\vert \widehat{X}_{\underline{s}}^{j,N}\right\vert\right)^{2k-\ell-2} \notag \\
 & = \widetilde{L_3}^{2k-\ell-2} \Bigg((1+\dfrac{1}{\sqrt{N}})^{2k-\ell-2}|\widehat{X}_{\underline{s}}^{i,N}|^{2k-\ell-2} +(2k-\ell-2)\left(1+\widehat{U}_{\underline{s}}^{i,N}\right)(1+\dfrac{1}{\sqrt{N}})^{2k-\ell-3}|\widehat{X}_{\underline{s}}^{i,N}|^{2k-\ell-3} \notag \\
&\quad+\sum_{m=2}^{2k-\ell-2} {2k-\ell-2 \choose m} \left(1+\widehat{U}_{\underline{s}}^{i,N}\right)^m  (1+\dfrac{1}{\sqrt{N}})^{2k-\ell-2-m}|\widehat{X}_{\underline{s}}^{i,N}|^{2k-\ell-2-m}\Bigg). \label{ct010ULcDt}
\end{align}	

Using the estimate $\left(1+\widehat{U}_{\underline{s}}^{i,N}\right) |\widehat{X}_{\underline{s}}^{i,N}|^{2k-\ell-3} \leq \frac 12 \Big( \left(1+\widehat{U}_{\underline{s}}^{i,N}\right)^2 |\widehat{X}_{\underline{s}}^{i,N}|^{2k-\ell-4} + |\widehat{X}_{\underline{s}}^{i,N}|^{2k-\ell-2}\Big)$, we get 
\begin{align*}
& \sum_{\ell=0}^{k}   {k \choose \ell} 2^{\ell} |\widehat{X}_{\underline{s}}^{i,N}|^{p-2k+\ell}   \left|c_{\Delta} (\widehat{X}_{\underline{s}}^{i,N},\mu_{\underline{s}}^{\widehat{\boldsymbol{X}}^N} )\right|^{2k-\ell} |z|^{2k-\ell}\\
\leq & \left|c_{\Delta} (\widehat{X}_{\underline{s}}^{i,N},\mu_{\underline{s}}^{\widehat{\boldsymbol{X}}^N} )\right|^{2}|\widehat{X}_{\underline{s}}^{i,N}|^{p-2}\sum_{\ell=0}^{k} {k \choose \ell} 2^{\ell}  |z|^{2k-\ell} \widetilde{L_3}^{2k-\ell-2}\left(1+\dfrac{1}{\sqrt{N}}\right)^{2k-\ell-3}\left( \dfrac{1}{\sqrt{N}}+k-\dfrac{\ell}{2}\right) \\
& + \left|c_{\Delta} (\widehat{X}_{\underline{s}}^{i,N},\mu_{\underline{s}}^{\widehat{\boldsymbol{X}}^N} )\right|^{2}\sum_{\ell=0}^{k} {k \choose \ell} 2^{\ell}  |z|^{2k-\ell} \widetilde{L_3}^{2k-\ell-2}  \Bigg( \left(k-\dfrac{\ell}{2}-1\right) \big(1+\dfrac{1}{\sqrt{N}}\big)^{2k-\ell-3}\left(1+\widehat{U}_{\underline{s}}^{i,N}\right)^2|\widehat{X}_{\underline{s}}^{i,N}|^{p-4}\\
& +\sum_{m=2}^{2k-\ell-2} {2k-\ell-2 \choose m}\big(1+\dfrac{1}{\sqrt{N}}\big)^{2k-\ell-2-m}\left(1+\widehat{U}_{\underline{s}}^{i,N}\right)^m|\widehat{X}_{\underline{s}}^{i,N}|^{p-2-m}\Bigg).
\end{align*}

Set $a=\vert z\vert \widetilde{L_3}\left(1+\dfrac{1}{\sqrt{N}}\right).$ Note that 
\begin{align*}
&\sum_{\ell=0}^{k} {k \choose \ell} 2^{\ell}  |z|^{2k-\ell} \widetilde{L_3}^{2k-\ell-2}\left(1+\dfrac{1}{\sqrt{N}}\right)^{2k-\ell-3}=\widetilde{L_3}^{-2} \left(1+\dfrac{1}{\sqrt{N}}\right)^{-3}\left(a^2+2a\right)^k,\\
&-\dfrac{1}{2}\sum_{\ell=0}^{k} {k \choose \ell} 2^{\ell}  |z|^{2k-\ell} \widetilde{L_3}^{2k-\ell-2}\left(1+\dfrac{1}{\sqrt{N}}\right)^{2k-\ell-3} \ell=- \widetilde{L_3}^{-2} \left(1+\dfrac{1}{\sqrt{N}}\right)^{-3} ka\left(a^2+2a\right)^{k-1}.
\end{align*}
These facts imply that 
$$\sum_{\ell=0}^{k} {k \choose \ell} 2^{\ell}  |z|^{2k-\ell} \widetilde{L_3}^{2k-\ell-2}\left(1+\dfrac{1}{\sqrt{N}}\right)^{2k-\ell-3} (\frac{1}{\sqrt{N}} + k - \frac{\ell }{2}) =\widetilde{L_3}^{-2} \left(1+\dfrac{1}{\sqrt{N}}\right)^{-3}\left(a^2+2a\right)^{k-1} \Big( \frac{a^2+2a}{\sqrt{N}} + k(a^2+a)\Big).$$
Moreover, similar to the estimate \eqref{ct010ULcDt}, we get 
\begin{align*}
\left|c_{\Delta} (\widehat{X}_{\underline{s}}^{i,N},\mu_{\underline{s}}^{\widehat{\boldsymbol{X}}^N} )\right|^{2}&\leq  2\widetilde{L_3}^{2} \left(\Big(1+\dfrac{1}{\sqrt{N}}\Big)^2|\widehat{X}_{\underline{s}}^{i,N}|^2+\left(1+\widehat{U}_{\underline{s}}^{i,N}\right)^2\right).
\end{align*}
Therefore, we have 
\begin{align*}
& \sum_{\ell=0}^{k}   {k \choose \ell} 2^{\ell} |\widehat{X}_{\underline{s}}^{i,N}|^{p-2k+\ell}   \left|c_{\Delta} (\widehat{X}_{\underline{s}}^{i,N},\mu_{\underline{s}}^{\widehat{\boldsymbol{X}}^N} )\right|^{2k-\ell} |z|^{2k-\ell}\\
\leq & \left|c_{\Delta} (\widehat{X}_{\underline{s}}^{i,N},\mu_{\underline{s}}^{\widehat{\boldsymbol{X}}^N} )\right|^{2}|\widehat{X}_{\underline{s}}^{i,N}|^{p-2} \widetilde{L_3}^{-2} \left(1+\dfrac{1}{\sqrt{N}}\right)^{-3}\left(a^2+2a\right)^{k-1}   \left(\dfrac{a^2+2a}{\sqrt{N}}+k(a^2+a)\right) \\
&\quad+ \sum_{\ell=0}^{k}|z|^{2k-\ell}Q_{p}\left(p-2,2k-\ell,|\widehat{X}_{\underline{s}}^{i,N}|, 1+\widehat{U}_{\underline{s}}^{i,N}\right).
\end{align*}
Next, using $\sum_{k=2}^{p/2}{p/2 \choose k}k z^{k-1}=\frac{p}{2}((1+z)^{p/2-1}-1)$ and $\sum_{k=2}^{p/2}{p/2 \choose k} z^{k-1}=z^{-1}((1+z)^{p/2}-1-\frac{p}{2}z)$ with $z>0$, we get that
\begin{align*}
&\sum_{k=2}^{p/2} {p/2 \choose k}|\widehat{X}_{\underline{s}}^{i,N}|^{p-2k} \left(\left|c_{\Delta} (\widehat{X}_{\underline{s}}^{i,N},\mu_{\underline{s}}^{\widehat{\boldsymbol{X}}^N} )z \right|^2+2\left\langle \widehat{X}_{\underline{s}}^{i,N}, c_{\Delta} (\widehat{X}_{\underline{s}}^{i,N},\mu_{\underline{s}}^{\widehat{\boldsymbol{X}}^N} )z\right\rangle \right)^k\\
&\leq \left|c_{\Delta} (\widehat{X}_{\underline{s}}^{i,N},\mu_{\underline{s}}^{\widehat{\boldsymbol{X}}^N} )\right|^{2}|\widehat{X}_{\underline{s}}^{i,N}|^{p-2} \widetilde{L_3}^{-2} \left(1+\dfrac{1}{\sqrt{N}}\right)^{-3}\\
&\quad\times\Bigg(\dfrac{1}{\sqrt{N}}\bigg((1+a)^p-1-\frac{p}{2}(a^2+2a)\bigg)+(a^2+a)\frac{p}{2}\Big((1+a)^{p-2}-1\Big)\Bigg)\\
&\quad+ \sum_{k=2}^{p/2} \sum_{\ell=0}^{k}{p/2 \choose k}|z|^{2k-\ell}Q_{p}\left(p-2,2k-\ell,|\widehat{X}_{\underline{s}}^{i,N}|, 1+\widehat{U}_{\underline{s}}^{i,N}\right).
\end{align*}
Applying the inequality 
$(1+a)^p-1-\frac{p}{2}(a^2+2a) \leq \left(a+1\right)^2\left((1+a)^{p-2}-1\right),$
with $p\geq 2$ and $a>0$, we get that for $\upsilon \geq \frac{1}{2\sqrt{N}}\max\{3\widetilde{L_3},1\}$,
\begin{align}
&\left|\widehat{X}_{\underline{s}}^{i,N}+c_{\Delta}(\widehat{X}_{\underline{s}}^{i,N},\mu_{\underline{s}}^{\widehat{\boldsymbol{X}}^N})  z\right|^{p} - |\widehat{X}_{\underline{s}}^{i,N}|^{p} -p |\widehat{X}_{\underline{s}}^{i,N}|^{p-2}\left\langle \widehat{X}_{\underline{s}}^{i,N}, c_{\Delta} (\widehat{X}_{\underline{s}}^{i,N},\mu_{\underline{s}}^{\widehat{\boldsymbol{X}}^N} )z\right\rangle  \notag\\
&\leq \left|c_{\Delta} (\widehat{X}_{\underline{s}}^{i,N},\mu_{\underline{s}}^{\widehat{\boldsymbol{X}}^N} )\right|^{2}|\widehat{X}_{\underline{s}}^{i,N}|^{p-2}\Bigg(\dfrac{p}{2}\vert z\vert^2+\widetilde{L_3}^{-2} \left(1+\dfrac{1}{\sqrt{N}}\right)^{-3}  \notag\\
&\quad\times\Bigg(\dfrac{1}{\sqrt{N}}\bigg((1+a)^p-1-\frac{p}{2}(a^2+2a)\bigg)+(a^2+a)\frac{p}{2}\Big((1+a)^{p-2}-1\Big)\Bigg)\Bigg) \notag\\
&\quad  + \sum_{k=2}^{p/2} \sum_{\ell=0}^{k}{p/2 \choose k}|z|^{2k-\ell}Q_{p}\left(p-2,2k-\ell,|\widehat{X}_{\underline{s}}^{i,N}|, 1+\widehat{U}_{\underline{s}}^{i,N}\right) \notag\\
&\leq  \left|c_{\Delta} (\widehat{X}_{\underline{s}}^{i,N},\mu_{\underline{s}}^{\widehat{\boldsymbol{X}}^N} )\right|^{2}|\widehat{X}_{\underline{s}}^{i,N}|^{p-2}\Bigg(\dfrac{p}{2}\vert z\vert^2+\widetilde{L_3}^{-2} \left(1+\dfrac{1}{\sqrt{N}}\right)^{-3}  \Big((1+a)^{p-2}-1\Big) \left(a+1\right)\Big(\dfrac{a+1}{\sqrt{N}}+\dfrac{ap}{2}\Big)\Bigg)   \notag\\
&\quad+ \sum_{k=2}^{p/2} \sum_{\ell=0}^{k}{p/2 \choose k}|z|^{2k-\ell}Q_{p}\left(p-2,2k-\ell,|\widehat{X}_{\underline{s}}^{i,N}|, 1+\widehat{U}_{\underline{s}}^{i,N}\right) \notag\\
&\leq  \left|c_{\Delta} (\widehat{X}_{\underline{s}}^{i,N},\mu_{\underline{s}}^{\widehat{\boldsymbol{X}}^N} )\right|^{2}|\widehat{X}_{\underline{s}}^{i,N}|^{p-2}p\Bigg(\dfrac{\vert z\vert^2}{2}+\widetilde{L_3}^{-2}  \Big((1+a)^{p-1}-a-1\Big) \Big(\dfrac{a+1}{p\sqrt{N}}+\dfrac{a}{2}\Big)\Bigg) \notag\\
&\quad  + \sum_{k=2}^{p/2} \sum_{\ell=0}^{k}{p/2 \choose k}|z|^{2k-\ell}Q_{p}\left(p-2,2k-\ell,|\widehat{X}_{\underline{s}}^{i,N}|, 1+\widehat{U}_{\underline{s}}^{i,N}\right)\notag\\
&\leq  \left|c_{\Delta} (\widehat{X}_{\underline{s}}^{i,N},\mu_{\underline{s}}^{\widehat{\boldsymbol{X}}^N} )\right|^{2}|\widehat{X}_{\underline{s}}^{i,N}|^{p-2}p\Bigg(\dfrac{\vert z\vert^2}{2}+\widetilde{L_3}^{-2}  \left(\left(1+\vert z\vert (\widetilde{L_3}+\upsilon)\right)^{p-1}-\vert z\vert (\widetilde{L_3}+\upsilon)-1\right) \bigg(\vert z\vert \Big(\frac{\widetilde{L_3}}{2}+\upsilon\Big)+\upsilon\bigg)\Bigg)   \notag\\
&\quad+ \sum_{k=2}^{p/2} \sum_{\ell=0}^{k}{p/2 \choose k}|z|^{2k-\ell}Q_{p}\left(p-2,2k-\ell,|\widehat{X}_{\underline{s}}^{i,N}|, 1+\widehat{U}_{\underline{s}}^{i,N}\right), \notag
\end{align} 
where we have used the fact that $
a\leq \vert z\vert (\widetilde{L_3}+\upsilon),\quad\dfrac{a+1}{p\sqrt{N}}+\dfrac{a}{2} \leq \vert z\vert \Big(\frac{\widetilde{L_3}}{2}+\upsilon\Big)+\upsilon.
$
\end{proof} 

\begin{Lem}
Assume  Conditions \textbf{T1}--\textbf{T5} and \textbf{A5}. 
Then, for any even integer   $p \in (0, p_0]$,   there exists a positive constant $C_p$ such that for any $s>0$ and $z \in \mathbb{R}^d$, 

			\begin{align}
			&\mathbb{E}\left[|\widehat{X}_{s}^{i,N}|^{p-2}\langle \widehat{X}_{s}^{i,N}, c_{\Delta} (\widehat{X}_{\underline{s}}^{i,N},\mu_{\underline{s}}^{\widehat{\boldsymbol{X}}^N} )z\rangle- |\widehat{X}_{\underline{s}}^{i,N}|^{p-2}\langle \widehat{X}_{\underline{s}}^{i,N}, c_{\Delta} (\widehat{X}_{\underline{s}}^{i,N},\mu_{\underline{s}}^{\widehat{\boldsymbol{X}}^N} )z\rangle\big| \mathcal{F}_{\underline{s}} \right] \notag\\
			&\leq C_p\vert z\vert\left(\sum\limits_{j=0}^{p-2} |\widehat{X}_{\underline{s}}^{i,N}|^j + \mathcal{W}_2(\mu_{\underline{s}}^{\widehat{\boldsymbol{X}}^N},\delta_0)\sum\limits_{j=0}^{p-3} |\widehat{X}_{\underline{s}}^{i,N}|^j\right). \label{Cond4}
			\end{align}	

\end{Lem}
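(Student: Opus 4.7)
I would adapt the computation carried out for part (b) of Lemma \ref{Lem:condmom}, replacing the drift vector $b(\widehat{X}_{\underline{s}}^{i,N},\mu_{\underline{s}}^{\widehat{\boldsymbol{X}}^N})$ by the (random but $\mathcal{F}_{\underline{s}}$-measurable) vector $c_{\Delta}(\widehat{X}_{\underline{s}}^{i,N},\mu_{\underline{s}}^{\widehat{\boldsymbol{X}}^N})z$. Setting $\delta_s := \widehat{X}_{s}^{i,N}-\widehat{X}_{\underline{s}}^{i,N}$, the algebraic identity
\begin{align*}
&|\widehat{X}_s^{i,N}|^{p-2}\langle \widehat{X}_s^{i,N}, c_{\Delta}z\rangle - |\widehat{X}_{\underline{s}}^{i,N}|^{p-2}\langle \widehat{X}_{\underline{s}}^{i,N}, c_{\Delta}z\rangle \\
&= |\widehat{X}_{\underline{s}}^{i,N}|^{p-2}\langle \delta_s, c_{\Delta}z\rangle + \left(|\widehat{X}_s^{i,N}|^{p-2}-|\widehat{X}_{\underline{s}}^{i,N}|^{p-2}\right)\langle \widehat{X}_{\underline{s}}^{i,N}, c_{\Delta}z\rangle \\
&\quad + \left(|\widehat{X}_s^{i,N}|^{p-2}-|\widehat{X}_{\underline{s}}^{i,N}|^{p-2}\right)\langle \delta_s, c_{\Delta}z\rangle
\end{align*}
splits the quantity into three pieces, which I call $T_1$, $T_2$, $T_3$.

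For $T_1$ I would exploit the martingale property: since $\mathbb{E}[W^i_s-W^i_{\underline{s}}\mid\mathcal{F}_{\underline{s}}]=0$ and $Z^i$ is a centered L\'evy process driven by the compensated Poisson measure $\widetilde{N}^i$, so $\mathbb{E}[Z^i_s-Z^i_{\underline{s}}\mid\mathcal{F}_{\underline{s}}]=0$, only the drift survives and $\mathbb{E}[T_1\mid\mathcal{F}_{\underline{s}}] = |\widehat{X}_{\underline{s}}^{i,N}|^{p-2}(s-\underline{s})\langle b,c_{\Delta}z\rangle$. Cauchy--Schwarz together with Condition \textbf{T3}, which yields $(s-\underline{s})|b||c_{\Delta}|\leq \Delta\cdot L/\sqrt{\Delta}=L\sqrt{\Delta}$, then bounds this contribution by a multiple of $|z||\widehat{X}_{\underline{s}}^{i,N}|^{p-2}$, accounting for the top of the first sum in the target estimate.

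For $T_2$ and $T_3$ I bound the inner products by absolute values via Cauchy--Schwarz (this is where the factor $|z|$ comes out), then expand
\[
\bigl||\widehat{X}_s^{i,N}|^{p-2}-|\widehat{X}_{\underline{s}}^{i,N}|^{p-2}\bigr| \leq \sum_{j=1}^{p-2}\binom{p-2}{j}|\widehat{X}_{\underline{s}}^{i,N}|^{p-2-j}|\delta_s|^j,
\]
together with the triangle inequality $|\delta_s|^j\leq 3^{j-1}(|b|^j(s-\underline{s})^j+|\sigma_{\Delta}|^j|W_s^i-W_{\underline{s}}^i|^j+|c_{\Delta}|^j|Z_s^i-Z_{\underline{s}}^i|^j)$. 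Taking conditional expectation and invoking the estimates \eqref{con-moment} and Lemma \ref{BDGjump} turns each increment-power into a factor of order $\sqrt{\Delta}$ (for $j=1$) or $\Delta$ (for $j\geq 2$). The remaining outer factor $|c_{\Delta}(\widehat{X}_{\underline{s}}^{i,N},\mu_{\underline{s}}^{\widehat{\boldsymbol{X}}^N})|$ is controlled in two complementary ways: applying the sharper Condition \textbf{T3} bound $|c_{\Delta}|(1+|\widehat{X}_{\underline{s}}^{i,N}|)\leq L/\sqrt{\Delta}$ absorbs one extra power of $|\widehat{X}_{\underline{s}}^{i,N}|$ and produces the polynomial part $\sum_{j=0}^{p-2}|\widehat{X}_{\underline{s}}^{i,N}|^j$, whereas substituting the linear-growth bound \eqref{cdelta} on a single $|c_{\Delta}|$ factor introduces exactly one copy of $\mathcal{W}_2(\mu_{\underline{s}}^{\widehat{\boldsymbol{X}}^N},\delta_0)$, giving rise to the $\mathcal{W}_2(\mu_{\underline{s}}^{\widehat{\boldsymbol{X}}^N},\delta_0)\sum_{j=0}^{p-3}|\widehat{X}_{\underline{s}}^{i,N}|^j$ term.

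The main obstacle I anticipate is the bookkeeping that ensures $\mathcal{W}_2(\mu_{\underline{s}}^{\widehat{\boldsymbol{X}}^N},\delta_0)$ appears only linearly and strictly below degree $p-2$ in $|\widehat{X}_{\underline{s}}^{i,N}|$. This forces one to dispose of higher powers of $|c_{\Delta}|$ using Condition \textbf{T3} (whose $1/\sqrt{\Delta}$-scaling precisely cancels the $\sqrt{\Delta}$ from the increment moments at the price of one power of $|\widehat{X}_{\underline{s}}^{i,N}|$), reserving \eqref{cdelta} for the single terminal $|c_{\Delta}|$ factor; the drop of the maximal degree from $p-2$ to $p-3$ in the $\mathcal{W}_2$-sum traces back exactly to the power of $|\widehat{X}_{\underline{s}}^{i,N}|$ consumed by \textbf{T3}.
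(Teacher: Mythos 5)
Your decomposition into $T_1,T_2,T_3$ is exactly the identity the paper uses (written there as $\langle\widehat X_{\underline s},c_\Delta z\rangle(|\widehat X_s|^{p-2}-|\widehat X_{\underline s}|^{p-2})+\langle\delta_s,c_\Delta z\rangle|\widehat X_{\underline s}|^{p-2}+\langle\delta_s,c_\Delta z\rangle(|\widehat X_s|^{p-2}-|\widehat X_{\underline s}|^{p-2})$), and the subsequent steps — dropping the martingale part of $T_1$, bounding the surviving drift term via \textbf{T3}, binomially expanding $|\widehat X_s^{i,N}|^{p-2}-|\widehat X_{\underline s}^{i,N}|^{p-2}$, splitting $|\delta_s|^j$ by the triangle inequality, absorbing the increments via \eqref{con-moment}, and trading one $|c_\Delta|$ factor for either a $1/\sqrt\Delta$ via \textbf{T3} or a copy of $\mathcal{W}_2(\mu_{\underline s}^{\widehat{\boldsymbol X}^N},\delta_0)$ via \eqref{cdelta} — all match the paper's computation. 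Your proposal is correct and takes essentially the same route as the paper's proof.
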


\begin{proof}

			By using the binomial theorem, \eqref{EM3} and \eqref{cdelta} of \textbf{T5}, we have
			\begin{align*}
			& |\widehat{X}_{s}^{i,N}|^{p-2}\langle \widehat{X}_{s}^{i,N}, c_{\Delta} (\widehat{X}_{\underline{s}}^{i,N},\mu_{\underline{s}}^{\widehat{\boldsymbol{X}}^N} )z\rangle- |\widehat{X}_{\underline{s}}^{i,N}|^{p-2}\langle \widehat{X}_{\underline{s}}^{i,N}, c_{\Delta} (\widehat{X}_{\underline{s}}^{i,N},\mu_{\underline{s}}^{\widehat{\boldsymbol{X}}^N} )z\rangle \\
			&=\langle \widehat{X}_{\underline{s}}^{i,N}, c_{\Delta} (\widehat{X}_{\underline{s}}^{i,N},\mu_{\underline{s}}^{\widehat{\boldsymbol{X}}^N} )z\rangle\left(|\widehat{X}_{s}^{i,N}|^{p-2}-|\widehat{X}_{\underline{s}}^{i,N}|^{p-2}\right)+\langle \widehat{X}_{s}^{i,N}-\widehat{X}_{\underline{s}}^{i,N}, c_{\Delta} (\widehat{X}_{\underline{s}}^{i,N},\mu_{\underline{s}}^{\widehat{\boldsymbol{X}}^N} )z\rangle|\widehat{X}_{s}^{i,N}|^{p-2}  \\
			&=\langle \widehat{X}_{\underline{s}}^{i,N}, c_{\Delta} (\widehat{X}_{\underline{s}}^{i,N},\mu_{\underline{s}}^{\widehat{\boldsymbol{X}}^N} )z\rangle\sum_{j=1}^{p-2} {p-2\choose j} |\widehat{X}_{\underline{s}}^{i,N}|^{p-2-j} \left(|\widehat{X}_{s}^{i,N}|-|\widehat{X}_{\underline{s}}^{i,N}|\right)^j\\
			&\quad+\langle \widehat{X}_{s}^{i,N}-\widehat{X}_{\underline{s}}^{i,N}, c_{\Delta} (\widehat{X}_{\underline{s}}^{i,N},\mu_{\underline{s}}^{\widehat{\boldsymbol{X}}^N} )z\rangle|\widehat{X}_{\underline{s}}^{i,N}|^{p-2} \\
			&\quad +\langle \widehat{X}_{s}^{i,N}-\widehat{X}_{\underline{s}}^{i,N}, c_{\Delta} (\widehat{X}_{\underline{s}}^{i,N},\mu_{\underline{s}}^{\widehat{\boldsymbol{X}}^N} )z\rangle\sum_{j=1}^{p-2} {p-2\choose j} |\widehat{X}_{\underline{s}}^{i,N}|^{p-2-j} \left(|\widehat{X}_{s}^{i,N}|-|\widehat{X}_{\underline{s}}^{i,N}|\right)^j  \\
			&\leq (p-2) \vert c_{\Delta} (\widehat{X}_{\underline{s}}^{i,N},\mu_{\underline{s}}^{\widehat{\boldsymbol{X}}^N} )\vert \vert z\vert |\widehat{X}_{\underline{s}}^{i,N}|^{p-2} \left|\widehat{X}_{s}^{i,N}-\widehat{X}_{\underline{s}}^{i,N}\right|\\
			&\quad+\vert c_{\Delta} (\widehat{X}_{\underline{s}}^{i,N},\mu_{\underline{s}}^{\widehat{\boldsymbol{X}}^N} )\vert \vert z\vert\sum_{j=2}^{p-2} {p-2\choose j} |\widehat{X}_{\underline{s}}^{i,N}|^{p-1-j} \left|\widehat{X}_{s}^{i,N}-\widehat{X}_{\underline{s}}^{i,N}\right|^j  \\
			&\quad +\left\langle b\left(\widehat{X}_{\underline{s}}^{i,N},\mu_{\underline{s}}^{\widehat{\boldsymbol{X}}^N}\right)(s-\underline{s}), c_{\Delta} (\widehat{X}_{\underline{s}}^{i,N},\mu_{\underline{s}}^{\widehat{\boldsymbol{X}}^N} )z\right\rangle|\widehat{X}_{\underline{s}}^{i,N}|^{p-2} \\
			&\quad+ \left\langle \sigma_{\Delta}\left(\widehat{X}_{\underline{s}}^{i,N},\mu_{\underline{s}}^{\widehat{\boldsymbol{X}}^N}\right)\left(W_{s}^i-W_{\underline{s}}^i\right)+c_{\Delta}\left(\widehat{X}_{\underline{s}}^{i,N},\mu_{\underline{s}}^{\widehat{\boldsymbol{X}}^N}\right)\left(Z_{s}^i-Z_{\underline{s}}^i\right), c_{\Delta} (\widehat{X}_{\underline{s}}^{i,N},\mu_{\underline{s}}^{\widehat{\boldsymbol{X}}^N} )z\right\rangle|\widehat{X}_{\underline{s}}^{i,N}|^{p-2}  \\
			&\quad +\vert c_{\Delta} (\widehat{X}_{\underline{s}}^{i,N},\mu_{\underline{s}}^{\widehat{\boldsymbol{X}}^N} )\vert \vert z\vert \sum_{j=1}^{p-2} {p-2\choose j} |\widehat{X}_{\underline{s}}^{i,N}|^{p-2-j} \left|\widehat{X}_{s}^{i,N}-\widehat{X}_{\underline{s}}^{i,N}\right|^{j+1}\\
			&\leq (p-2) \vert z\vert |\widehat{X}_{\underline{s}}^{i,N}|^{p-2} \bigg(|c_{\Delta}(\widehat{X}_{\underline{s}}^{i,N},\mu_{\underline{s}}^{\widehat{\boldsymbol{X}}^N})|  |b(\widehat{X}_{\underline{s}}^{i,N},\mu_{\underline{s}}^{\widehat{\boldsymbol{X}}^N})|(s-\underline{s})  + |c_{\Delta}(\widehat{X}_{\underline{s}}^{i,N},\mu_{\underline{s}}^{\widehat{\boldsymbol{X}}^N})|  |\sigma_{\Delta}\left(\widehat{X}_{\underline{s}}^{i,N},\mu_{\underline{s}}^{\widehat{\boldsymbol{X}}^N}\right)|\left|W_{s}^i-W_{\underline{s}}^i\right| \\
			&\quad+ |c_{\Delta}(\widehat{X}_{\underline{s}}^{i,N},\mu_{\underline{s}}^{\widehat{\boldsymbol{X}}^N})|^2\left|Z_{s}^i-Z_{\underline{s}}^i\right| \bigg) +\widetilde{L_3}\left(1+\vert \widehat{X}_{\underline{s}}^{i,N}\vert+\mathcal{W}_2(\mu_{\underline{s}}^{\widehat{\boldsymbol{X}}^N},\delta_0)\right) \vert z\vert\sum_{j=2}^{p-2} {p-2\choose j} |\widehat{X}_{\underline{s}}^{i,N}|^{p-1-j} 3^{j-1}\\
			&\qquad\times\bigg(\left|b\left(\widehat{X}_{\underline{s}}^{i,N},\mu_{\underline{s}}^{\widehat{\boldsymbol{X}}^N}\right)\right|^j \left|s-\underline{s}\right|^j + \left|\sigma_{\Delta}\left(\widehat{X}_{\underline{s}}^{i,N},\mu_{\underline{s}}^{\widehat{\boldsymbol{X}}^N}\right)\right|^j  \left| W_{s}^i-W_{\underline{s}}^i\right|^j  + \left|c_{\Delta}\left(\widehat{X}_{\underline{s}}^{i,N},\mu_{\underline{s}}^{\widehat{\boldsymbol{X}}^N}\right) \right|^j \left|Z_{s}^i-Z_{\underline{s}}^i\right|^j\bigg)  \\
			&\quad +\left\langle b\left(\widehat{X}_{\underline{s}}^{i,N},\mu_{\underline{s}}^{\widehat{\boldsymbol{X}}^N}\right)(s-\underline{s}), c_{\Delta} (\widehat{X}_{\underline{s}}^{i,N},\mu_{\underline{s}}^{\widehat{\boldsymbol{X}}^N} )z\right\rangle|\widehat{X}_{\underline{s}}^{i,N}|^{p-2} + \Big\langle \sigma_{\Delta}\left(\widehat{X}_{\underline{s}}^{i,N},\mu_{\underline{s}}^{\widehat{\boldsymbol{X}}^N}\right)\left(W_{s}^i-W_{\underline{s}}^i\right)\\
			&\quad+c_{\Delta}\left(\widehat{X}_{\underline{s}}^{i,N},\mu_{\underline{s}}^{\widehat{\boldsymbol{X}}^N}\right)\left(Z_{s}^i-Z_{\underline{s}}^i\right), c_{\Delta} (\widehat{X}_{\underline{s}}^{i,N},\mu_{\underline{s}}^{\widehat{\boldsymbol{X}}^N} )z\Big\rangle|\widehat{X}_{\underline{s}}^{i,N}|^{p-2}  +\widetilde{L_3}\left(1+\vert \widehat{X}_{\underline{s}}^{i,N}\vert+\mathcal{W}_2(\mu_{\underline{s}}^{\widehat{\boldsymbol{X}}^N},\delta_0)\right) \vert z\vert \\
			&\qquad \times\sum_{j=1}^{p-2} {p-2\choose j} |\widehat{X}_{\underline{s}}^{i,N}|^{p-2-j} 3^{j}\bigg(\left|b\left(\widehat{X}_{\underline{s}}^{i,N},\mu_{\underline{s}}^{\widehat{\boldsymbol{X}}^N}\right)\right|^{j+1} \left|s-\underline{s}\right|^{j+1} + \left|\sigma_{\Delta}\left(\widehat{X}_{\underline{s}}^{i,N},\mu_{\underline{s}}^{\widehat{\boldsymbol{X}}^N}\right)\right|^{j+1}  \left| W_{s}^i-W_{\underline{s}}^i\right|^{j+1} \\
			&\quad + \left|c_{\Delta}\left(\widehat{X}_{\underline{s}}^{i,N},\mu_{\underline{s}}^{\widehat{\boldsymbol{X}}^N}\right) \right|^{j+1} \left|Z_{s}^i-Z_{\underline{s}}^i\right|^{j+1}\bigg).
			\end{align*}
			By applying the estimates \eqref{con-moment} and condition \textbf{T4}, we obtain the desired result. 			
\end{proof}

	\begin{Prop}\label{dinh ly 2}
		Assume that  Conditions \textbf{T1}--\textbf{T5} and \textbf{A5} hold, and  $N \geq \left( \frac{\max\{3\widetilde{L_3},1\}}{2\upsilon}\right)^2$. 
		Then, for any positive  $k \leq  p_0/2$,  there exists a positive constant $C=C(x_0,k,\widetilde{\gamma}_1,\widetilde{\gamma}_2,\widetilde{\eta},L,\widetilde{L_3},p_0)$ which depends neither on $\Delta$  nor on $t$ such that for any $t\geq 0$, 
		\begin{align}
			\max_{i\in\{1,\ldots,N\}}\left(\bE \left[|\widehat{X}_t^{i,N}|^{2k} \right] \vee \bE \left[|\widehat{X}_{\underline{t}}^{i,N}|^{2k}\right]\right) \le \left\{ \begin{array}{l l}
				Ce^{{2k\widetilde{\gamma} t}}\quad &\text{ if\;\; } \widetilde{\gamma} > 0,\\
				C(1+t)^{k} \quad &\text{ if \;\;} \widetilde{\gamma}  = 0, \\
				C \quad &\text{ if \;\;} \widetilde{\gamma}  < 0,
			\end{array} \right. \label{EXtmu}
		\end{align}
		where $\widetilde{\gamma} =\widetilde{\gamma}_1 +\widetilde{\gamma}_2$.
	\end{Prop}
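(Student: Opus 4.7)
The plan is to proceed by induction on the positive even integer $p = 2k \in \{2,4,\ldots,p_0\}$; the general positive $k$ then follows from H\"older's inequality, and the base case $p=2$ is exactly Lemma \ref{indu2}. Assume that \eqref{EXtmu} holds for all even integers strictly less than $p$. Following the template used in Lemma \ref{indu2} and step 2 of Proposition \ref{nghiem dung 1}, apply It\^o's formula to $e^{-p\widetilde{\gamma} t}|\widehat{X}_t^{i,N}|^p$ via equation \eqref{EM2}. This produces a Lebesgue integrand involving $-p\widetilde{\gamma}|\widehat{X}_s^{i,N}|^p$, $p|\widehat{X}_s^{i,N}|^{p-2}\langle \widehat{X}_s^{i,N},b(\widehat{X}_{\underline{s}}^{i,N},\mu_{\underline{s}}^{\widehat{\boldsymbol{X}}^N})\rangle$ and the $|\sigma_\Delta|^2$-type terms, two stochastic integrals against $W^i$ and $\widetilde{N}^i$ whose expectations vanish thanks to Lemma \ref{menh de 3}, and a compensator integrand of the form $|\widehat{X}_s^{i,N}+c_\Delta z|^p - |\widehat{X}_s^{i,N}|^p - p|\widehat{X}_s^{i,N}|^{p-2}\langle \widehat{X}_s^{i,N}, c_\Delta z\rangle$ integrated against $\nu(dz)$.

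Taking conditional expectation with respect to $\mathcal{F}_{\underline{s}}$, the essential move is to replace every occurrence of $\widehat{X}_s^{i,N}$ by the frozen value $\widehat{X}_{\underline{s}}^{i,N}$, modulo a lower-order polynomial correction. Parts (a)--(c) of Lemma \ref{Lem:condmom} handle the $-p\widetilde{\gamma}$, drift and $|\sigma_\Delta|^2$-contributions, while the compensator is split into a telescoping piece controlled by \eqref{C52} and \eqref{Cond4}, and a frozen piece controlled by Lemma \ref{LemulXs_}; the standing hypothesis $N \geq (\max\{3\widetilde{L_3},1\}/(2\upsilon))^2$ is exactly what Lemma \ref{LemulXs_} requires. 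After substitution the dominant $|\widehat{X}_{\underline{s}}^{i,N}|^{p-2}$-weighted combination of the drift, $\sigma_\Delta$ and $c_\Delta$-terms matches the left-hand side of Remark \ref{RMbsmdelta} applied to $(\widehat{X}_{\underline{s}}^{i,N},\mu_{\underline{s}}^{\widehat{\boldsymbol{X}}^N})$, and is therefore bounded by $p|\widehat{X}_{\underline{s}}^{i,N}|^{p-2}\bigl(\widetilde{\gamma}_1|\widehat{X}_{\underline{s}}^{i,N}|^2 + \widetilde{\gamma}_2\mathcal{W}_2^2(\mu_{\underline{s}}^{\widehat{\boldsymbol{X}}^N},\delta_0)+\widetilde{\eta}\bigr)$.

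All remaining corrections are polynomials in $|\widehat{X}_{\underline{s}}^{i,N}|$ and in $\widehat{U}_{\underline{s}}^{i,N}$ of total degree at most $p-2$, multiplied by $\nu$-moments of $|z|$ up to order $p_0$ (finite by \textbf{A5}). Taking full expectation and using exchangeability of the particle system together with Young's inequality $x^a y^b \le \tfrac{a}{a+b}x^{a+b}+\tfrac{b}{a+b}y^{a+b}$, every such correction collapses to $\bE[|\widehat{X}_{\underline{s}}^{i,N}|^q]$ with $q \le p-2$, controlled by the inductive hypothesis; the same argument yields $\bE[\mathcal{W}_2^2(\mu_{\underline{s}}^{\widehat{\boldsymbol{X}}^N},\delta_0)|\widehat{X}_{\underline{s}}^{i,N}|^{p-2}] \le \bE[|\widehat{X}_{\underline{s}}^{i,N}|^p] + \textrm{l.o.t.}$ Setting $m_p(t):=\bE[|\widehat{X}_t^{i,N}|^p]$ and using $\widetilde{\gamma}_1+\widetilde{\gamma}_2=\widetilde{\gamma}$ to cancel the $m_p$-terms, one obtains $e^{-p\widetilde{\gamma} t}m_p(t) \le |x_0|^p + \int_0^t e^{-p\widetilde{\gamma} s}(p\widetilde{\eta}+\Phi(s))\,ds$, where $\Phi(s)$ is already bounded by the inductive hypothesis. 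Integrating this in the three cases exactly as at the end of Proposition \ref{nghiem dung 1} yields \eqref{EXtmu} for $\bE[|\widehat{X}_t^{i,N}|^p]$, and the companion bound for $\bE[|\widehat{X}_{\underline{t}}^{i,N}|^p]$ follows from the deterministic inequality \eqref{qh}. The main obstacle is the bookkeeping in the middle step: the precise $\upsilon$-shift in Condition \textbf{T5} was designed to anticipate exactly the empirical-measure correction $\widehat{U}_{\underline{s}}^{i,N}$ appearing in Lemma \ref{LemulXs_}, and it is this delicate match that lets the leading coefficient be absorbed by $e^{-p\widetilde{\gamma} t}$ uniformly in $t$.
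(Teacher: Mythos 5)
Your outline reproduces the skeleton of the paper's argument (induction on even $p$, It\^o, the three frozen-variable corrections of Lemma \ref{Lem:condmom}, splitting the compensator into the telescoping part \eqref{C52}, \eqref{Cond4} and the frozen part in Lemma \ref{LemulXs_}, the independence factorizations, and \eqref{qh} for $\widehat{X}_{\underline t}^{i,N}$), but the key numerical choice is wrong in a way that sinks the two non-trivial cases. You apply It\^o to $e^{-p\widetilde\gamma t}\vert\widehat X_t^{i,N}\vert^p$ and claim the identity $\widetilde\gamma_1+\widetilde\gamma_2=\widetilde\gamma$ cancels the $m_p$-terms. That exact cancellation holds only for $p=2$ (Lemma \ref{indu2}), because there the weight $\vert X\vert^{p-2}$ is trivial and $\bE[\mathcal W_2^2(\mu_{\underline s}^{\widehat{\boldsymbol X}^N},\delta_0)]=\bE[\vert\widehat X_{\underline s}^{i,N}\vert^2]$. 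For $p>2$ one has instead $\bE[\vert\widehat X_{\underline s}^{i,N}\vert^{p-2}\mathcal W_2^2] = \tfrac1N\bE[\vert\widehat X_{\underline s}^{i,N}\vert^p]+\tfrac1N\sum_{m\neq i}\bE[\vert\widehat X_{\underline s}^{i,N}\vert^{p-2}]\bE[\vert\widehat X_{\underline s}^{m,N}\vert^2]$, so the $m_p$-piece cancels only up to a factor $(1-1/N)$ and, more importantly, you are left with genuine cross-products of total degree $p$, not $p-2$ as you assert. (The polynomial $Q_p(p-2,2k-\ell,\cdot,\cdot)$ in \eqref{C52}, \eqref{Cond1} is homogeneous of degree $p$; only its degree in the single-particle variable $\vert\widehat X_{\underline s}^{i,N}\vert$ is bounded by $p-2$.)

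The consequence is fatal with your choice $\lambda=\widetilde\gamma$. In the case $\widetilde\gamma>0$ the inductive hypothesis bounds those degree-$p$ cross terms by $Ce^{p\widetilde\gamma s}$, so the integrand $e^{-p\widetilde\gamma s}\cdot Ce^{p\widetilde\gamma s}$ is constant and the integral gives $Ct$; you end up with $\bE[\vert\widehat X_t^{i,N}\vert^p]\le(\vert x_0\vert^p+Ct)\,e^{p\widetilde\gamma t}$, which is strictly weaker than the claimed $Ce^{p\widetilde\gamma t}$. Similarly for $\widetilde\gamma=0$ your bound degrades to $C(1+t)^{p/2+1}$ instead of $C(1+t)^{p/2}$. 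This is precisely why the paper does not tilt at rate $\widetilde\gamma$ but at the strictly smaller rate $\lambda=\widetilde\gamma_1+\widetilde\gamma_2/N$, mirroring the choice $\lambda=\gamma_1 p$ in Step 2 of Proposition \ref{nghiem dung 1} (not $\lambda=\gamma p$, which that proof uses only for $p=2$). With $\lambda<\widetilde\gamma$ the crucial integral $\int_0^t e^{-p\lambda s}e^{p\widetilde\gamma s}\,ds$ is of order $e^{p(\widetilde\gamma-\lambda)t}$ rather than linear in $t$, and the right closing constant $Ce^{p\widetilde\gamma t}$ (or $C(1+t)^{p/2}$ when $\widetilde\gamma=0$) follows. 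Your argument could be repaired either by adopting that tilt, or by keeping the residual negative term $-p\widetilde\gamma_2(1-1/N)\bE[\vert\widehat X_{\underline s}^{i,N}\vert^p]$ and running a genuine Gronwall comparison; but as written, with the $m_p$-terms dropped after a false cancellation and the correction degree mislabelled, the proof does not close.
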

	\begin{proof}
		Using H\"older's inequality, it suffices to show \eqref{EXtmu} for a positive integer $k \leq p_0/2$. We are going to use the induction method. First, note that \eqref{EXtmu} is valid for $k=1$ due to Lemma \ref{indu2}.
		
		Next, assume that \eqref{EXtmu} holds for any $k \leq k_0 \leq [p_0/2]-1$. 
		We wish to show that \eqref{EXtmu} still holds for $k=k_0+1$. For this, using It\^o's formula for $e^{-p\lambda t}|\widehat{X}_t^{i,N}|^p$ with even integer $p:=2(k_0+1)$, we have 
\begin{align}\label{e^pX^p}
e^{-p\lambda t}\left|\widehat{X}_t^{i,N}\right|^p 
			=|x_0|^p+ \int_0^t e^{-p\lambda s} \mathcal{R}_s ds  + \mathcal{M}_t,
\end{align}			
where 
		\begin{align*}
		\mathcal{R}_s &=	-p\lambda |\widehat{X}_s^{i,N}|^p +  p|\widehat{X}_s^{i,N}|^{p-2} \left\langle \widehat{X}_s^{i,N}, b(\widehat{X}_{\underline{s}}^{i,N},\mu_{\underline{s}}^{\widehat{\boldsymbol{X}}^N}) \right\rangle \notag\\
			&\quad + \dfrac{p}{2} |\widehat{X}_s^{i,N}|^{p-2} \left|\sm_{\Delta} (\widehat{X}_{\underline{s}}^{i,N},\mu_{\underline{s}}^{\widehat{\boldsymbol{X}}^N})\right|^2 + \dfrac{p(p-2)}{2} |\widehat{X}_s^{i,N}|^{p-4} \left|(\widehat{X}_s^{i,N})^\mathsf{T} \sm_{\Delta} (\widehat{X}_{\underline{s}}^{i,N},\mu_{\underline{s}}^{\widehat{\boldsymbol{X}}^N})\right|^2  \notag \\
			&\quad +  \int_{\mathbb{R}_0^d}  \left( \left|\widehat{X}_{s}^{i,N}+c_{\Delta}(\widehat{X}_{\underline{s}}^{i,N},\mu_{\underline{s}}^{\widehat{\boldsymbol{X}}^N})  z\right|^{p} - |\widehat{X}_{s}^{i,N}|^{p} -p |\widehat{X}_{s}^{i,N}|^{p-2}\left\langle \widehat{X}_s^{i,N}, c_{\Delta} (\widehat{X}_{\underline{s}}^{i,N},\mu_{\underline{s}}^{\widehat{\boldsymbol{X}}^N} )z\right\rangle \right) \nu\left(dz \right),  \notag \\
		\mathcal{M}_t 	&=  p\int_{0}^{t} e^{-p\lambda s} |\widehat{X}_s^{i,N}|^{p-2} \left\langle \widehat{X}_s^{i,N}, \sm_{\Delta}(\widehat{X}_{\underline{s}}^{i,N},\mu_{\underline{s}}^{\widehat{\boldsymbol{X}}^N}) dW_s^i \right\rangle \notag\\ &\quad+\int_{0}^{t} \int_{\mathbb{R}_0^d} e^{-p \lambda s}\left( \left|\widehat{X}_{s-}^{i,N}+c_{\Delta}(\widehat{X}_{\underline{s}-}^{i,N},\mu_{\underline{s}-}^{\widehat{\boldsymbol{X}}^N})  z\right|^{p} - |\widehat{X}_{s-}^{i,N}|^{p} \right) \widetilde{N}^i(d s, d z). 
		\end{align*}
If follows from Lemma  \ref{Lem:condmom} that 
		\begin{align}
		& \bE \bigg[-{p\lambda} |\widehat{X}_s^{i,N}|^p +  p|\widehat{X}_s^{i,N}|^{p-2} \left\langle \widehat{X}_s^{i,N}, b(\widehat{X}_{\underline{s}}^{i,N},\mu_{\underline{s}}^{\widehat{\boldsymbol{X}}^N}) \right\rangle + \dfrac{p}{2} |\widehat{X}_s^{i,N}|^{p-2} \left|\sm_{\Delta} (\widehat{X}_{\underline{s}}^{i,N},\mu_{\underline{s}}^{\widehat{\boldsymbol{X}}^N})\right|^2\notag\\
		&\quad  + \dfrac{p(p-2)}{2} |\widehat{X}_s^{i,N}|^{p-4} \left|(\widehat{X}_s^{i,N})^\mathsf{T} \sm_{\Delta} (\widehat{X}_{\underline{s}}^{i,N},\mu_{\underline{s}}^{\widehat{\boldsymbol{X}}^N})\right|^2 \Big|\mathcal{F}_{\underline{s}}    \bigg]	  \notag\\
		&\le p|\widehat{X}_{\underline{s}}^{i,N}|^{p-2}\left(-\lambda |\widehat{X}_{\underline{s}}^{i,N}|^2 +  \left\langle \widehat{X}_{\underline{s}}^{i,N}, b(\widehat{X}_{\underline{s}}^{i,N},\mu_{\underline{s}}^{\widehat{\boldsymbol{X}}^N}) \right\rangle + \dfrac{p-1}{2}|\sigma_\Delta(\widehat{X}_{\underline{s}}^{i,N},\mu_{\underline{s}}^{\widehat{\boldsymbol{X}}^N})|^2 \right)+ \overline{Q}_{p-2}\left(\left|\widehat{X}_{\underline{s}}^{i,N}\right|\right), \label{continuouspart}
		\end{align}
		where $\overline{Q}_{p-2}(\vert\widehat{X}_{\underline{s}}^{i,N}\vert)$ is a polynomial in $|\widehat{X}_{\underline{s}}^{i,N}|$ of degree $p-2$.
		
We write
		\begin{align}
			&\left|\widehat{X}_{s}^{i,N}+c_{\Delta}(\widehat{X}_{\underline{s}}^{i,N},\mu_{\underline{s}}^{\widehat{\boldsymbol{X}}^N})  z\right|^{p} - |\widehat{X}_{s}^{i,N}|^{p} -p |\widehat{X}_{s}^{i,N}|^{p-2}\left\langle \widehat{X}_s^{i,N}, c_{\Delta} (\widehat{X}_{\underline{s}}^{i,N},\mu_{\underline{s}}^{\widehat{\boldsymbol{X}}^N} )z\right\rangle \notag \\
			&= \left|\widehat{X}_{\underline{s}}^{i,N}+c_{\Delta}(\widehat{X}_{\underline{s}}^{i,N},\mu_{\underline{s}}^{\widehat{\boldsymbol{X}}^N})  z\right|^{p} - |\widehat{X}_{\underline{s}}^{i,N}|^{p} -p |\widehat{X}_{\underline{s}}^{i,N}|^{p-2}\left\langle \widehat{X}_{\underline{s}}^{i,N}, c_{\Delta} (\widehat{X}_{\underline{s}}^{i,N},\mu_{\underline{s}}^{\widehat{\boldsymbol{X}}^N} )z\right\rangle \notag \\	  
			&\quad  + \left(\left|\widehat{X}_{s}^{i,N}+c_{\Delta}(\widehat{X}_{\underline{s}}^{i,N},\mu_{\underline{s}}^{\widehat{\boldsymbol{X}}^N})  z\right|^{p}-\left|\widehat{X}_{\underline{s}}^{i,N}+c_{\Delta}(\widehat{X}_{\underline{s}}^{i,N},\mu_{\underline{s}}^{\widehat{\boldsymbol{X}}^N})  z\right|^{p}\right)-\left(|\widehat{X}_{s}^{i,N}|^{p}-|\widehat{X}_{\underline{s}}^{i,N}|^{p}\right)\notag\\
			&\quad -p\left( |\widehat{X}_{s}^{i,N}|^{p-2}\left\langle \widehat{X}_s^{i,N}, c_{\Delta} (\widehat{X}_{\underline{s}}^{i,N},\mu_{\underline{s}}^{\widehat{\boldsymbol{X}}^N} )z\right\rangle- |\widehat{X}_{\underline{s}}^{i,N}|^{p-2}\left\langle \widehat{X}_{\underline{s}}^{i,N}, c_{\Delta} (\widehat{X}_{\underline{s}}^{i,N},\mu_{\underline{s}}^{\widehat{\boldsymbol{X}}^N} )z\right\rangle\right).\label{c}
		\end{align}

			Therefore, taking the conditional expectation on both sides of \eqref{c} and inserting   \eqref{C52}, \eqref{Cond1}, and \eqref{Cond4} into the right hand side, we obtain that for $\upsilon\geq \frac{1}{2\sqrt{N}}\max\{3\widetilde{L_3},1\}$,
			\begin{align}
			& \mathbb{E}\left[\left|\widehat{X}_{s}^{i,N}+c_{\Delta}\left(\widehat{X}_{\underline{s}}^{i,N},\mu_{\underline{s}}^{\widehat{\boldsymbol{X}}^N}\right) z\right|^{p}-|\widehat{X}_{s}^{i,N}|^{p}-p |\widehat{X}_{s}^{i,N}|^{p-2} \left\langle \widehat{X}_{s}^{i,N}, c_{\Delta}\left(\widehat{X}_{\underline{s}}^{i,N},\mu_{\underline{s}}^{\widehat{\boldsymbol{X}}^N}\right) z \right\rangle \big| \mathcal{F}_{\underline{s}}\right] \notag \\
			&\le  p\left|c_{\Delta} (\widehat{X}_{\underline{s}}^{i,N},\mu_{\underline{s}}^{\widehat{\boldsymbol{X}}^N} )\right|^{2}|\widehat{X}_{\underline{s}}^{i,N}|^{p-2}\Bigg(\dfrac{\vert z\vert^2}{2}+\widetilde{L_3}^{-2}  \left(\left(1+\vert z\vert (\widetilde{L_3}+\upsilon)\right)^{p-1}-\vert z\vert (\widetilde{L_3}+\upsilon)-1\right) \bigg(\vert z\vert \Big(\frac{\widetilde{L_3}}{2}+\upsilon\Big)+\upsilon\bigg)\Bigg)    \notag\\
			&\quad+ \sum_{k=2}^{p/2} \sum_{\ell=0}^{k}{p/2 \choose k}|z|^{2k-\ell}Q_{p}\left(p-2,2k-\ell,|\widehat{X}_{\underline{s}}^{i,N}|, 1+\widehat{U}_{\underline{s}}^{i,N}\right)+\widehat{Q}_{p-2}\left(\left|\widehat{X}_{\underline{s}}^{i,N}\right|,|z|\right)\notag\\
			&\quad+  \sum_{j=1}^{p/2}\sum_{k=0}^{j}C_{j,k} \vert z\vert^{j+k}\mathcal{W}_2^{j+k}(\mu_{\underline{s}}^{\widehat{\boldsymbol{X}}^N},\delta_0) \bigg(\left\vert \widehat{X}_{\underline{s}}^{i,N}\right\vert^{j-k-2}+\left|\widehat{X}_{\underline{s}}^{i,N}\right\vert^{p-j-k-2} +\sum_{\ell=2}^{j-k}\left(\left\vert \widehat{X}_{\underline{s}}^{i,N}\right\vert^{j-k-\ell}+\left|\widehat{X}_{\underline{s}}^{i,N}\right\vert^{p-j-k-\ell}\right)\notag\\
			&\quad+\sum_{\ell=2}^{p-2j}\left|\widehat{X}_{\underline{s}}^{i,N}\right\vert^{p-j-k-\ell}\bigg)+C_p\vert z\vert\left(\sum\limits_{j=0}^{p-2} |\widehat{X}_{\underline{s}}^{i,N}|^j + \mathcal{W}_2(\mu_{\underline{s}}^{\widehat{\boldsymbol{X}}^N},\delta_0)\sum\limits_{j=0}^{p-3} |\widehat{X}_{\underline{s}}^{i,N}|^j\right).
			  \label{Cond5}
			\end{align}		
			Hence, combining \eqref{continuouspart} and  \eqref{Cond5}, we get 
			\begin{align*}
			 \bE [ \mathcal{R}_s |\mathcal{F}_{\underline{s}}   ]	  			&\le p|\widehat{X}_{\underline{s}}^{i,N}|^{p-2}\Bigg(-\lambda |\widehat{X}_{\underline{s}}^{i,N}|^2 +  \langle \widehat{X}_{\underline{s}}^{i,N}, b(\widehat{X}_{\underline{s}}^{i,N},\mu_{\underline{s}}^{\widehat{\boldsymbol{X}}^N}) \rangle + \dfrac{p-1}{2}|\sigma_\Delta(\widehat{X}_{\underline{s}}^{i,N},\mu_{\underline{s}}^{\widehat{\boldsymbol{X}}^N})|^2 \notag   \\
			&\quad + \left|c_{\Delta}\left(\widehat{X}_{\underline{s}}^{i,N},\mu_{\underline{s}}^{\widehat{\boldsymbol{X}}^N}\right)\right|^{2} \int_{\bR_{0}^{d}} \Bigg(\dfrac{\vert z\vert^2}{2}+\widetilde{L_3}^{-2}  \left(\left(1+\vert z\vert (\widetilde{L_3}+\upsilon)\right)^{p-1}-\vert z\vert (\widetilde{L_3}+\upsilon)-1\right) \bigg(\vert z\vert \Big(\frac{\widetilde{L_3}}{2}+\upsilon\Big)+\upsilon\bigg)\Bigg)  \nu(dz)\Bigg)\\
			&\quad + \overline{Q}_{p-2}\left(\left|\widehat{X}_{\underline{s}}^{i,N}\right|\right)+\int_{\bR_{0}^{d}} \Bigg(\sum_{k=2}^{p/2} \sum_{\ell=0}^{k}{p/2 \choose k}|z|^{2k-\ell}Q_{p}\left(p-2,2k-\ell,|\widehat{X}_{\underline{s}}^{i,N}|, 1+\widehat{U}_{\underline{s}}^{i,N}\right)+\widehat{Q}_{p-2}\left(\left|\widehat{X}_{\underline{s}}^{i,N}\right|,|z|\right)\notag\\
			&\quad+  \sum_{j=1}^{p/2}\sum_{k=0}^{j}C_{j,k} \vert z\vert^{j+k}\mathcal{W}_2^{j+k}(\mu_{\underline{s}}^{\widehat{\boldsymbol{X}}^N},\delta_0) \bigg(\left\vert \widehat{X}_{\underline{s}}^{i,N}\right\vert^{j-k-2}+\left|\widehat{X}_{\underline{s}}^{i,N}\right\vert^{p-j-k-2} +\sum_{\ell=2}^{j-k}\left(\left\vert \widehat{X}_{\underline{s}}^{i,N}\right\vert^{j-k-\ell}+\left|\widehat{X}_{\underline{s}}^{i,N}\right\vert^{p-j-k-\ell}\right)\notag\\
			&\quad+\sum_{\ell=2}^{p-2j}\left|\widehat{X}_{\underline{s}}^{i,N}\right\vert^{p-j-k-\ell}\bigg)+C_p\vert z\vert\bigg(\sum\limits_{j=0}^{p-2} |\widehat{X}_{\underline{s}}^{i,N}|^j + \mathcal{W}_2(\mu_{\underline{s}}^{\widehat{\boldsymbol{X}}^N},\delta_0)\sum\limits_{j=0}^{p-3} |\widehat{X}_{\underline{s}}^{i,N}|^j\bigg)\Bigg) \nu(dz).
			\end{align*}
In the following, we choose $\lambda = \widetilde{\gamma}_1+\frac{\widetilde{\gamma}_2}{N}$. 
It follows from  \eqref{sigmacdelta} of \textbf{T5} and \textcolor{black}{the equality $\mathcal{W}_2^2(\mu_{\underline{s}}^{\widehat{\boldsymbol{X}}^{N}},\delta_0)= \frac{1}{N}\sum_{m=1}^{N}\vert \widehat{X}_{\underline{s}}^{m,N}\vert^2$} that 
	\begin{align*}
	&-(\widetilde{\gamma}_1+\frac{\widetilde{\gamma}_2}{N})  |\widehat{X}_{\underline{s}}^{i,N}|^2 + \langle \widehat{X}_{\underline{s}}^{i,N}, b(\widehat{X}_{\underline{s}}^{i,N},\mu_{\underline{s}}^{\widehat{\boldsymbol{X}}^N}) \rangle + \dfrac{p-1}{2} |\sigma_\Delta(\widehat{X}_{\underline{s}}^{i,N},\mu_{\underline{s}}^{\widehat{\boldsymbol{X}}^N})|^2  \notag   \\
			&\quad  +     \left|c_{\Delta}\left(\widehat{X}_{\underline{s}}^{i,N},\mu_{\underline{s}}^{\widehat{\boldsymbol{X}}^N}\right)\right|^{2} \int_{\bR_{0}^{d}} \Bigg(\dfrac{\vert z\vert^2}{2}+\widetilde{L_3}^{-2}  \left(\left(1+\vert z\vert (\widetilde{L_3}+\upsilon)\right)^{p-1}-\vert z\vert (\widetilde{L_3}+\upsilon)-1\right) \bigg(\vert z\vert \Big(\frac{\widetilde{L_3}}{2}+\upsilon\Big)+\upsilon\bigg)\Bigg)  \nu(dz) \Bigg) \notag\\
	\leq & -(\widetilde{\gamma}_1+\frac{\widetilde{\gamma}_2}{N})  |\widehat{X}_{\underline{s}}^{i,N}|^2 +\widetilde{\gamma}_1 |\widehat{X}_{\underline{s}}^{i,N}|^2 +\widetilde{\gamma}_2\mathcal{W}_2^2(\mu_{\underline{s}}^{\widehat{\boldsymbol{X}}^N},\delta_0)+\widetilde{\eta} =  \frac{\widetilde{\gamma}_2}{N}\sum_{m=1, m\neq i}^{N}\vert \widehat{X}_{\underline{s}}^{m,N}\vert^2+\widetilde{\eta}.
	\end{align*} 
Therefore, 	using the estimate \textcolor{black}{$	\mathcal{W}_2(\mu_{\underline{s}}^{\widehat{\boldsymbol{X}}^N},\delta_0) \leq \frac{1}{\sqrt{N}} \sum_{m=1}^N |\Hat{X}^{m, N}_{\underline{s}}|$}, we obtain that 

\begin{align}
			\bE [\mathcal{R}_s]  
			&\leq p \mathbb{E} \Bigg[ \frac{\widetilde{\gamma}_2}{N}|\widehat{X}_{\underline{s}}^{i,N}|^{p-2} \sum_{m=1, m\neq i}^{N}\vert \widehat{X}_{\underline{s}}^{m,N}\vert^2 +\int_{\bR_{0}^{d}} \sum_{k=2}^{p/2} \sum_{\ell=0}^{k}{p/2 \choose k}|z|^{2k-\ell}Q_{p}\left(p-2,2k-\ell,|\widehat{X}_{\underline{s}}^{i,N}|, 1+\widehat{U}_{\underline{s}}^{i,N}\right) \nu(dz)\Bigg] \notag\\
			&\quad+ C\sum_{j=0}^{p-2}\bE \left[ |\widehat{X}_{\underline{s}}^{i,N}|^j\right], \label{tag42}
			\end{align}			
			for some positive constant $C$.
		
	Thanks to Lemma \ref{menh de 3}, $\mathbb{E}[\mathcal{M}_t] =0$. Now, we are going to take the expectation for \eqref{e^pX^p} with $\lambda=\widetilde{\gamma}_1+\frac{\widetilde{\gamma}_2}{N}$, plug \eqref{tag42} into \eqref{e^pX^p}, and use the inductive assumption,
	Condition \textbf{A5} and the following fact thanks to the independence between $|\widehat{X}_{\underline{s}}^{i,N}|^{p-2}$ and $\sum_{m=1, m\neq i}^{N}\vert \widehat{X}_{\underline{s}}^{m,N}\vert^2$
	\begin{equation}\label{indepen}\begin{split}
	&\mathbb{E}\left[ |\widehat{X}_{\underline{s}}^{i,N}|^{p-2} \sum_{m=1, m\neq i}^{N}\vert \widehat{X}_{\underline{s}}^{m,N}\vert^2\right]=\sum_{m=1, m\neq i}^{N}\mathbb{E}\left[ |\widehat{X}_{\underline{s}}^{i,N}|^{p-2}\right]\mathbb{E}\left[\vert \widehat{X}_{\underline{s}}^{m,N}\vert^2\right],\\
	&\mathbb{E}\left[ |\widehat{X}_{\underline{s}}^{i,N}|^{p-\ell} \left(1+\widehat{U}_{\underline{s}}^{i,N}\right)^{\ell}\right]=\mathbb{E}\left[ |\widehat{X}_{\underline{s}}^{i,N}|^{p-\ell} \right]\mathbb{E}\left[  \left(1+\widehat{U}_{\underline{s}}^{i,N}\right)^{\ell}\right],
	\end{split}
	\end{equation}	
	for any  $\ell \in\{2,\ldots,p-2\}$, where recall that $\widehat{U}_{\underline{s}}^{i,N}=\frac{1}{\sqrt{N}}\sum_{m=1; m\neq i}^{N}\vert \widehat{X}_{\underline{s}}^{m,N}\vert$. As a consequence, we get that
	\begin{align*}
	\bE \left[e^{-p\big(\widetilde{\gamma}_1+\frac{\widetilde{\gamma}_2}{N}\big) t}\left|\widehat{X}_t^{i,N}\right|^p \right]&\leq |x_{0}|^{p}+C \int_{0}^{t} e^{ -p\big(\widetilde{\gamma}_1+\frac{\widetilde{\gamma}_2}{N}\big) s}\left(\sum_{\ell=2}^{p-2}\mathbb{E}\left[ |\widehat{X}_{\underline{s}}^{i,N}|^{p-\ell} \right]\mathbb{E}\left[  \left(1+\widehat{U}_{\underline{s}}^{i,N}\right)^{\ell}\right]+\sum_{j=0}^{p-2}\bE \left[ |\widehat{X}_{\underline{s}}^{i,N}|^j\right]\right)ds.
	\end{align*}
	Here, recall that $\widetilde{\gamma} =\widetilde{\gamma}_1 +\widetilde{\gamma}_2$.
	
	\noindent\underline{Case $\widetilde{\gamma}> 0$:}  
	\begin{align*}
	\bE \left[e^{-p\big(\widetilde{\gamma}_1+\frac{\widetilde{\gamma}_2}{N}\big) t}\left|\widehat{X}_t^{i,N}\right|^p \right] &\leq |x_{0}|^{p}+C \int_{0}^{t} e^{ -p\big(\widetilde{\gamma}_1+\frac{\widetilde{\gamma}_2}{N}\big) s}\left(\sum_{\ell=2}^{p-2}e^{ (p-\ell)\big(\widetilde{\gamma}_1+\widetilde{\gamma}_2\big)  s}e^{ \ell\big(\widetilde{\gamma}_1+\widetilde{\gamma}_2\big)  s}+\sum_{j=0}^{p-2}e^{ j\big(\widetilde{\gamma}_1+\widetilde{\gamma}_2\big)  s}\right)ds\\
	&\leq  |x_{0}|^{p}+C \int_{0}^{t} e^{ -p\big(\widetilde{\gamma}_1+\frac{\widetilde{\gamma}_2}{N}\big) s}e^{ p\big(\widetilde{\gamma}_1+\widetilde{\gamma}_2\big)  s}ds\\
	&=|x_{0}|^{p}+C\int_{0}^{t} e^{ -p\widetilde{\gamma}_2\big(\frac{1}{N}-1\big) s}ds\\
	&=|x_{0}|^{p}+\dfrac{C}{-p\widetilde{\gamma}_2\big(\frac{1}{N}-1\big)}\left(e^{ -p\widetilde{\gamma}_2\big(\frac{1}{N}-1\big) t}-1\right),
	\end{align*}
	which implies that
	\begin{align*}
	\bE \left[\left|\widehat{X}_t^{i,N}\right|^p \right]&\leq |x_{0}|^{p}e^{p\big(\widetilde{\gamma}_1+\frac{\widetilde{\gamma}_2}{N}\big) t}+\dfrac{C}{p\widetilde{\gamma}_2\big(1-\frac{1}{N}\big)}\left(e^{p\big(\widetilde{\gamma}_1+\widetilde{\gamma}_2\big) t}-e^{p\big(\widetilde{\gamma}_1+\frac{\widetilde{\gamma}_2}{N}\big) t}\right)\\
	&\leq Ce^{p\big(\widetilde{\gamma}_1+\widetilde{\gamma}_2\big) t}\\
	&= Ce^{p\widetilde{\gamma} t}.
	\end{align*}

	\noindent\underline{Case $\widetilde{\gamma}=0$:}	In this case, we have $\widetilde{\gamma}_1+\frac{\widetilde{\gamma}_2}{N}<\widetilde{\gamma}=0$. Using the integration by parts formula repeatedly, it can be checked that for any $\beta<0$ and $q\in \mathbb{N}^{\ast}$,
	\begin{align*}
	\int_0^te^{-\beta s} (1+s)^qds &\leq C(\beta,q)\left(e^{-\beta t} (1+t)^q+\int_0^te^{-\beta s} ds\right) \\
	&\leq C(\beta,q)\left(e^{-\beta t} (1+t)^q+e^{-\beta t} \right)\\
	&\leq C(\beta,q)e^{-\beta t} (1+t)^q,
	\end{align*}
	for some constant $C(\beta,q)>0$. This deduces that
	\begin{align*}
	\bE \left[e^{-p\big(\widetilde{\gamma}_1+\frac{\widetilde{\gamma}_2}{N}\big) t}\left|\widehat{X}_t^{i,N}\right|^p \right] &\leq |x_{0}|^{p}+C \int_{0}^{t} e^{ -p\big(\widetilde{\gamma}_1+\frac{\widetilde{\gamma}_2}{N}\big) s}\left(\sum_{\ell=2}^{p-2}\left(1+s\right)^{(p-\ell)/2}\left(1+s\right)^{\ell/2}+\sum_{j=0}^{p-2}\left(1+s\right)^{j/2}\right)ds\\
	&\leq |x_{0}|^{p}+C \int_{0}^{t} e^{ -p\big(\widetilde{\gamma}_1+\frac{\widetilde{\gamma}_2}{N}\big) s}\left(1+s\right)^{p/2}ds\\
	&\leq |x_{0}|^{p}+C e^{ -p\big(\widetilde{\gamma}_1+\frac{\widetilde{\gamma}_2}{N}\big) t}\left(1+t\right)^{p/2}.
	\end{align*}
	Therefore, we obtain that
	\begin{align*}
	\mathbb{E}\left[\left|\widehat{X}_t^{i,N}\right|^p \right]\leq C\left(1+t\right)^{p/2}.
	\end{align*}
	\noindent\underline{Case $\widetilde{\gamma}<0$:} In this case, we have $\widetilde{\gamma}_1+\frac{\widetilde{\gamma}_2}{N}<\widetilde{\gamma}<0$. Thus, we get
	\begin{align*}
	\bE \left[e^{-p\big(\widetilde{\gamma}_1+\frac{\widetilde{\gamma}_2}{N}\big) t}\left|\widehat{X}_t^{i,N}\right|^p \right]&\leq |x_{0}|^{p}+C \int_{0}^{t} e^{ -p\big(\widetilde{\gamma}_1+\frac{\widetilde{\gamma}_2}{N}\big) s}ds,
	\end{align*}
	which implies that
	\begin{align*}
	\bE \left[\left|\widehat{X}_t^{i,N}\right|^p \right]\leq C.
	\end{align*}
	Consequently, combining with \eqref{qh}, we conclude that \eqref{EXtmu} holds for $k=k_0+1$. Thus, the result follows.
\end{proof} 

\begin{Rem}\label{hq1} 
	When condition \textbf{A6} and  all conditions of Proposition \ref{dinh ly 2} are satisfied, we get the following estimate on the expectation of the number of timesteps $N_T$  
	\begin{equation}\label{ENT}
	\mathbb{E}\left[N_{T}-1\right] \leq \dfrac{C}{\Delta},
	\end{equation}
	for any $T>0$, where the positive constant $C$ does not depend on $\Delta$.
	
	Indeed, the same argument in the proof of  Lemma 2 in \cite{FG} yields that
	$$N_T = \sum_{k=1}^{N_T} 1 \leq \int_0^T \frac{1}{\Delta\textbf{h}(\widehat{\boldsymbol{X}}_{\underline{t}}^N,\mu_{\underline{t}}^{\widehat{\boldsymbol{X}}^N})}dt + 1.$$ 
	Then, using \eqref{chooseh}, Assumption \textbf{A6}, and Remark  \ref{sigmac} (i) and (iii), we get that for any $i\in\{1,\ldots,N\}$ and $p_0\geq 2(\ell+1)$,
$$	\frac{1}{h(\widehat{X}^{i,N}_{\underline{t}},\mu_{\underline{t}}^{\widehat{\boldsymbol{X}}^N})} \leq C\left(1 + |\widehat{X}^{i,N}_{\underline{t}}|^{p_0} + \mathcal{W}_2^{p_0}(\mu_{\underline{t}}^{\widehat{\boldsymbol{X}}^N}, \delta_0)\right)  \leq C\left(1 + |\widehat{X}^{i,N}_{\underline{t}}|^{p_0} +  \dfrac{1}{N}\sum_{m=1}^{N}\left\vert \widehat{X}_{\underline{t}}^{m,N}\right\vert^{p_0}  \right).$$ 
This, combined with
$\textbf{h}(\widehat{\boldsymbol{X}}^N_{\underline{t}},\mu_{\underline{t}}^{\widehat{\boldsymbol{X}}^N})=\min\{h(\widehat{X}^{1,N}_{\underline{t}},\mu_{\underline{t}}^{\widehat{\boldsymbol{X}}^N}),\ldots,h(\widehat{X}^{N,N}_{\underline{t}},\mu_{\underline{t}}^{\widehat{\boldsymbol{X}}^N})\}$, Lemma \ref{menh de 3} and Proposition \ref{dinh ly 2}, shows the estimate \eqref{ENT}.
\end{Rem}

Next,  the difference between $\widehat{X}_t^{i,N}$ and $\widehat{X}_{\underline{t}}^{i,N}$ has the following uniform bound in time.
\begin{Lem}\label{hq2}
	Let all conditions of Proposition \ref{dinh ly 2} be satisfied. Then  for any $p \in [2,p_0]$,  there exists a positive constant $C_p=C(p,L)$ such that
	\begin{equation*}
	\max_{i\in\{1,\ldots,N\}}\bE \left[  \left|\widehat{X}_t^{i,N} - \widehat{X}_{\underline{t}}^{i,N}\right|^p \big| \mathcal{F}_{\underline{t}}  \right] \le C_p\Delta,
	\end{equation*}
	for any $t\geq 0$ and
	\begin{equation*}
	\max_{i\in\{1,\ldots,N\}}\sup_{t \ge 0}\bE \left[  \left|\widehat{X}_t^{i,N} - \widehat{X}_{\underline{t}}^{i,N}\right|^p \right] \le C_p\Delta.
	\end{equation*}
\end{Lem}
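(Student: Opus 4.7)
The plan is to decompose the difference via the continuous interpolation formula \eqref{EM3},
\[
\widehat{X}_t^{i,N}-\widehat{X}_{\underline{t}}^{i,N}=b(\widehat{X}_{\underline{t}}^{i,N},\mu_{\underline{t}}^{\widehat{\boldsymbol{X}}^N})(t-\underline{t})+\sigma_\Delta(\widehat{X}_{\underline{t}}^{i,N},\mu_{\underline{t}}^{\widehat{\boldsymbol{X}}^N})(W_t^i-W_{\underline{t}}^i)+c_\Delta(\widehat{X}_{\underline{t}}^{i,N},\mu_{\underline{t}}^{\widehat{\boldsymbol{X}}^N})(Z_t^i-Z_{\underline{t}}^i),
\]
apply $|a+b+c|^p\le 3^{p-1}(|a|^p+|b|^p+|c|^p)$, and bound each of the three resulting conditional expectations. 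The backbone of every estimate will be the elementary inequality $t-\underline{t}\le\textbf{h}(\widehat{\boldsymbol{X}}_{\underline{t}}^N,\mu_{\underline{t}}^{\widehat{\boldsymbol{X}}^N})\Delta\le h(\widehat{X}_{\underline{t}}^{i,N},\mu_{\underline{t}}^{\widehat{\boldsymbol{X}}^N})\Delta$, combined with the explicit form \eqref{chooseh} of $h$; throughout, $\Delta\in(0,1)$ and $p\ge 2$ will be used freely.

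The drift contribution is immediate from Condition \textbf{T1}: $|b|h\le L$ gives $|b|(t-\underline{t})\le L\Delta$, whence $|b|^p(t-\underline{t})^p\le L^p\Delta^p\le L^p\Delta$. For the Brownian piece, the Gaussian moment formula \eqref{markov} combined with $\mathcal{F}_{\underline{t}}$-measurability of $\sigma_\Delta(\widehat{X}_{\underline{t}}^{i,N},\mu_{\underline{t}}^{\widehat{\boldsymbol{X}}^N})$ yields a conditional bound $C_p|\sigma_\Delta|^p(t-\underline{t})^{p/2}=C_p(|\sigma_\Delta|^2(t-\underline{t}))^{p/2}$. Since the denominator of \eqref{chooseh} dominates $(1+|\sigma|)^2\ge|\sigma|^2$, Condition \textbf{T4} gives $|\sigma_\Delta|^2 h\le|\sigma|^2 h\le h_0$, hence $|\sigma_\Delta|^2(t-\underline{t})\le h_0\Delta$, and the whole term is bounded by $C_p h_0^{p/2}\Delta$ because $p/2\ge 1$ and $\Delta<1$.

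For the jump piece, I will invoke the BDG inequality of Lemma \ref{BDGjump} applied to $Z_t^i-Z_{\underline{t}}^i=\int_{\underline{t}}^t\int_{\mathbb{R}_0^d}z\widetilde{N}^i(ds,dz)$, using Condition \textbf{A5} with $q=2p_0\ge p$ to ensure finiteness of $\mu_2=\int|z|^2\nu(dz)$ and $\mu_p=\int|z|^p\nu(dz)$; this produces
\[
\bE\bigl[|c_\Delta(Z_t^i-Z_{\underline{t}}^i)|^p\,\big|\,\mathcal{F}_{\underline{t}}\bigr]\le C_p|c_\Delta|^p\bigl[\mu_2^{p/2}(t-\underline{t})^{p/2}+\mu_p(t-\underline{t})\bigr].
\]
The key observation is that for $p\in[2,p_0]$ the denominator of \eqref{chooseh} is at least $\max(1,|c|^{p_0})\ge|c|^p$ (the case $|c|\le 1$ using the constant $1$ inside the denominator, the case $|c|\ge 1$ using $|c|^{p_0}\ge|c|^p$). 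Hence $|c|^p h\le h_0$, and \textbf{T4} gives $|c_\Delta|^p h\le h_0$. Consequently $|c_\Delta|^p(t-\underline{t})\le h_0\Delta$ and $|c_\Delta|^p(t-\underline{t})^{p/2}\le\Delta^{p/2-1}\cdot|c_\Delta|^p(t-\underline{t})\le h_0\Delta^{p/2}\le h_0\Delta$, so the jump contribution is controlled by $C_p(\mu_2^{p/2}+\mu_p)h_0\Delta$.

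Summing the three bounds yields the first (conditional) estimate, and since each bound is independent of $t$, taking expectation and a supremum over $t\ge 0$ gives the unconditional uniform-in-time statement. No Gronwall iteration, moment control on $\widehat{X}_{\underline{t}}^{i,N}$, or reference to the auxiliary process $\widehat{X}^{i,N,H}$ of Proposition \ref{dinh ly 4} is needed. The only mildly delicate point will be the interpolation inequality $|c_\Delta|^p h\le h_0$ for the full range $p\in[2,p_0]$, which is what decouples the estimate from both $t$ and $N$ and explains why the constant $C_p$ in the statement depends only on $p$ and $L$.
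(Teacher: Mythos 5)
Your proposal is correct and follows essentially the same route as the paper: decompose $\widehat{X}_t^{i,N}-\widehat{X}_{\underline t}^{i,N}$ via \eqref{EM3}, apply $|a+b+c|^p\le 3^{p-1}(|a|^p+|b|^p+|c|^p)$, and control each term using the taming in $h$ together with \textbf{T4}, \eqref{markov} and a BDG bound on the jump increment. The paper simply cites the already-established conditional moment bounds \eqref{con-moment} (which are proved exactly as you do, including the observation that the denominator of \eqref{chooseh} dominates $|\sigma|^2$ and $|c|^p$ for $p\in[2,p_0]$), so the two proofs coincide up to whether those estimates are rederived inline or referenced.
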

\begin{proof} 
	From \eqref{EM3}, we have that for any $p \ge 2$,
	\begin{align}
	|\widehat{X}_t^{i,N} - \widehat{X}_{\underline{t}}^{i,N}|^p 
	&= \left| b(\widehat{X}_{\underline{t}}^{i,N},\mu_{\underline{t}}^{\widehat{\boldsymbol{X}}^N})(t-\underline{t})+\sm_{\Delta}(\widehat{X}_{\underline{t}}^{i,N},\mu_{\underline{t}}^{\widehat{\boldsymbol{X}}^N})(W_t^i-W_{\underline{t}}^i) +c_{\Delta}(\widehat{X}_{\underline{t}}^{i,N},\mu_{\underline{t}}^{\widehat{\boldsymbol{X}}^N})(Z_t^i-Z_{\underline{t}}^i) \right|^p \notag \\
	&\le 3^{p-1} \left[  \left| b(\widehat{X}_{\underline{t}}^{i,N},\mu_{\underline{t}}^{\widehat{\boldsymbol{X}}^N})\right|^p (t-\underline{t})^p + \left|\sm_{\Delta}(\widehat{X}_{\underline{t}}^{i,N},\mu_{\underline{t}}^{\widehat{\boldsymbol{X}}^N})\right|^p \left|W_t^i-W_{\underline{t}}^i \right|^p +\left|c_{\Delta}(\widehat{X}_{\underline{t}}^{i,N},\mu_{\underline{t}}^{\widehat{\boldsymbol{X}}^N})\right|^p \left|Z_t^i-Z_{\underline{t}}^i \right|^p \right]. \notag 
	\end{align} 
	This, combined with \eqref{con-moment}, concludes the desired result.
\end{proof} 

\subsection{Convergence} 

We are going to state the main result of the paper. To this aim, the following additional conditions will be needed.
\begin{enumerate}
	\item[\bf T6.] There exists a positive constant $L_4$ such that
	\begin{align*}
	&|\sm(x,\mu)-\sm_{\Delta}(x,\mu)| \le L_4 \Delta^{1/2}|\sm(x,\mu)|^2(1+\vert x\vert),\\
	&|c(x,\mu)-c_{\Delta}(x,\mu)| \le L_4 \Delta^{1/2} |c(x,\mu)|^2\left(1+\vert x\vert+ \vert b(x,\mu)\vert\right),
	\end{align*}
	for all $x \in \bR^d$, and $\mu\in \mathcal{P}_2(\mathbb{R}^d)$.	
\end{enumerate}

\begin{Rem}
	If we choose 
	\begin{align} \label{smDtcDt}
	&\sm_{\Delta}(x,\mu)= \dfrac{\sm(x,\mu)}{1+\Delta^{1/2}|\sm(x,\mu)|(1+|x|)}, \quad c_{\Delta}(x,\mu)= \dfrac{c(x,\mu)}{1+\Delta^{1/2}|c(x,\mu)|(1+|x|+\vert b(x,\mu)\vert)},
	\end{align}
	then Conditions \textbf{T3, T4} and \textbf{T6} are satisfied.
\end{Rem}

\begin{Thm}\label{dinh ly 6} 
	Assume  that Conditions \textbf{A1}, \textbf{A3--A5},  \textbf{T2--T6} hold, and $p_0\geq 4\ell+6$,   $N \geq \left( \frac{\max\{3\widetilde{L_3},1\}}{2\upsilon}\right)^2$. 
	Assume further that there exists a constant $\varepsilon>0$ such that \textbf{A2} holds for $\kappa_1=\kappa_2=1+\varepsilon$, $L_1\in\mathbb{R}$, $L_2\geq 0$. Then for any $T>0$, there exist positive constants  $C_T=C(x_0,L,L_1,L_2,L_4,\widetilde{\gamma}_1,\widetilde{\gamma}_2,\widetilde{\eta},\widetilde{L_3}, \varepsilon, T)$ and $C'_T=C'(x_0,L,L_1,L_2,L_4,\widetilde{\gamma}_1,\widetilde{\gamma}_2,\widetilde{\eta},\widetilde{L_3}, \varepsilon,T)$ such that
	\begin{equation} \label{EXmu-X1}
	\max_{i\in\{1,\ldots,N\}}\underset{t\in[0,T]}{\sup} \bE \left[  \left\vert X_t^{i,N}-\widehat{X}_t^{i,N}\right\vert^2 \right] \le  C_T \Delta,
	\end{equation}
and for any $p\in (0,2)$, 
\begin{equation} \label{EXmu-X0}
	\max_{i\in\{1,\ldots,N\}}\bE \left[ \underset{t\in[0,T]}{\sup} \left\vert  X_t^{i,N}-\widehat{X}_t^{i,N}\right\vert^p \right] \le  \left(\dfrac{4-p}{2-p}\right) (C'_T\Delta)^{p/2}.
\end{equation}	
	Moreover, if  $\widetilde{\gamma} = \widetilde{\gamma}_1 + \widetilde{\gamma}_2 < 0$, and $L_1+L_2<0$, then,  there exists a positive constant  $C''=C''(x_0,L,L_1,L_2,L_4,\widetilde{\gamma}_1,\widetilde{\gamma}_2,\widetilde{\eta},\widetilde{L_3}, \varepsilon)$, which does not depend on $T$,  such that
	\begin{equation} \label{EXmu-X2}
	\max_{i\in\{1,\ldots,N\}}\underset{t \ge 0}{\sup}\ \bE \left[  \left\vert X_t^{i,N}-\widehat{X}_t^{i,N}\right\vert^2 \right] \le  C'' \Delta.
	\end{equation} 
\end{Thm}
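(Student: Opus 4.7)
The plan is to apply It\^o's formula to $e^{-\lambda t}|X_t^{i,N} - \widehat{X}_t^{i,N}|^2$ for a suitably chosen $\lambda\in\mathbb{R}$ and then to absorb all discretization and taming errors into a term of order $\Delta$ plus a contractive term controlled by Gronwall's inequality. Writing $D_t^{i} := X_t^{i,N}-\widehat{X}_t^{i,N}$, I would expand the integrand via the telescoping identity
\[
b(X_s^{i,N},\mu_s^{\boldsymbol{X}^N})-b(\widehat{X}_{\underline{s}}^{i,N},\mu_{\underline{s}}^{\widehat{\boldsymbol{X}}^N}) = \underbrace{\bigl[b(X_s^{i,N},\mu_s^{\boldsymbol{X}^N})-b(\widehat{X}_s^{i,N},\mu_s^{\widehat{\boldsymbol{X}}^N})\bigr]}_{\text{contractive part}} + \underbrace{\bigl[b(\widehat{X}_s^{i,N},\mu_s^{\widehat{\boldsymbol{X}}^N})-b(\widehat{X}_{\underline{s}}^{i,N},\mu_{\underline{s}}^{\widehat{\boldsymbol{X}}^N})\bigr]}_{\text{one-step error}},
\]
and analogously decompose $\sigma(X_s,\mu_s^{\boldsymbol{X}^N})-\sigma_\Delta(\widehat{X}_{\underline{s}},\mu_{\underline{s}}^{\widehat{\boldsymbol{X}}^N})$ into a contractive part $\sigma(X_s,\mu_s^{\boldsymbol{X}^N})-\sigma(\widehat{X}_s,\mu_s^{\widehat{\boldsymbol{X}}^N})$, a one-step error $\sigma(\widehat{X}_s,\mu_s^{\widehat{\boldsymbol{X}}^N})-\sigma(\widehat{X}_{\underline{s}},\mu_{\underline{s}}^{\widehat{\boldsymbol{X}}^N})$, and a taming error $\sigma(\widehat{X}_{\underline{s}},\mu_{\underline{s}}^{\widehat{\boldsymbol{X}}^N})-\sigma_\Delta(\widehat{X}_{\underline{s}},\mu_{\underline{s}}^{\widehat{\boldsymbol{X}}^N})$; likewise for $c$. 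The contractive parts of all three coefficients are packaged together via Condition \textbf{A2} (with $\kappa_1=\kappa_2=1+\varepsilon$) to produce the bound $L_1|D_s^i|^2+L_2\mathcal{W}_2^2(\mu_s^{\boldsymbol{X}^N},\mu_s^{\widehat{\boldsymbol{X}}^N})$.

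The key gain is that $\varepsilon>0$ in \textbf{A2} creates slack which lets me apply Young's inequality $|a+b+c|^2\le (1+\varepsilon)|a|^2+C_\varepsilon|b+c|^2$ inside the It\^o quadratic variation terms, so the leading diffusion and jump contributions are absorbed by \textbf{A2} while the remaining cross-terms are pure error. These error terms are then bounded as follows: the taming errors $|\sigma-\sigma_\Delta|^2$ and $|c-c_\Delta|^2$ give a factor $\Delta$ times polynomial moments of $\widehat{X}_{\underline{s}}^{i,N}$ via Condition \textbf{T6}; the one-step errors in the diffusion and jump coefficients are dominated (using Remark \ref{sigmac}(ii)) by $(1+|\widehat{X}_s|^\ell+|\widehat{X}_{\underline{s}}|^\ell)(|\widehat{X}_s-\widehat{X}_{\underline{s}}|^2+\mathcal{W}_2^2(\mu_s^{\widehat{\boldsymbol{X}}^N},\mu_{\underline{s}}^{\widehat{\boldsymbol{X}}^N}))$, while the one-step error in $b$ is controlled via \textbf{A4}; in each case, H\"older's inequality combined with Lemma \ref{hq2} (giving $\mathbb{E}[|\widehat{X}_s-\widehat{X}_{\underline{s}}|^q|\mathcal{F}_{\underline{s}}]\le C\Delta$ for $q\in[2,p_0]$) and the uniform-in-time moment bound of order $p_0\ge 4\ell+6$ from Proposition \ref{dinh ly 2} yields a contribution of order $\Delta$. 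Using $\mathcal{W}_2^2(\mu_s^{\boldsymbol{X}^N},\mu_s^{\widehat{\boldsymbol{X}}^N})\le \frac{1}{N}\sum_{j=1}^N|D_s^j|^2$ and then taking maximum over $i$, I arrive at
\[
e^{-\lambda t}\max_i\mathbb{E}\bigl[|D_t^i|^2\bigr] \le \int_0^t e^{-\lambda s}\Bigl[(-\lambda+L_1+L_2+\varepsilon)\max_i\mathbb{E}\bigl[|D_s^i|^2\bigr] + C\Delta\cdot\psi(s)\Bigr]ds,
\]
where $\psi(s)$ collects the moment-weighted error terms.

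Choosing $\lambda = L_1+L_2+\varepsilon$ kills the linear factor and Gronwall's lemma delivers \eqref{EXmu-X1}; on $[0,T]$ the factor $\psi$ is uniformly bounded so $C_T\Delta$ is the upshot. For \eqref{EXmu-X2}, under the additional assumptions $\gamma<0$ and $L_1+L_2<0$, I would choose $\varepsilon$ small enough that $\lambda<0$; Proposition \ref{dinh ly 2} then makes $\psi$ bounded uniformly in time, and the bounded integral $\int_0^\infty e^{-\lambda s}\,ds$ on the right-hand side (after rearrangement) produces a $T$-independent constant. Finally, for \eqref{EXmu-X0} with $p\in(0,2)$, I would write $|D_t^i|^2$ as a sum of a non-decreasing process of integrable expectation (bounded in expectation by $C_T\Delta$ from the computation above) plus a local martingale, and apply the Lenglart--Rebolledo domination inequality, which produces exactly the constant $\frac{4-p}{2-p}$ when raised to the $p/2$-power.

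The main obstacle will be the careful bookkeeping of the Young decomposition so that the $(1+\varepsilon)$-coefficient provided by \textbf{A2} precisely matches the quadratic term $|\sigma(X_s,\mu_s^{\boldsymbol{X}^N})-\sigma_\Delta(\widehat{X}_{\underline{s}},\mu_{\underline{s}}^{\widehat{\boldsymbol{X}}^N})|^2$ appearing in the It\^o formula (and the analogous compensator term for the jump integral), while simultaneously ensuring that the moment-weighted polynomial remainders $\psi(s)$ are uniformly bounded in $s$ for the infinite-horizon statement. The hypothesis $p_0\ge 4\ell+6$ is dictated precisely by the highest power of $|\widehat{X}|$ that appears in these products after applying \textbf{A4} and \textbf{T6}.
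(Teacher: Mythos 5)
Your proposal is correct and follows essentially the same route as the paper: It\^o's formula applied to $e^{-\lambda t}|X_t^{i,N}-\widehat{X}_t^{i,N}|^2$, telescoping decompositions into contractive, one-step, and taming errors for each of $b,\sigma,c$, absorbing the contractive part via \textbf{A2} with $\kappa_1=\kappa_2=1+\varepsilon$ while the $\varepsilon$-slack takes up the Young-type cross-terms, and then bounding the remainders with \textbf{T6}, Remark~\ref{sigmac}, Lemma~\ref{hq2}, and Proposition~\ref{dinh ly 2} (which is exactly where $p_0\ge 4\ell+6$ is used). The ``Lenglart--Rebolledo'' argument you invoke for~\eqref{EXmu-X0} is precisely the domination inequality the paper cites as Proposition~IV.4.7 of Revuz--Yor, applied (after establishing a stopping-time version of the $L^2$ bound with $\lambda=\varepsilon_1+L_1$ and using the already-proven~\eqref{EXmu-X1} to handle the $L_2\mathcal{W}_2^2(\mu_s^{\boldsymbol{X}^N},\mu_s^{\widehat{\boldsymbol{X}}^N})$ term) with exponent $p/2\in(0,1)$, which yields exactly $\frac{2-p/2}{1-p/2}=\frac{4-p}{2-p}$.
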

\begin{proof}	
Thanks to \eqref{equa2} and \eqref{EM2}, we have that for any $i\in\{1,\ldots,N\}$,
\begin{align*}
X_t^{i,N}-\widehat{X}_t^{i,N}&=\int_0^t \left(b(X_s^{i,N},\mu_{s}^{\boldsymbol{X}^N})-b\left(\widehat{X}_{\underline{s}}^{i,N},\mu_{\underline{s}}^{\widehat{\boldsymbol{X}}^{N}}\right)\right)ds+\int_0^t \left(\sigma(X_s^{i,N},\mu_{s}^{\boldsymbol{X}^N})-\sigma_{\Delta}\left(\widehat{X}_{\underline{s}}^{i,N},\mu_{\underline{s}}^{\widehat{\boldsymbol{X}}^{N}}\right)\right)dW_s^i \\
&\quad+ \int_0^t\int_{\mathbb{R}_0^d} \left(c(X_{s-}^{i,N},\mu_{s-}^{\boldsymbol{X}^N})-c_{\Delta}\left(\widehat{X}_{\underline{s}}^{i,N},\mu_{\underline{s}}^{\widehat{\boldsymbol{X}}^{N}}\right)\right)z\widetilde{N}^i(d s, d z).
\end{align*}
Thanks to It\^o's formula, we obtain that for any  $\lambda \in \br$,\footnote{From here to the end of the proof, for the reader's convenience, we will use 3 different colors to track the changes of the quantities of interested.}
\begin{align}
&e^{-\lambda t}\vert X_t^{i,N}-\widehat{X}_t^{i,N}\vert^2=\int_0^te^{-\lambda s}\bigg(-\lambda\vert X_s^{i,N}-\widehat{X}_s^{i,N}\vert^2+\textcolor{blue}{2\left\langle X_s^{i,N}-\widehat{X}_s^{i,N},b(X_s^{i,N},\mu_{s}^{\boldsymbol{X}^N})-b\left(\widehat{X}_{\underline{s}}^{i,N},\mu_{\underline{s}}^{\widehat{\boldsymbol{X}}^{N}}\right)\right\rangle} \notag\\
&+ \textcolor{magenta}{\left\vert \sigma(X_s^{i,N},\mu_{s}^{\boldsymbol{X}^N})-\sigma_{\Delta}\left(\widehat{X}_{\underline{s}}^{i,N},\mu_{\underline{s}}^{\widehat{\boldsymbol{X}}^{N}}\right)\right\vert^2 }\bigg)ds+2\int_0^te^{-\lambda s}\left\langle X_s^{i,N}-\widehat{X}_s^{i,N},\big(\sigma(X_s^{i,N},\mu_{s}^{\boldsymbol{X}^N})-\sigma_{\Delta}(\widehat{X}_{\underline{s}}^{i,N},\mu_{\underline{s}}^{\widehat{\boldsymbol{X}}^{N}})\big)dW_s^i\right\rangle \notag\\
&+\int_0^t\int_{\mathbb{R}_0^d}e^{-\lambda s}\left(\vert X_{s-}^{i,N}-\widehat{X}_{s-}^{i,N}+\big(c(X_{s-}^{i,N},\mu_{s-}^{\boldsymbol{X}^N})-c_{\Delta}(\widehat{X}_{\underline{s}-}^{i,N},\mu_{\underline{s}-}^{\widehat{\boldsymbol{X}}^{N}})\big)z\vert^2-\vert X_{s-}^{i,N}-\widehat{X}_{s-}^{i,N}\vert^2\right)\widetilde{N}^i(d s, d z) \notag\\
&+ \textcolor{red}{\int_0^t\int_{\mathbb{R}_0^d}e^{-\lambda s}\big\vert \big(c(X_{s}^{i,N},\mu_{s}^{\boldsymbol{X}^N})-c_{\Delta}(\widehat{X}_{\underline{s}}^{i,N},\mu_{\underline{s}}^{\widehat{\boldsymbol{X}}^{N}})\big)z\big\vert^2\nu(d z) ds.} \label{express1}
\end{align}
In the following, we will give upper bounds for each term on the right-hand side of \eqref{express1}. 
\textcolor{blue}{First, we decompose 
\begin{align}
2\left\langle X_s^{i,N}-\widehat{X}_s^{i,N},b(X_s^{i,N},\mu_{s}^{\boldsymbol{X}^N})-b\left(\widehat{X}_{\underline{s}}^{i,N},\mu_{\underline{s}}^{\widehat{\boldsymbol{X}}^{N}}\right)\right\rangle =2\left\langle X_s^{i,N}-\widehat{X}_s^{i,N},b(X_s^{i,N},\mu_{s}^{\boldsymbol{X}^N})-b\left(\widehat{X}_{s}^{i,N},\mu_{s}^{\widehat{\boldsymbol{X}}^{N}}\right)\right\rangle + S, \label{ct000}
\end{align} 
where 
$S= 2\left\langle X_s^{i,N}-\widehat{X}_s^{i,N},b\left(\widehat{X}_{s}^{i,N},\mu_{s}^{\widehat{\boldsymbol{X}}^{N}}\right)-b\left(\widehat{X}_{\underline{s}}^{i,N},\mu_{\underline{s}}^{\widehat{\boldsymbol{X}}^{N}}\right)\right\rangle$. 
By using Cauchy's inequality and Condition \textbf{A4}, we obtain that for any $\varepsilon_1>0$,
\begin{align}
S& \leq \varepsilon_1 \left\vert X_s^{i,N}-\widehat{X}_s^{i,N}\right\vert^2+\dfrac{1}{\varepsilon_1}\left\vert b\left(\widehat{X}_{s}^{i,N},\mu_{s}^{\widehat{\boldsymbol{X}}^{N}}\right)-b\left(\widehat{X}_{\underline{s}}^{i,N},\mu_{\underline{s}}^{\widehat{\boldsymbol{X}}^{N}}\right)\right\vert^2 \notag\\
&\leq 
\varepsilon_1 \left\vert X_s^{i,N}-\widehat{X}_s^{i,N}\right\vert^2 +\dfrac{6}{\varepsilon_1}L^2\left(1+\left\vert \widehat{X}_{s}^{i,N}\right\vert^{2\ell}+\left\vert \widehat{X}_{\underline{s}}^{i,N}\right\vert^{2\ell}\right) \left(\left\vert \widehat{X}_{s}^{i,N}-\widehat{X}_{\underline{s}}^{i,N}\right\vert^2+\mathcal{W}_2^2\left(\mu_{s}^{\widehat{\boldsymbol{X}}^{N}},\mu_{\underline{s}}^{\widehat{\boldsymbol{X}}^{N}}\right)\right) \notag\\
&\leq 
\varepsilon_1 \left\vert X_s^{i,N}-\widehat{X}_s^{i,N}\right\vert^2\notag\\
& \quad   +\dfrac{6}{\varepsilon_1}L^2\left(1+2^{2\ell-1}\left(\left\vert \widehat{X}_{s}^{i,N}-\widehat{X}_{\underline{s}}^{i,N}\right\vert^{2\ell}+\left\vert \widehat{X}_{\underline{s}}^{i,N}\right\vert^{2\ell}\right)+\left\vert \widehat{X}_{\underline{s}}^{i,N}\right\vert^{2\ell}\right) \left(\left\vert \widehat{X}_{s}^{i,N}-\widehat{X}_{\underline{s}}^{i,N}\right\vert^2+\mathcal{W}_2^2\left(\mu_{s}^{\widehat{\boldsymbol{X}}^{N}},\mu_{\underline{s}}^{\widehat{\boldsymbol{X}}^{N}}\right)\right) \notag\\
&=
\varepsilon_1 \left\vert X_s^{i,N}-\widehat{X}_s^{i,N}\right\vert^2 +\dfrac{6}{\varepsilon_1}L^2\Bigg(\left\vert \widehat{X}_{s}^{i,N}-\widehat{X}_{\underline{s}}^{i,N}\right\vert^2+\mathcal{W}_2^2\left(\mu_{s}^{\widehat{\boldsymbol{X}}^{N}},\mu_{\underline{s}}^{\widehat{\boldsymbol{X}}^{N}}\right)+2^{2\ell-1}\left\vert \widehat{X}_{s}^{i,N}-\widehat{X}_{\underline{s}}^{i,N}\right\vert^{2\ell+2} \Bigg) \notag \\
&\quad +\dfrac{6}{\varepsilon_1}L^2\Bigg( 2^{2\ell}\left\vert \widehat{X}_{s}^{i,N}-\widehat{X}_{\underline{s}}^{i,N}\right\vert^{2\ell}  \mathcal{W}_2^2\left(\mu_{s}^{\widehat{\boldsymbol{X}}^{N}},\mu_{\underline{s}}^{\widehat{\boldsymbol{X}}^{N}}\right)+(2^{2\ell-1}+1)\left\vert \widehat{X}_{\underline{s}}^{i,N}\right\vert^{2\ell}\left\vert \widehat{X}_{s}^{i,N}-\widehat{X}_{\underline{s}}^{i,N}\right\vert^2 \Bigg) \notag \\
& \quad +\dfrac{6}{\varepsilon_1}L^2 (2^{2\ell}+1)\left\vert \widehat{X}_{\underline{s}}^{i,N}\right\vert^{2\ell}\mathcal{W}_2^2\left(\mu_{s}^{\widehat{\boldsymbol{X}}^{N}},\mu_{\underline{s}}^{\widehat{\boldsymbol{X}}^{N}}\right).\label{express2}
\end{align}
}
 \textcolor{magenta}{
Second, by using Cauchy's inequality
we have that for any $\varepsilon_2>0$,
\begin{align}
&\left\vert \sigma(X_s^{i,N},\mu_{s}^{\boldsymbol{X}^N})-\sigma_{\Delta}\left(\widehat{X}_{\underline{s}}^{i,N},\mu_{\underline{s}}^{\widehat{\boldsymbol{X}}^{N}}\right)\right\vert^2 \notag\\
&\leq (1+\varepsilon_2)\left\vert \sigma(X_s^{i,N},\mu_{s}^{\boldsymbol{X}^N})-\sigma\left(\widehat{X}_{s}^{i,N},\mu_{s}^{\widehat{\boldsymbol{X}}^{N}}\right)\right\vert^2  \notag\\
&\quad+2\left(1+\frac{1}{\varepsilon_2}\right)\left(\left\vert \sigma\left(\widehat{X}_{s}^{i,N},\mu_{s}^{\widehat{\boldsymbol{X}}^{N}}\right)-\sigma\left(\widehat{X}_{\underline{s}}^{i,N},\mu_{\underline{s}}^{\widehat{\boldsymbol{X}}^{N}}\right)\right\vert^2+\left\vert \sigma\left(\widehat{X}_{\underline{s}}^{i,N},\mu_{\underline{s}}^{\widehat{\boldsymbol{X}}^{N}}\right)-\sigma_{\Delta}\left(\widehat{X}_{\underline{s}}^{i,N},\mu_{\underline{s}}^{\widehat{\boldsymbol{X}}^{N}}\right)\right\vert^2\right). \label{ct001}
\end{align} 
It follows from Remark \ref{sigmac} that 
\begin{align}
&\left\vert \sigma\left(\widehat{X}_{s}^{i,N},\mu_{s}^{\widehat{\boldsymbol{X}}^{N}}\right)-\sigma\left(\widehat{X}_{\underline{s}}^{i,N},\mu_{\underline{s}}^{\widehat{\boldsymbol{X}}^{N}}\right)\right\vert^2  \notag \\
\leq & \dfrac{L\widetilde{L}}{1+\varepsilon}\left(1+\left\vert \widehat{X}_{s}^{i,N}\right\vert^{\ell}+\left\vert \widehat{X}_{\underline{s}}^{i,N}\right\vert^{\ell}\right)   \left(\left\vert \widehat{X}_{s}^{i,N}-\widehat{X}_{\underline{s}}^{i,N}\right\vert^2+\mathcal{W}_2^2\left(\mu_{s}^{\widehat{\boldsymbol{X}}^{N}},\mu_{\underline{s}}^{\widehat{\boldsymbol{X}}^{N}}\right)\right) \notag \\
\leq & \dfrac{L\widetilde{L}}{1+\varepsilon}\left(1+2^{\ell}\Big(\left\vert \widehat{X}_{s}^{i,N}-\widehat{X}_{\underline{s}}^{i,N}\right\vert^{\ell}+\left\vert \widehat{X}_{\underline{s}}^{i,N}\right\vert^{\ell}\Big)+\left\vert \widehat{X}_{\underline{s}}^{i,N}\right\vert^{\ell}\right) \left(\left\vert \widehat{X}_{s}^{i,N}-\widehat{X}_{\underline{s}}^{i,N}\right\vert^2+\mathcal{W}_2^2\left(\mu_{s}^{\widehat{\boldsymbol{X}}^{N}},\mu_{\underline{s}}^{\widehat{\boldsymbol{X}}^{N}}\right)\right)  \notag\\
= & \dfrac{L\widetilde{L}}{1+\varepsilon}\bigg(\left\vert \widehat{X}_{s}^{i,N}-\widehat{X}_{\underline{s}}^{i,N}\right\vert^2+\mathcal{W}_2^2\left(\mu_{s}^{\widehat{\boldsymbol{X}}^{N}},\mu_{\underline{s}}^{\widehat{\boldsymbol{X}}^{N}}\right) +2^{\ell}\left\vert \widehat{X}_{s}^{i,N}-\widehat{X}_{\underline{s}}^{i,N}\right\vert^{\ell+2}\notag \\
&+2^{\ell}\left\vert \widehat{X}_{s}^{i,N}-\widehat{X}_{\underline{s}}^{i,N}\right\vert^{\ell}  \mathcal{W}_2^2\left(\mu_{s}^{\widehat{\boldsymbol{X}}^{N}},\mu_{\underline{s}}^{\widehat{\boldsymbol{X}}^{N}}\right)+(2^{\ell}+1)\left\vert \widehat{X}_{\underline{s}}^{i,N}\right\vert^{\ell}\left\vert \widehat{X}_{s}^{i,N}-\widehat{X}_{\underline{s}}^{i,N}\right\vert^2  \notag \\
&+(2^{\ell-1}+1)\big\vert \widehat{X}_{\underline{s}}^{i,N}\big\vert^{\ell}\mathcal{W}_2^2\left(\mu_{s}^{\widehat{\boldsymbol{X}}^{N}},\mu_{\underline{s}}^{\widehat{\boldsymbol{X}}^{N}}\right)\bigg).\label{ct002} 
\end{align} 
Then, it follows from \textbf{T6} and Remark \ref{sigmac}(iii) that 
\begin{align}
&\left\vert \sigma\left(\widehat{X}_{\underline{s}}^{i,N},\mu_{\underline{s}}^{\widehat{\boldsymbol{X}}^{N}}\right)-\sigma_{\Delta}\left(\widehat{X}_{\underline{s}}^{i,N},\mu_{\underline{s}}^{\widehat{\boldsymbol{X}}^{N}}\right)\right\vert^2 \leq  L_4^2 \Delta\left\vert\sm\left(\widehat{X}_{\underline{s}}^{i,N},\mu_{\underline{s}}^{\widehat{\boldsymbol{X}}^{N}}\right)\right\vert^4\left(1+\left\vert \widehat{X}_{\underline{s}}^{i,N}\right\vert\right)^2  \notag \\
\leq &4L_4^2 \Delta\bigg(16\Big(\dfrac{L\widetilde{L}}{1+\varepsilon}\Big)^2\Big(1+\vert \widehat{X}_{\underline{s}}^{i,N}\vert^{2\ell}\Big)\Big(\vert \widehat{X}_{\underline{s}}^{i,N}\vert^4+\mathcal{W}_2^4\big(\mu_{\underline{s}}^{\widehat{\boldsymbol{X}}^{N}},\delta_0\big)\Big)+4\left\vert \sigma(0,\delta_0)\right\vert^4\bigg)\Big(1+\vert \widehat{X}_{\underline{s}}^{i,N}\vert^2\Big). \label{ct003} 
\end{align}
}
\textcolor{red}{ 
Third, using Cauchy's inequality,   we obtain that for any $\varepsilon_3>0$,
\begin{align}
&\left\vert c(X_s^{i,N},\mu_{s}^{\boldsymbol{X}^N})-c_{\Delta}\left(\widehat{X}_{\underline{s}}^{i,N},\mu_{\underline{s}}^{\widehat{\boldsymbol{X}}^{N}}\right)\right\vert^2
\leq (1+\varepsilon_3)\left\vert c(X_s^{i,N},\mu_{s}^{\boldsymbol{X}^N})-c\left(\widehat{X}_{s}^{i,N},\mu_{s}^{\widehat{\boldsymbol{X}}^{N}}\right)\right\vert^2 
\notag\\
&\quad+2\left(1+\frac{1}{\varepsilon_3}\right)\bigg(\left\vert c\left(\widehat{X}_{s}^{i,N},\mu_{s}^{\widehat{\boldsymbol{X}}^{N}}\right)-c\left(\widehat{X}_{\underline{s}}^{i,N},\mu_{\underline{s}}^{\widehat{\boldsymbol{X}}^{N}}\right)\right\vert^2+\left\vert c\left(\widehat{X}_{\underline{s}}^{i,N},\mu_{\underline{s}}^{\widehat{\boldsymbol{X}}^{N}}\right)-c_{\Delta}\left(\widehat{X}_{\underline{s}}^{i,N},\mu_{\underline{s}}^{\widehat{\boldsymbol{X}}^{N}}\right)\right\vert^2\bigg).
\label{ct004} 
\end{align}
Thanks to Remark \ref{sigmac}(ii), we have 
\begin{align}
&\left\vert c\left(\widehat{X}_{s}^{i,N},\mu_{s}^{\widehat{\boldsymbol{X}}^{N}}\right)-c\left(\widehat{X}_{\underline{s}}^{i,N},\mu_{\underline{s}}^{\widehat{\boldsymbol{X}}^{N}}\right)\right\vert^2 \int_{\bR_{0}^d}|z|^{2} \nu(d z)   \notag\\
\leq &  \dfrac{L\widetilde{L}}{1+\varepsilon}\left(1+\left\vert \widehat{X}_{s}^{i,N}\right\vert^{\ell}+\left\vert \widehat{X}_{\underline{s}}^{i,N}\right\vert^{\ell}\right) \left(\left\vert \widehat{X}_{s}^{i,N}-\widehat{X}_{\underline{s}}^{i,N}\right\vert^2+\mathcal{W}_2^2\left(\mu_{s}^{\widehat{\boldsymbol{X}}^{N}},\mu_{\underline{s}}^{\widehat{\boldsymbol{X}}^{N}}\right)\right)  \notag\\
\leq & \dfrac{L\widetilde{L}}{1+\varepsilon}\left(1+2^{\ell}\Big(\left\vert \widehat{X}_{s}^{i,N}-\widehat{X}_{\underline{s}}^{i,N}\right\vert^{\ell}+\left\vert \widehat{X}_{\underline{s}}^{i,N}\right\vert^{\ell}\Big)+\left\vert \widehat{X}_{\underline{s}}^{i,N}\right\vert^{\ell}\right) \left(\left\vert \widehat{X}_{s}^{i,N}-\widehat{X}_{\underline{s}}^{i,N}\right\vert^2+\mathcal{W}_2^2\left(\mu_{s}^{\widehat{\boldsymbol{X}}^{N}},\mu_{\underline{s}}^{\widehat{\boldsymbol{X}}^{N}}\right)\right) \notag\\
= &\dfrac{L\widetilde{L}}{1+\varepsilon}\bigg(\left\vert \widehat{X}_{s}^{i,N}-\widehat{X}_{\underline{s}}^{i,N}\right\vert^2+\mathcal{W}_2^2\left(\mu_{s}^{\widehat{\boldsymbol{X}}^{N}},\mu_{\underline{s}}^{\widehat{\boldsymbol{X}}^{N}}\right) +2^{\ell}\left\vert \widehat{X}_{s}^{i,N}-\widehat{X}_{\underline{s}}^{i,N}\right\vert^{\ell+2}  \notag \\
&\qquad \qquad + 2^{\ell}\left\vert \widehat{X}_{s}^{i,N}-\widehat{X}_{\underline{s}}^{i,N}\right\vert^{\ell} 
\mathcal{W}_2^2\left(\mu_{s}^{\widehat{\boldsymbol{X}}^{N}},\mu_{\underline{s}}^{\widehat{\boldsymbol{X}}^{N}}\right)+(2^{\ell}+1)\left\vert \widehat{X}_{\underline{s}}^{i,N}\right\vert^{\ell}\left\vert \widehat{X}_{s}^{i,N}-\widehat{X}_{\underline{s}}^{i,N}\right\vert^2 \notag \\
&\qquad \qquad +(2^{\ell}+1)\big\vert \widehat{X}_{\underline{s}}^{i,N}\big\vert^{\ell}\mathcal{W}_2^2\left(\mu_{s}^{\widehat{\boldsymbol{X}}^{N}},\mu_{\underline{s}}^{\widehat{\boldsymbol{X}}^{N}}\right)\bigg).\label{ct005}
\end{align} 
Thanks to Condition \textbf{T6} and Remark \ref{sigmac}(iii) and (i), we have 
\begin{align}
&\left\vert c\left(\widehat{X}_{\underline{s}}^{i,N},\mu_{\underline{s}}^{\widehat{\boldsymbol{X}}^{N}}\right)-c_{\Delta}\left(\widehat{X}_{\underline{s}}^{i,N},\mu_{\underline{s}}^{\widehat{\boldsymbol{X}}^{N}}\right)\right\vert^2 \notag\\
\leq & L_4^2 \Delta\left\vert c\left(\widehat{X}_{\underline{s}}^{i,N},\mu_{\underline{s}}^{\widehat{\boldsymbol{X}}^{N}}\right)\right\vert^4\left(1+\left\vert \widehat{X}_{\underline{s}}^{i,N}\right\vert+\left\vert b\left(\widehat{X}_{\underline{s}}^{i,N},\mu_{\underline{s}}^{\widehat{\boldsymbol{X}}^{N}}\right)\right\vert\right)^2 
\notag\\
\leq & \dfrac{6L_4^2 \Delta}{ \Big(\int_{\bR_{0}^d}|z|^{2} \nu(d z) \Big)^2}\bigg(16\Big(\dfrac{L\widetilde{L}}{1+\varepsilon}\Big)^2\Big(1+\vert \widehat{X}_{\underline{s}}^{i,N}\vert^{2\ell}\Big)\Big(\vert \widehat{X}_{\underline{s}}^{i,N}\vert^4+\mathcal{W}_2^4\big(\mu_{\underline{s}}^{\widehat{\boldsymbol{X}}^{N}},\delta_0\big)\Big)+4\left\vert c(0,\delta_0)\right\vert^4\Big(\int_{\bR_{0}^d}|z|^{2} \nu(d z)\Big)^2\bigg) \notag \\ 
&\quad\times\left(1+\vert \widehat{X}_{\underline{s}}^{i,N}\vert^2+2\left(\vert b(0,\delta_0)\vert^2+4L^2\big(1+\vert \widehat{X}_{\underline{s}}^{i,N}\vert^{2\ell}\big)\left(\vert \widehat{X}_{\underline{s}}^{i,N}\vert^{2}+\mathcal{W}_2^2\big(\mu_{\underline{s}}^{\widehat{\boldsymbol{X}}^{N}},\delta_0\big)\right)\right)\right). \label{express4}
\end{align}
}
Therefore, inserting all estimations   \eqref{ct000} -- \eqref{express4} into \eqref{express1}, and choosing $\varepsilon_2=\varepsilon_3=\varepsilon$, we obtain that for any $\lambda \in \br$ and $\varepsilon_1>0$,
\begin{align}
&e^{-\lambda t}\vert X_t^{i,N}-\widehat{X}_t^{i,N}\vert^2 \notag\\
&\leq \int_0^te^{-\lambda s}\Bigg(-\lambda\vert X_s^{i,N}-\widehat{X}_s^{i,N}\vert^2 
+\textcolor{blue}{\varepsilon_1 \left\vert X_s^{i,N}-\widehat{X}_s^{i,N}\right\vert^2
+2\left\langle X_s^{i,N}-\widehat{X}_s^{i,N},b(X_s^{i,N},\mu_{s}^{\boldsymbol{X}^N})-b\left(\widehat{X}_{s}^{i,N},\mu_{s}^{\widehat{\boldsymbol{X}}^{N}}\right)\right\rangle} \notag\\
&\quad+ \textcolor{magenta}{(1+\varepsilon)\left\vert \sigma(X_s^{i,N},\mu_{s}^{\boldsymbol{X}^N})-\sigma\left(\widehat{X}_{s}^{i,N},\mu_{s}^{\widehat{\boldsymbol{X}}^{N}}\right)\right\vert^2}
+\textcolor{red}{(1+\varepsilon)\left\vert c(X_s^{i,N},\mu_{s}^{\boldsymbol{X}^N})-c\left(\widehat{X}_{s}^{i,N},\mu_{s}^{\widehat{\boldsymbol{X}}^{N}}\right)\right\vert^2\int_{\bR_{0}^d}|z|^{2} \nu(d z) } \notag\\
&\quad + \textcolor{blue} {\dfrac{6}{\varepsilon_1}L^2\Bigg[ \left\vert \widehat{X}_{s}^{i,N}-\widehat{X}_{\underline{s}}^{i,N}\right\vert^2 +\mathcal{W}_2^2\left(\mu_{s}^{\widehat{\boldsymbol{X}}^{N}},\mu_{\underline{s}}^{\widehat{\boldsymbol{X}}^{N}}\right)+2^{2\ell}\left\vert \widehat{X}_{s}^{i,N}-\widehat{X}_{\underline{s}}^{i,N}\right\vert^{2\ell+2}  +2^{2\ell}\left\vert \widehat{X}_{s}^{i,N}-\widehat{X}_{\underline{s}}^{i,N}\right\vert^{2\ell} }\notag\\
&\quad \textcolor{blue} { \times \mathcal{W}_2^2\left(\mu_{s}^{\widehat{\boldsymbol{X}}^{N}},\mu_{\underline{s}}^{\widehat{\boldsymbol{X}}^{N}}\right)+(2^{2\ell}+1)\left\vert \widehat{X}_{\underline{s}}^{i,N}\right\vert^{2\ell}\left\vert \widehat{X}_{s}^{i,N}-\widehat{X}_{\underline{s}}^{i,N}\right\vert^2  +(2^{2\ell}+1)\left\vert \widehat{X}_{\underline{s}}^{i,N}\right\vert^{2\ell}\mathcal{W}_2^2\left(\mu_{s}^{\widehat{\boldsymbol{X}}^{N}},\mu_{\underline{s}}^{\widehat{\boldsymbol{X}}^{N}}\right)\Bigg]}  \notag\\
&\quad+ \textcolor{magenta}{ 2\big(1+\frac{1}{\varepsilon}\big)\Bigg[ \dfrac{L\widetilde{L}}{1+\varepsilon}\bigg(\left\vert \widehat{X}_{s}^{i,N}-\widehat{X}_{\underline{s}}^{i,N}\right\vert^2+\mathcal{W}_2^2\left(\mu_{s}^{\widehat{\boldsymbol{X}}^{N}},\mu_{\underline{s}}^{\widehat{\boldsymbol{X}}^{N}}\right)  +2^{\ell}\left\vert \widehat{X}_{s}^{i,N}-\widehat{X}_{\underline{s}}^{i,N}\right\vert^{\ell+2}+2^{\ell}\left\vert \widehat{X}_{s}^{i,N}-\widehat{X}_{\underline{s}}^{i,N}\right\vert^{\ell}}\notag\\
&\quad \textcolor{magenta}{  \times\mathcal{W}_2^2\left(\mu_{s}^{\widehat{\boldsymbol{X}}^{N}},\mu_{\underline{s}}^{\widehat{\boldsymbol{X}}^{N}}\right)+(2^{\ell}+1)\left\vert \widehat{X}_{\underline{s}}^{i,N}\right\vert^{\ell}\left\vert \widehat{X}_{s}^{i,N}-\widehat{X}_{\underline{s}}^{i,N}\right\vert^2  +(2^{\ell}+1)\big\vert \widehat{X}_{\underline{s}}^{i,N}\big\vert^{\ell}\mathcal{W}_2^2\left(\mu_{s}^{\widehat{\boldsymbol{X}}^{N}},\mu_{\underline{s}}^{\widehat{\boldsymbol{X}}^{N}}\right)\bigg) }\notag\\
&\quad +\textcolor{magenta}{  4L_4^2 \Delta\bigg( 16\Big(\dfrac{L\widetilde{L}}{1+\varepsilon}\Big)^2\Big(1+\vert \widehat{X}_{\underline{s}}^{i,N}\vert^{2\ell}\Big)\Big(\vert \widehat{X}_{\underline{s}}^{i,N}\vert^4+\mathcal{W}_2^4\big(\mu_{\underline{s}}^{\widehat{\boldsymbol{X}}^{N}},\delta_0\big)\Big)  +4\left\vert \sigma(0,\delta_0)\right\vert^4\bigg) \Big(1+\vert \widehat{X}_{\underline{s}}^{i,N}\vert^2\Big)\Bigg] }  \notag\\
&\quad + \textcolor{red}{ 2\left(1+\frac{1}{\varepsilon}\right)\Bigg[ \dfrac{L\widetilde{L}}{1+\varepsilon}\bigg(\left\vert \widehat{X}_{s}^{i,N}-\widehat{X}_{\underline{s}}^{i,N}\right\vert^2+\mathcal{W}_2^2\left(\mu_{s}^{\widehat{\boldsymbol{X}}^{N}},\mu_{\underline{s}}^{\widehat{\boldsymbol{X}}^{N}}\right) +2^{\ell}\left\vert \widehat{X}_{s}^{i,N}-\widehat{X}_{\underline{s}}^{i,N}\right\vert^{\ell+2}+2^{\ell}\left\vert \widehat{X}_{s}^{i,N}-\widehat{X}_{\underline{s}}^{i,N}\right\vert^{\ell} }\notag\\
&\quad \textcolor{red}{ \times\mathcal{W}_2^2\left(\mu_{s}^{\widehat{\boldsymbol{X}}^{N}},\mu_{\underline{s}}^{\widehat{\boldsymbol{X}}^{N}}\right)+(2^{\ell}+1)\left\vert \widehat{X}_{\underline{s}}^{i,N}\right\vert^{\ell}\left\vert \widehat{X}_{s}^{i,N}-\widehat{X}_{\underline{s}}^{i,N}\right\vert^2  +(2^{\ell}+1)\big\vert \widehat{X}_{\underline{s}}^{i,N}\big\vert^{\ell}\mathcal{W}_2^2\left(\mu_{s}^{\widehat{\boldsymbol{X}}^{N}},\mu_{\underline{s}}^{\widehat{\boldsymbol{X}}^{N}}\right)\bigg)}\notag\\
&\quad \textcolor{red}{+\dfrac{6L_4^2 \Delta}{\int_{\bR_{0}^d}|z|^{2} \nu(d z)}\bigg( 16\Big(\dfrac{L\widetilde{L}}{1+\varepsilon}\Big)^2\Big(1+\vert \widehat{X}_{\underline{s}}^{i,N}\vert^{2\ell}\Big)\Big(\vert \widehat{X}_{\underline{s}}^{i,N}\vert^4+\mathcal{W}_2^4\big(\mu_{\underline{s}}^{\widehat{\boldsymbol{X}}^{N}},\delta_0\big)\Big)  +4\left\vert c(0,\delta_0)\right\vert^4\Big(\int_{\bR_{0}^d}|z|^{2} \nu(d z)\Big)^2\bigg)}  \notag\\
&\quad \textcolor{red}{\times\left(1+\vert \widehat{X}_{\underline{s}}^{i,N}\vert^2+2\left(\vert b(0,\delta_0)\vert^2+4L^2\left(1+\vert \widehat{X}_{\underline{s}}^{i,N}\vert^{2\ell}\right)\left(\vert \widehat{X}_{\underline{s}}^{i,N}\vert^{2}+\mathcal{W}_2^2\big(\mu_{\underline{s}}^{\widehat{\boldsymbol{X}}^{N}},\delta_0\big)\right)\right)\right)\Bigg]}\Bigg) ds   \notag\\
&\quad+2\int_0^te^{-\lambda s}\left\langle X_s^{i,N}-\widehat{X}_s^{i,N},\big(\sigma(X_s^{i,N},\mu_{s}^{\boldsymbol{X}^N})-\sigma_{\Delta}(\widehat{X}_{\underline{s}}^{i,N},\mu_{\underline{s}}^{\widehat{\boldsymbol{X}}^{N}})\big)dW_s^i\right\rangle  \notag\\
&\quad+\int_0^t\int_{\mathbb{R}_0^d}e^{-\lambda s}\left(\vert X_{s-}^{i,N}-\widehat{X}_{s-}^{i,N}+\big(c(X_{s-}^{i,N},\mu_{s-}^{\boldsymbol{X}^N})-c_{\Delta}(\widehat{X}_{\underline{s}-}^{i,N},\mu_{\underline{s}-}^{\widehat{\boldsymbol{X}}^{N}})\big)z\vert^2-\vert X_{s-}^{i,N}-\widehat{X}_{s-}^{i,N}\vert^2\right)\widetilde{N}^i(d s, d z). \notag 
\end{align}

Using Condition \textbf{A2} for $\kappa_1=\kappa_2=1+\varepsilon$, $L_1\in\mathbb{R}$, $L_2\geq 0$, we obtain that for any $\lambda \in \br$ and $\varepsilon_1>0$,
\begin{align}
&e^{-\lambda t}\vert X_t^{i,N}-\widehat{X}_t^{i,N}\vert^2 \notag\\
&\leq \int_0^te^{-\lambda s}\Bigg\{ -\lambda\vert X_s^{i,N}-\widehat{X}_s^{i,N}\vert^2 
+ \textcolor{blue}{ \varepsilon_1 \left\vert X_s^{i,N}-\widehat{X}_s^{i,N}\right\vert^2}
+L_1 \vert X_s^{i,N}-\widehat{X}_s^{i,N}\vert^2+L_2\mathcal{W}_2^2\left(\mu_{s}^{\boldsymbol{X}^{N}},\mu_{s}^{\widehat{\boldsymbol{X}}^{N}}\right)  \notag\\
&\quad +\textcolor{blue}{\dfrac{6}{\varepsilon_1}L^2\Bigg[\left\vert \widehat{X}_{s}^{i,N}-\widehat{X}_{\underline{s}}^{i,N}\right\vert^2 +\mathcal{W}_2^2\left(\mu_{s}^{\widehat{\boldsymbol{X}}^{N}},\mu_{\underline{s}}^{\widehat{\boldsymbol{X}}^{N}}\right)+2^{2\ell}\left\vert \widehat{X}_{s}^{i,N}-\widehat{X}_{\underline{s}}^{i,N}\right\vert^{2\ell+2} +2^{2\ell}\left\vert \widehat{X}_{s}^{i,N}-\widehat{X}_{\underline{s}}^{i,N}\right\vert^{2\ell}} \notag\\
&\quad \textcolor{blue}{\times\mathcal{W}_2^2\left(\mu_{s}^{\widehat{\boldsymbol{X}}^{N}},\mu_{\underline{s}}^{\widehat{\boldsymbol{X}}^{N}}\right)+(2^{2\ell}+1)\left\vert \widehat{X}_{\underline{s}}^{i,N}\right\vert^{2\ell}\left\vert \widehat{X}_{s}^{i,N}-\widehat{X}_{\underline{s}}^{i,N}\right\vert^2  +(2^{2\ell}+1)\left\vert \widehat{X}_{\underline{s}}^{i,N}\right\vert^{2\ell}\mathcal{W}_2^2\left(\mu_{s}^{\widehat{\boldsymbol{X}}^{N}},\mu_{\underline{s}}^{\widehat{\boldsymbol{X}}^{N}}\right)\Bigg]}\notag\\
&\quad \textcolor{magenta}{+2\big(1+\frac{1}{\varepsilon}\big)\Bigg[ \dfrac{L\widetilde{L}}{1+\varepsilon}\bigg(\left\vert \widehat{X}_{s}^{i,N}-\widehat{X}_{\underline{s}}^{i,N}\right\vert^2+\mathcal{W}_2^2\left(\mu_{s}^{\widehat{\boldsymbol{X}}^{N}},\mu_{\underline{s}}^{\widehat{\boldsymbol{X}}^{N}}\right)  +2^{\ell}\left\vert \widehat{X}_{s}^{i,N}-\widehat{X}_{\underline{s}}^{i,N}\right\vert^{\ell+2}+2^{\ell}\left\vert \widehat{X}_{s}^{i,N}-\widehat{X}_{\underline{s}}^{i,N}\right\vert^{\ell}}\notag\\
&\quad \textcolor{magenta}{ \times\mathcal{W}_2^2\left(\mu_{s}^{\widehat{\boldsymbol{X}}^{N}},\mu_{\underline{s}}^{\widehat{\boldsymbol{X}}^{N}}\right)+(2^{\ell}+1)\left\vert \widehat{X}_{\underline{s}}^{i,N}\right\vert^{\ell}\left\vert \widehat{X}_{s}^{i,N}-\widehat{X}_{\underline{s}}^{i,N}\right\vert^2  +(2^{\ell}+1)\big\vert \widehat{X}_{\underline{s}}^{i,N}\big\vert^{\ell}\mathcal{W}_2^2\left(\mu_{s}^{\widehat{\boldsymbol{X}}^{N}},\mu_{\underline{s}}^{\widehat{\boldsymbol{X}}^{N}}\right)\bigg)}\notag\\
&\quad \textcolor{magenta}{+4L_4^2 \Delta\bigg( 16\Big(\dfrac{L\widetilde{L}}{1+\varepsilon}\Big)^2\Big(1+\vert \widehat{X}_{\underline{s}}^{i,N}\vert^{2\ell}\Big)\Big(\vert \widehat{X}_{\underline{s}}^{i,N}\vert^4+\mathcal{W}_2^4\big(\mu_{\underline{s}}^{\widehat{\boldsymbol{X}}^{N}},\delta_0\big)\Big) +4\left\vert \sigma(0,\delta_0)\right\vert^4\bigg) \Big(1+\vert \widehat{X}_{\underline{s}}^{i,N}\vert^2\Big)\Bigg]} \notag\\
&\quad \textcolor{red}{+2\left(1+\frac{1}{\varepsilon}\right)\Bigg[\dfrac{L\widetilde{L}}{1+\varepsilon}\bigg(\left\vert \widehat{X}_{s}^{i,N}-\widehat{X}_{\underline{s}}^{i,N}\right\vert^2+\mathcal{W}_2^2\left(\mu_{s}^{\widehat{\boldsymbol{X}}^{N}},\mu_{\underline{s}}^{\widehat{\boldsymbol{X}}^{N}}\right)  +2^{\ell}\left\vert \widehat{X}_{s}^{i,N}-\widehat{X}_{\underline{s}}^{i,N}\right\vert^{\ell+2}+2^{\ell}\left\vert \widehat{X}_{s}^{i,N}-\widehat{X}_{\underline{s}}^{i,N}\right\vert^{\ell}}\notag\\
&\quad \textcolor{red}{ \times\mathcal{W}_2^2\left(\mu_{s}^{\widehat{\boldsymbol{X}}^{N}},\mu_{\underline{s}}^{\widehat{\boldsymbol{X}}^{N}}\right)+(2^{\ell}+1)\left\vert \widehat{X}_{\underline{s}}^{i,N}\right\vert^{\ell}\left\vert \widehat{X}_{s}^{i,N}-\widehat{X}_{\underline{s}}^{i,N}\right\vert^2  +(2^{\ell}+1)\big\vert \widehat{X}_{\underline{s}}^{i,N}\big\vert^{\ell}\mathcal{W}_2^2\left(\mu_{s}^{\widehat{\boldsymbol{X}}^{N}},\mu_{\underline{s}}^{\widehat{\boldsymbol{X}}^{N}}\right)\bigg)}\notag\\
&\quad \textcolor{red}{ +\dfrac{6L_4^2 \Delta}{\int_{\bR_{0}^d}|z|^{2} \nu(d z)}\bigg( 16\Big(\dfrac{L\widetilde{L}}{1+\varepsilon}\Big)^2\Big(1+\vert \widehat{X}_{\underline{s}}^{i,N}\vert^{2\ell}\Big)\Big(\vert \widehat{X}_{\underline{s}}^{i,N}\vert^4+\mathcal{W}_2^4\big(\mu_{\underline{s}}^{\widehat{\boldsymbol{X}}^{N}},\delta_0\big)\Big)  +4\left\vert c(0,\delta_0)\right\vert^4\Big(\int_{\bR_{0}^d}|z|^{2} \nu(d z)\Big)^2\bigg)} \notag\\
&\quad \textcolor{red}{ \times \left(1+\vert \widehat{X}_{\underline{s}}^{i,N}\vert^2+2\left(\vert b(0,\delta_0)\vert^2+4L^2\left(1+\vert \widehat{X}_{\underline{s}}^{i,N}\vert^{2\ell}\right)\left(\vert \widehat{X}_{\underline{s}}^{i,N}\vert^{2}+\mathcal{W}_2^2\big(\mu_{\underline{s}}^{\widehat{\boldsymbol{X}}^{N}},\delta_0\big)\right)\right)\right)\Bigg]} \Bigg \} ds \notag\\
&\quad+2\int_0^te^{-\lambda s}\left\langle X_s^{i,N}-\widehat{X}_s^{i,N},\big(\sigma(X_s^{i,N},\mu_{s}^{\boldsymbol{X}^N})-\sigma_{\Delta}(\widehat{X}_{\underline{s}}^{i,N},\mu_{\underline{s}}^{\widehat{\boldsymbol{X}}^{N}})\big)dW_s^i\right\rangle \notag\\
&\quad+\int_0^t\int_{\mathbb{R}_0^d}e^{-\lambda s}\left(\vert X_{s-}^{i,N}-\widehat{X}_{s-}^{i,N}+\big(c(X_{s-}^{i,N},\mu_{s-}^{\boldsymbol{X}^N})-c_{\Delta}(\widehat{X}_{\underline{s}-}^{i,N},\mu_{\underline{s}-}^{\widehat{\boldsymbol{X}}^{N}})\big)z\vert^2-\vert X_{s-}^{i,N}-\widehat{X}_{s-}^{i,N}\vert^2\right)\widetilde{N}^i(d s, d z). \label{m1}
\end{align}
Now, using Lemma \ref{hq2} and Proposition \ref{dinh ly 2}, we have the following estimates
\begin{equation}\label{m2}\begin{split}
&\bE\left [\mathcal{W}_2^2\big(\mu_{s}^{\widehat{\boldsymbol{X}}^{N}},\mu_{\underline{s}}^{\widehat{\boldsymbol{X}}^{N}}\big)\right]\leq\dfrac{1}{N}\sum_{j=1}^{N}\bE\left [\left\vert \widehat{X}_{s}^{j,N}-\widehat{X}_{\underline{s}}^{j,N}\right\vert^2\right]=\bE\left [\left\vert \widehat{X}_{s}^{i,N}-\widehat{X}_{\underline{s}}^{i,N}\right\vert^2\right]\leq C \Delta,\\
&\bE \left[\mathcal{W}_2^2\big(\mu_{\underline{s}}^{\widehat{\boldsymbol{X}}^{N}},\delta_0\big)\right] =\bE\left [\dfrac{1}{N}\sum_{j=1}^{N}\left\vert \widehat{X}^{j,N}_{\underline{s}}\right\vert^2\right]= \dfrac{1}{N}\sum_{j=1}^{N}\bE\left [\left\vert \widehat{X}^{j,N}_{\underline{s}}\right\vert^2\right]=\bE\left [\left\vert \widehat{X}^{i,N}_{\underline{s}}\right\vert^2\right] \leq C,\\
&\bE\left [\mathcal{W}_2^4\big(\mu_{\underline{s}}^{\widehat{\boldsymbol{X}}^{N}},\delta_0\big)\right]=\bE\left [\left(\dfrac{1}{N}\sum_{j=1}^{N}\left\vert \widehat{X}^{j,N}_{\underline{s}}\right\vert^2\right)^2\right]\leq \dfrac{1}{N}\sum_{j=1}^{N}\bE\left [\left\vert \widehat{X}^{j,N}_{\underline{s}}\right\vert^4\right]=\bE\left [\left\vert \widehat{X}^{i,N}_{\underline{s}}\right\vert^4\right]\leq C,
\end{split}
\end{equation}
for any $i\in\{1,\ldots,N\}$ and some constant $C>0$.

Using the estimate
\begin{align*}
\mathcal{W}_2^p\left(\mu_{s}^{\widehat{\boldsymbol{X}}^{N}},\mu_{\underline{s}}^{\widehat{\boldsymbol{X}}^{N}}\right)\leq \dfrac{1}{N}\sum_{j=1}^{N}\left\vert \widehat{X}_{s}^{j,N}-\widehat{X}_{\underline{s}}^{j,N}\right\vert^p,
\end{align*}
valid for any $p\geq 2$ and Lemma \ref{hq2}, we have that for $\rho\in\{\ell, 2\ell\}$,
\begin{align}
&\bE\left [\left\vert \widehat{X}_{s}^{i,N}-\widehat{X}_{\underline{s}}^{i,N}\right\vert^{\rho}\mathcal{W}_2^2\left(\mu_{s}^{\widehat{\boldsymbol{X}}^{N}},\mu_{\underline{s}}^{\widehat{\boldsymbol{X}}^{N}}\right)\right]\leq \dfrac{1}{N}\sum_{j=1}^{N}\bE\left [\left\vert \widehat{X}_{s}^{i,N}-\widehat{X}_{\underline{s}}^{i,N}\right\vert^{\rho}\left\vert \widehat{X}_{s}^{j,N}-\widehat{X}_{\underline{s}}^{j,N}\right\vert^2\right]  \notag\\
&=\dfrac{1}{N}\bE\left [\left\vert \widehat{X}_{s}^{i,N}-\widehat{X}_{\underline{s}}^{i,N}\right\vert^{\rho+2}\right]+\dfrac{1}{N}\sum_{j\neq i} \bE\left [\left\vert \widehat{X}_{s}^{i,N}-\widehat{X}_{\underline{s}}^{i,N}\right\vert^{\rho}\right]\bE\left [\left\vert \widehat{X}_{s}^{j,N}-\widehat{X}_{\underline{s}}^{j,N}\right\vert^2\right]  \notag\\
&\leq C \Delta.  \label{m3}
\end{align}
Next, using Lemma \ref{hq2}, Proposition \ref{dinh ly 2} and the fact that $p_0\geq 4\ell+6$, we get that
\begin{equation}  \label{m4}\begin{split}
&\bE \left[\left\vert \widehat{X}_{s}^{i,N}-\widehat{X}_{\underline{s}}^{i,N}\right\vert^q\right] \leq C\Delta,\;\; \bE \left[\left\vert \widehat{X}_{s}^{i,N}-\widehat{X}_{\underline{s}}^{i,N}\right\vert^{\ell}\right] \leq \left(\bE \left[\left\vert \widehat{X}_{s}^{i,N}-\widehat{X}_{\underline{s}}^{i,N}\right\vert^{2\ell}\right] \right)^{1/2}\leq C\sqrt{\Delta} \leq C,\\
&\bE\left[\left\vert \widehat{X}_{\underline{s}}^{i,N}\right\vert^{\iota}\right]\leq C,
\end{split}
\end{equation}
for $q \in\{2; \ell+2; 2\ell; 2\ell+2\}$, $\iota \in\{\ell; 2\ell; 2\ell+4; 2\ell+6; 4\ell+6\}$, and
\begin{align}
\bE \left[\left\vert \widehat{X}_{\underline{s}}^{i,N}\right\vert^{\kappa}\left\vert \widehat{X}_{s}^{i,N}-\widehat{X}_{\underline{s}}^{i,N}\right\vert^2\right]&=\bE \left[\bE \left[\left\vert \widehat{X}_{\underline{s}}^{i,N}\right\vert^{\kappa}\left\vert \widehat{X}_{s}^{i,N}-\widehat{X}_{\underline{s}}^{i,N}\right\vert^2\big| \mathcal{F}_{\underline{s}}\right]\right] \notag\\
&=\bE \left[\left\vert \widehat{X}_{\underline{s}}^{i,N}\right\vert^{\kappa}\bE \left[\left\vert \widehat{X}_{s}^{i,N}-\widehat{X}_{\underline{s}}^{i,N}\right\vert^2\big| \mathcal{F}_{\underline{s}}\right]\right] \notag\\
&\leq C\Delta \bE \left[\left\vert \widehat{X}_{\underline{s}}^{i,N}\right\vert^{\kappa}\right]  \notag\\
&\leq C\Delta, \label{m5}
\end{align}
for $\kappa \in\{\ell; 2\ell\}$ and some constant $C>0$. 

Furthermore, using Lemma \ref{hq2} and Proposition \ref{dinh ly 2}, we obtain that for $\varrho\in\{\ell, 2\ell\}$,
\begin{align}
&\bE\left [\left\vert \widehat{X}_{\underline{s}}^{i,N}\right\vert^{\varrho}\mathcal{W}_2^2\left(\mu_{s}^{\widehat{\boldsymbol{X}}^{N}},\mu_{\underline{s}}^{\widehat{\boldsymbol{X}}^{N}}\right)\right]\leq \dfrac{1}{N}\sum_{j=1}^{N}\bE\left [\left\vert \widehat{X}_{\underline{s}}^{i,N}\right\vert^{\varrho}\left\vert \widehat{X}_{s}^{j,N}-\widehat{X}_{\underline{s}}^{j,N}\right\vert^2\right]  \notag\\
&=\dfrac{1}{N}\bE\left [\left\vert \widehat{X}_{\underline{s}}^{i,N}\right\vert^{\varrho}\left\vert \widehat{X}_{s}^{i,N}-\widehat{X}_{\underline{s}}^{i,N}\right\vert^2\right]+\dfrac{1}{N} \sum_{j\neq i} \bE\left [\left\vert \widehat{X}_{\underline{s}}^{i,N}\right\vert^{\varrho}\right]\bE\left [\left\vert \widehat{X}_{s}^{j,N}-\widehat{X}_{\underline{s}}^{j,N}\right\vert^2\right]  \notag\\
&\leq C\Delta.  \label{m6}
\end{align}
Using Proposition \ref{dinh ly 2}, we obtain that for $\vartheta\in\{2\ell+2; 4\ell+2\}$,
\begin{align}
&\bE\left[\left\vert \widehat{X}_{\underline{s}}^{i,N}\right\vert^{\vartheta}\mathcal{W}_2^4\big(\mu_{\underline{s}}^{\widehat{\boldsymbol{X}}^{N}},\delta_0\big)\right]\leq \dfrac{1}{N}\sum_{j=1}^{N}\bE\left [\left\vert\widehat{X}_{\underline{s}}^{i,N}\right\vert^{\vartheta}\left\vert \widehat{X}_{\underline{s}}^{j,N}\right\vert^4\right]  \notag\\
&=\dfrac{1}{N}\bE\left [\left\vert\widehat{X}_{\underline{s}}^{i,N}\right\vert^{\vartheta+4}\right]+\dfrac{1}{N}\sum_{j\neq i}\bE\left [\left\vert\widehat{X}_{\underline{s}}^{i,N}\right\vert^{\vartheta}\right]\bE\left [\left\vert \widehat{X}_{\underline{s}}^{j,N}\right\vert^4\right]  \notag\\
&\leq C.  \label{m7}
\end{align}
Proceeding similarly to the above, we get that
\begin{align}
&\bE\left[\left\vert \widehat{X}_{\underline{s}}^{i,N}\right\vert^{4\ell}\mathcal{W}_2^2\big(\mu_{\underline{s}}^{\widehat{\boldsymbol{X}}^{N}},\delta_0\big)\mathcal{W}_2^4\big(\mu_{\underline{s}}^{\widehat{\boldsymbol{X}}^{N}},\delta_0\big)\right]=\bE\left[\left\vert \widehat{X}_{\underline{s}}^{i,N}\right\vert^{4\ell}\mathcal{W}_2^6\big(\mu_{\underline{s}}^{\widehat{\boldsymbol{X}}^{N}},\delta_0\big)\right]\leq  C. \label{m8}
\end{align}
Consequently, plugging \eqref{m2}-\eqref{m8} into \eqref{m1}, taking the expectation on both sides and choosing $\lambda=\varepsilon_1+L_1+L_2$,  we get that for any $t \in[0,T]$,
\begin{align*}
\bE\left[e^{- (\varepsilon_1+L_1+L_2) t}\left\vert X_t^{i,N}-\widehat{X}_t^{i,N}\right\vert^2\right]\leq C \Delta \int_{0}^{t} e^{-(\varepsilon_1+L_1+L_2) s}ds. 
\end{align*}
This implies that
\begin{equation}\label{CT}
\max_{i\in\{1,\ldots,N\}}\underset{t\in[0,T]}{\sup} \bE \left[  \vert X_t^{i,N}-\widehat{X}_t^{i,N}\vert^2 \right] \le  C_T \Delta,
\end{equation}
for some positive constant $C_T=C(x_0,L,L_1,L_2,L_4,\widetilde{\gamma}_1,\widetilde{\gamma}_2,\widetilde{\eta},\widetilde{L_3}, T)$,  which shows \eqref{EXmu-X1}.

Next, for any stopping time $\tau \leq T$, using again \eqref{m1} with $t=\tau$, $\lambda = \varepsilon_1 + L_1$, taking the expectation of the above inequality on both sides and using again the estimates \eqref{m2}--\eqref{m8} and \eqref{CT}, we obtain that
	\begin{align*}
	&\bE \left[e^{-( \varepsilon_1 + L_1) \tau}\vert X_{\tau}^{i,N}-\widehat{X}_{\tau}^{i,N}\vert^2 \right]\\
	&\le \int_{0}^{T} e^{-( \varepsilon_1 + L_1) s} \bE\Bigg\{  L_2\mathcal{W}_2^2\left(\mu_{s}^{\boldsymbol{X}^{N}},\mu_{s}^{\widehat{\boldsymbol{X}}^{N}}\right)  \notag\\
	&\quad + \textcolor{blue}{ \dfrac{6}{\varepsilon_1}L^2\Bigg[ \left\vert \widehat{X}_{s}^{i,N}-\widehat{X}_{\underline{s}}^{i,N}\right\vert^2 +\mathcal{W}_2^2\left(\mu_{s}^{\widehat{\boldsymbol{X}}^{N}},\mu_{\underline{s}}^{\widehat{\boldsymbol{X}}^{N}}\right)+2^{2\ell}\left\vert \widehat{X}_{s}^{i,N}-\widehat{X}_{\underline{s}}^{i,N}\right\vert^{2\ell+2} +2^{2\ell}\left\vert \widehat{X}_{s}^{i,N}-\widehat{X}_{\underline{s}}^{i,N}\right\vert^{2\ell} } \notag\\
	&\quad  \textcolor{blue}{  \times\mathcal{W}_2^2\left(\mu_{s}^{\widehat{\boldsymbol{X}}^{N}},\mu_{\underline{s}}^{\widehat{\boldsymbol{X}}^{N}}\right)+(2^{2\ell}+1)\left\vert \widehat{X}_{\underline{s}}^{i,N}\right\vert^{2\ell}\left\vert \widehat{X}_{s}^{i,N}-\widehat{X}_{\underline{s}}^{i,N}\right\vert^2  +(2^{2\ell}+1)\left\vert \widehat{X}_{\underline{s}}^{i,N}\right\vert^{2\ell}\mathcal{W}_2^2\left(\mu_{s}^{\widehat{\boldsymbol{X}}^{N}},\mu_{\underline{s}}^{\widehat{\boldsymbol{X}}^{N}}\right)\Bigg]} \notag\\
	&\quad + \textcolor{magenta}{  2\big(1+\frac{1}{\varepsilon}\big)\Bigg[ \dfrac{L\widetilde{L}}{1+\varepsilon}\bigg(\left\vert \widehat{X}_{s}^{i,N}-\widehat{X}_{\underline{s}}^{i,N}\right\vert^2+\mathcal{W}_2^2\left(\mu_{s}^{\widehat{\boldsymbol{X}}^{N}},\mu_{\underline{s}}^{\widehat{\boldsymbol{X}}^{N}}\right)  +2^{\ell}\left\vert \widehat{X}_{s}^{i,N}-\widehat{X}_{\underline{s}}^{i,N}\right\vert^{\ell+2}+2^{\ell}\left\vert \widehat{X}_{s}^{i,N}-\widehat{X}_{\underline{s}}^{i,N}\right\vert^{\ell}}\notag\\
	&\quad  \textcolor{magenta}{  \times\mathcal{W}_2^2\left(\mu_{s}^{\widehat{\boldsymbol{X}}^{N}},\mu_{\underline{s}}^{\widehat{\boldsymbol{X}}^{N}}\right)+(2^{\ell}+1)\left\vert \widehat{X}_{\underline{s}}^{i,N}\right\vert^{\ell}\left\vert \widehat{X}_{s}^{i,N}-\widehat{X}_{\underline{s}}^{i,N}\right\vert^2  +(2^{\ell}+1)\big\vert \widehat{X}_{\underline{s}}^{i,N}\big\vert^{\ell}\mathcal{W}_2^2\left(\mu_{s}^{\widehat{\boldsymbol{X}}^{N}},\mu_{\underline{s}}^{\widehat{\boldsymbol{X}}^{N}}\right)\bigg) }\notag\\
	&\quad  \textcolor{magenta}{  +4L_4^2 \Delta\bigg( 16\Big(\dfrac{L\widetilde{L}}{1+\varepsilon}\Big)^2\Big(1+\vert \widehat{X}_{\underline{s}}^{i,N}\vert^{2\ell}\Big)\Big(\vert \widehat{X}_{\underline{s}}^{i,N}\vert^4+\mathcal{W}_2^4\big(\mu_{\underline{s}}^{\widehat{\boldsymbol{X}}^{N}},\delta_0\big)\Big) +4\left\vert \sigma(0,\delta_0)\right\vert^4\bigg) \Big(1+\vert \widehat{X}_{\underline{s}}^{i,N}\vert^2\Big)\Bigg] } \notag\\
	&\quad +   \textcolor{red}{  2\left(1+\frac{1}{\varepsilon}\right)\Bigg[ \dfrac{L\widetilde{L}}{1+\varepsilon}\bigg(\left\vert \widehat{X}_{s}^{i,N}-\widehat{X}_{\underline{s}}^{i,N}\right\vert^2+\mathcal{W}_2^2\left(\mu_{s}^{\widehat{\boldsymbol{X}}^{N}},\mu_{\underline{s}}^{\widehat{\boldsymbol{X}}^{N}}\right)  +2^{\ell}\left\vert \widehat{X}_{s}^{i,N}-\widehat{X}_{\underline{s}}^{i,N}\right\vert^{\ell+2}+2^{\ell}\left\vert \widehat{X}_{s}^{i,N}-\widehat{X}_{\underline{s}}^{i,N}\right\vert^{\ell}} \notag\\
	&\quad  \textcolor{red}{  \times\mathcal{W}_2^2\left(\mu_{s}^{\widehat{\boldsymbol{X}}^{N}},\mu_{\underline{s}}^{\widehat{\boldsymbol{X}}^{N}}\right)+(2^{\ell}+1)\left\vert \widehat{X}_{\underline{s}}^{i,N}\right\vert^{\ell}\left\vert \widehat{X}_{s}^{i,N}-\widehat{X}_{\underline{s}}^{i,N}\right\vert^2  +(2^{\ell}+1)\big\vert \widehat{X}_{\underline{s}}^{i,N}\big\vert^{\ell}\mathcal{W}_2^2\left(\mu_{s}^{\widehat{\boldsymbol{X}}^{N}},\mu_{\underline{s}}^{\widehat{\boldsymbol{X}}^{N}}\right)\bigg) } \notag\\
	&\quad  \textcolor{red}{ +\dfrac{6L_4^2 \Delta}{\int_{\bR_{0}^d}|z|^{2} \nu(d z)}\bigg( 16\Big(\dfrac{L\widetilde{L}}{1+\varepsilon}\Big)^2\Big(1+\vert \widehat{X}_{\underline{s}}^{i,N}\vert^{2\ell}\Big)\Big(\vert \widehat{X}_{\underline{s}}^{i,N}\vert^4+\mathcal{W}_2^4\big(\mu_{\underline{s}}^{\widehat{\boldsymbol{X}}^{N}},\delta_0\big)\Big)  +4\left\vert c(0,\delta_0)\right\vert^4\Big(\int_{\bR_{0}^d}|z|^{2} \nu(d z)\Big)^2\bigg)} \notag\\
	&\quad  \textcolor{red}{ \times \left(1+\vert \widehat{X}_{\underline{s}}^{i,N}\vert^2+2\left(\vert b(0,\delta_0)\vert^2+4L^2\left(1+\vert \widehat{X}_{\underline{s}}^{i,N}\vert^{2\ell}\right)\left(\vert \widehat{X}_{\underline{s}}^{i,N}\vert^{2}+\mathcal{W}_2^2\big(\mu_{\underline{s}}^{\widehat{\boldsymbol{X}}^{N}},\delta_0\big)\right)\right)\right)\Bigg] } \Bigg\} ds\\
	&\le  \widetilde{C}_T \Delta,
	\end{align*}
	for some constant $\widetilde{C}_T>0$.

	Therefore, due to Proposition IV.4.7 in \cite{RY}, we get that for any $p\in(0,2)$,
	\begin{align*}
	\max_{i\in\{1,\ldots,N\}}\bE \left[ \underset{t\in[0,T]}{\sup} e^{-\frac{p ( \varepsilon_1 + L_1) t}{2}}\vert X_{t}^{i,N}-\widehat{X}_{t}^{i,N}\vert^p \right] \leq \left(\dfrac{2-p/2}{1-p/2}\right)  (\widetilde{C}_T\Delta)^{p/2},
	\end{align*}
	which,  combined with the fact that $e^{-\frac{p ( \varepsilon_1 + L_1) t}{2}}\geq e^{-\frac{p \vert  \varepsilon_1 + L_1 \vert T}{2}}$,  concludes \eqref{EXmu-X0}.

Moreover, when $L_1+L_2<0$, we can always choose $\varepsilon_1>0$ such that $L_1 + L_2 + \varepsilon_1<0$. Consequently, when $L_1+L_2<0$ and $\widetilde{\gamma}<0$, the constant $C_T$ in \eqref{CT} now does not depend on $T$. Therefore, we have shown \eqref{EXmu-X2}, which finishes the desired proof.
\end{proof} 

We now state our main result on strong convergence in both finite and infinite time intervals of the tamed-adaptive Euler-Maruyama scheme for multidimensional McKean-Vlasov SDEs driven by L\'evy processes. 
\begin{Thm}

Assume  that Conditions \textbf{A1}, \textbf{A3--A7},  \textbf{T2--T6} hold, and $p_0\geq 4\ell+6$,   $N \geq \left( \frac{\max\{3\widetilde{L_3},1\}}{2\upsilon}\right)^2$. 
Assume further that there exists a constant $\varepsilon>0$ such that \textbf{A2} holds for $\kappa_1=\kappa_2=1+\varepsilon$, $L_1\in\mathbb{R}$, $L_2\geq 0$. Then for any $T>0$, there exists a positive constant  $C_T=C(x_0,L,L_1,L_2,L_3,L_4,\widetilde{\gamma}_1,\widetilde{\gamma}_2,\widetilde{\eta},\widetilde{L_3}, T)$ such that
\begin{equation} \label{main1}
\max_{i\in\{1,\ldots,N\}}\underset{t\in[0,T]}{\sup} \bE \left[  \left\vert X_t^{i}-\widehat{X}_t^{i,N}\right\vert^2 \right] \le  C_T \left(\Delta + \varphi(N)\right),
\end{equation}
for any $N\in\mathbb{N}$, where the constant $C_T>0$ does not depend on $N$.

Moreover, assume that $\gamma= \gamma_1 + \gamma_2 <0$, $\widetilde{\gamma}= \widetilde{\gamma}_1+ \widetilde{\gamma}_2<0$ and $L_1+L_2<0$. Then,  there exists a positive constant  \\$C''=C''(x_0,L,L_1,L_2,L_3,L_4,\widetilde{\gamma}_1,\widetilde{\gamma}_2,\widetilde{\eta},\widetilde{L_3})$ which does not depend on $T$  such that
\begin{equation} \label{main2}
\max_{i\in\{1,\ldots,N\}}\underset{t \ge 0}{\sup}\ \bE \left[ \left \vert X_t^{i}-\widehat{X}_t^{i,N}\right\vert^2 \right] \le  C'' \left(\Delta + \varphi(N)\right).
\end{equation} 
\end{Thm}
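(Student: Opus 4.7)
The plan is to obtain the bound on $\mathbb{E}[|X_t^i - \widehat{X}_t^{i,N}|^2]$ by splitting the error into two parts via the interacting particle system $X^{i,N}$ and the triangle inequality
\begin{equation*}
\mathbb{E}\!\left[|X_t^i - \widehat{X}_t^{i,N}|^2\right] \leq 2\,\mathbb{E}\!\left[|X_t^i - X_t^{i,N}|^2\right] + 2\,\mathbb{E}\!\left[|X_t^{i,N} - \widehat{X}_t^{i,N}|^2\right].
\end{equation*}
The first term measures the propagation of chaos between the non-interacting McKean-Vlasov particle and the $N$-interacting particle; the second measures the discretization error of the tamed-adaptive scheme for the interacting system. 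Both quantities have already been estimated in the paper, so the task reduces to assembling these two estimates under the hypotheses of the theorem.

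For the finite-time statement \eqref{main1}, I would invoke Proposition \ref{PoC} to control the first term by $C_T\varphi(N)$ (noting that Conditions \textbf{A6}, \textbf{A7}, and \textbf{A2} with $\kappa_1=\kappa_2=1+\varepsilon$ imply, via Remark \ref{sigmac}(iii), the local boundedness of $\sigma$ required there, and Condition \textbf{A5} with $p_0 \geq 4\ell+6 \geq 2$ gives $q=2p_0$ after possibly strengthening it), and Theorem \ref{dinh ly 6} to control the second term by $C_T \Delta$. Adding the two bounds gives \eqref{main1} with a new $T$-dependent constant depending on the relevant parameters.

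For the infinite-time statement \eqref{main2}, under the additional assumptions $\gamma<0$, $\widetilde{\gamma}<0$, and $L_1+L_2<0$, I would apply the uniform-in-time version \eqref{esti2} of Proposition \ref{PoC} (which applies precisely when $L_1+L_2<0$ and $\gamma<0$) to bound the first term by $C\varphi(N)$ uniformly in $t\geq 0$, and the uniform-in-time bound \eqref{EXmu-X2} of Theorem \ref{dinh ly 6} (which applies when $\widetilde{\gamma}<0$ and $L_1+L_2<0$) to bound the second term by $C''\Delta$. Summing and taking the supremum over $t\geq 0$ and the maximum over $i$ yields \eqref{main2}.

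Since the two ingredients are already proved and we are simply combining them, there is no serious obstacle; the only mild care needed is to verify that the hypotheses of the current theorem, in particular \textbf{A1}, \textbf{A3}--\textbf{A7} together with \textbf{A2} for $\kappa_1=\kappa_2=1+\varepsilon$, imply the hypotheses of Propositions \ref{nghiem dung 1}, \ref{moment XiN}, \ref{PoC}, and of Theorem \ref{dinh ly 6}. In particular, Remark \ref{sigmac} ensures the local boundedness of $\sigma$ used in Propositions \ref{nghiem dung 1} and \ref{moment XiN}, and the Lipschitz assumption \textbf{A2} with $L_1=L_2>0$ needed for the existence result Proposition \ref{existenceunique} is subsumed by \textbf{A2} with $\kappa_1=\kappa_2=1+\varepsilon$ at the cost of adjusting constants. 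This bookkeeping is routine, so the proof consists of one line of triangle inequality followed by a citation of the two prior results.
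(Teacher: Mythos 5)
Your proposal is correct and matches the paper's own (omitted) proof exactly: decompose the error through the interacting particle system $X^{i,N}$ via the triangle inequality, then bound the propagation-of-chaos term by Proposition \ref{PoC} and the discretization term by Theorem \ref{dinh ly 6}, in both the finite-horizon and uniform-in-time settings. The hypothesis bookkeeping you describe is also the right check, and no further ideas are needed.
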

\begin{proof} 
As a consequence of Proposition \ref{PoC} and Theorem \ref{dinh ly 6}, the proof is straightforward. Thus, we omit it.
\end{proof} 	

\section{Numerical experiment} \label{sec:nume}

In this section, we consider the rate of convergence of the tamed-adaptive Euler-Maruyama scheme \eqref{EM1},\eqref{chooseh}, \eqref{smDtcDt}  in Theorem \ref{dinh ly 6} for fixed large values of  $N$.
We consider the following L\'evy-driven McKean-Vlasov stochastic differential equation
	\begin{equation} \label{ptvd}
	d X_t=\left(-1-3 \left(X_t+\mathbb{E}\left[X_t\right]\right)- X_t |X_t|^{0.3}\right)dt + 0.2\left(1+|X_t|^{1.1}+\mathbb{E}\left[X_t\right]\right) d W_t+0.2\left(X_{t-}+\mathbb{E}\left[X_{t-} \right]\right) d Z_t.
	\end{equation} 
	That is,
	\begin{align*}
		&b(x, \mu)=-1-3\left(x+\int_{\mathbb{R}} z \mu(d z) \right)- x|x|^{0.3},\\
		&\sigma(x, \mu)=0.2\left(1+|x|^{1.1}+\int_{\mathbb{R}} z \mu(d z)\right),\qquad
		c(x, \mu)=0.2\left(x+\int_{\mathbb{R}} z \mu(d z)\right),
		\end{align*}
	for all $x \in \bR$ and $\mu\in \mathcal{P}_2(\mathbb{R})$.	
	Here we suppose that $Z=(Z_t)_{t\geq 0}$ is a bilateral Gamma process whose scale parameter is 5  and shape 	
	parameter is 1. It is straightforward to verify  that these coefficients satisfy Conditions \textbf{A1--A7} and \textbf{T2--T6}. 
	
	In the following, we will implement the tamed-adaptive Euler approximation scheme \eqref{EM1}-\eqref{chooseh} with 
$N = 500, \ x_0 = 1, \ell = 0.3, p_0 = 8$, and  $T = 10.$
Since the exact solution of equation \eqref{ptvd} is unknown, we will derive the rate of convergence of the tamed-adaptive Euler approximation scheme \eqref{EM1}-\eqref{chooseh} in an indirect way as in \cite{KLN22, KLNT22}. 
We consider the mean squared difference of $\widehat{X}$ on two consecutive levels as follows: 
$$\textrm{MSE}(\ellf, T) = \frac{1}{M} \sum_{k=1}^M |\widehat{X}^{(\ellf,k)}_T - \widehat{X}^{(\ellf+1,k)}_T|^2,$$
where for each $\ellf \geq 1$,  $(\widehat{X}^{(\ellf,k)})_{1 \leq k \leq M}$ is a sequence of independent copies of $\widehat{X}^{(\ellf)}$ defined by equations \eqref{EM1}, \eqref{chooseh}, and \eqref{smDtcDt} with $\Delta = 2^{-\ellf}$. Here $\widehat{X}^{(\ellf,k)}_T$ and  $\widehat{X}^{(\ellf+1,k)}_T$ must be simulated to the same Brownian motions and bilateral Gamma processes (See  Algorithm 1 in \cite{FG}). 

 It is clear that  $\widehat{X}^{(\ellf)}$ converges at some rate of order $\beta \in (0,+\infty)$ in $L^2$-norm  iff $2^{\beta \ellf} \|\widehat{X}^{(\ellf+1)}_T - \widehat{X}^{(\ellf)}_T\|_{L^2} = O(1)$,
which implies that  
$\log_2 \textrm{MSE}(\ellf,T) = -2\beta \ellf + C + o(1),  $
for some constant $C \in \mathbb{R}.$ Thus we can use the regression method to estimate the rate $\beta$.  Figure \ref{fig1} shows the values of $\log_2 MSE(\ellf, T)$ plotted against $\ellf \in \{1, 2,..., 6\}.$ We see that $\beta \approx 0.5$.

\begin{figure}
\begin{center}
\includegraphics[height = 3cm]{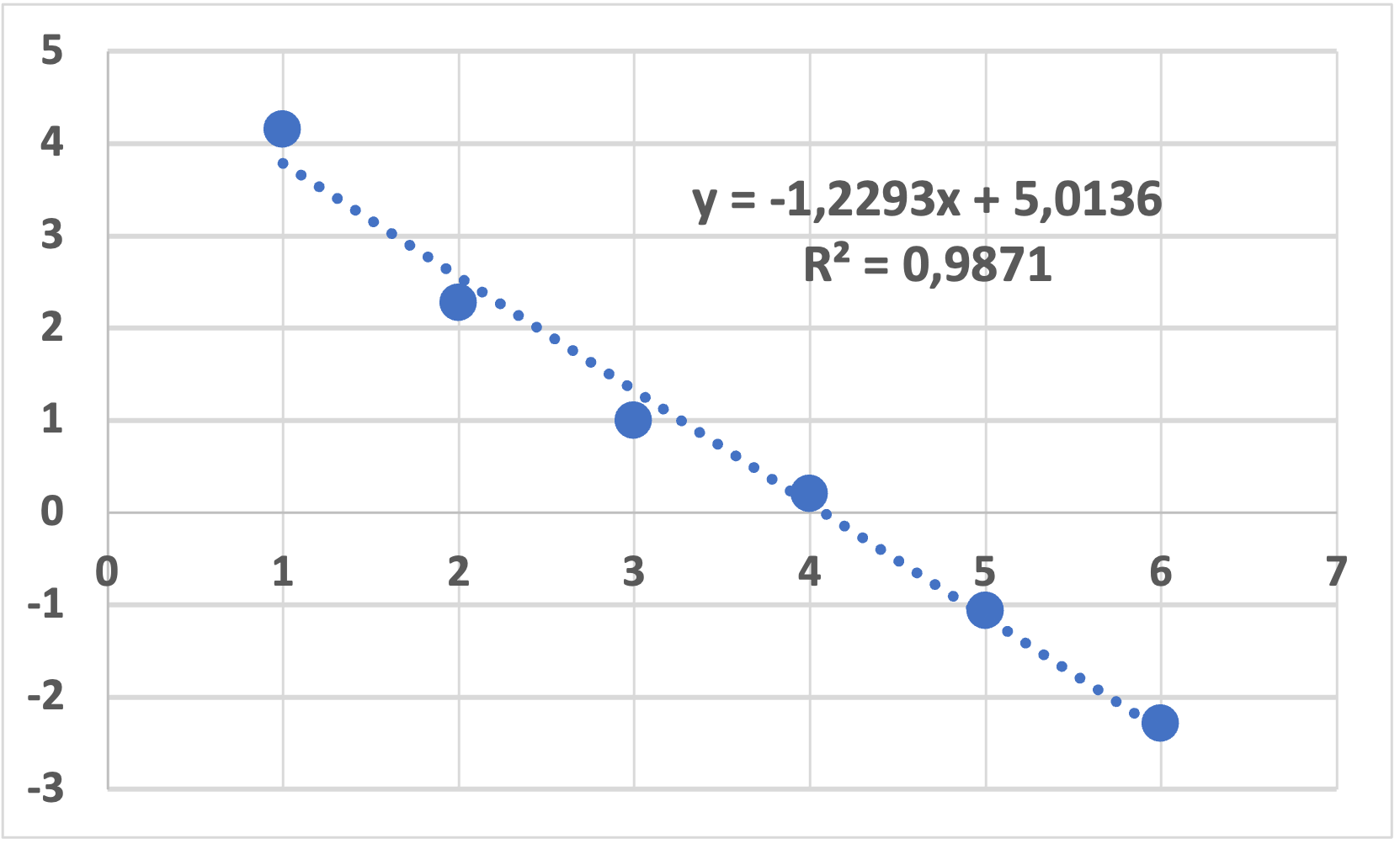}
\caption{Error $\log_2 \textrm{MSE}(\ellf, 10)$ plotted against $\ellf=1,...,6$.}
\label{fig1} 
\end{center} 
\end{figure}

\section*{Acknowledgements}
 This research is funded by the Vietnam National Foundation for Science and Technology
Development (NAFOSTED) under the grant number  101.03-2021.36. 
A part of this work was done when the authors visited Vietnam Institute for Advanced Study in Mathematics (VIASM) in 2023. The authors also would like to thank VIASM for their kind hospitality during their research visit.

\end{document}